\newtheorem{Thm}{Theorem}[subsection]
\newenvironment{NB}{\color{red}{\bf NB:} \footnotesize}{}
\newtheorem{Lem}[Thm]{Lemma}
\newtheorem{Prop}[Thm]{Proposition}
\newtheorem{Cor}[Thm]{Corollary}
\newtheorem{Conj}[Thm]{Conjecture}
\newtheorem{Eg}[Thm]{Example}
\newtheorem{Rem}[Thm]{Remark}
\newtheorem{Def}[Thm]{Definition}
\newtheorem*{Def*}{Definition}
\newtheorem*{Thm*}{Theorem}
\newcommand{\Z}{\mathbb{Z}}
\newcommand{\N}{\mathbb{N}}
\newcommand{\Q}{\mathbb{Q}}
\newcommand{\C}{\mathbb{C}}
\newcommand{\R}{\mathbb{R}}
\newcommand{\ie}{{\em i.e.}\ }
\newcommand{\cf}{{\em cf.}\ }
\newcommand{\st}{{such that}\ }
\renewcommand{\hat}[1]{\widehat{#1}}
\renewcommand{\tilde}[1]{\widetilde{#1}}
\newcommand{\opname}[1]{\operatorname{\mathsf{#1}}}
\renewcommand{\mod}{\opname{mod}}%redefined!
\newcommand{\Rep}{\opname{Rep}}
\newcommand{\per}{\opname{per}}
\newcommand{\Dfd}{\cD_{fd}}
\newcommand{\pr}{\opname{pr}}
\newcommand{\ind}{\opname{ind}}
\newcommand{\add}{\opname{add}}
\newcommand{\op}{^{op}}
\newcommand{\ra}{\rightarrow}
\newcommand{\xra}{\xrightarrow}
\newcommand{\iso}{\stackrel{_\sim}{\rightarrow}}
\newcommand{\Gr}{\opname{Gr}}
\newcommand{\dimv}{\underline{\dim}\,}
\newcommand{\rank}{\opname{rank}}
\newcommand{\Ker}{\opname{Ker}}
\renewcommand{\Im}{\opname{Im}}%redefined!
\newcommand{\Tr}{\opname{Tr}}
\newcommand{\id}{\mathbf{1}}
\newcommand{\Aut}{\opname{Aut}}
\newcommand{\Hom}{\opname{Hom}}
\newcommand{\Ext}{\opname{Ext}}
\newcommand{\ext}{\opname{ext}}
\newcommand{\supp}{\opname{supp}}
\renewcommand{\deg}{\opname{deg}}
\newcommand{\Kz}{{\opname{K}_0}}
\newcommand{\Hf}{{\frac{1}{2}}}
\newcommand{\Rm}[1]{{\longmapsto}}
\newcommand{\Lm}[1]{{\longmapsfrom}}
\newcommand{\cA}{{\mathcal A}}
\newcommand{\cC}{{\mathcal C}}
\newcommand{\cD}{{\mathcal D}}
\newcommand{\cE}{{\mathcal E}}
\newcommand{\cF}{{\mathcal F}}
\newcommand{\cK}{{\mathcal K}}
\newcommand{\cL}{{\mathcal L}}
\newcommand{\cM}{{\mathcal M}}
\newcommand{\cN}{{\mathcal N}}
\newcommand{\cP}{{\mathcal P}}
\newcommand{\cQ}{{\mathcal Q}}
\newcommand{\cR}{{\mathcal R}}
\newcommand{\cT}{{\mathcal T}}
\newcommand{\cU}{{\mathcal U}}
\newcommand{\cY}{{\mathcal Y}}
\newcommand{\sT}{{\mathbb T}}
\newcommand{\tB}{{\widetilde{B}}}
\newcommand{\tL}{{\widetilde{L}}}
\newcommand{\tQ}{{\widetilde{Q}}}
\newcommand{\tW}{{\widetilde{W}}}
\newcommand{\td}{{\widetilde{d}}}
\newcommand{\tg}{{\tilde{g}}}
\newcommand{\tcF}{{\tilde{\cF}}}
\newcommand{\grRep}{{\Rep^\bullet}}
\newcommand{\grM}{{\cM^\bullet}}
\newcommand{\grMaff}{{\cM^\bullet_0}}
\newcommand{\tq}{{\widetilde{q}}}%quotient map
\newcommand{\inc}{\opname{i}}%inclusion map
\newcommand{\pbw}{M}%standard module, dual pbw basis
\newcommand{\can}{L}%simple module, dual canonical basis
\newcommand{\gen}{\mathbb{L}}%% almost simple module, generic basis \mathbb{L}
\newcommand{\tChar}{\chi_{q,t}}%q-character, reduced character
\newcommand{\HChar}{\chi^H_{q,t}}%q-character, reduced character
\newcommand{\pbwTorus}{{\pbw^\torus}}%pbw in double torus
\newcommand{\canTorus}{{\can^\torus}}
\newcommand{\genTorus}{{\gen^\torus}}
\newcommand{\pbwCl}{{\pbw^\clAlg}}%pbw in quantum cluster algebra
\newcommand{\canCl}{{\can^\clAlg}}
\newcommand{\genCl}{{\gen^\clAlg}}
\newcommand{\pbwClBasis}{{\{\pbwCl(w)\}}}%the set pbw basis of quantum cluster algebra
\newcommand{\canClBasis}{{\{\canCl(w)\}}}%
\newcommand{\genClBasis}{{\{\genCl(w)\}}}%
\newcommand{\redWSet}{{\mathcal{J}}}%coefficient Free W-vectors
\newcommand{\redCanBasis}{{\{\can^\torus(w),w\in\redWSet\}}}%
\newcommand{\genRedTarg}{{\gen^\redTargSpace}}
\newcommand{\KGp}{{\cK}}
\newcommand{\dualKGp}{{\cK^*}}
\newcommand{\quotKGp}{{\cR_t}}
\newcommand{\tBase}{{\Z[t^\pm]}}
\newcommand{\vBase}{{\Z[v^\pm]}}
\newcommand{\qBase}{{\Z[q^{\pm\Hf}]}}
\newcommand{\qBaseCoeff}{{\Z P[q^{\pm\Hf}]}}
\newcommand{\ZCoeff}{{\Z P}}
\newcommand{\redTargSpace}{{\cY}}%reduced target space
\newcommand{\HParam}{{t^\Hf}} %e^w e^v
\newcommand{\HRing}{{{\cY}^H_\HParam}} %e^w e^v
\newcommand{\HGp}{{\cK_\HParam}} %e^w e^v
\newcommand{\torus}{\cT}%X^
\newcommand{\flagVar}{{\tilde{\cF}}}%flag variety
\newcommand{\dualFlagVar}{{\tilde{\cF}^\bot}}%flag variety
\newcommand{\projQuot}{{\cM}}%projective quotient
\newcommand{\affQuot}{{\projQuot_0}}%affine quotient
\newcommand{\lag}{{\cL}}%lagrangian fibre at zero point
\newcommand{\grProjQuot}{{\projQuot^\bullet}}
\newcommand{\grAffQuot}{{\affQuot^\bullet}}
\newcommand{\grLag}{{\lag^\bullet}}%graded lagrangian fibre at zero point
\newcommand{\grFib}{{\mathfrak{m}^\bullet}}%fiber of projective $\pi$ in graded case
\newcommand{\grRegStratum}{{\grAffQuot^\mathrm{reg}}}%fiber of projective $\pi$
\newcommand{\cor}{{\opname{cor}}}%correction map
\newcommand{\contr}{{\hat{\Pi}}}%contraction map
\newcommand{\clAlg}{{\cA}}%cluster algebra
\newcommand{\qClAlg}{{\cA^q}}
\newcommand{\subQClAlg}{{\cA^q_{sub}}}
\newcommand{\qGLSClAlg}{{\cA^q_{\mathrm{GLS}}}}
\newcommand{\coeffFree}{{^\phi}}
\newcommand{\pureCoeff}{{^f}}
\newcommand{\kerMod}{{^\sigma}}
\newcommand{\vtx}{{\opname{I}}}%set of vertices
\newcommand{\wtLess}{{<_w}}
\newcommand{\tOmega}{{\tilde{\Omega}}}
\newcommand{\oOmega}{{\overline{\Omega}}}
\newcommand{\oh}{{\overline{h}}}
\newcommand{\lSp}{{\opname{L}}}
\newcommand{\eSp}{{\opname{E}}}
\newcommand{\grLSp}{{\lSp^\wtLess}}
\newcommand{\grESp}{{\eSp^\wtLess}}
\newcommand{\grEndSp}{{\lSp^\bullet}}
\newcommand{\te}{{\tilde{e}}}
\newcommand{\diag}{{\delta}}
\newcommand{\canStr}{{b}}
\newcommand{\condC}{{\rm{(C)}$\ $}} 
\newcommand{\invCq}{{C_q^{-1}}}
\newcommand{\Pop}{{\bar{P}}}
\newcommand{\Iop}{{\bar{I}}}
\newcommand{\rev}{{\xi}}%reverse
\newcommand{\rscPbwCl}{{\tilde{\pbwCl}}}
\newcommand{\rscCanCl}{{\tilde{\canCl}}}
\newcommand{\rscCanBasis}{{\{\rscCanCl(w)\}}}
\newcommand{\rscQClAlg}{\tilde{\cA^q}}
\newcommand{\intA}{{\mathbb{A}}}
\newcommand{\tRes}{{\tilde{\mathrm{Res}}}}
\newcommand{\res}{{\mathrm{Res}}}
\newcommand{\eMatrix}{{{\cE'}}}
\newcommand{\dT}{{\td'}}
\newcommand{\dTW}{{\dT_W}}
\newcommand{\wLess}{{<_{\overrightarrow{w}}}}
\newcommand{\trunc}{{^{\leq 0}}}
\newcommand{\tr}{{\tilde{r}}}
\newcommand{\bracket}[1]{\left\langle#1\right\rangle}
\def\tr{\mathop{\mathrm{tr}}\nolimits}
\newcommand{\ow}{{\overrightarrow{w}}}
\newcommand{\BPos}{{B_{+}^{\mathrm{up}}}}
\newcommand{\mfr}[1]{{\mathfrak{#1}}}
\newcommand{\mbf}[1]{{\mathbf{#1}}}
\newcommand{\mbb}[1]{{\mathbb{#1}}}
\newcommand{\mca}[1]{{\mathcal{#1}}}
\newcommand{\mscr}[1]{{\mathscr{#1}}}	
\newenvironment{aenumerate}{%
  \begin{enumerate}%
  }{\end{enumerate}}
\newenvironment{renumerate}{%
  \begin{enumerate}%
  }{\end{enumerate}}
\newcommand{\braket}[1]{\left\langle#1\right\rangle}
\newcommand{\set}[1]{\left\{#1\right\}}
\newcommand{\Qv}{\mathbb{Q}(v)}
\newcommand{\Uv}{\mbf{U}_{v}}
\newcommand{\lemref}[1]{Lemma~\ref{#1}}
\newcommand{\fit}[1]{{\widetilde{f}}_{#1}}
\newcommand{\eit}[1]{{\widetilde{e}}_{#1}}
\newcommand{\wt}{\operatorname{wt}}
\newcommand{\Binfty}{\mscr{B}(\infty)}
\newcommand{\vep}{\varepsilon}
\newcommand{\Gup}{G^{\operatorname{up}}}
\newcommand{\Fup}{F^{\operatorname{up}}}
\newcommand{\Glow}{G^{\operatorname{low}}}
\newcommand{\+}{\oplus}
\newcommand{\up}{\operatorname{up}}
\newcommand{\low}{\operatorname{low}}
\newcommand{\rpm}{r_{\pm}}
\newcommand{\rmp}{r_{\mp}}
\tikzstyle{none}=[inner sep=0pt]
\tikzstyle{black box}=[draw=black, fill=black!25]
\tikzstyle{white box}=[draw=black, fill=white]
\tikzstyle{black circle}=[circle,draw=black!50, fill=black!25]
\tikzstyle{red circle}=[circle,draw=red!50, fill=red!25]
\tikzstyle{blue circle}=[circle,draw=blue!50, fill=blue!25]
\tikzstyle{green circle}=[circle,draw=green!50, fill=green!25]
\tikzstyle{yellow circle}=[circle,draw=yellow!50, fill=yellow!25]
\begin{document}
\title[Graded quiver varieties and quantum cluster algebras]{Graded quiver varieties, quantum cluster algebras and dual
  Canonical basis}
 
\author{Yoshiyuki Kimura}
\address{Yoshiyuki Kimura\\
Osaka City University Advanced Mathematical Institute,%
Osaka City University,3-3-138, Sugimoto, Sumiyoshi-ku, Osaka, 558-8585 Japan%
}
\email{ykimura@sci.osaka-cu.ac.jp}
\author{Fan Qin}
\address{Fan Qin\\
Universit\'{e} Paris Diderot - Paris 7, Institut de Math\'{e}\-ma\-ti\-ques de Jussieu, UMR 7586 du CNRS, 175 rue du chevaleret, 75013, Paris, France}
\email{qinfan@math.jussieu.fr}
%\author{Yoshiyuki Kimura,\\Osaka City university Advanced Mathematical Institute,\\Osaka, Japan \and Fan Qin\thanks{Corresponding author. qinfan@math.jussieu.fr},\\University Paris Diderot - Paris 7, \\Paris, France}
%\address[1]{Japan}

%\author{Fan QIN}
%\address{Fan Qin, Universit\'{e} Paris Diderot - Paris 7, Institut de Math\'{e}\-ma\-ti\-ques de Jussieu, UMR 7586 du CNRS, 175 rue du chevaleret, 75013, Paris, France}
%\thanks[2]{Corresponding author.}
%\contact{Fan Qin}
%\corresponding
%\corauth[1]{Corresponding Author: University of Life, 123 Some St.,
%    Somewhere, MI 12345, USA.  Phone: (555) 555-5555 
%    Fax: (555) 555-7777, Email: JDoe@uol.edu}
%\email{qinfan@math.jussieu.fr}%add
\maketitle
\begin{abstract}
Inspired by a previous work of Nakajima, we consider perverse sheaves
over acyclic graded quiver
varieties and study the Fourier-Sato-Deligne transform from a
representation theoretic point of view. We obtain deformed monoidal
categorifications of acyclic quantum cluster algebras with specific
coefficients. In particular, the (quantum) positivity conjecture is verified whenever
there is an acyclic seed in the (quantum) cluster algebra.

In the second part of the paper, we introduce new quantizations and show that all quantum cluster
monomials in our setting belong
to the dual canonical basis of the corresponding quantum unipotent
subgroup. This result generalizes previous work by Lampe and by
Hernandez-Leclerc from the Kronecker and Dynkin quiver case to the acyclic case.

The Fourier transform part of this paper provides crucial input for
the second author's paper where he constructs bases of acyclic quantum cluster algebras with
arbitrary coefficients and quantization.
\end{abstract}

\tableofcontents

%\input{part1.tex}%Qin: edit 2nd step

%%%%%%%%%%%%%%%%%%%%%%%%%%
%%			Main content
%%%%%%%%%%%%%%%%%%%%%%%%%%

\section{Introduction}\label{sec:intro}

\subsection{Motivation}

% \begin{draft}
%   Link between cluster algebra and Lie theory. Quantum cluster
%   monomials, dual canonical basis.

%   Posivity conjecture.
% \end{draft}

Cluster algebras were invented by Fomin and Zelevinsky in
\cite{FominZelevinsky02}. They are algebras generated by certain
combinatorially defined generators (the cluster variables). The quantum deformations were defined
in \cite{BerensteinZelevinsky05}. Fomin and Zelevinsky stated their original motivation as follows:

\textit{
This structure should serve as an algebraic framework for the study of dual
canonical bases in these coordinate rings and their q-deformations. In particular, we conjecture that all monomials in the variables of any given cluster (the cluster monomials) belong to this dual canonical basis.}

However, despite the many successful applications of (quantum) cluster algebras to other
areas (\cf the introductory survey by
Bernhard Keller \cite{Keller12}), the link between (quantum) cluster monomials and the dual
canonical basis of quantum groups remains largely open. Partial
results are due to \cite{Lamp10} \cite{Lamp11} \cite{HernandezLeclerc11} for quivers of finite and affine type.

Also, the following conjecture has attracted a lot of interest since the invention of cluster algebras.
\begin{Conj}[Positivity conjecture]\label{conj:positivity}
With respect to the cluster variables in any given seed, each cluster
variable expands into a Laurent polynomial with non-negative integer coefficients.
\end{Conj}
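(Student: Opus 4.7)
The plan is to deduce positivity from a geometric/categorical construction rather than combinatorially, by realizing each cluster variable as the class of a simple perverse sheaf whose cohomology stalks are automatically non-negative integers. The restriction in the abstract (``whenever there is an acyclic seed'') tells us that the starting point should be an acyclic quiver $Q$ underlying the chosen seed, so the first step is to reduce the general positivity statement to positivity with respect to the cluster of a \emph{particular} acyclic seed. This reduction is possible because the Laurent phenomenon together with the mutation rules implies that if the expansion in one cluster has non-negative coefficients and the seeds differ by a sequence of mutations whose exchange polynomials are themselves positive, the positivity propagates. Thus we only need positivity with respect to the fixed acyclic seed.

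Next, following Nakajima's strategy, I would attach to the acyclic quiver $Q$ (with the chosen framing given by the principal/frozen part of the seed) a family of graded quiver varieties $\grProjQuot$, $\grAffQuot$, together with the Lagrangian $\grLag$ and the tensor product variety structure. The quantum torus of the initial seed is modelled on the equivariant $K$-theory (or graded dimension vectors) of the fiber $\grFib$, while the relevant category of perverse sheaves (semisimple complexes on $\grAffQuot$) carries a convolution product that, after Fourier–Sato–Deligne transform, matches the quantum cluster algebra multiplication. The key technical input, which should already be set up in the body of the paper, is the Fourier transform exchanging the two types of graded quiver varieties (the ``$\mBp$-side'' and the ``$\mBn$-side''), since on one side one sees the $PBW$-type monomials $\pbw^\torus(w)$ naturally, and on the other side one sees the Laurent expansions in the initial cluster.

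With this setup, the proof proposal is: (i) show that each initial cluster variable $X_i$ is the class of a simple perverse sheaf $\can^\torus(w_i)$ for some $w_i$; (ii) show that every cluster variable obtained from $X_i$ by a sequence of mutations is again realized as the class of a simple perverse sheaf in the dual canonical-type basis $\canBasis$ constructed from intersection cohomology complexes; (iii) expand this class in the initial $PBW$-type basis $\pbwBasis$ (equivalently, in monomials in the initial cluster $X_i^{\pm 1}$) via the transition matrix between $\canBasis$ and $\pbwBasis$. Because the transition coefficients are dimensions of stalks of IC-sheaves, hence non-negative integers (or, in the quantum case, Laurent polynomials in $q^{\pm \Hf}$ with non-negative integer coefficients by the Weil conjectures/decomposition theorem), positivity follows at once.

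The main obstacle is step (ii): identifying the \emph{mutated} cluster variables with simple perverse sheaves. This requires matching the categorical mutation, which on the geometric side comes from a Fourier–Sato–Deligne transform combined with a truncation $\trunc$ of the grading, with the algebraic mutation formula of Berenstein–Zelevinsky. Concretely, one must check that the recursion characterising cluster variables (an exchange relation producing a Laurent polynomial with a unique decomposition into a difference of positive monomials divided by a cluster variable) is compatible with the recursion satisfied by the classes $[\can^\torus(w)]$ under the Fourier transform. Once this compatibility is established on a single mutation, an induction on mutation sequences (which is finite for each fixed cluster variable by the Laurent phenomenon, even though the exchange graph is infinite) finishes the argument. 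The quantum version then follows with no extra conceptual input because the classes of simple perverse sheaves, graded by the cohomological degree, already live in $\Z_{\geq 0}[q^{\pm \Hf}]$.
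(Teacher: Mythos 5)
The core geometric strategy you outline (graded quiver varieties, Fourier--Sato--Deligne transform, identifying cluster monomials with classes of simple perverse sheaves, and appealing to the decomposition theorem for positivity of coefficients) is indeed what the paper does, and your identification of the ``main obstacle'' as matching mutation with the geometric construction is on point. However, there is a genuine gap in your opening reduction.

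You assert that positivity with respect to a single acyclic seed propagates to all seeds ``because the exchange polynomials are themselves positive.'' This is false, and is in fact essentially the whole difficulty of the conjecture: the exchange relation defines $x_k(t')$ as a \emph{quotient} by $x_k(t)$, and a quotient of a positive Laurent polynomial by a cluster variable need not have positive coefficients when re-expanded in the new cluster. If positivity simply propagated along the exchange graph, Conjecture~\ref{conj:positivity} would have been trivial from the start, since exchange polynomials are always subtraction-free. With this reduction removed, the rest of your argument --- expanding a simple IC class in the PBW-type basis and reading off stalk dimensions --- only proves positivity with respect to the one fixed acyclic seed, not arbitrary seeds.

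The paper proves the stronger statement (Corollary~\ref{cor:stronglyPositive} and Corollary~\ref{cor:positivity}) by a different mechanism, which you should internalize: it establishes \emph{strong positivity}, namely that the basis $\redCanBasis$ of $\qClAlg$ (coming from simple perverse sheaves, via Theorems~\ref{thm:positiveSimples}, \ref{thm:injectiveHom} and \ref{thm:iso}) has positive structure constants \emph{and} contains every quantum cluster monomial (Theorem~\ref{thm:CC_formula}, Theorem~\ref{thm:genChar}, Proposition~\ref{prop:clusterMonomial}). Given any seed $t$ and a quantum cluster monomial $m$, the Laurent phenomenon writes
\[
m=\frac{\sum_{m_*} c_{m_*}\prod_i x_i(t)^{m_i}}{\prod_i x_i(t)^{d_i}},
\]
and since $m$ and each $x_i(t)$ are basis elements $\canCl(w)$, $\canCl(w_i)$, clearing denominators gives
\[
\sum_{m_*} c_{m_*}\,\canCl\Bigl(\sum_i m_iw_i\Bigr)
=\prod_i\canCl(w_i)^{d_i}\cdot \canCl(w),
\]
so each $c_{m_*}$ is a structure constant of a product of $\canCl$-basis elements, hence lies in $\N[q^{\pm\frac{1}{2}}]$. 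This is how positivity in \emph{every} seed is obtained, and it is a conceptually different step from the transition-matrix positivity (which only controls the initial acyclic seed) that your proposal invokes.
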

This conjecture has been proved for cluster algebras arising from
surfaces by Gregg Musiker, Ralf Schiffler, and Lauren Williams \cite{MusikerSchifflerWilliams11}, for cluster algebras containing a
bipartite seed by Nakajima \cite{Nakajima09}, and the quantized
version for quantum cluster algebras with respect to
an acyclic initial seed by \cite{Qin10}. Recently, Efimov obtained
further partial results on this conjecture for quantum cluster algebras containing an acyclic seed using
mixed Hodge modules, \cf \cite{Efimov11}.  After this article was
posted on Arxiv, Kyungyong Lee and Ralf Schiffler informed the authors about a combinatorial proof of this conjecture for skew-symmetric coefficient-free cluster algebras of rank 3, \cf \cite{LeeSchiffler12}.

\subsection{Strategy and main results}

In \cite{HernandezLeclerc09}, Hernandez and Leclerc propose monoidal categorification as a new
approach to Conjecture \ref{conj:positivity}: for a given cluster
algebra $\clAlg$, find a monoidal category
$\cC$ such that its Grothendieck ring $R$ is isomorphic to $\clAlg$ and the
preimages of the cluster monomials are equivalence classes of simple
objects. Nakajima observed in \cite{Nakajima09} that the Grothendieck ring could be
constructed geometrically, following his series of works
\cite{Nakajima01} \cite{Nakajima04} where he studied quantum
affine algebras via (graded) quiver varieties. As a consequence, he
gave a geometric construction of the cluster algebra associated
with a bipartite quiver in the spirit of monoidal categorification.

Inspired by the previous work of Nakajima \cite{Nakajima09}, we use geometry of certain graded quiver
varieties to construct a deformed Grothendieck ring, and show that it
is isomorphic to the acyclic quantum cluster algebra. This proof consists
of the following steps:

\begin{enumerate}
\item use a new family of graded quiver varieties to construct
  a Grothendieck ring with a new $t$-deformation, which is treated in
  detail in \cite{Qin11} (\cf \cite{QinThesis});
\item use the Fourier-Sato-Deligne transform to identify the $t$-analogue
  of $q$-characters ($qt$-characters for short) of
  certain ``simple modules'' inside the Grothendieck ring with the
  quantum cluster variables whose cluster expansions were obtained in \cite{Qin10};
\item prove that the above identification is an algebra isomorphism.
\end{enumerate}

The second step is crucial. We can no longer use Nakajima's previous
construction because our quiver is not bipartite. Instead, we interpret
the graded quiver varieties using quiver representation theory. This
allows us to use the Nakayama functor to construct the pair of dual
spaces to which we apply the Fourier-Sato-Deligne transform. This
conceptual interpretation allows us to simplify and generalize Nakajima's
previous work. 

As a corollary, Conjecture \ref{conj:positivity} is true for any quantum
cluster algebra containing an acyclic seed.

Next, we change the quantizations of the $t$-deformed Grothendieck
rings, the $qt$-characters, the ring of $qt$-characters, and the
acyclic quantum cluster
algebra. Notice that the standard modules and the simple modules induce a dual PBW
basis and a dual canonical basis of the quantum cluster algebras, \cf
\cite{Qin11} for a more general treatment. Following the idea of \cite{GeissLeclercSchroeer11}, we use
$T$-systems to show that the quantum cluster algebra $\rscQClAlg$ is
isomorphic to a certain quantum coordinate ring, by comparing the dual PBW
bases of both algebras. As a consequence, up to specific $q$-powers,
we could identify the dual canonical bases of both algebras. Via this identification, up to
specific $q$-powers, the quantum cluster monomials are contained
in the dual canonical basis of the quantum coordinate ring.

\subsection{Plan of the paper}

In Section \ref{sec:reminders}, we recall the definitions and some
properties of the ice
quiver with $z$-pattern, of the graded
quiver varieties, of the geometric constructions of deformed
Grothendieck rings, and of $t$-analogues of $q$-characters.

In Section
  \ref{sec:psedoModules}, we give a representation theoretic interpretation
  of graded quiver varieties and study the Fourier-Sato-Deligne
  transforms. We obtain the deformed monoidal categorification of
  an acyclic quantum cluster algebra and the positivity conjecture in
  this case (Theorem \ref{thm:iso} and Corollary \ref{cor:stronglyPositive}).

In section \ref{sec:unipotentSubgroup}, we recall the unipotent
quantum subgroup following \cite{Kimura10}. In Section~\ref{sec:GLS},
we recall the $T$-systems of quantum minors inside quantum coordinate rings.

In Section \ref{sec:twistedQTCharacters}, we introduce new quantizations
of the deformed Grothendieck ring, the ring of $qt$-characters and the quantum cluster algebras. Then in Section
\ref{sec:dualCanonicalBasis}, we show that, up to specific $q$-powers,
the quantum
cluster monomials of these algebras can be identified with certain elements in the dual
canonical basis of the corresponding quantum unipotent subgroups (Theorem \ref{thm:2nd}).

\section*{Acknowledgments}\label{sec:ack}
The authors would like to thank their thesis supervisors Hiraku
Nakajima and Bernhard Keller for encouragement and inspiring
discussions. They thank David Hernandez and Bernard Leclerc for explaining
their recent work \cite{HernandezLeclerc11}. The work of Y.K. was supported by Kyoto University Global Center Of
Excellence Program ``Fostering top leaders in mathematics''. The work of F.Q. was
supported by the CSC scholarship and the research network ANR GTAA.

\section{Preliminaries}\label{sec:reminders}

In this Section, we give the definitions and some basic properties
of ice quivers, quantum cluster algebras, graded quiver varieties,
deformed Grothendieck rings, and $t$-analogues of
$q$-characters. More details can be found in
\cite{BerensteinZelevinsky05} \cite{Nakajima01} \cite{Nakajima04} \cite{Nakajima09}, or in \cite{Qin10} \cite{Qin11}.

\subsection{Ice quivers with $z$-pattern}
\label{sec:frozenQuiver}
A quiver $Q$ is an oriented graph, which consists of a set of vertices
$I=\{ 1,\ldots,n\}$ and a set of arrows $\Omega$. For each arrow $h$,
denote its source by $s(h)$ and its target by $t(h)$. Associate to
$h$ a new arrow $\oh$ which points from $t(h)$ to $s(h)$. Denote the
set $\{\oh|h\in\Omega\}$ by $\oOmega$. Define $H$ to be the disjoint
union of $\Omega$ and $\oOmega$. The opposite quiver $Q\op$ of $Q$ consists of the
vertices in $I$ the arrows in $\oOmega$. Sometimes we also denote $I$ and $\Omega$ by
$Q_0$ and $Q_1$ respectively.

The quiver $Q$ is called acyclic if it contains
no oriented cycles. It is called bipartite if at any vertex $i\in I$,
either there are no incoming arrows or there are no outgoing arrows. 

\begin{Eg}
  The acyclic quiver $Q$ in Figure \ref{fig:acyclicQuiver} has vertices $1$, $2$,
  $3$. Its opposite quiver $Q\op$ is given by Figure \ref{fig:oppositeQuiver}.
\end{Eg}

\begin{figure}[htb!]
 \centering
\beginpgfgraphicnamed{fig:acyclicQuiver}
  \begin{tikzpicture}
    \node [shape=circle, draw] (v1) at (1,-3) {1}; \node
    [shape=circle, draw] (v2) at (3,-2) {2}; \node [shape=circle,
    draw] (v3) at (3,0) {3};

    \draw[-triangle 60] (v1) edge (v2); \draw[-triangle 60] (v2) edge
    (v3); \draw[-triangle 60] (v1) edge (v3);
  \end{tikzpicture}
\endpgfgraphicnamed
\caption{An acyclic quiver $Q$}
\label{fig:acyclicQuiver}
\end{figure}

\begin{figure}[htb!]
 \centering
\beginpgfgraphicnamed{fig:oppositeQuiver}
  \begin{tikzpicture}
    \node [shape=circle, draw] (v1) at (1,-3) {1}; \node
    [shape=circle, draw] (v2) at (3,-2) {2}; \node [shape=circle,
    draw] (v3) at (3,0) {3};

    \draw[-triangle 60] (v2) edge (v1); \draw[-triangle 60] (v3) edge
    (v2); \draw[-triangle 60] (v3) edge (v1);
  \end{tikzpicture}
\endpgfgraphicnamed
\caption{The quiver $Q\op$}
\label{fig:oppositeQuiver}
\end{figure}

Let $Q$ be a full subquiver of another quiver $\tQ$, which has
vertices $\{ 1,\ldots,m\}$ and set of arrows $\tOmega$. We say that $\tQ$
is an ice quiver with principal part $Q$ and \emph{coefficient type} (or \emph{frozen pattern}) $\tOmega-\Omega$. The vertices $n+1,\ldots,m$ are called \emph{frozen vertices}.

\begin{Eg}
  Figure \ref{fig:levelOneQuiver} is an example of an ice quiver with
  $m=6$, whose principal part is the quiver in Figure \ref{fig:acyclicQuiver}.
\end{Eg}

\begin{figure}[htb!]
 \centering
\beginpgfgraphicnamed{fig:levelOneQuiver}
  \begin{tikzpicture}
    \node [shape=circle, draw] (v1) at (1,-3) {1}; 
    \node  [shape=circle, draw] (v2) at (3,-2) {2}; 
    \node [shape=circle,  draw] (v3) at (3,0) {3};

\node [shape=circle, draw] (v4) at (-4,-3) {4}; 
    \node  [shape=circle, draw] (v5) at (-2,-2) {5}; 
    \node [shape=circle,  draw] (v6) at (-2,0) {6};

    \draw[-triangle 60] (v1) edge (v2); 
    \draw[-triangle 60] (v2) edge (v3); 
    \draw[-triangle 60] (v1) edge (v3);
    
    \draw[-triangle 60] (v5) edge (v1); 
    \draw[-triangle 60] (v6) edge (v1); 
    \draw[-triangle 60] (v6) edge (v2); 
    
    \draw[-triangle 60] (v1) edge (v4); 
    \draw[-triangle 60] (v2) edge (v5); 
    \draw[-triangle 60] (v3) edge (v6); 
       
  \end{tikzpicture}
\endpgfgraphicnamed
\caption{An ice quiver $\tQ^z_1$ of level $1$ with $z$-pattern}
\label{fig:levelOneQuiver}
\end{figure}

We associate to $\tQ$ an $m\times n$ matrix\footnote{Notice that this convention is opposite to that of \cite{Nakajima09}.} 
$\tB=(b_{ij})$ such that its entry in the position $(i,j)$ is
\begin{align*}
  b_{ij}=\sharp\{\text{arrows from $i$ to $j$}\}-\sharp\{\text{arrows from $j$
to $i$}\}.
\end{align*}
If further a compatible pair $(\Lambda,\tB)$ is given, we can construct
the associated quantum cluster algebra $\qClAlg$ following Section \ref{sec:quantumClusterAlgebras}. 

\begin{Eg}
The matrix $B=B_Q$ associated with the quiver $Q$ in Figure
\ref{fig:acyclicQuiver} is
  \begin{align*}
    \begin{pmatrix}
      0&1&1\\
      -1&0&1\\
      -1&-1&0
    \end{pmatrix}.
  \end{align*}

  The matrix $\tB$ associated to the ice quiver $\tQ$ in Figure
  \ref{fig:levelOneQuiver} is
  \begin{align*}
    \begin{pmatrix}
      0&1&1\\
      -1&0&1\\
      -1&-1&0\\
      -1&0&0\\
      1&-1&0\\
      1&1&-1
    \end{pmatrix}.
  \end{align*}
The matrix $B_{\tQ}$ is invertible. Thus we have a canonical choice of
$\Lambda$ given by
  \begin{align*}
\Lambda=-B_{\tQ}^{-1}=    \begin{pmatrix}
      0&0&0&1&0&0\\
      0&0&0&1&1&0\\
      0&0&0&2&1&1\\
      -1&-1&-2&0&-1&-2\\
      0&-1&-1&1&0&-1\\
      0&0&-1&2&1&0
    \end{pmatrix}.
  \end{align*}
\end{Eg}

Let $l$ be a non-negative integer and $Q$ an acyclic quiver. Denote by $\mod \C Q$ the category of
finite dimensional right $\C Q$-modules, or equivalently
representations of the opposite quiver $Q\op$. The indecomposable projectives are
denoted by $P_i$. The bounded derived category $D^b(\mod\C Q)$ has an
Auslander-Reiten quiver, from which we extract the full subquiver
supported on the vertices $\tau^d P_i$, $i\in I$, $1\leq d\leq l+1$, and
delete the arrows among the vertices $\tau^{l+1}P_i$, $i\in I$. The
resulting ice quiver $\tQ$ is called a \emph{level $l$ ice quiver with $z$-pattern},
where the vertices corresponding to $\tau^{l+1}P_i$, $i\in I$,
are chosen to be frozen. In this case we also denote it by
$\tQ^z_l$. The associated $(l+1)n\times ln$-matrix $\tB$ is denoted by
$\tB^z$ or $\tB^z_l$.

\begin{Eg}
The quiver in Figure \ref{fig:levelOneQuiver} is a level $1$ ice
quiver with $z$-pattern. The quiver in Figure \ref{fig:levelTwoQuiver} is a level $2$ ice
quiver with $z$-pattern.
\end{Eg}

\begin{figure}[htb!]
 \centering
\beginpgfgraphicnamed{fig:levelTwoQuiver}

  \begin{tikzpicture}
    \node [shape=circle, draw] (v1) at (2,-3) {1}; 
    \node  [shape=circle, draw] (v2) at (3,-2) {2}; 
    \node [shape=circle,  draw] (v3) at (3,0) {3};

\node [shape=circle, draw] (v4) at (-1,-3) {4}; 
    \node  [shape=circle, draw] (v5) at (0,-2) {5}; 
    \node [shape=circle,  draw] (v6) at (0,0) {6};

\node [shape=circle, draw] (v7) at (-4,-3) {7}; 
    \node  [shape=circle, draw] (v8) at (-3,-2) {8}; 
    \node [shape=circle,  draw] (v9) at (-3,0) {9};

    \draw[-triangle 60] (v1) edge (v2); 
    \draw[-triangle 60] (v2) edge (v3); 
    \draw[-triangle 60] (v1) edge (v3);
    
    \draw[-triangle 60] (v5) edge (v1); 
    \draw[-triangle 60] (v6) edge (v1); 
    \draw[-triangle 60] (v6) edge (v2); 
    
    \draw[-triangle 60] (v1) edge (v4); 
    \draw[-triangle 60] (v2) edge (v5); 
    \draw[-triangle 60] (v3) edge (v6);

    \draw[-triangle 60] (v4) edge (v5); 
    \draw[-triangle 60] (v4) edge (v6); 
    \draw[-triangle 60] (v5) edge (v6);
    
    \draw[-triangle 60] (v4) edge (v7); 
    \draw[-triangle 60] (v5) edge (v8); 
    \draw[-triangle 60] (v6) edge (v9); 
    
    \draw[-triangle 60] (v8) edge (v4); 
    \draw[-triangle 60] (v9) edge (v5); 
    \draw[-triangle 60] (v9) edge (v4);

  \end{tikzpicture}
\endpgfgraphicnamed
\caption{An ice quiver $\tQ^z_2$ of level $2$ with $z$-pattern}
\label{fig:levelTwoQuiver}
\end{figure}

\subsection{Quantum cluster algebras}

\label{sec:quantumClusterAlgebras}

We refer the reader to \cite{Qin10} for detailed definitions and important properties.
		
Following \cite{BerensteinZelevinsky05}, we define (generalized) quantum cluster algebras over
$(R,v)$, where $R$ is an integral domain and $v$ an invertible element
in $R$. In the present paper we shall only be
interested in the case where $(R,v)=(\Z[v^\pm],v)$ for a formal
parameter $v$. We also denote $v^2$ by $q$ and $v$ by $q^\Hf$.

%\subsection{base quantum torus}
%\label{sec:baseQuantumTorus}

Let $m\geq n$ be two positive integers. Let $\Lambda$ be an $m\times m$ skew-symmetric integer matrix and $\tB$ an $m\times n$ integer matrix. The upper $n\times n$ submatrix of $\tB$, denoted by $B$, is called the \emph{principal part} of $\tB$.
\begin{Def}[Compatible pair]\label{def:compatible}
The pair $(\Lambda, \tB)$ is called \emph{compatible} if we have
\begin{align}\label{eq:BZ_compatible}
\Lambda(-\tB)=\begin{bmatrix}D\\0 \end{bmatrix}
\end{align}
for some $n\times n$ diagonal matrix $D$ whose diagonal entries are
strictly positive integers. It is called a \emph{unitally compatible
  pair} if moreover $D$ is the identity matrix $\id_n$.
\end{Def}

Let $(\Lambda,\tB)$ be a compatible pair. The component $\Lambda$ is called the \emph{$\Lambda$-matrix} of $(\Lambda,\tB)$, and the component $\tB$ the \emph{$B$-matrix} of $(\Lambda,\tB)$.

\begin{Prop}\cite[Proposition 3.3]{BerensteinZelevinsky05}
The $B$-matrix $\tB$ has full rank $n$, and the product $D B$ is skew-symmetric.
\end{Prop}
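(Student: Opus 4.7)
The plan is to derive both conclusions directly from the defining identity $\Lambda(-\tB)=\begin{bmatrix}D\\0\end{bmatrix}$, exploiting the fact that $D$ is invertible (positive diagonal entries) and $\Lambda$ is skew-symmetric.

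For the rank statement, I would read off the identity as saying that the product $\Lambda\tB$ has top $n\times n$ block equal to $-D$, which is invertible of rank $n$. Hence $\Lambda\tB$ has rank at least $n$, and so $\tB$ has rank at least $n$; since $\tB$ is $m\times n$, its rank is exactly $n$. No further work is needed for this part.

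For skew-symmetry of $DB$, the key idea is to compute the $n\times n$ matrix $\tB^T\Lambda\tB$ in two ways. On one hand, using $\Lambda^T=-\Lambda$, one has
\begin{equation*}
(\tB^T\Lambda\tB)^T=\tB^T\Lambda^T\tB=-\tB^T\Lambda\tB,
\end{equation*}
so this matrix is skew-symmetric automatically. On the other hand, plugging in the compatibility identity gives
\begin{equation*}
\tB^T\Lambda\tB=-\tB^T\begin{bmatrix}D\\0\end{bmatrix}=-B^TD,
\end{equation*}
where I have split $\tB$ into its principal part $B$ and its lower block $B'$ so that $\tB^T=[B^T\ (B')^T]$ annihilates the lower zero block. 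Combining these and using $D^T=D$, skew-symmetry of $-B^TD$ yields $-DB=B^TD$, i.e. $(DB)^T=B^TD=-DB$, which is precisely the claim.

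There is no serious obstacle here: the entire argument is one block computation. The only thing to be mindful of is the sign and ordering conventions around the compatibility equation (the negative sign on $\tB$ and the fact that $B$ sits in the top block of $\tB$), since Berenstein–Zelevinsky and Nakajima-style conventions differ; the proof above uses the sign convention stated in Definition~\ref{def:compatible} of this excerpt.
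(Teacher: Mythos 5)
Your argument is correct in both parts: the rank claim follows because the top $n\times n$ block of $\Lambda\tB$ is $-D$, invertible, and the skew-symmetry of $DB$ follows from the clean two-way computation of $\tB^T\Lambda\tB$, using $\Lambda^T=-\Lambda$ on one side and the block identity $\tB^T\begin{bmatrix}D\\0\end{bmatrix}=B^TD$ on the other. The paper does not reprove this, citing \cite[Proposition 3.3]{BerensteinZelevinsky05}, and your argument is essentially the one given there, so there is nothing to correct.
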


We write $\Lambda(g,h)$ for $g^T \Lambda h$, $g,h\in \Z^m$, where $(\ )^T$ means
taking the matrix transposition.

\begin{Def}[Quantum torus]\label{def:quantum_torus}
The \emph{quantum torus }$\cT=\cT(\Lambda)$ over $(R,v)$ is the Laurent polynomial ring
$\vBase[x_1^\pm,\ldots,x_m^\pm]$, endowed with the twisted product $*$
\st we have
\[
x^{g}*x^{h}=v^{\Lambda(g,h)}x^{g+h}
\]
for any $g$ and $h$ in $\Z^m$. Here for any $g=(g_i)_{1\leq i\leq
  n}\in\Z^m$, $x^g$ denotes the monomial $\prod_{1\leq i\leq m} X_i^{g_i}$.
\end{Def}
We denote the usual product in $\cT$ by $\cdot$, and often omit this
notation. 

 Assume that $R$ is endowed with an involution sending each element
 $r$ to $\overline{r}$, \st $\overline{v}$ equals $v^{-1}$. We extend the
 involution of $R$ to an involution (anti-automorphism) of $\cT$ by defining
 $\overline{x^g}=x^g$ for any $g\in \Z^m$.

A \emph{sign} $\epsilon$ is an element in $\{-1,+1\}$. Denote by $b_{ij}$ the entry in position $(i,j)$ of $\tB$. For any $1\leq k\leq n$ and any sign $\epsilon$, we associate to $\tB$ an $m\times m$ matrix $E_\epsilon$ whose entry in position $(i,j)$ is 
\begin{align}\label{eq:E_epsilon}
e_{ij}=\left\{
\begin{array}{ll}
\delta_{ij} & \textrm{if $j\neq k$}\\
-1 & \textrm{if $i=j=k$}\\
max(0,-\epsilon b_{ik}) & \textrm{if $i\neq k, j=k$},
\end{array} \right.
\end{align}
and an $n\times n$ matrix $F_\epsilon$ whose entry in position $(i,j)$ is
\[
f_{ij}=\left\{
\begin{array}{ll}
\delta_{ij} & \textrm{if $i\neq k$}\\
-1 & \textrm{if $i=j=k$}\\
max(0,\epsilon b_{kj} ) & \textrm{if $i=k$, $j\neq k$}.
\end{array} \right.
\]

Fix a compatible pair $(\Lambda,\tB)$ and the quantum torus
$\cT=\cT(\Z^m,\Lambda)$. Notice that the quantum torus $\cT=\cT(+,*)$ is contained in
its \emph{skew-field of fractions}, which is denoted by $\cF$, \cf \cite[Appendix]{BerensteinZelevinsky05}.

In the following, we consider triples $(\Lambda',\tB',x')$ \st $(\Lambda',\tB')$ is a
compatible pair and $x'=(x'_1,\cdots,x'_m)$ is an $m$-tuple of
elements in the quantum torus $\cF$.

Let $\sT_n$ be an $n$-regular tree with root $t_0$. There is a
unique way of associating a triple $(\Lambda(t),\tB(t),x(t))$ with each vertex $t$ of $\sT_n$ \st we have
\begin{enumerate}
\item $(\Lambda(t_0),\tB(t_0),x(t_0))=(\Lambda,\tB,x)$, where
  $x=(x_1,\cdots,x_m)$, and
\item if two vertices $t$ and $t'$ are linked by an edge labeled $k$,
  then the seed % \linebreak[4]
  $(\Lambda(t'),\tB(t'),X(t'))$ is obtained from
  $(\Lambda(t),\tB(t),X(t))$ by the mutation at $k$ defined as below.
\end{enumerate}

\begin{Def}[Mutation \cite{BerensteinZelevinsky05}]\label{def:seed_mutation}
Given any sign $\epsilon$, the new triple \linebreak[4]$(\Lambda(t'),\tB(t'),x(t'))$ obtained from $(\Lambda(t),\tB(t),x(t))$ by the mutation
at $k$ is given by
\begin{align}
(\Lambda(t'),\tB(t'))=(E_\epsilon(t) ^T \Lambda(t)  E_\epsilon(t) , E_\epsilon(t)  \tB(t)  F_\epsilon(t) ),
\end{align}
and 
\begin{align}
\begin{split}x_k(t)* x_k(t')=&v^{\Lambda(t)(e_k,\sum_{1\leq i\leq m}[b_{ik}(t)]_{+} e_i)}\prod_{1\leq i\leq m}x_i(t)^{[b_{ik}]_{+}(t)}\\
&+v^{\Lambda(t)(e_k,\sum_{1\leq j\leq m}[-b_{jk}(t)]_{+} e_j)}\prod_{1\leq j\leq m}x_j(t)^{[-b_{jk}]_{+}(t)},\end{split}\\
x_i(t')=&x_i(t) ,\quad 1\leq i\leq m,\quad i\neq k.
\end{align}
\end{Def}
Notice that here $[\ ]_+$ denotes the function $\max\{0,\ \}$.
We recall from \cite{BerensteinZelevinsky05} that $\mu_k$ is an
involution, and is independent of the choice of
$\epsilon$.

The triples
$(\Lambda(t),\tB(t),x(t))$,
$t\in\sT_n$, are called the (quantum) seeds. The elements $x_i(t)$, $1\leq i\leq m$, $t\in\sT_n$, are called the $x$-variables. The
$x$-variables $x_i(t)$, $1\leq i\leq n$, $t\in\sT_n$, are called the
\emph{quantum cluster variables}. For each $t\in \sT_n$, each monomial
in the $x_i(t)$, $1\leq i\leq n$, is called a \emph{quantum cluster monomial}. Notice that for $j>n$, the $x$-variables $x_{j}(t)$ do not depend on $t$.

\begin{Def}[Quantum cluster algebra]
The \emph{quantum cluster algebra} $\qClAlg=\qClAlg(+,*)$ over $(R,v)$ is the
$R$-subalgebra of $\cF$ generated by the quantum cluster variables
$x_i(t)$ for all the vertices $t$ of $\sT_n$ and $1\leq i\leq n$, and
the elements $x_{j}$ and $x_{j}^{-1}$ for all $j>n$.
\end{Def}

The specialization $\clAlg=\qClAlg|_{v\mapsto 1}$ is called the \emph{classical
cluster algebra}, which is also denoted by $\cA^\Z$. If $R=\Z[v^\pm]$, we say that $\qClAlg$ and $\clAlg$ are \emph{integral}. 

\begin{Thm}[Quantum Laurent phenomenon]\cite[Section 5]{BerensteinZelevinsky05}\label{thm:Laurent}
The quantum cluster algebra $\qClAlg$ is a subalgebra of $\torus$.
\end{Thm}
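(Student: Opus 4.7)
The plan is to follow the strategy of Berenstein and Zelevinsky, proving the Laurent phenomenon by induction on the distance $d(t_0,t)$ in the tree $\sT_n$ and reducing the inductive step to a finite local check via a quantum analogue of the Fomin--Zelevinsky Caterpillar Lemma. The goal in each step is to rewrite a given $x_\ell(t')$ as an honest element of $\torus=\cT(\Lambda(t_0))$, not merely as a rational function in the skew-field $\cF$.

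For the base case $t=t_0$ every $x_i(t_0)$ is a generator of $\torus$ by construction. If $t$ is at distance $1$ along the edge labelled $k$, the mutation formula of \defref{def:seed_mutation} expresses $x_k(t)$ as $x_k(t_0)^{-1}$ times a binomial in the remaining $x_i(t_0)$, which lies in $\torus$; the other $x_i(t)=x_i(t_0)$ are already in $\torus$. For the inductive step, assume every $x_i(s)$ is a Laurent polynomial in $x(t_0)=(x_1,\ldots,x_m)$ over $\Z[v^{\pm}]$ whenever $d(t_0,s)\le N$, and let $t'$ be reached from $t$ (at distance $N$) by mutation at some index $\ell$. Substituting the inductive hypothesis into the exchange relation produces a rational expression for $x_\ell(t')$ in $x(t_0)$ whose apparent denominator is a product of the cluster variables mutated along the path $t_0\rightsquigarrow t$. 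Following the standard strategy, I would reduce the infinite global cancellation problem to a finite list of local configurations, namely the ``caterpillar'' patterns $t_0\xrightarrow{k}t_1\xrightarrow{\ell}t_2\xrightarrow{k}t_3$ and $t_0\xrightarrow{k}t_1\xrightarrow{\ell}t_2\xrightarrow{\ell}t_3$ (the case $\ell=k$ being trivial since mutation is involutive), and for each such pattern verify directly that the resulting element of $\cF$ actually lies in $\torus$.

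The hard part will be the \emph{quantum} aspect of the Caterpillar step: unlike in the commutative setting, divisibility of a twisted polynomial by a binomial is delicate, since left and right division differ and the two monomials appearing in an exchange relation in general do not commute with the variable being cleared. The crucial input here is the compatibility condition $\Lambda(-\tB)=\bigl[\begin{smallmatrix}D\\ 0\end{smallmatrix}\bigr]$ from \defref{def:compatible}: it forces the two monomial terms $v^{\alpha_{+}}\prod x_i(t)^{[b_{i\ell}]_{+}}$ and $v^{\alpha_{-}}\prod x_j(t)^{[-b_{j\ell}]_{+}}$ to quasi-commute in a controlled way both with each other and with $x_\ell(t)$, so that factors can be reordered up to explicit powers of $v$ and a $q$-binomial identity then clears the would-be denominator cleanly. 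Once each quantum cluster variable $x_i(t)$ is shown to lie in $\torus$, the remaining frozen generators $x_j^{\pm 1}$ for $j>n$ are tautologically in $\torus$, and closure under the twisted product gives $\qClAlg\subseteq\torus$.
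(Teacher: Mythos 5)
The theorem you are proving is stated in the paper only as a citation to \cite[Section~5]{BerensteinZelevinsky05}; the paper supplies no proof of its own, so there is no ``paper's proof'' to compare with. It is worth noting, though, that your sketch does not reproduce the proof in the cited source. Berenstein and Zelevinsky do not quantize the Fomin--Zelevinsky caterpillar argument; they instead introduce quantum \emph{upper bounds} $\cU(\Sigma)=\cT_{\Sigma}\cap\cT_{\mu_1\Sigma}\cap\cdots\cap\cT_{\mu_n\Sigma}$ and prove that $\cU(\Sigma)$ is invariant under mutation, from which the Laurent phenomenon falls out immediately because every quantum cluster variable lies, by construction, in the upper bound attached to its own seed. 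The non-commutative divisibility issues you flag are handled there once and for all by working inside intersections of quantum tori, rather than by clearing denominators mutation by mutation.

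As a free-standing argument, your proposal has a real gap precisely at the point you yourself call ``the hard part.'' You reduce the claim to finite caterpillar configurations and then assert that the compatibility condition $\Lambda(-\tB)=\bigl[\begin{smallmatrix}D\\0\end{smallmatrix}\bigr]$ together with a $q$-binomial identity will clear the denominator. But the compatibility condition only controls the quasi-commutation of the cluster variable being mutated with the two monomials in \emph{its own} exchange relation; it does not by itself give you the analogue of the coprimality/GCD computation that powers the classical caterpillar lemma, where one must show that two successive exchange polynomials are coprime as elements of a Laurent polynomial ring. In the quantum torus there is no commutative GCD theory to appeal to, left and right factorizations differ, and the exchange binomials at $t_1$ and $t_2$ in your pattern $t_0\xrightarrow{k}t_1\xrightarrow{\ell}t_2\xrightarrow{k}t_3$ are built from \emph{different} seeds, so the compatibility identity at $t_0$ does not directly control their interaction. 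Until that local verification is actually written out --- identifying the correct notion of quantum coprimality and proving it for the relevant binomials --- the inductive step does not close. This is exactly the obstruction Berenstein--Zelevinsky sidestep by passing to upper bounds, and if you want to pursue the caterpillar route you would need to supply this missing lemma rather than gesture at it.
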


Similarly, the cluster algebra $\clAlg=\cA^\Z$ is a subalgebra of the
Laurent polynomial ring $\torus|_{v\mapsto 1}=\torus^\Z$.

\subsection{Cluster category and quantum cluster variables} \label{sec:cluster_category}

Let $\tQ$ and $\tB$ be given as in Section \ref{sec:frozenQuiver}. We associate to $\tB$ the cluster algebra $\cA^\Z$ as in \cite{FominZelevinsky07}. Let
the base field $k$ be the complex field $\C$. Let $\tW$ be a generic potential on $\tQ$ in the sense of \cite{DerksenWeymanZelevinsky08}. As in \cite{KellerYang09}, with the quiver with potential $(\tQ,\tW)$ we can associate the Ginzburg algebra $\Gamma=\Gamma(\tQ,\tW)$. Denote the perfect derived category of $\Gamma$ by $\per\Gamma$ and denote the full subcategory of $\per\Gamma$ whose objects are dg modules with finite dimensional homology by $\Dfd\Gamma$. The \emph{generalized cluster category} $\cC=\cC_{(\tQ,\tW)}$ in the sense of \cite{Amiot09} is the quotient category
\[
\cC=\per\Gamma/\Dfd\Gamma.
\]
Denote the quotient functor by $\pi:\per\Gamma\ra \cC$ and define
\begin{align*}
T_i&=\pi(e_i\Gamma),\quad 1\leq i\leq m,\\
T&=\oplus_{1\leq i\leq m} T_i.
\end{align*}
It is shown in \cite{Plamondon10c} that the endomorphism algebra of $T$ is isomorphic to $H^0\Gamma$.

For any triangulated category $\cU$ and any rigid object $X$ of $\cU$, we define the subcategory $\pr_\cU (X)$ of $\cU$ to be the full subcategory consisting of the objects $M$ \st there exists a triangle in $\cU$
\[
M_1\ra M_0\ra M\ra \Sigma M_1,
\]
for some $M_1$ and $M_0$ in $\add X$. The \emph{presentable cluster category} $\cD\subset \cC$ is defined as the full subcategory consisting of the objects $M$ \st
\[
M\in\pr_\cC (T)\cap\pr_\cC (\Sigma^{-1}T)\quad\text{and}\quad\dim\Ext^1_\cC(T,M)<\infty,
\]
\cf \cite{Plamondon10a}.

We refer to \cite{Plamondon10a} for the definition of the iterated mutations of the object $T$. There is a unique way of associating an object $T(t)=\oplus_{1\leq i\leq m} T_i(t)$ of $\cD$ with each vertex $t$ of $\sT_n$ \st we have
\begin{enumerate}
\item $T(t_0)=T$, and
\item if two vertices $t$ and $t'$ are linked by an edge labeled $k$, then the object $T(t')$ is obtained from $T(t)$ by the mutation at $k$. 
\end{enumerate}

Let $\cF\subset\per\Gamma$ denote the full subcategory $\pr_{\per\Gamma}(\Gamma)$. The quotient functor $\pi:\per\Gamma\ra \cC$ induces an equivalence $\cF\iso\pr_\cC(T)$. Denote by $\pi^{-1}$ the inverse equivalence. For an object $M\in \pr_{\cC}(T)$, we define its \emph{index }$\ind_T M$ as the class $[\pi^{-1}M]$ in $\Kz(\per\Gamma)$. 

\begin{Thm}\cite{Plamondon10a}\label{thm:g_vector_coordinate}
(1) For any vertex $t$ of $\sT_n$, the classes $[\ind_T T_i(t)]$ form a basis of $\Kz(\per\Gamma)$. 

(2) For a class $[P]$ in $\Kz(\per\Gamma)$, let $[[P]:T_i(t)]$ denote
its $i$th coordinate in this basis. Then we have $([\ind_T
T_i(t):T_j])_{1\leq j\leq m}=\tg_i(t)$, where $\tg_i(t)$ is the $i$th
extended $g$-vector associated with $t$, \cf \cite{FominZelevinsky07}.
\end{Thm}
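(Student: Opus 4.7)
The natural strategy is induction on the distance in the regular tree $\sT_n$ from the root $t_0$, proving (1) and (2) simultaneously. At the base case $t=t_0$, we have $T_i(t_0)=T_i=\pi(e_i\Gamma)$, so by the equivalence $\pi:\cF\iso\pr_\cC(T)$ we obtain $\pi^{-1}T_i(t_0)=e_i\Gamma$, and hence $\ind_T T_i(t_0)=[e_i\Gamma]$. Since $\Kz(\per\Gamma)$ is freely generated by the classes $[e_i\Gamma]$, $1\le i\le m$, statement (1) holds at $t_0$, and the coordinate vector $([\ind_T T_i(t_0):T_j])_j$ is $e_i$, which matches $\tg_i(t_0)=e_i$ by convention.

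For the inductive step, suppose $t$ and $t'$ are linked by an edge labeled $k$. The mutation of $T(t)$ at $k$ is defined (following Plamondon) by a pair of exchange triangles in $\cC$,
\begin{equation*}
T_k(t)\ra \bigoplus_{i}T_i(t)^{[b_{ik}(t)]_+}\ra T_k(t')\ra \Sigma T_k(t),
\end{equation*}
together with the dual triangle using $[-b_{ik}(t)]_+$; and $T_i(t')=T_i(t)$ for $i\ne k$. Because $\pi$ restricts to an equivalence $\cF\iso\pr_\cC(T(t))$ (the crucial input from \cite{Plamondon10a}), one can lift the first triangle to $\per\Gamma$ and read off
\begin{equation*}
[\pi^{-1}T_k(t')]\;=\;-[\pi^{-1}T_k(t)]+\sum_{i}[b_{ik}(t)]_+\,[\pi^{-1}T_i(t)]
\end{equation*}
in $\Kz(\per\Gamma)$. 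Expanding the basis $\{[\ind_T T_i(t)]\}$ in terms of $\{[\ind_T T_j(t')]\}$, this is exactly the transition matrix $E_{\epsilon}(t)$ from Definition~\ref{def:seed_mutation} with a suitable choice of sign. Since $E_\epsilon$ is unimodular, the new family $\{[\ind_T T_i(t')]\}$ is still a $\Z$-basis of $\Kz(\per\Gamma)$, which gives (1) at $t'$.

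For (2), the same computation shows that the coordinate vectors $([\ind_T T_i(t'):T_j])_j$ are obtained from $([\ind_T T_i(t):T_j])_j$ by precisely the same linear transformation (and piecewise-linear rule at index $k$) as in the Fomin-Zelevinsky mutation rule for extended $g$-vectors, \cf \cite{FominZelevinsky07}. By induction both recursions agree, so $([\ind_T T_i(t):T_j])_{1\le j\le m}=\tg_i(t)$ for every $t$.

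The main technical obstacle is matching the sign conventions and showing that the two exchange triangles, which a priori give two different expressions for $[\pi^{-1}T_k(t')]$, yield the same answer in $\Kz(\per\Gamma)$ and coincide with the tropical/piecewise-linear mutation of $g$-vectors. This uses the fact that the difference of the two approximation sums lies in $\Dfd\Gamma\cap\cF$, together with a careful bookkeeping of the coefficient rows of $\tB$ to recover the full \emph{extended} $g$-vector (the $j>n$ components), rather than only the principal part.
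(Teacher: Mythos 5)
The paper does not contain a proof of this statement; it is cited directly from \cite{Plamondon10a}. So what follows is an assessment of your sketch on its own terms.

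Your overall strategy — induction along $\sT_n$, comparing the recursion for indices under mutation of the cluster-tilting object with the Fomin--Zelevinsky $g$-vector recursion — is indeed the structure of Plamondon's argument, and the base case is handled correctly. But the key inductive step is asserted rather than proved, and the assertion as written is actually false in a way that matters. You claim the exchange triangle lifts to $\per\Gamma$ and yields the \emph{linear} formula $[\pi^{-1}T_k(t')]=-[\pi^{-1}T_k(t)]+\sum_i [b_{ik}(t)]_+[\pi^{-1}T_i(t)]$ in $\Kz(\per\Gamma)$. If the second exchange triangle also lifted, the same reasoning would give $[\pi^{-1}T_k(t')]=-[\pi^{-1}T_k(t)]+\sum_i [-b_{ik}(t)]_+[\pi^{-1}T_i(t)]$, and subtracting shows $\sum_i b_{ik}(t)[\pi^{-1}T_i(t)]=0$; by the inductive hypothesis these classes are a basis, so this would force the $k$-th column of $\tB(t)$ to vanish. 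Hence at most one of the two exchange triangles can lift to a triangle with all terms in $\cF$, and deciding \emph{which} one — equivalently, which sign $\epsilon$ occurs — is not a matter of bookkeeping; it is the entire content. It is here that Dehy--Keller's correction formula for the non-additivity of indices and a sign-coherence argument enter. Your paragraph "the main technical obstacle is..." correctly names this, but it names the theorem rather than proving it.

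Two further inaccuracies. The equivalence provided by the quotient functor is $\cF\iso\pr_\cC(T)$ for the \emph{fixed} initial $T$, not $\pr_\cC(T(t))$; the index $\ind_T$ is always computed against the basis $[e_j\Gamma]$, so this distinction is load-bearing — membership of each $T_i(t)$ in $\pr_\cC(T)$ is itself something to be established along the induction, and a triangle whose terms lie in $\pr_\cC(T)$ need not lift to a triangle inside $\cF$. And for part (2), matching with $\tg_i(t)$ requires verifying that the chosen sign $\epsilon$ at each step agrees with the sign prescribed by the tropicalized $g$-vector recursion in \cite{FominZelevinsky07} (which is governed by sign-coherence of $c$-vectors); a generic "the recursions agree by induction" does not address this.
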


\begin{Def}[Coefficient-free objects]\label{def:CoefFree}
An object $M$ in $\cC$ is called \emph{coefficient-free} if
\begin{enumerate}
\item  the object $M$ does not contain a direct summand $T_i$, $i>n$, and
\item  the space $\Ext^1_{\cC}(T_i,M)$ vanishes for $i>n$.
\end{enumerate}
\end{Def}
For a coefficient-free object $M\in\cC$, the space $\Ext^1_{\cC}(T,M)$ is a right $H^0\Gamma$-module whose support is concentrated on $Q$. Thus, it can be viewed as a $\cP(Q,W)$-module, where $W$ is the potential on $Q$ obtained from $\tW$ by deleting all cycles through vertices $j>n$ and $\cP(Q,W)$ is the Jacobi algebra of $(Q,W)$. Denote by $\phi:\Kz(\mod \cP(Q,W))\ra \Kz(\per\Gamma)$ the map induced by the composition of inclusions $\mod\cP(Q,W)\ra \Dfd\Gamma \ra \per\Gamma$. For any vertex $i$ of $\tQ$, it sends $[S_i]$ to
\begin{align}\label{eq:phi}
\sum_{\mathrm{arrows}\, i\ra j}[e_j\Gamma]-\sum_{\mathrm{arrows}\, l\ra i}[e_l\Gamma],
\end{align}
as one easily checks using the minimal cofibrant resolution of the
simple dg $\Gamma$-module $S_i$, \cf \cite{KellerYang09}. Thus, the
matrix of $\phi$ in the natural bases is $-\tB$.

By the \emph{twisted Poincar\'{e}
polynomial} of a topological space $Z$, we mean the polynomial
$p_t(Z)=\sum_p (-1)^p\dim H^p(Z,\Q)$. When $Q$ is acyclic, we have the following construction.
\begin{Def}[Quantum CC-formula, {\cite{Qin10}}]\label{def:CC_formula}
For any coefficient-free and rigid object $M\in\cD$, we denote by $m$ the class of $\Ext^1_{\cC}(T,M)$ in $\Kz(\mod kQ)$, and associate to $M$ the following element in $\cT^\Z$:
\begin{align}\label{eq:CC_formula}
x_M=\sum_e p_{q^{\Hf}}(\Gr_e (\Ext^1_{\cC}(T,M))) q^{-\Hf\dim \Gr_e (\Ext^1_{\cC}(T,M))}x^{\ind_T(M)-\phi( e)},
\end{align}
where $p_{q^{\Hf}}(\ )$ denotes the twisted Poincar\'{e}
polynomial, and $\Gr_e (\Ext^1_{\cC}(T,M))$ is the submodule Grassmannian of $\Ext^1_{\cC}(T,M)$ whose $\C$-points are the submodules of the class $e$ in $\Kz(\mod\cP(Q,W))$.
\end{Def}

The following theorem is the main result of \cite{Qin10}.
\begin{Thm}[{\cite{Qin10}}] \label{thm:CC_formula}
Assume that the quiver $Q$ is acyclic. For any vertex $t$ of $\sT_n$ and any $1\leq i\leq n$, we have
\[
x_{T_i(t)}=x_i(t).
\]
Moreover, the map taking an object $M$ to $x_M$ induces
a bijection from the set of isomorphism classes of coefficient-free
rigid objects of $\cD$ to the set of quantum cluster monomials of $\qClAlg$.
\end{Thm}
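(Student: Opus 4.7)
The plan is to proceed by induction on the distance in $\sT_n$ from the initial vertex $t_0$. For the base case $t=t_0$ and $M=T_i$, rigidity of $T$ gives $\Ext^1_{\cC}(T,T_i)=0$, so the sum in (\ref{eq:CC_formula}) collapses to $x^{\ind_T T_i}=x^{e_i}=x_i$, matching $x_i(t_0)$. For the inductive step, assume $x_{T_j(t)}=x_j(t)$ for all $j$ at a vertex $t$ and let $t'=\mu_k(t)$; it then suffices to verify $x_{T_k(t')}=x_k(t')$, which by the mutation formulas of Definition \ref{def:seed_mutation} reduces to establishing a quantum exchange relation for $x_{T_k(t)}*x_{T_k(t')}$.

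The key identity to prove is
\[
x_{T_k(t)}*x_{T_k(t')}=v^{\alpha_+}\prod_i x_{T_i(t)}^{[b_{ik}(t)]_+}+v^{\alpha_-}\prod_j x_{T_j(t)}^{[-b_{jk}(t)]_+}
\]
in $\cT$, with the precise exponents $\alpha_\pm$ dictated by the quantum mutation rule. The relevant geometry is provided by the two exchange triangles in $\cD$,
\[
T_k(t')\to E\to T_k(t)\to\Sigma T_k(t'),\qquad T_k(t)\to E'\to T_k(t')\to\Sigma T_k(t),
\]
where $E$ and $E'$ are direct sums of the $T_i(t)$ with multiplicities $[b_{ik}(t)]_+$ and $[-b_{ik}(t)]_+$. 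Combined with Theorem \ref{thm:g_vector_coordinate}, these triangles yield the correct monomial $x^{\ind_T T_k(t)+\ind_T T_k(t')}$ on the left, and they stratify the submodule Grassmannian of $\Ext^1_{\cC}(T,T_k(t)\oplus T_k(t'))$ in terms of pairs of Grassmannians associated with $\Ext^1_{\cC}(T,E)$ and $\Ext^1_{\cC}(T,E')$. A count of twisted Poincar\'e polynomials over the strata, together with the bookkeeping of the $v$-twist coming from Definition \ref{def:quantum_torus}, then reproduces the right-hand side.

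The main obstacle is the matching of the quantum normalizations $\alpha_\pm$: classically these are trivial, but in the quantum case one must track the interaction between the Grassmannian dimensions appearing in (\ref{eq:CC_formula}) and the $\Lambda$-twist of the multiplication $*$. This is controlled by the compatibility identity $\Lambda(-\tB)=\bigl[\begin{smallmatrix}D\\0\end{smallmatrix}\bigr]$ together with the relation between $\phi$ (whose matrix is $-\tB$) and the change of index across an exchange triangle; the required cancellations ultimately follow from a direct computation at the level of $\Kz(\per\Gamma)$.

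For the bijection, I would combine the first statement with Theorem \ref{thm:g_vector_coordinate}. The leading monomial of $x_M$ is $x^{\ind_T M}$, and coefficient-free rigid objects of $\cD$ are determined up to isomorphism by their class in $\Kz(\per\Gamma)$, i.e.\ by $\ind_T M$; moreover $x_{M\oplus N}=v^{?}x_M*x_N$ up to a $v$-scalar by a direct inspection of (\ref{eq:CC_formula}). Injectivity follows from uniqueness of the leading monomial, while surjectivity follows because every extended $g$-vector of a quantum cluster monomial is of the form $\ind_T(\bigoplus_i T_i(t)^{\oplus m_i})$ for some $t$ and nonnegative $m_i$, so the corresponding $x_M$ recovers that cluster monomial by the multiplicativity just noted and by the case of cluster variables already proved.
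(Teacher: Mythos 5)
The paper only imports this theorem from \cite{Qin10} and does not reprove it, so there is no internal argument to compare against; I am assessing your sketch on its own terms. Your overall plan — induction over the mutation tree, reducing the inductive step to a quantum exchange relation for $x_{T_k(t)}*x_{T_k(t')}$ via the two exchange triangles, then using indices and leading monomials for the bijection — is the correct shape, and the base case $x_{T_i}=x_i$ is handled properly.

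The problem is that the two geometric steps on which the argument rests are described as though they were transparent, when in fact they are the whole content of \cite{Qin10}. First, you propose to ``stratify the submodule Grassmannian of $\Ext^1_{\cC}(T,T_k(t)\oplus T_k(t'))$ in terms of pairs of Grassmannians associated with $E$ and $E'$''. But $T_k(t)\oplus T_k(t')$ is not rigid (the exchange pair has $\Ext^1_{\cC}(T_k(t),T_k(t'))\neq 0$), so Definition~\ref{def:CC_formula} does not assign it a character and that Grassmannian is not what appears in the product. Expanding $x_{T_k(t)}*x_{T_k(t')}$ gives a sum over pairs $(e_1,e_2)$ of products $\Gr_{e_1}(\Ext^1_{\cC}(T,T_k(t)))\times\Gr_{e_2}(\Ext^1_{\cC}(T,T_k(t')))$, and the multiplication theorem is a Caldero--Keller/Palu-style correspondence between these pairs and the Grassmannians attached to $E$ and $E'$, built from the long exact sequences obtained by applying $\Hom_{\cC}(T,-)$ to the two exchange triangles and organized by whether the connecting maps vanish. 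Quantizing that correspondence, i.e.\ matching the $\Lambda$-twist against the Grassmannian dimensions via $\Lambda(-\tB)$ and the index change $\phi$, is the hard technical core; ``a direct computation at the level of $\Kz(\per\Gamma)$'' does not discharge it. Second, the asserted multiplicativity $x_{M\oplus N}=v^{?}x_M*x_N$ for $\Ext^1_{\cC}$-orthogonal $M,N$ is not a ``direct inspection'' of \eqref{eq:CC_formula}: it requires showing that $\Gr_e$ of the direct sum fibres by affine spaces over $\bigsqcup_{e_1+e_2=e}\Gr_{e_1}\times\Gr_{e_2}$ and then bookkeeping the $v$-power produced. Finally, the bijection argument needs two named external inputs that Theorem~\ref{thm:g_vector_coordinate} alone does not supply: that coefficient-free rigid objects of $\cD$ are determined up to isomorphism by their index (Dehy--Keller/Plamondon), and that for acyclic $Q$ every coefficient-free rigid object of $\cD$ is reachable from $T$ by mutation, which is what makes surjectivity go through.
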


\subsection{Deformed Grothendieck ring via graded quiver
  varieties}\label{sec:notations}

In \cite{Nakajima09}, Nakajima used graded quiver varieties associated
with bipartite quivers to construct deformed Grothendieck rings. In
order to generalize his result to study acyclic quantum cluster
algebras, we will use a new family of graded quiver varieties and a
modified version of deformed Grothendieck rings for acyclic quivers
$Q$, which have been studied in \cite{Qin11}. For the convenience of the reader, we shall recall the
basic definitions and properties of these constructions.

Notice that, by the dimension of a complex variety, we always mean the
complex dimension. By default, we \emph{only} consider geometric points.

\subsubsection*{Graded quiver varieties}

Assume that $Q$ is an acyclic quiver with the set of vertices
$I=\{1,\ldots, n\}$, \st $b_{ij}\leq 0$ whenever $i\geq j$, \cf Section
\ref{sec:frozenQuiver} for the definition of $b_{ij}$.

We denote the finitely supported bigraded vectors in $\N^{I\times \Z}$ by $w=(w_i(a))_{i\in I,
  a\in \Z}$, and the finitely supported bigraded vectors in $\N^{I\times(\Z+\Hf)}$ by
$v=(v_i(a))_{i,a}$. Let the associated graded complex vector spaces be $W=\C^w=\oplus_{i,a}W_i(a)=\oplus_{i,a} \C^{w_i(a)}$ and similarly $V=\C^v=\oplus_{i,a} V_i(a)=\oplus_{i,a} \C^{v_i(a)}$. 

The vectors $w$ and $v$ can be naturally viewed as elements in
$\Z^{I\times \R}$. For any $d\in\R$, define the degree shift $[d]$ of vectors
$\eta=\eta_i(a)\in\Z^{I\times \R}$ by $\eta[d]_i(a)=\eta_i(a+d)$. For any two vectors $\eta^1=\eta^1_i(a)$, $\eta^2=\eta^2_i(a)$ in $\Z^{I \times \R}$, if at least
one of them has finite support, their inner product is defined as $\eta^1\cdot \eta^2=\sum_{i,a}\eta^1_i(a)\eta^2_i(a)$.

The Cartan matrix $C$ associated with $Q$ is the $I\times I$ matrix whose entry in position $(i,j)$ is 
\begin{align}
c_{ij}=\left\{
\begin{array}{ll}
2 & \textrm{if $i= j$}\\
-|b_{ij}| & \textrm{if $i\neq j$}
\end{array} \right. .
\end{align}

\begin{Def}[$q$-Cartan matrix]\label{def:cartanMatrix}
We define the linear map
\begin{align*}
  C_q:\Z^{I\times (\Hf+\Z)}\ra \Z^{I\times \Z}
\end{align*}
\st for each $\eta\in \Z^{I\times (\Hf+\Z)}$, we have
\begin{align}
(C_q \eta)_k(a)=\eta_k(a+\Hf)+\eta_k(a-\Hf)-\sum_{i:1\leq i< k}b_{ik} \eta_i(a+\Hf)-\sum_{j:k< j\leq n}b_{kj}\eta_j(a-\Hf).
\end{align}
It is called a \emph{$q$-analogue} of the Cartan matrix $C$, or a \emph{$q$-Cartan matrix} for short.
\end{Def}

It is easy to see that $C_q$ naturally extends to a linear map from $\Z^{I\times \R}$ to $\Z^{I\times \R}$, which is also denoted by $C_q$. Furthermore, $C_q$ commutes with $[d]$, $d\in \R$.

The $q$-Cartan matrix $C_q$ is symmetric in the following sense.
\begin{Lem}\label{lem:symmetricCartan}
  For any two vectors $\eta^1,\eta^2\in \Z^{I\times \R}$, if \emph{at
    least one of them has finite support}, we have the following
  identity:
  \begin{align}
    \label{eq:symmetricForm}
    \eta^2 \cdot C_q\eta^1[-\Hf]=C_q\eta^2\cdot \eta^1[-\Hf].
  \end{align}
\end{Lem}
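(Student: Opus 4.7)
The plan is to verify (\ref{eq:symmetricForm}) by direct computation: expand both sides using Definition \ref{def:cartanMatrix} and match terms after relabeling summation indices. The finite-support hypothesis on one of $\eta^1,\eta^2$ ensures that every sum appearing in the computation has only finitely many nonzero terms, so no convergence issue arises. The essential point is that both sides are bilinear in $(\eta^1,\eta^2)$ and the symmetry we are proving ultimately reflects the symmetry of the ordinary Cartan matrix $C=(c_{ij})$.

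First I would compute $(C_q \eta^1[-\Hf])_k(a)$. Since $C_q$ commutes with the shift, this equals $(C_q\eta^1)_k(a-\Hf)$, and the definition gives
\[
(C_q\eta^1)_k(a-\Hf) = \eta^1_k(a) + \eta^1_k(a-1) - \sum_{i<k} b_{ik}\eta^1_i(a) - \sum_{j>k} b_{kj}\eta^1_j(a-1).
\]
Multiplying by $\eta^2_k(a)$ and summing over $(k,a)$ writes the LHS as a sum of four pieces: two diagonal ones $\sum_{k,a}\eta^2_k(a)\eta^1_k(a)$ and $\sum_{k,a}\eta^2_k(a)\eta^1_k(a-1)$, plus two cross pieces involving $b_{ik}$ (for $i<k$) and $b_{kj}$ (for $j>k$). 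For the RHS, I would analogously expand $(C_q\eta^2)_k(a)$, pair it with $\eta^1_k(a-\Hf)$, and perform the change of variable $a\mapsto a+\Hf$ to align the summation indexing with that of the LHS; this again yields four pieces, and the two diagonal pieces match the LHS diagonal pieces immediately.

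The remaining step is to identify the two cross pieces. In the LHS cross term $\sum_{k,a}\sum_{i<k} b_{ik}\,\eta^2_k(a)\eta^1_i(a)$ I would swap the dummy indices $i\leftrightarrow k$: the constraint $i<k$ becomes $k'<i'$, i.e.\ $i'>k'$, and after relabeling $(k',i')\to(k,j)$ the sum becomes $\sum_{k,a}\sum_{j>k} b_{kj}\,\eta^2_j(a)\eta^1_k(a)$, matching the corresponding cross term of the RHS. An entirely parallel swap $j\leftrightarrow k$ in the other LHS cross term reproduces the fourth RHS term. The only potential obstacle is bookkeeping: keeping the half-integer shifts $\pm\Hf$, the index substitutions $a\mapsto a\pm 1$, and the interchange of $\{i<k\}$ with $\{j>k\}$ consistent throughout. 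Since these are mechanical, no conceptual difficulty arises, and (\ref{eq:symmetricForm}) follows.
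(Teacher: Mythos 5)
Your computation is correct. Writing out both sides, the LHS becomes
\begin{align*}
\sum_{k,a}\eta^2_k(a)\,\eta^1_k(a)
+\sum_{k,a}\eta^2_k(a)\,\eta^1_k(a-1)
-\sum_{k,a}\sum_{i<k}b_{ik}\,\eta^2_k(a)\,\eta^1_i(a)
-\sum_{k,a}\sum_{j>k}b_{kj}\,\eta^2_k(a)\,\eta^1_j(a-1),
\end{align*}
and after the shift $a\mapsto a+\Hf$ the RHS becomes
\begin{align*}
\sum_{k,a}\eta^1_k(a)\,\eta^2_k(a+1)
+\sum_{k,a}\eta^1_k(a)\,\eta^2_k(a)
-\sum_{k,a}\sum_{i<k}b_{ik}\,\eta^1_k(a)\,\eta^2_i(a+1)
-\sum_{k,a}\sum_{j>k}b_{kj}\,\eta^1_k(a)\,\eta^2_j(a).
\end{align*}
The first and fourth LHS terms match the second and third RHS terms after the shift $a\mapsto a+1$ and, for the off-diagonal piece, the swap $j\leftrightarrow k$; the second and third LHS terms match the first and fourth RHS terms likewise. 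So all four pieces pair off as you describe. The paper states the lemma without proof, and this direct bookkeeping argument is exactly the intended verification. One small inaccuracy in your narrative: after the single shift $a\mapsto a+\Hf$ only one of the two diagonal pieces matches ``immediately''; the other needs the further relabeling $a\mapsto a-1$, which you implicitly perform. Also, the heuristic that the identity ``reflects the symmetry of $C$'' is a bit of a red herring --- the computation never uses $c_{ij}=c_{ji}$; it only relies on the same entries $b_{ik}$ (with $i<k$) reappearing after the dummy-index swap, i.e.\ on consistent bookkeeping rather than a symmetry of the matrix $B$, which is in fact skew-symmetric in the principal part.
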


\begin{Def}[$l$-Dominance]
A pair $(v,w)$ is
called \emph{$l$-dominant} \footnote{The $l$ here stands for ``loop''
  in the quantum loop algebras.} if $w-C_q v\in\N^{I\times \Z}$.

Define the
\emph{dominance order} on the set of pairs $(v,w)$, \st
$(v',w')\leq (v,w)$ if $w'-C_q v'=w-C_q (v+v'')$, for some $v''\in
\N^{I\times(\Z+\Hf)}$.

We say $v'\leq v$ if $(v',w)\leq (v,w)$, or equivalently, if $v_i'(a)\geq v_i(a)$, for all $(i,a)$. We say
$w'\leq w$ if $(0,w')\leq (0,w)$.
\end{Def}

% \begin{Eg}
% To remember the definition of the this Cartan matrix, look at picture \ref{fig:2} for level $2$. The columns with top vertices $3$, $9$, $15$ are of degree $a=0,1,2$, in which on all vertices we assign dimensions $w_i(a)$, $i=1,2,3$. For example, on vertex $7$ we assign $w_1(1)$, on vertex $8$ we assign $w^2(1)$, etc.

% % The columns with top vertices $6$, $12$ are of degree $a=0.5$, $1.5$, in which on all vertices we assign $v_i(a)$. Not exactly true in high levels, one can imagine the vector spaces $v$ lie on the opposite quiver $Q\op$, where $Q$ is in picture \ref{fig:1}.

% Then the Cartan matrix describe how the vector $v$ is added to the vector $w$. Note that $b_ik$, $i<k$ is the number of arrows going from $i$ to $k$ in $Q$ of Picture \ref{fig:1}.
% \end{Eg}

\begin{Def}[Weight order]\label{def:weightOrder}
We define the total order (weight order) $\wtLess$ on the set of pairs
$(i,a)$, $i\in I$, $a\in \R$, \st $(i',a')\wtLess(i,a)$ if $a'<a$, or if $a'=a$ and $i'<i$.
\end{Def}

Denote the set $\{\eta\in \Z^{I\times \Z}|\eta_k(a)=0,\ \forall k\in
I,\ a\ll 0\}$ by $E$. Denote the set $\{\eta[\Hf]|\eta\in E\}$ by $E[\Hf]$.

\begin{Lem}
1) $C_q$ restricts to an isomorphism from $E[\Hf]$ to $E$.
 
2) $C_q$ restricts to an isomorphism from $E$ to $E[-\Hf]$. 
\end{Lem}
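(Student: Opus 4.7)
The plan is to exploit the fact that $C_q$ is ``upper triangular'' with respect to the weight order $\wtLess$ of Definition \ref{def:weightOrder}. Solving the defining equation $(C_q\eta)_k(a)=\mu_k(a)$ for the component $\eta_k(a+\Hf)$ gives
\begin{align*}
\eta_k(a+\Hf) \;=\; \mu_k(a) \;-\; \eta_k(a-\Hf) \;+\; \sum_{i<k} b_{ik}\,\eta_i(a+\Hf) \;+\; \sum_{j>k} b_{kj}\,\eta_j(a-\Hf).
\end{align*}
The crucial observation is that each position appearing on the right---namely $(k,a-\Hf)$, the $(i,a+\Hf)$ with $i<k$, and the $(j,a-\Hf)$ with $j>k$---is strictly smaller than $(k,a+\Hf)$ in the weight order, since the second coordinate is either $a-\Hf<a+\Hf$ or equal to $a+\Hf$ with a strictly smaller first coordinate.

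For part (1), I would first check injectivity of $C_q\colon E[\Hf]\to E$. Suppose $\eta\in E[\Hf]$ satisfies $C_q\eta=0$ and $\eta\neq 0$. Since the support of $\eta$ is bounded below in its second coordinate (by definition of $E[\Hf]$) and $I$ is finite, $\wtLess$ is well-founded on $\supp(\eta)$; let $(k_0,a_0+\Hf)$ be the minimum element. Evaluating the displayed formula at $(k_0,a_0)$ with $\mu=0$ writes $\eta_{k_0}(a_0+\Hf)$ as a combination of values of $\eta$ at weights strictly below $(k_0,a_0+\Hf)$, which all vanish by minimality---contradicting $\eta_{k_0}(a_0+\Hf)\neq 0$. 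For surjectivity, given $\mu\in E$ I would define $\eta$ by the same formula, inductively along $\wtLess$; the region $a\ll 0$ (where $\mu$ vanishes) serves as the base of the induction and forces $\eta_k(a+\Hf)=0$ there, so the resulting $\eta$ automatically lies in $E[\Hf]$. By construction $C_q\eta=\mu$.

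Part (2) then follows formally. Since $C_q$ commutes with every degree shift $[d]$, and the shift $[-\Hf]$ restricts to bijections $E[\Hf]\iso E$ and $E\iso E[-\Hf]$, the isomorphism $C_q\colon E[\Hf]\to E$ from part (1) transports under $[-\Hf]$ to the required isomorphism $C_q\colon E\to E[-\Hf]$. I do not expect any real obstacle: the argument is a formal triangular-system inversion, and the only point needing a little care is the well-foundedness of $\wtLess$ on the relevant supports, which is guaranteed by the lower-bound condition built into $E$ and $E[\Hf]$ together with the finiteness of $I$.
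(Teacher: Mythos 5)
Your argument is correct, and it is the natural one: $C_q$ is unitriangular with respect to the weight order $\wtLess$ (each $(C_q\eta)_k(a)$ equals $\eta_k(a+\Hf)$ plus a combination of values of $\eta$ at strictly $\wtLess$-smaller positions), and the boundedness-below condition built into $E$ and $E[\Hf]$, together with the finiteness of $I$, makes $\wtLess$ well-founded on the relevant supports, so the triangular system can be inverted. The paper states this lemma without proof, so there is nothing to compare directly, but your approach is exactly what the surrounding material (Definition \ref{def:weightOrder} and, especially, Lemma \ref{lem:inverseAlgorithm}, which records the leading terms of $C_q^{-1}(e_{k,a})$ in the weight order) is pointing at; the explicit path-counting formula in that lemma can be read as the unrolled version of your recursion. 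One small point of care in the surjectivity step: the recursion by itself has no smallest weight to serve as a base case, so one must explicitly declare $\eta_k(b)=0$ for $b$ below the lower bound of $\supp(\mu)$ shifted by $\Hf$, then check that the recursion remains consistent (it does, since $\mu$ vanishes there); your phrasing that the small-$a$ region ``forces'' $\eta=0$ slightly elides this, but the intended argument is sound, and uniqueness of the preimage in $E[\Hf]$ follows from your injectivity step. Part (2) via the commutation of $C_q$ with degree shifts and the bijections $E[\Hf]\iso E\iso E[-\Hf]$ is correct.
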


We denote the inverses of both the restriction maps by $C_q^{-1}$.

Let $e_{k,a}\in \N^{I\times \Z}$ be the unit vector concentrated at
the degree $(k,a)$. Denote by $p_{i,j}$ the dimension of $\Hom_{\mod \C Q}(P_i,P_j)$, $i\leq j$, namely the number of (possibly trivial) paths from $i$ to $j$ in the quiver $Q$.

\begin{Lem}\label{lem:inverseAlgorithm}
The vector $\te_{k,a}=C_q^{-1}(e_{k,a})$ satisfies, for all $k'\in I$, 
\begin{equation}
\label{eq:cartanInverse}
\te_{k,a}\cdot e_{k',b}=\left\{
  \begin{array}{l l}
    0&\quad \text{if }b<a+\Hf\\
p_{kk'}&\quad \text{if }b=a+\Hf
  \end{array}\right.
.
\end{equation}
\end{Lem}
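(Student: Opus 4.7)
The plan is to compute $\te_{k,a}$ by peeling off degrees from below, using the explicit formula for $C_q$ given in Definition~\ref{def:cartanMatrix}. Because the previous lemma guarantees that $\te_{k,a}=C_q^{-1}(e_{k,a})$ exists and lies in $E[\Hf]$, its support is bounded below in the degree coordinate, so there is a well-defined minimum degree $a_0\in\Z+\Hf$ with $\te_{k,a}(k',b)=0$ for all $k'$ and all $b<a_0$.

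First I would prove the vanishing claim in (\ref{eq:cartanInverse}). Evaluating $(C_q\te_{k,a})_{k'}(a_0-\Hf)$ using the explicit formula, every term of the form $\te_{k,a}(\cdot,a_0-1)$ vanishes by minimality of $a_0$, and we are left with
\[
(C_q\te_{k,a})_{k'}(a_0-\Hf)=\te_{k,a}(k',a_0)-\sum_{i<k'}b_{ik'}\,\te_{k,a}(i,a_0).
\]
Since $C_q\te_{k,a}=e_{k,a}$, the left-hand side equals $\delta_{k,k'}\delta_{a,a_0-\Hf}$. Assume for contradiction that $a_0<a+\Hf$, so the right-hand side vanishes for every $k'$. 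Inducting on $k'=1,2,\dots,n$ (the case $k'=1$ is immediate because the sum is empty), one deduces $\te_{k,a}(k',a_0)=0$ for all $k'$, contradicting minimality of $a_0$. Hence $a_0\geq a+\Hf$, which is exactly the vanishing claim $\te_{k,a}\cdot e_{k',b}=0$ for $b<a+\Hf$.

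Next I would identify the values at $b=a+\Hf$. Necessarily $a_0=a+\Hf$ (otherwise, repeating the argument above at $a_0>a+\Hf$ again forces $\te_{k,a}\equiv 0$ at degree $a_0$, contradicting the definition of $a_0$, while $\te_{k,a}$ cannot be identically zero since $C_q\te_{k,a}=e_{k,a}\neq 0$). Plugging $a_0=a+\Hf$ into the equation above gives the recursion
\[
\te_{k,a}(k',a+\Hf)=\delta_{k,k'}+\sum_{i<k'}b_{ik'}\,\te_{k,a}(i,a+\Hf).
\]
Because $Q$ is acyclic with the topological ordering chosen so that $b_{ij}\geq 0$ for $i<j$, the coefficient $b_{ik'}$ for $i<k'$ counts the number of arrows $i\to k'$ in $Q$. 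The right-hand side is then exactly the standard path-counting recursion for $p_{kk'}=\dim\Hom_{\mod \C Q}(P_k,P_{k'})$: a path from $k$ to $k'$ is either trivial ($k=k'$) or an arrow $i\to k'$ preceded by a path from $k$ to some $i<k'$. A straightforward induction on $k'$ then yields $\te_{k,a}(k',a+\Hf)=p_{kk'}$.

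The only subtle point in the bookkeeping is the alternation between integer and half-integer degree indices, and the need to use the correct evaluation point $c=a_0-\Hf\in\Z$ to pick up a single layer of $\te_{k,a}$; once this is set up the argument is a triangular inversion together with a transparent path-counting interpretation, so I expect no serious obstacle.
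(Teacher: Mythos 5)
Your argument is correct. The paper states Lemma~\ref{lem:inverseAlgorithm} without proof, so there is no proof to compare against, but your approach is the natural one: $C_q$ is triangular for the weight order of Definition~\ref{def:weightOrder} (the terms $\eta_j(a-\Hf)$ involve a strictly smaller degree, and within degree $a+\Hf$ only vertices $i<k'$ appear), so one inverts layer by layer in the degree index and, within the bottom nonzero layer, by induction on the vertex index. Your bookkeeping is accurate: evaluating $(C_q\te_{k,a})_{k'}(a_0-\Hf)$ correctly kills all the $a_0-1$ terms, giving the vanishing below $a+\Hf$; evaluating $(C_q\te_{k,a})_{k'}(a)=\delta_{kk'}$ then yields exactly the recursion $\te_{k,a}(k',a+\Hf)=\delta_{kk'}+\sum_{i<k'}b_{ik'}\,\te_{k,a}(i,a+\Hf)$, and the identification with $p_{kk'}$ by decomposing a path from $k$ to $k'$ by its last arrow (which must come from a vertex $i<k'$ by the topological ordering, with multiplicity $b_{ik'}\ge 0$) matches both the recursion and the base case $p_{k,1}=\delta_{k,1}$. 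One small remark: the step showing $a_0=a+\Hf$ exactly is not strictly needed — once you know $\te_{k,a}$ vanishes in all degrees $<a+\Hf$, the computation of $(C_q\te_{k,a})_{k'}(a)$ already goes through — but it is harmless and gives a clean a posteriori consistency check via $p_{kk}=1$.
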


Consider the opposite quiver $Q\op$ and denote its set of
arrows by $\Omega$. Define the function $\epsilon:H\ra \{\pm 1\}$ \st
$\epsilon(\Omega)=\{1\}$, $\epsilon(\oOmega)=\{-1\}$. Similarly, for a
linear map $B_h$ indexed by $h\in H$, we define $(\epsilon
B)_h=\epsilon(h)B_h$. Given finitely supported vectors $v$, $v'$ in $\N^{I\times(\Z+\Hf)}$ and $w$ in $\N^{I\times \Z}$, we define the following graded vector spaces
\begin{align*}
%  \label{eq:grSp}
\grEndSp(v,v')&=\oplus_{(i,a)} \Hom(V_i(a),V'_i(a)),\\
\grLSp(w,v)&=\oplus_{(i,a)} \Hom(W_i(a),V_i(a-\Hf)),\\
\grLSp(v,w)&=\oplus_{(i,a)}  \Hom(V_i(a),W_i(a-\Hf)),\\
 \grESp(v,v')&=(\oplus_{h\in\Omega,a}
 \Hom(V_{s(h)}(a),V'_{t(h)}(a)))\\&\qquad \oplus (\oplus_{\oh\in\oOmega,a} \Hom(V_{s(\oh)}(a),V'_{t(\oh)}(a-1))).
\end{align*}
Notice that the weight order strictly decreases along the linear maps in the last three spaces.

Consider the affine space
\begin{equation}
\grRep(Q\op,v,w)=\grESp(v,v)\oplus\grLSp(w,v) \oplus \grLSp(v,w).
\end{equation}
Denote the coordinates of its points by 
\begin{equation}
  \begin{split}
(B,\alpha,\beta)=((B_{h})_{h\in H},\alpha,\beta)=((b_h)_{h\in\Omega}, (b_{\oh})_{\oh\in\oOmega},(\alpha_i)_i,(\beta_i)_i)\\=((\oplus_a b_{h,a})_h,(\oplus_a b_{\oh,a})_{\oh},(\oplus_a\alpha_{i,a})_i,(\oplus_a\beta_{i,a})_i).
  \end{split}
\end{equation}
Define the analogue of the moment map $\mu: \grRep(Q\op, v,w)\ra \grEndSp(v,v[-1])$ \st
\begin{align}
  \begin{split}
    \mu(B,\alpha,\beta)&=\oplus_{a\in \Z+\Hf,i\in
      I}\mu(B,\alpha,\beta)_{i,a}\\
    &=\oplus_{i,a}(\sum_{h\in\Omega}(b_{h,a} b_{\oh,a+1}-b_{\oh,a+1}
    b_{h,a+1}) +\alpha_{i,a+\Hf} \beta_{i,a+1}),
  \end{split}
\end{align}
or $\mu(B,\alpha,\beta)=(\epsilon B) B+\alpha\beta$ for short.

\begin{Eg}
%The following is an example for the case of the acyclic $A_{2}^{(1)}$
%quiver.
Let $Q$ be given as in Figure \ref{fig:acyclicQuiver}. Figure \ref{fig:repSpace} is an example of the vector space $\grRep(Q\op,v,w)$,
  where the vertices denote the $(i,a)$-degree components of $W$ and
  $V$ and the arrows
  denote the corresponding linear maps. The components in the same
  rows have the same $i$-degrees, and those in the same columns have
  the same $a$-degrees (or degrees for short).
\end{Eg}

\begin{figure}[htb!]
\centering
%\beginpgfgraphicnamed
  \begin{tikzpicture}
	\begin{pgfonlayer}{nodelayer}
		\node [style=white box] (0) at (-2.5, 2) {$V_3(-\frac{1}{2})$};
		\node [style=white box] (1) at (2.5, 2) {$V_3(\frac{1}{2})$};
		\node [style=black box] (2) at (-5, 1.5) {$W_3(-1)$};
		\node [style=black box] (3) at (0, 1.5) {$W_3(0)$};
		\node [style=black box] (4) at (5, 1.5) {$W_3(1)$};
		\node [style=white box] (5) at (-3.5, 0) {$V_{2}(-\frac{1}{2})$};
		\node [style=white box] (6) at (1.5, 0) {$V_{2}(\frac{1}{2})$};
		\node [style=black box] (7) at (-6, -0.5) {$W_{2}(-1)$};
		\node [style=black box] (8) at (-0.5, -0.5) {$W_{2}(0)$};
		\node [style=black box] (9) at (4, -0.5) {$W_{2}(1)$};
		\node [style=white box] (10) at (-2.5, -2) {$V_{1}(-\frac{1}{2})$};
		\node [style=white box] (11) at (2.5, -2) {$V_{1}(\frac{1}{2})$};
		\node [style=black box] (12) at (-5, -2.5) {$W_{1}(-1)$};
		\node [style=black box] (13) at (0, -2.5) {$W_{1}(0)$};
		\node [style=black box] (14) at (5, -2.5)
                {$W_{1}(1)$};

		\node [] (15) at (-3.75, -2) {$\beta_1$};
		\node [] (16) at (-0.9, -2) {$\alpha_1$};
		\node [] (17) at (-3.25, -1.3)
                {$h$};
		\node [] (18) at (-0.5, -1.3)
                {$\overline{h}$};

	\end{pgfonlayer}
	\begin{pgfonlayer}{edgelayer}
		\draw [->, thick, shorten <=2 pt, shorten >=2 pt, >=stealth'] (1) to (3);
		\draw [->, thick, shorten <=2 pt, shorten >=2 pt, >=stealth'] (3) to (0);
		\draw [->, thick, shorten <=2 pt, shorten >=2 pt, >=stealth'] (14) to (11);
		\draw [->, thick, shorten <=2 pt, shorten >=2 pt, >=stealth'] (11) to (13);
		\draw [->, thick, shorten <=2 pt, shorten >=2 pt, >=stealth'] (13) to (10);
		\draw [->, thick, shorten <=2 pt, shorten >=2 pt, >=stealth'] (1) to (6);
		\draw [->, thick, shorten <=2 pt, shorten >=2 pt,
                >=stealth'] (10) to (12);
		\draw [->, thick, shorten <=2 pt, shorten >=2 pt, >=stealth'] (8) to (5);
		\draw [->, thick, shorten <=2 pt, shorten >=2 pt, >=stealth'] (5) to (7);
		\draw [->, thick, shorten <=2 pt, shorten >=2 pt, >=stealth'] (6) to (11);
		\draw [->, thick, shorten <=2 pt, shorten >=2 pt, >=stealth'] (6) to (0);
		\draw [->, thick, shorten <=2 pt, shorten >=2 pt, >=stealth'] (9) to (6);
		\draw [->, thick, shorten <=2 pt, shorten >=2 pt, >=stealth'] (4) to (1);
		\draw [->, thick, shorten <=2 pt, shorten >=2 pt, >=stealth'] (0) to (5);
		\draw [->, thick, shorten <=2 pt, shorten >=2 pt, >=stealth'] (11) to (0);
		\draw [->, thick, shorten <=2 pt, shorten >=2 pt, >=stealth'] (6) to (8);
		\draw [->, thick, shorten <=2 pt, shorten >=2 pt, >=stealth'] (5) to (10);
		\draw [->, thick, shorten <=2 pt, shorten >=2 pt, >=stealth'] (1) to (11);
		\draw [->, thick, shorten <=2 pt, shorten >=2 pt, >=stealth'] (0) to (10);
		\draw [->, thick, shorten <=2 pt, shorten >=2 pt, >=stealth'] (0) to (2);
		\draw [->, thick, shorten <=2 pt, shorten >=2 pt, >=stealth'] (11) to (5);
	\end{pgfonlayer}
\end{tikzpicture}
%\endpgfgraphicnamed
\caption{Vector space $\grRep(Q\op,v,w)$}
\label{fig:repSpace}
\end{figure}

The group $G_v=\prod_{i,a} GL(V_{i,a})$ acts naturally on the level
set $\mu^{-1}(0)$ such that for $g=(g_{i,a})\in G_v$, we have $g(\alpha)=g
\alpha$, $g(\beta)=\beta g^{-1}$, $g(b_h)= g_{t(h)}b_h g_{s(h)}^{-1}$, $g(b_{\oh})= g_{t({\oh})}b_{\oh}
g_{s({\oh})}^{-1}$. We fix the character of $\chi$ of $G_v$ such that $\chi(g)=\prod_{i,a}(\det g_{i,a})^{-1}$.

The graded quasi-projective quiver variety $\grProjQuot(v,w)$ is
defined to be the geometric invariant theory quotient (GIT quotient
for short) of $\mu^{-1}(0)$ with
respect to $\chi$, and the graded affine quiver variety
$\grAffQuot(v,w)$ is defined to be the categorical quotient of
$\mu^{-1}(0)$ by the action of $G_v$. Then there is a natural projective morphism $\pi$ from $\grProjQuot(v,w)$ to $\grAffQuot(v,w)$. 

Denote the fibre of $\pi$ over a point $x$ by $\grFib_x(v,w)$. When
$x=0$, we also denote the fiber by $\grLag=\grLag(v,w)$.

\begin{Rem}
If the quiver $Q\op$ is bipartite, our $q$-Cartan matrix $C_q$ and quiver varieties are isomorphic
to those defined in \cite{Nakajima09}. This can be seen by applying
appropriate shifts in the degrees $a$ of the vectors $v_i(a)$, $w_i(a)$.
\end{Rem}

Let us define $\grRegStratum(v,w)$ to be the set of points in
$\grAffQuot(v,w)$ \st the stabilizers in $G_v$ of their representatives are
trivial. Then the morphism $\pi$ is an isomorphism from
$\pi^{-1}(\grRegStratum(v,w))$ to $\grRegStratum(v,w)$, \cf
\cite{Qin11}.

\begin{Prop}
  \label{prop:nonemptyStratum}
$\grRegStratum(v,w)$ is non-empty if and only if $(v,w)$ is $l$-dominant.
\end{Prop}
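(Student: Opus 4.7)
The plan is to follow the standard complex approach from Nakajima's theory of graded quiver varieties, adapted to the acyclic (not necessarily bipartite) setting as developed in \cite{Qin11}. The key observation is that for each vertex $(k,a) \in I\times\Z$, one can construct a three-term complex
\[
V_k(a+\Hf) \xrightarrow{\sigma^1_{k,a}} W_k(a) \oplus \bigoplus_{h\in\Omega, t(h)=k} V_{s(h)}(a+\Hf) \oplus \bigoplus_{\bar h\in\oOmega, t(\bar h)=k} V_{s(\bar h)}(a-\Hf) \oplus V_k(a-\Hf) \xrightarrow{\sigma^2_{k,a}} V_k(a-\Hf)
\]
built from $\alpha$, $\beta$, the $b_h$ and $b_{\bar h}$, and the moment map relation $\mu(B,\alpha,\beta)=0$ forces $\sigma^2_{k,a}\circ\sigma^1_{k,a}=0$. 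A direct rank count shows that the middle cohomology at $(k,a)$ has alternating dimension exactly $(w-C_q v)_k(a)$ whenever $\sigma^1_{k,a}$ is injective and $\sigma^2_{k,a}$ is surjective.

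For the ``only if'' direction, I would argue that if $(B,\alpha,\beta)$ has trivial stabilizer in $G_v$, then in particular $\sigma^1_{k,a}$ must be injective for every $(k,a)$: otherwise a non-trivial element of $\Ker\sigma^1_{k,a}$ can be used to produce a non-trivial diagonal endomorphism supported at $(k,a+\Hf)$ which commutes with $B$ and annihilates $\beta$, contradicting triviality of the stabilizer. Dualizing the same argument (or using the analogous map on the $W$-side) forces $\sigma^2_{k,a}$ to be surjective. Consequently, the Euler characteristic of the complex equals $\dim$ of the middle cohomology, which is non-negative; this Euler characteristic is precisely $(w-C_q v)_k(a)$, yielding $l$-dominance.

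For the ``if'' direction, given an $l$-dominant pair $(v,w)$, I would construct an explicit representative in $\grRegStratum(v,w)$ as a direct sum of ``standard'' building blocks. Using Lemma \ref{lem:inverseAlgorithm}, one sees that the lattice generated by the vectors $\widetilde{e}_{k,a}=C_q^{-1}(e_{k,a})$ together with the $e_{k,a}$ spans the dominance order. Thus one can decompose $v = \sum_{(k,a)} m_{k,a}\,\widetilde e_{k,a}$ with multiplicities $m_{k,a}=(w-C_q v)_k(a) \geq 0$ (plus residual contributions from $w$), and build $(B,\alpha,\beta)$ as a direct sum of the corresponding ``$l$-fundamental'' representations, positioned so that the weight order strictly decreases along all the arrows $b_h$, $b_{\bar h}$, $\alpha$, $\beta$. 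Because these building blocks are pairwise non-isomorphic (they have distinct $\widetilde{e}_{k,a}$ support) and individually have scalar endomorphism rings on which $\chi$ is trivial only at the identity, a generic choice within this $G_v$-orbit has trivial stabilizer.

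The main obstacle is carefully verifying triviality of the stabilizer in the ``if'' direction: one needs the moment map relation, the weight-order strictness, and acyclicity of $Q$ to combine and eliminate all non-scalar commuting endomorphisms. A technical point is handling the part of $v$ that does not contribute to $w-C_q v$ (so the residual direct summands are of ``imaginary'' type in the acyclic setting), where one uses that such summands can be chosen transitively regular so that the only automorphism preserving $\alpha$, $\beta$ and the $B_h$ simultaneously is the identity.
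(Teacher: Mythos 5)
The overall strategy (the three-term complex of Nakajima type at each vertex $(k,a)$, with the moment-map relation forcing $\sigma^2\circ\sigma^1=0$, and Euler characteristic $(w-C_qv)_k(a)$) is the right one, but both directions contain genuine gaps. In the ``only if'' direction, your inference from $\Ker\sigma^1_{k,a}\neq 0$ to a nontrivial stabilizer element is not justified: an element $\xi\in\Ker\sigma^1_{k,a}\subset V_k(a+\Hf)$ is a vector, not an endomorphism, and the scaling $g=\id + (\lambda-1)\pi_{\C\xi}$ that you implicitly invoke need not commute with the $b_h$ whose target is $k$, nor with those whose source lies at a neighboring vertex. The actual argument has to pass through the observation that the distinguished representative of a point of $\grAffQuot(v,w)$ is the one with closed orbit (i.e.\ the semisimple $\Pi(Q)$-module structure on $V\oplus W$), and then trivial stabilizer rules out direct summands supported purely on $V$, which in turn yields stability and costability and hence injectivity of $\sigma^1$ and surjectivity of $\sigma^2$. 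As written, your reduction skips the step that supplies the $B$-invariant subspace on which the would-be stabilizer acts.

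The ``if'' direction has a more serious error: the decomposition $v=\sum_{(k,a)}m_{k,a}\widetilde e_{k,a}$ with $m_{k,a}=(w-C_qv)_k(a)$ is not an identity. Applying $C_q$ to both sides gives $C_qv = w-C_qv$, i.e.\ $w=2C_qv$, which is far stronger than $l$-dominance; already $(v,w)=(0,e_{1,0})$ is $l$-dominant but violates this. What is true is that $\sum_{(k,a)}m_{k,a}\widetilde e_{k,a}=C_q^{-1}w-v$, which is a different (and generally infinitely supported) object than $v$, and does not directly hand you a direct-sum decomposition of the $V$-space. As a result the ``building blocks'' you propose are not well-defined, and the ``imaginary summand''/``transitively regular'' hedge is too vague to rescue the construction. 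A correct proof of this direction has to either construct the representative directly from an $l$-dominant pair by an induction on the weight order (adding one unit $\widetilde e_{k,a}$ to $v$ at a time while keeping $w-C_qv\geq 0$, and checking at each step that stability and costability are preserved), or reduce to the case $v=0$ via the transversal slice (Theorem~\ref{thm:grTransversalSlice}) together with the closure relations of Proposition~\ref{prop:grStratification}; neither of these is what you have written.
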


\begin{Thm}[Transversal slice] \label{thm:grTransversalSlice}
Assume $x$ is a point in $\grRegStratum(v^0,w)$, which is naturally embedded into a quotient $\grAffQuot(v,w)$. Let $T$ be the tangent space of $\grRegStratum(v,w)$ at $x$. As $(v^0,w)$ is $l$-dominant, define $w^\bot=w-C_qv^0$, $v^\bot=v-v^0$. Then there exist neighborhoods $U$, $U_T$, $U^\bot$ of $x\in \grAffQuot(V,W)$, $0\in T$, $0\in \grAffQuot(v^\bot,w^\bot )$ respectively, and biholomorphic maps $U\ra U_T\times U^\bot $, $\pi^{-1}(U)\ra U_T\times \pi^{-1}(U^\bot )$, \st the following diagram commutes:
\begin{equation*}
\begin{CD}
  \grProjQuot(v,w) \;@. \supset \; @. \pi^{-1}(U) @>>\cong> U_T \times \pi^{-1}(U^\bot ) \;@.\subset \;@. T\times \projQuot(v^\bot ,w^\bot )\\
   @. @. @V{\pi}VV @VV{\id\times\pi}V @.@. \\
   \grAffQuot(v,w)\;@.\supset \; @. U @>>\cong> U_T\times U^\bot  \;@. \subset \;@. T\times\grAffQuot(v^\bot ,w^\bot )
\end{CD}
\end{equation*}

In particular, the fibre $\grFib_x(v,w)=\pi^{-1}(x)$ is biholomorphic to the zero fibre $\grLag(v^\bot ,w^\bot )$ over $0\in \grAffQuot(v^\bot ,w^\bot )$. 
\end{Thm}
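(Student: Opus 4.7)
The plan is to prove this by a graded version of Luna's étale slice theorem, following the strategy of Nakajima's original work but adapted to our (non-bipartite) acyclic setting. First, I would lift $x \in \grRegStratum(v^0,w)$ to a representative $(B^0,\alpha^0,\beta^0) \in \mu^{-1}(0)$ supported on a graded vector space $V^0$ of dimension $v^0$. By the regularity assumption, the stabilizer of this representative inside $G_{v^0}$ is trivial. To view $x$ as a point of $\grAffQuot(v,w)$, take the polystable lift $(B^0 \oplus 0,\alpha^0,\beta^0)$ on $V = V^0 \oplus V^\bot$ with $V^\bot$ of dimension $v^\bot = v - v^0$ and no linear maps attached; its $G_v$-stabilizer is exactly $G_{v^\bot}$ acting on the $V^\bot$-summand.

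Next, I would apply the étale slice theorem to the $G_v$-action on $\mu^{-1}(0)$ at this lift. The $G_v$-equivariant tangent direction to $\mu^{-1}(0)$ splits as the tangent of the orbit direct sum a $G_{v^\bot}$-invariant transversal slice $S$. The tangent direction along $\grRegStratum(v,w)$ produces the smooth factor $U_T \subset T$; this uses that, because the $V^0$-stabilizer is trivial, the deformations of the $V^0$-part of the representation are unobstructed and smooth. The remaining normal directions carry a canonical symplectic structure and a residual $G_{v^\bot}$-action with Hamiltonian moment map, and Hamiltonian reduction on this slice realizes it locally as a neighborhood of $0$ in a smaller quiver variety $\grProjQuot(v^\bot, w^\bot)$, compatible with the projective morphism $\pi$.

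The essential content is therefore the identification of the new framing weight $w^\bot$. This is a cohomological calculation: the normal bundle at the lifted point is computed by a three-term complex involving $\grESp(v^\bot, v^0) \oplus \grESp(v^0, v^\bot)$, the framing contributions $\grLSp(w, v^0) \oplus \grLSp(v^0, w)$, and the moment map differential into $\grEndSp(v^\bot, v^0[-1]) \oplus \grEndSp(v^0, v^\bot[-1])$. Taking Euler characteristics of the $G_{v^\bot}$-equivariant part, and using Lemma~\ref{lem:symmetricCartan}, the effective framing dimension at the $V^\bot$-component comes out to be precisely $w - C_q v^0 = w^\bot$. From this local product description one then reads off the biholomorphism $\pi^{-1}(U) \cong U_T \times \pi^{-1}(U^\bot)$, and restricting to $U_T = \{0\}$ one obtains $\grFib_x(v,w) \cong \grLag(v^\bot, w^\bot)$.

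The main obstacle will be tracking the grading shifts and orientations carefully: unlike in the bipartite case treated in~\cite{Nakajima09}, the arrows $h \in \Omega$ and $\oh \in \oOmega$ act with different degree shifts (as encoded in $\grESp$ and in Definition~\ref{def:cartanMatrix} of $C_q$), and it is exactly the asymmetric shifts in the $q$-Cartan matrix that make the identity $w - C_q v^0 = w^\bot$ come out correctly from the normal bundle computation. Once this bookkeeping is done, the compatibility of the slice with the character $\chi$ (hence with GIT stability, hence with $\pi$) follows from the fact that $\chi$ restricts to the analogous character on $G_{v^\bot}$, so the local biholomorphism intertwines $\pi$ with $\id \times \pi$ as required.
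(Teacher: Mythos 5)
The paper does not actually prove this statement; it is recalled from \cite{Qin11}, which adapts Nakajima's transversal slice theorem for (graded) quiver varieties from \cite{Nakajima01} and \cite[Section 3.3]{Nakajima04} to the acyclic setting. Your high-level outline --- a Luna \'etale slice at the polystable lift $(B^0\oplus 0,\alpha^0,\beta^0)$ whose $G_v$-stabilizer is $G_{v^\bot}$, followed by an identification of the $G_{v^\bot}$-equivariant normal data with $\grRep(Q\op,v^\bot,w^\bot)$ and a cohomological computation yielding $w^\bot = w - C_q v^0$ --- is the right strategy, and you are correct that the asymmetric shifts in $C_q$ are exactly what makes the last identity come out.

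The gap is in the middle step, where you assert that the remaining normal directions ``carry a canonical symplectic structure and a residual $G_{v^\bot}$-action with Hamiltonian moment map'' and that ``Hamiltonian reduction on this slice'' produces the smaller quiver variety. This is false in the graded setting. Unlike the ungraded case, $\grRep(Q\op,v,w)$ carries no natural symplectic pairing: for $h\in\Omega$ one has $B_h\in\Hom(V_{s(h)}(a),V_{t(h)}(a))$ while $B_{\oh}\in\Hom(V_{t(h)}(a'),V_{s(h)}(a'-1))$, so the trace of $B_{\oh}B_h$ never lands in $\C$; likewise $\alpha_i$ and $\beta_i$ both lower the degree by $\Hf$, so they are not dual to each other. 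Consistently, $\mu$ takes values in $\grEndSp(v,v[-1])$ rather than in $\mathfrak{g}_v\cong\grEndSp(v,v)$, so $\mu=0$ is \emph{not} a moment-map condition for the $G_v$-action, the tangent complex $\mathfrak{g}_v\to T_p\grRep(v,w)\to\grEndSp(v,v[-1])$ is not self-dual, and the Marle--Guillemin--Sternberg normal form you are implicitly invoking is unavailable. To close this gap you must either run Luna's slice theorem directly on the $G_v$-variety $\mu^{-1}(0)$ and compute the $G_{v^\bot}$-isotypic decomposition of the normal complex by hand (where the framing piece entering the slice is $\grLSp(w,v^\bot)\oplus\grLSp(v^\bot,w)$, not $\grLSp(w,v^0)\oplus\grLSp(v^0,w)$ as you wrote, the $v^0$-dependence entering only through the $\grESp(v^0,v^\bot)\oplus\grESp(v^\bot,v^0)$ and $d\mu$ terms), or realize $\grAffQuot(v,w)$ as a $\C^*$-fixed-point component of an ungraded, genuinely symplectic quiver variety and restrict Nakajima's symplectic slice theorem from that ambient space. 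Either route requires exactly the shift bookkeeping you flag at the end, but without the symplectic scaffolding your sketch leans on.
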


\begin{Prop}
  \label{prop:grStratification}
We have a stratification
\begin{align}
  \label{eq:grStratification}
  \grAffQuot(v,w)=\sqcup_{(v',w)\geq (v,w)}\grRegStratum(v',w).
\end{align}
In particular, the variety $\grAffQuot(w)=\cup_v\grAffQuot(v,w)$ has a stratification
\begin{align*}
  \grAffQuot(w)=\sqcup_v\grRegStratum(v,w).
\end{align*}
\end{Prop}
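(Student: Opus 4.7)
The plan is to follow the classical Nakajima stratification strategy, adapted to the present graded setting. The key is the fact that the affine GIT quotient $\grAffQuot(v,w)$ parametrizes closed $G_v$-orbits in $\mu^{-1}(0)$, and that each closed orbit corresponds to a \emph{semisimple} graded representation modulo the moment map relation, since its stabilizer in $G_v$ is reductive.

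First I would show that every point $x\in\grAffQuot(v,w)$ lies in some $\grRegStratum(v',w)$ with $(v',w)\geq (v,w)$. Pick a representative $(B,\alpha,\beta)\in\mu^{-1}(0)$ on the closed orbit over $x$. Semisimplicity produces a decomposition
\begin{equation*}
(V,B,\alpha,\beta)\;\cong\;(V_{\mathrm{ess}},B',\alpha,\beta)\;\oplus\;(V_{\mathrm{triv}},B'',0,0),
\end{equation*}
where $V_{\mathrm{triv}}\subset V$ is the sum of those simple summands that are unframed, i.e.\ on which both $\alpha$ and $\beta$ vanish. Setting $v'=v-\dim V_{\mathrm{triv}}$ gives $v'\leq v$ entrywise, which is exactly the condition $(v',w)\geq (v,w)$ in the dominance order. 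The essential part $(V_{\mathrm{ess}},B',\alpha,\beta)$ has trivial $G_{v'}$-stabilizer by the maximality of $V_{\mathrm{triv}}$, so its image lies in $\grRegStratum(v',w)$. One then identifies $x$ with this image via the natural morphism
\begin{equation*}
\iota_{v',v}\colon \grAffQuot(v',w)\longrightarrow \grAffQuot(v,w)
\end{equation*}
obtained by adjoining a trivial graded summand of graded dimension $v-v'$. Because adjoining a trivial summand does not alter the GIT-theoretic image (the trivial summand contributes a factor of the affine space $\grEndSp(v-v',v-v')^{G_{v-v'}}\cdot\{\,\text{stabilized data}\,\}$ on which the character is trivial), $\iota_{v',v}$ is a well-defined locally closed embedding.

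For disjointness, the decomposition into essential and trivial parts is canonical: $V_{\mathrm{triv}}$ is intrinsically characterized as the maximal $B$-invariant graded subspace contained in $\ker\beta$ that receives no image from $\alpha$, hence its graded dimension $v-v'$ depends only on the orbit $x$, not on the choice of representative. Thus the subsets $\iota_{v',v}(\grRegStratum(v',w))$ for different $v'$ are mutually disjoint. Combining existence and disjointness yields the stratification \eqref{eq:grStratification}. The second stratification of $\grAffQuot(w)=\cup_v\grAffQuot(v,w)$ is an immediate consequence: every point has a unique essential dimension vector $v'$, which places it in a unique $\grRegStratum(v',w)$.

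The main technical obstacle will be verifying carefully the semisimple decomposition in the graded setting, that is, that closed orbits really do correspond to semisimple representations and that the resulting direct sum decomposition respects the $(I\times\Z)$-grading on $V$ so that $v-v'\in\N^{I\times(\Z+\Hf)}$. This is where the graded structure differs from Nakajima's original setup \cite{Nakajima01,Nakajima04}; however, since the $G_v$-action, the moment map, and the character $\chi$ are all homogeneous with respect to the grading, the standard argument transports directly, and Lemma~\ref{lem:symmetricCartan} guarantees that the pairings entering the argument behave well under degree shifts. The transversal slice result (Theorem~\ref{thm:grTransversalSlice}) may also be used to reduce locally around a point of $\grRegStratum(v',w)$ to a neighborhood of $0$ in a smaller $\grAffQuot(v-v',w-C_qv')$, where the stratification is tautological.
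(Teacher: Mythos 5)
Your proof follows the standard Nakajima closed-orbit / semisimple-decomposition route; the paper itself does not spell out a proof for this preliminary statement (it defers to \cite{Nakajima01} and \cite{Qin11}), so there is no alternate argument in the text to compare against, and your outline is essentially the one in the literature and essentially sound. Two implicit steps are worth making explicit, however. First, the deduction that $(V_{\mathrm{ess}},B',\alpha,\beta)$ has trivial $G_{v'}$-stabilizer ``by the maximality of $V_{\mathrm{triv}}$'' silently invokes the standard lemma that \emph{stability} (no nonzero $B$-invariant graded subspace contained in $\ker\beta$) forces a trivial stabilizer: for any stabilizing $g\in G_{v'}$, the graded subspace $\mathrm{Im}(g-\mathrm{id})$ is $B$-invariant (since $g$ commutes with $B$) and lies in $\ker\beta$ (since $\beta g=\beta$), hence vanishes, so $g=\mathrm{id}$; maximality of $V_{\mathrm{triv}}$ together with semisimplicity of a closed-orbit representative is what guarantees stability of $V_{\mathrm{ess}}$, and that chain of reasoning should be stated. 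Second, the well-definedness and injectivity of $\iota_{v',v}$ depends on the trivial summand carrying no moduli: in the present graded acyclic setting the arrow maps strictly decrease the degree $a$, so the only simple graded preprojective modules with $\alpha=\beta=0$ are vertex simples concentrated at a single degree $(i,a)$, which forces $B''=0$ and makes the trivial summand unique up to graded dimension; your justification (``contributes a factor of the affine space $\grEndSp(v-v',v-v')^{G_{v-v'}}\cdots$'') obscures this, and taken at face value would leave open whether a nontrivial family of $B''$-data spoils the clean partition. Finally, the clause ``that receives no image from $\alpha$'' in your intrinsic characterization of $V_{\mathrm{triv}}$ is best dropped: ``maximal $B$-invariant graded subspace contained in $\ker\beta$'' already does the job, and the $\alpha$-clause is complement-dependent as stated.
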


\begin{Cor}\label{cor:degeneratedStratum}
Let $(v,w)$,$(v^0,w)$ be $l$-dominant pairs. Then we have $\grRegStratum(v^0,w)\subset \overline{\grRegStratum(v,w)}$ if and
only if $(v^0,w)\geq (v,w)$.
\end{Cor}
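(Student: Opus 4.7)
The plan is to derive this corollary as a direct consequence of the stratification in Proposition \ref{prop:grStratification}, combined with Theorem \ref{thm:grTransversalSlice} and Proposition \ref{prop:nonemptyStratum} for the one nontrivial step. Recall from the definition of the dominance order that, once $w$ is fixed, $(v^0,w) \geq (v,w)$ is equivalent to $v \geq v^0$ componentwise.

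For the ``only if'' direction, assume $\grRegStratum(v^0,w) \subset \overline{\grRegStratum(v,w)}$ and pick any point $x \in \grRegStratum(v^0,w)$. Since $\grAffQuot(v,w)$ is a closed subvariety of $\grAffQuot(w)$ containing $\grRegStratum(v,w)$, we have $x \in \overline{\grRegStratum(v,w)} \subset \grAffQuot(v,w)$. The stratification of $\grAffQuot(v,w)$ in Proposition \ref{prop:grStratification} then forces $x \in \grRegStratum(v',w)$ for some $(v',w) \geq (v,w)$; the global disjoint decomposition $\grAffQuot(w) = \sqcup_{v'} \grRegStratum(v',w)$, combined with $x \in \grRegStratum(v^0,w)$, then forces $v' = v^0$, giving $(v^0,w) \geq (v,w)$.

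For the ``if'' direction, assume $(v^0,w) \geq (v,w)$. The stratification immediately gives $\grRegStratum(v^0,w) \subset \grAffQuot(v,w)$, so the task is to show that every $x \in \grRegStratum(v^0,w)$ lies in $\overline{\grRegStratum(v,w)}$. I would apply Theorem \ref{thm:grTransversalSlice} at such an $x$ with $v^\bot = v - v^0$ and $w^\bot = w - C_q v^0$: locally $\grAffQuot(v,w)$ is biholomorphic to $U_T \times U^\bot$ with $U^\bot$ a neighborhood of $0 \in \grAffQuot(v^\bot,w^\bot)$, and this identification matches $\grRegStratum(v,w) \cap U$ with $U_T \times (\grRegStratum(v^\bot,w^\bot) \cap U^\bot)$. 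The $l$-dominance of $(v,w)$ yields
\[
w^\bot - C_q v^\bot = w - C_q v \in \N^{I \times \Z},
\]
so $(v^\bot,w^\bot)$ is itself $l$-dominant and Proposition \ref{prop:nonemptyStratum} ensures that $\grRegStratum(v^\bot,w^\bot)$ is nonempty. Since the origin of $\grAffQuot(v^\bot,w^\bot)$ lies in the closure of its regular stratum, the point $(0,0) \in U_T \times U^\bot$ is a limit of points in $U_T \times (\grRegStratum(v^\bot,w^\bot) \cap U^\bot)$, hence $x \in \overline{\grRegStratum(v,w)}$.

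The main obstacle is precisely the density statement invoked at the very end: that the origin of any nonempty $\grAffQuot(v^\bot,w^\bot)$ belongs to $\overline{\grRegStratum(v^\bot,w^\bot)}$. I expect this to follow from the natural $\C^*$-scaling on $\grRep(Q\op,v^\bot,w^\bot)$, which preserves $\mu^{-1}(0)$, commutes with $G_{v^\bot}$, and contracts every orbit to the zero configuration; alternatively, one can argue from irreducibility of $\grAffQuot(v^\bot,w^\bot)$ together with the openness of its regular stratum. Both approaches are standard for GIT quotients of graded quiver varieties in this setting.
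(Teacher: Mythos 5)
Your proof is correct, and since the paper states the corollary without an explicit argument (it is presented as an immediate consequence of Proposition \ref{prop:grStratification}), you have made visible exactly the frontier condition that the word ``stratification'' there leaves implicit. Your ``only if'' direction is the clean combinatorial argument one expects: closedness of $\grAffQuot(v,w)$ inside $\grAffQuot(w)$ plus disjointness of the strata pins down $v'=v^0$. Your ``if'' direction is the substantive half, and the route you take---transversal slice at $x$, $l$-dominance of $(v^\bot,w^\bot)$ to invoke Proposition \ref{prop:nonemptyStratum}, and then $\C^*$-scaling to put $0$ in the closure of $\grRegStratum(v^\bot,w^\bot)$---is sound. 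Two small points are worth flagging. First, you invoke that the biholomorphism of Theorem \ref{thm:grTransversalSlice} carries $\grRegStratum(v,w)\cap U$ to $U_T\times(\grRegStratum(v^\bot,w^\bot)\cap U^\bot)$; this compatibility of the slice with the stratification is standard for Nakajima-type transversal slices and is what the theorem is designed to give, but it goes slightly beyond the literal wording quoted in the paper, so it deserves an explicit citation. Second, for the $\C^*$-argument one should note that the stabilizer of $\lambda\cdot p$ equals the stabilizer of $p$ (since the scaling commutes with $G_{v^\bot}$), so the whole punctured $\C^*$-orbit stays inside the regular stratum; together with the contracting property of the weight-one action on the affine quotient this gives $0\in\overline{\grRegStratum(v^\bot,w^\bot)}$. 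Your alternative via irreducibility is also viable, but it quietly needs surjectivity of $\pi:\grProjQuot(v^\bot,w^\bot)\to\grAffQuot(v^\bot,w^\bot)$ to transport irreducibility from the smooth connected $\grProjQuot$ (Theorem \ref{thm:oddVanish}) to $\grAffQuot$; the $\C^*$-scaling route avoids that dependence and is preferable.
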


For any two pairs of vectors $(v^1,w^1)$, $(v^2,w^2)$, define quadratic forms $d((v^1,w^1),(v^2,w^2))$, $\dT((v^1,w^1),(v^2,w^2))$, $\dTW(w^1,w^2)$, and
$\eMatrix(w^1,w^2)$ \st
\begin{align}
  \label{eq:dimension}
d((v^1,w^1),(v^2,w^2))=(w^1-C_qv^1)\cdot v^2[-\Hf]+ v^1\cdot w^2[-\Hf],
\end{align}
\begin{align}
  \label{eq:dimensionW}
\dTW(w^1,w^2)=-w^1[\Hf] \cdot C_q^{-1}w^2,
\end{align}
\begin{align}
  \label{eq:twistDimension}
  \eMatrix(w^1,w^2)=-w^1[\Hf] \cdot C_q^{-1}w^2+w^2[\Hf] \cdot C_q^{-1}w^1.
\end{align}
\begin{align}
  \dT((v^1,w^1),(v^2,w^2))=d((v^1,w^1),(v^2,w^2))+\dTW(w^1,w^2),
\end{align}
Notice that $\dT((0,w^1),(0,w^2))$ equals $\dTW(w^1,w^2)$.

\begin{Rem}
  Our $\dT$ and $\dTW$ are different from $\td$ and $\td_W$ in
  \cite{Nakajima04}, but the properties are similar.
\end{Rem}

\begin{Lem}
The following equality holds:
\begin{align}\label{eq:chopTwistDimension}
  \dT((v^1,w^1),(v^2,w^2))=\dT((0,w^1-C_q v^1),(0,w^2-C_q v^2)).
\end{align}
\end{Lem}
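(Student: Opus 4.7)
The identity is purely formal, reducing to bilinearity of the pairing together with the symmetry of $C_q$ with respect to the shifted inner product (Lemma~\ref{lem:symmetricCartan}). First I would note that on the right-hand side the $d$-part vanishes, since both $v$-components are zero:
\[
d((0,w^1-C_qv^1),(0,w^2-C_qv^2)) = 0.
\]
Therefore the RHS reduces to $\dTW(w^1-C_qv^1,\,w^2-C_qv^2)$. Expanding this by bilinearity and using $C_q^{-1}\circ C_q = \id$ on $E[\Hf]$ yields
\[
-w^1[\Hf]\cdot C_q^{-1}w^2 \;+\; w^1[\Hf]\cdot v^2 \;+\; (C_qv^1)[\Hf]\cdot C_q^{-1}w^2 \;-\; (C_qv^1)[\Hf]\cdot v^2.
\]

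Next I would expand the LHS using the definitions, obtaining
\[
w^1\cdot v^2[-\Hf] \;-\; C_qv^1\cdot v^2[-\Hf] \;+\; v^1\cdot w^2[-\Hf] \;-\; w^1[\Hf]\cdot C_q^{-1}w^2.
\]
The trivial reindexing identity $\alpha\cdot\beta[-\Hf] = \alpha[\Hf]\cdot \beta$ turns $w^1\cdot v^2[-\Hf]$ into $w^1[\Hf]\cdot v^2$ and $C_q v^1\cdot v^2[-\Hf]$ into $(C_qv^1)[\Hf]\cdot v^2$. After this rewriting, the terms $-w^1[\Hf]\cdot C_q^{-1}w^2$, $w^1[\Hf]\cdot v^2$, and $-(C_qv^1)[\Hf]\cdot v^2$ appear on both sides and cancel.

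What remains is the single identity
\[
v^1\cdot w^2[-\Hf] \;=\; (C_qv^1)[\Hf]\cdot C_q^{-1}w^2,
\]
which is the essential content of the lemma. I would obtain it by applying Lemma~\ref{lem:symmetricCartan} with $\eta^2 := v^1[\Hf]$ (finitely supported) and $\eta^1 := (C_q^{-1}w^2)[\Hf]$: since $C_q$ commutes with degree shifts, the left-hand side of the symmetry reads $v^1[\Hf]\cdot w^2 = v^1\cdot w^2[-\Hf]$, while the right-hand side reads $(C_qv^1)[\Hf]\cdot C_q^{-1}w^2$, matching exactly. The only point requiring a small amount of care is keeping track of the degree shifts $[\pm\Hf]$ and confirming that the finite-support hypothesis needed to apply Lemma~\ref{lem:symmetricCartan} is satisfied at each step, which it is because the $v^i$ are finitely supported; beyond this the proof is entirely mechanical and presents no real obstacle.
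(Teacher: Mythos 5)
Your proof is correct, and the paper itself states this lemma without proof, so there is nothing to compare against. The expansion of both sides, the cancellation after the reindexing $\alpha\cdot\beta[-\Hf]=\alpha[\Hf]\cdot\beta$, the reduction to the single identity $v^1\cdot w^2[-\Hf]=(C_qv^1)[\Hf]\cdot C_q^{-1}w^2$, and its deduction from Lemma~\ref{lem:symmetricCartan} with $\eta^2=v^1[\Hf]$ (finitely supported, as required) are all valid.
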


\begin{Thm}[{\cite{Qin11}}]\label{thm:oddVanish}
The graded quiver variety $\grProjQuot(v,w)$ is smooth connected of
dimension $d((v,w),(v,w))$. Furthermore, it is homotopic to the zero fiber
$\grLag(v,w)$, and its odd homology vanishes.
\end{Thm}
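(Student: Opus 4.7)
The plan is to adapt Nakajima's arguments for ungraded and bipartite graded quiver varieties to the present acyclic setting, following the line of \cite{Nakajima01}, \cite{Nakajima04}.

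First, for smoothness and the dimension formula, I verify that $G_v$ acts freely on the $\chi$-stable locus of $\mu^{-1}(0)$ and that the differential $d\mu$ is surjective there. The character $\chi(g)=\prod(\det g_{i,a})^{-1}$ forces a stability condition that rules out nonzero $B$-invariant graded subspaces of $V$ contained in $\ker \beta$, giving freeness of the $G_v$-action. Surjectivity of $d\mu$ follows by standard linear-algebra duality: the cokernel of $d\mu$ at a point is identified with the space of infinitesimal stabilizers via the symmetric pairing of Lemma~\ref{lem:symmetricCartan}, so freeness implies transversality. Consequently $\grProjQuot(v,w)$ is smooth, and a routine count
\[
\dim \grProjQuot(v,w) = \dim \grRep(Q\op,v,w) - \dim \grEndSp(v,v[-1]) - \dim G_v
\]
simplifies, upon unwinding the definitions of $\grESp$, $\grLSp$, $\grEndSp$ and of $C_q$ in Definition~\ref{def:cartanMatrix}, to $d((v,w),(v,w))$.

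Next, for connectedness, the homotopy equivalence with $\grLag(v,w)$, and the vanishing of odd homology, I introduce a $\C^*$-action on $\mu^{-1}(0)$ obtained by simultaneously scaling the linear maps $(B,\alpha,\beta)$ and combining this with an appropriate action on the graded vector space $V$. This action descends to $\grProjQuot(v,w)$ and $\grAffQuot(v,w)$ and commutes with the projective morphism $\pi$. The key point is to choose the weights so that the induced action on $\grAffQuot(v,w)$ contracts every point to the origin as $t \to 0$; granted this, the fixed locus in $\grProjQuot(v,w)$ is contained in $\pi^{-1}(0)=\grLag(v,w)$. A Bialynicki-Birula decomposition of the smooth variety $\grProjQuot(v,w)$ combined with the $\C^*$-equivariant deformation retraction onto the fixed locus then yields both the homotopy equivalence with $\grLag(v,w)$ and a paving by affine cells, from which the vanishing of odd homology (and, via the connectedness of $\grLag(v,w)$ through its zero-stratum, the connectedness of $\grProjQuot(v,w)$) follows at once.

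The main obstacle is the correct construction of the contracting $\C^*$-action in the acyclic (not necessarily bipartite) case. In Nakajima's bipartite setup one alternates positive and negative weights on the two halves of the vertex set; here the weights must instead be coupled with a shift of the $\Z$-grading on $V$ and $W$ compatible with the asymmetric $q$-Cartan matrix of Definition~\ref{def:cartanMatrix} and with the ordering convention $b_{ij}\leq 0$ for $i\geq j$. Once a weight system is exhibited such that $\grAffQuot(v,w)^{\C^*}=\{0\}$, all remaining steps are standard Bialynicki-Birula machinery applied to the smooth variety $\grProjQuot(v,w)$ and the projective morphism $\pi$, essentially as in \cite{Nakajima01}, \cite{Nakajima04}.
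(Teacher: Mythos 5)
Your framework (stability and freeness, transversality of $d\mu$, a contracting $\C^*$-action, Bialynicki--Birula) is the Nakajima-style template, but there are two real gaps. First, for smoothness you claim $\coker d\mu$ is dual to the infinitesimal stabilizer space by ``standard linear-algebra duality'' via Lemma~\ref{lem:symmetricCartan}. In the graded setting, however, $\mu$ is \emph{not} a moment map in the symplectic sense: its target is $\grEndSp(v,v[-1])$, not $\grEndSp(v,v)^*=\mathrm{Lie}(G_v)^*$, and $\grRep(Q\op,v,w)$ carries no graded symplectic form under which the infinitesimal action and $d\mu$ become adjoint. Lemma~\ref{lem:symmetricCartan} is a numerical symmetry of $C_q$ on dimension vectors; it does not furnish a pairing on tangent spaces. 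So the step does not go through as written, and a different mechanism is needed to establish smoothness. Second, you single out constructing a contracting $\C^*$-action as ``the main obstacle'' and leave it unresolved, proposing a weight system coupled to a regrading. No such subtlety exists here: the uniform scaling $(B,\alpha,\beta)\mapsto(sB,s\alpha,s\beta)$ already does the job. It preserves $\mu^{-1}(0)$ (since $\mu$ is homogeneous quadratic), commutes with $G_v$, acts on $\grRep(Q\op,v,w)$ with strictly positive weights so that $\grAffQuot(v,w)$ contracts to its unique fixed point $0$, and has limits on $\grProjQuot(v,w)$ as $s\to 0$ by the valuative criterion applied to the proper morphism $\pi$. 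The ``alternating bipartite weights'' you invoke belong to the ungraded Kac--Moody quiver-variety story and are irrelevant in the graded loop-algebra setting, where the weight order already decreases strictly along every map in $\grESp$ and $\grLSp$.

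For comparison, the route the paper itself follows for smoothness and irreducibility (Proposition~\ref{prop:flag_bundle} and its corollary, in the level-$1$ case) bypasses moment-map transversality entirely: $\grProjQuot(v,w)$ is identified with the incidence sub-vector-bundle $\tcF(v,w)\subset\cF(v,w)\times E_w$ over the smooth projective iterated Grassmannian bundle $\cF(v,w)$. Smoothness and connectedness are then immediate, $\grProjQuot(v,w)$ deformation-retracts onto the zero section $\cF(v,w)\times\{0\}=\grLag(v,w)$, and $\cF(v,w)$ admits an affine cell decomposition, so the vanishing of odd homology falls out at once. This representation-theoretic bundle description, rather than a transversality argument for the non-moment-map $\mu$, is the mechanism behind the theorem.
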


\subsubsection*{Deformed Grothendieck ring}

Let $(v,w)$ be a pair of vectors. The map $\pi:\grProjQuot(v,w)\ra \grAffQuot(w)$ is proper since it is the
composition of a projective morphism and a closed embedding. The rank $1$ trivial local system over
$\grProjQuot(v,w)$ yields a perverse sheaf
$1_{\grProjQuot(v,w)}=\underline{\C}_{}[\dim\grProjQuot(v,w)]$. Define $IC_w(v')$ to be the simple perverse sheaf generated by the rank $1$ trivial local system on $\grRegStratum(v',w)$.

By the celebrated decomposition theorem \cite{BeilinsonBernsteinDeligne82}, the sheaf $\pi_w(v)=\pi_!(1_{\grProjQuot(v,w)})$ decomposes into a direct sum of shifts of simple perverse sheaves on $\grAffQuot(w)$. The results in \cite[Theorem 14.3.2]{Nakajima01} can be translated into the following.
\begin{Thm}\label{thm:decomposition}
We have a decomposition
  \begin{align}
    \label{eq:decomposition}
    \pi_w(v)=\oplus_{v':(v',w)\text{ is $l$-dominant}}\oplus_{d\in\Z}a_{v,v';w}^dIC_w(v')[d],
  \end{align}
where the coefficients $a_{v,v';w}^d$ satisfy $a_{v,v';w}^d\in \N$,
$a_{v,v';w}^d=a_{v,v';w}^{-d}$, $a_{v,v;w}^d=\delta_{d0}$ if $(v,w)$
is $l$-dominant, and
$a_{v,v';w}^d$ vanishes unless $(v',w)\geq(v,w)$.
\end{Thm}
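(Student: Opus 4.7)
The plan is to apply the Beilinson--Bernstein--Deligne decomposition theorem to the proper morphism $\pi\colon \grProjQuot(v,w)\to \grAffQuot(w)$, and then extract each asserted property from the geometry developed earlier in Section~\ref{sec:notations}, following the strategy of Nakajima's original argument in \cite{Nakajima01}. Since $\pi$ is proper (it is the composition of a projective morphism with a closed embedding) and $\grProjQuot(v,w)$ is smooth of pure dimension by Theorem~\ref{thm:oddVanish}, the complex $1_{\grProjQuot(v,w)}=\underline{\C}[\dim\grProjQuot(v,w)]$ is a pure perverse sheaf. The decomposition theorem then decomposes $\pi_!(1_{\grProjQuot(v,w)})$ into a direct sum of shifts of simple perverse sheaves on $\grAffQuot(w)$, with multiplicities in $\N$, which already yields $a^d_{v,v';w}\in\N$.

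The next step is to identify the simple summands with the sheaves $IC_w(v')$. By Propositions~\ref{prop:nonemptyStratum} and \ref{prop:grStratification}, every simple perverse sheaf on $\grAffQuot(w)$ is of the form $IC(\overline{\grRegStratum(v',w)},\cL)$ for an $l$-dominant pair $(v',w)$ and an irreducible local system $\cL$ on the smooth stratum $\grRegStratum(v',w)$. The main obstacle is ruling out non-trivial $\cL$. For this I would use the transversal slice of Theorem~\ref{thm:grTransversalSlice}: at a point $x\in\grRegStratum(v',w)$ in the image of $\pi$, the stalks of $\pi_!(1_{\grProjQuot(v,w)})$ are computed, up to shift, by the cohomology of the zero fiber $\grLag(v-v',\,w-C_qv')$. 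This cohomology is concentrated in even degrees by the parity vanishing of Theorem~\ref{thm:oddVanish}, and a standard induction on $\dim\grAffQuot(v,w)$, using the slice to trivialize the local system transversally to each stratum, forces every $\cL$ appearing in the decomposition to be trivial.

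The remaining three properties then follow from standard arguments. For the symmetry $a_{v,v';w}^d=a_{v,v';w}^{-d}$, note that $1_{\grProjQuot(v,w)}$ is Verdier self-dual because $\grProjQuot(v,w)$ is smooth; since $\pi$ is proper, Verdier duality commutes with $\pi_!=\pi_*$, so $\pi_!(1_{\grProjQuot(v,w)})$ is self-dual, and the $IC_w(v')$ are self-dual as well, forcing the symmetry of multiplicities under $d\mapsto -d$. For the support condition, the image of $\pi$ is closed by properness and contained in the closed subvariety $\grAffQuot(v,w)\subset \grAffQuot(w)$; by Proposition~\ref{prop:grStratification} and Corollary~\ref{cor:degeneratedStratum}, only the strata $\grRegStratum(v',w)$ with $(v',w)\geq (v,w)$ can meet this image, so only the corresponding $IC_w(v')$ may appear. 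Finally, when $(v,w)$ is itself $l$-dominant, $\pi$ restricts to an isomorphism $\pi^{-1}(\grRegStratum(v,w))\iso \grRegStratum(v,w)$, so the restriction of $\pi_!(1_{\grProjQuot(v,w)})$ to this open stratum is the shifted constant sheaf, forcing $a_{v,v;w}^d=\delta_{d,0}$. The hard part throughout is the triviality of the local systems in the middle paragraph; everything else is essentially bookkeeping from the stratification theory already established.
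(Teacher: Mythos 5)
Your proposal unpacks precisely the argument that the paper delegates to a citation: the authors simply invoke the decomposition theorem of \cite{BeilinsonBernsteinDeligne82} applied to the proper map $\pi$ and then state that \cite[Theorem 14.3.2]{Nakajima01} translates into the present statement. Your sketch reproduces the ingredients of Nakajima's proof (properness plus smoothness of the source for BBD, the transversal slice and parity vanishing to rule out nontrivial local systems, Verdier self-duality for $a^d=a^{-d}$, the stratification in Proposition~\ref{prop:grStratification} for the support condition, and the isomorphism over the open stratum for $a^d_{v,v;w}=\delta_{d0}$), so this is essentially the same approach as the paper.
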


% \begin{Rem}
% The original proof of the theorem \cite[Theorem 14.3.2]{Nakajima01} only applied to Dynkin case, because Proposition \ref{prop:grStratification} did not necessarily hold due to the specific $G_w\times \C^*$ action chosen there. In \cite{Nakajima09}, and in the present paper, the group actions are different from the original one, and the results in Proposition \ref{prop:grStratification} and in this theorem hold beyond Dynkin case, \cf also \cite[(3.7)(3.15)]{Nakajima09}.
% \end{Rem}

% \begin{Rem}
%   In \cite{Nakajima01}, \cite[3.3]{Nakajima09} there are families of algebraic homomorphisms from $U_q(Lg)$ to the convolution algebras defined via equivarient $K$-theory respectively, and \cite[Theorem 14.3.2]{Nakajima01} is stated in the language of representation theory of $U_q(Lg)$. On the other hand, in our sitution the action of $G_w\times\C^*$ varies, and thus the convolution algebra via equivarient $K$-theory varies as well.

% We do not pursue a Lie theoretic intereptation of the convolution algebra in this paper. Therefore, while applying the proof of \cite[Theorem 14.3.2]{Nakajima01}, $U_q(Lg)$ shold be translated into the convolution algebra.
% \end{Rem}
We define the Laurent polynomial $a_{v,v';w}(t)$ in the Laurent
polynomial ring $\Z[t^\pm]$ to be
\begin{align*}
  a_{v,v';w}(t)=\sum_{d\in\Z}a_{v,v';w}^d t^d.
\end{align*}

For each $l$-dominant pair $(v,w)$, we define a set 
\begin{align*}
  \cP_w=\{IC_w(v)|(v,w)\text{ is $l$-dominant}\}.
\end{align*}
This set is of finite cardinality.

Let $\cD_c(\grAffQuot(w))$ be the bounded derived category of
constructible sheaves on $\grAffQuot(w)$,  and $\cQ_w$ its full
subcategory whose objects are isomorphic to the direct sums of the
shifts of the objects in $\cP_w$. Then
$\cQ_w$ and $\cP_w$ are stable under the Verdier duality $D$. Let
$\KGp_w$ be the quotient of the free abelian group generated by the
isomorphism classes $(L)$ in $\cQ_w$ modulo the relation
$(L)=(L')+(L'')$ whenever $L$ is isomorphic to $L'\oplus L''$. The
group $\KGp_w$ has a natural $\tBase$-structure \st $t(L)=(L[1])$. The
duality $D$ induces an involution $\overline{(\ )}$ on $\KGp_w$ which
satisfies $\overline{t(L)}=t^{-1}\overline{(L)}$ and $\overline{(IC_w(v))}=(IC_w(v))$

The decomposition \eqref{eq:decomposition} implies that, by abuse of
notation, $\KGp_w$ has two $\tBase$-bases: $\{IC_w(v)|(v,w)\text{ is
  $l$-dominant}\}$ and $\{\pi_w(v)|(v,w)\text{ is $l$-dominant}\}$.

As in \cite{Nakajima09}, we define the abelian group $\dualKGp_w=\Hom_\tBase(\KGp_w,\tBase)$. Let $\{\can_w(v)\}$,
$\{\chi_w(v)\}$ be the bases of $\dualKGp_w$ dual to $\{IC_w(v)\}$,
$\{\pi_w(v)\}$ respectively. Define another basis
$\{\pbw_w(v)|(v,w)\text{ is $l$-dominant}\}$ of $\dualKGp_w$ by
\begin{align*}
\langle \pbw_w(v),L\rangle=\sum_k t^{\dim \grRegStratum(v,w)-k}\dim H^k(i^!_{x_{v,w}}L),
\end{align*}
where $x_{v,w}$ is any point in $\grRegStratum(v,w)$, $i_{x_{v,w}}$
is the inclusion, and $\langle\ ,\ \rangle$ is the canonical
pairing. Notice that the definition of $\pbw_w(v)$ is independent of
the choice of $x_{v,w}$. Indeed, it suffices to check this for the
elements $L$ in the basis $\{\pi_w(v)\}$ and here it follows from
Theorem \ref{thm:grTransversalSlice}.

\begin{comment}
Why we use   $\dim \grAffQuot(v,w)$ here instead of $\dim
\grProjQuot(v,w)$? For dominant pairs these two should be the same by
the existence of the dense regular stratum, on which $\pi$ is isomorphism.
\end{comment}

In the situation of Theorem
\ref{thm:grTransversalSlice}, we have
\begin{align*}
  \langle \pbw_w(v'),IC_w(v)\rangle=\langle
  \pbw_{w^\bot}({v'}^\bot),IC_{w^\bot}(v^\bot)\rangle.
\end{align*}
By the
properties of perverse sheaves, we have
\begin{align}
  \label{eq:canToPbw}
  \can_w(v)\in\pbw_w(v)+\sum_{(v',w)<(v,w)}t^{-1}\Z[t^{-1}]\pbw_{w}(v').
\end{align}
Therefore, $\{\pbw_w(v)|(v,w)\text{ is $l$-dominant}\}$ is a
basis. 

\begin{comment}
  [check] the coefficients are polynomials in $t^{-1}$.
\end{comment}

\begin{Def}[{\cite[3.3]{Nakajima09}}]\label{def:quotKGp}
Define $\quotKGp$ to be the infinite rank free
  $\tBase$-module consisting of the functionals
  $(f_w)\in\prod_w\dualKGp_w$ \st we have $\langle
    f_w,IC_w(v)\rangle=\langle
    f_{w^\bot},IC_{w^\bot}(v^\bot)\rangle$ for any $l$-dominant pairs
    $(v,w)$, $(w^\bot,v^\bot)$ appearing in Theorem \ref{thm:grTransversalSlice}.
\end{Def}

\begin{comment}
  Following the arguments of \cite[Theorem 4.3]{VaragnoloVasserot03},
  \cf also \cite[3.5]{Nakajima09}, we consider the restriction functor
\end{comment}

Let $M(w)=(f_{w'})_{w'}$ denote the functional determined by $f_w=M_w(0)$ and
$L(w)=(f_{w'})_{w'}$ the functional determined by $f_w=L_w(0)$. Then $\{M(w)\}$ and
$\{L(w)\}$ are two bases of $\quotKGp$.

By \cite{VaragnoloVasserot03},
resp. \cite[Section 3.5]{Nakajima09}, for any $w$, $w^1$, $w^2$, \st $w^1+w^2=w$, we have a restriction functor
\begin{align*}
  \tRes_{w^1,w^2}^w:\cD_c(\grAffQuot(w))\ra
\cD_c(\grAffQuot(w^1))\times \cD_c(\grAffQuot(w^2)).
\end{align*}
Furthermore, $\tRes^w_{w^1,w^2}$ sends $\pi_w(v)$ to 
\begin{align*}
\oplus_{v^1+v^2=v}\pi_{w^1}(v^1)\boxtimes  \pi_{w^2}(v^2)[d((v^2,w^2),(v^1,w^1))-d((v^1,w^1),(v^2,w^2))],
\end{align*}
where $(v^1,w^1)$, $(v^2,w^2)$ are not necessarily $l$-dominant.

\begin{comment}
  [Check] recheck VV's original proof of restriction functor and the
  structure constants.
\end{comment}

We define
$\res^w=\sum_{w^1+w^2=w}\tRes^w_{w^1,w^2}[-\eMatrix(w^1,w^2)]$ for
each $w$. Because these functors are compatible with Theorem \ref{thm:grTransversalSlice}, they induce a
multiplication of $\quotKGp$, which we denote by $\otimes$.

The arguments of \cite{VaragnoloVasserot03} imply the following result.
\begin{Thm}\label{thm:positiveSimples}
The structure constants of the multiplication $\otimes$ with respect to the
  basis $\{\can(w)\}$ of $\quotKGp$ are positive:
  \begin{align}
    \label{eq:otimes}
    \can(w^1)\otimes \can(w^2)=\sum_{w^3}\canStr^{w^3}_{w^1,w^2}(t)\can(w^3)
  \end{align}
with $\canStr^{w^3}_{w^1,w^2}(t)\in\N[t^\pm]$.
\end{Thm}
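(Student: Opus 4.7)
The plan is to realize the multiplication $\otimes$ on $\quotKGp$ as the operation dual to a geometric \emph{induction} functor between derived categories of graded quiver varieties, and then apply the Beilinson--Bernstein--Deligne decomposition theorem to extract positivity.

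First I would construct, for each decomposition $w=w^1+w^2$, an induction correspondence
\[
\grAffQuot(w^1)\times\grAffQuot(w^2)\;\xla{\;p\;}\;E_{w^1,w^2}\;\xra{\;q\;}\;\grAffQuot(w),
\]
in the style of Lusztig and Varagnolo--Vasserot. Here $E_{w^1,w^2}$ should parametrize a point of $\grAffQuot(w)$ together with a compatible decomposition of the $W$- and $V$-spaces into summands of types $(v^1,w^1)$ and $(v^2,w^2)$; in this setup $p$ will be a smooth morphism (a vector bundle of explicit rank) and $q$ will be proper. The corresponding induction functor is
\[
\Ind^w_{w^1,w^2}(K_1\boxtimes K_2)\;=\;q_!\,p^*(K_1\boxtimes K_2)[s(w^1,w^2)],
\]
where the shift $s(w^1,w^2)$ is to be calibrated so that, under the pairing $\langle\,,\,\rangle\colon\KGp_w\otimes_\tBase\dualKGp_w\to\tBase$, the functor $\Ind^w_{w^1,w^2}$ is adjoint to $\tRes^w_{w^1,w^2}[-\eMatrix(w^1,w^2)]$. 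Summing over decompositions then realises $\otimes$ as the dual of $\res^w$.

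Next I would apply $\Ind^w_{w^1,w^2}$ to the external product $IC_{w^1}(0)\boxtimes IC_{w^2}(0)$, which, via the compatibility built into the definition of $\quotKGp$, represents $\can(w^1)$ and $\can(w^2)$. Since the input is semisimple perverse and $q$ is proper, the decomposition theorem yields
\[
\Ind^w_{w^1,w^2}\bigl(IC_{w^1}(0)\boxtimes IC_{w^2}(0)\bigr)\;\cong\;\bigoplus_{(v,w)\ l\text{-dominant},\;d\in\Z}n^{v}_{d}\,IC_w(v)[d],\qquad n^{v}_{d}\in\N.
\]
Translating back through the adjunction and unpacking the definition of $\can(w^3)=\can_{w^3}(0)$, the coefficient of $\can(w^3)$ in $\can(w^1)\otimes\can(w^2)$ turns out to be a non-negative $\Z$-linear combination of monomials $n^v_d\,t^d$, giving $\canStr^{w^3}_{w^1,w^2}(t)\in\N[t^{\pm}]$ as claimed.

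The main obstacle will be twofold. First, the induction correspondence must be constructed on \emph{graded} quiver varieties, refining Lusztig's classical construction by carrying the $\Z$-grading encoded in $w_i(a)$ and $v_i(a)$; one must verify the smoothness of $p$ and properness of $q$, and compute the relevant fibre dimensions. Second, the shift $s(w^1,w^2)$ must be pinned down so that the adjunction between $\Ind$ and $\tRes^w_{w^1,w^2}[-\eMatrix(w^1,w^2)]$ holds exactly; this amounts to a numerical identity among $\eMatrix$, the fibre dimensions of $p$ and $q$, and the quadratic forms $d((v^i,w^i),(v^j,w^j))$ of \eqref{eq:dimension}, analogous to the one verified by Varagnolo--Vasserot. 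Once both points are settled, positivity of $\canStr^{w^3}_{w^1,w^2}(t)$ is an immediate consequence of the semisimplicity delivered by the decomposition theorem.
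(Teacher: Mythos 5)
Your proposal misses what the actual geometric input for this theorem is, and the route via an induction functor has a genuine logical gap. The paper invokes the argument of Varagnolo--Vasserot, whose key point is that the \emph{restriction} functor $\tRes^w_{w^1,w^2}$ itself sends $IC_w(v)$ to a direct sum of shifts of external products $IC_{w^1}(v^1)\boxtimes IC_{w^2}(v^2)$. This is the nontrivial step: $\tRes^w_{w^1,w^2}$ is built from a $\C^*$-action on $\grAffQuot(w)$ through hyperbolic localization (attracting set, projection to the fixed locus), it is not a proper pushforward, and semisimplicity of its output must be extracted from purity and Braden's hyperbolic localization theorem (or, in Lusztig's original quiver-representation setting, a Fourier--Deligne transform trick). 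Once one knows $\tRes(IC_w(v))\in\cQ_{w^1}\times\cQ_{w^2}$, the coefficient of $IC_{w^1}(0)\boxtimes IC_{w^2}(0)$ in that direct sum is a sum of nonnegative multiplicities times shift monomials $t^{\pm d}$, and that coefficient \emph{is} the structure constant $\canStr^{w^3}_{w^1,w^2}(t)$ by the duality defining $\otimes$ on $\quotKGp$. Positivity follows immediately and there is nothing else to do.

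Your route hits two obstructions. First, the induction correspondence you want to write down, with $p$ smooth and $q$ proper between (products of) graded affine quiver varieties, does not exist here the way Lusztig's does for spaces of quiver representations: the affine quiver varieties $\grAffQuot(w)$ are categorical quotients of framed representation spaces, the deformed product on $\quotKGp$ is defined exclusively through the restriction/hyperbolic-localization side, and there is no natural Lusztig-type extension correspondence refining it. You identify this as an ``obstacle to be settled,'' but it is not a technical detail to be calibrated; the geometry simply is not the same as in the quiver-representation picture.

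Second, and more fundamentally, even if such an $\Ind$ existed and were adjoint to $\tRes^w_{w^1,w^2}[-\eMatrix(w^1,w^2)]$, the decomposition theorem applied to $\Ind(IC_{w^1}(0)\boxtimes IC_{w^2}(0))$ would only give you \emph{Hom-space dimensions}: via the adjunction, the multiplicity $n^v_d$ in the semisimple decomposition of $\Ind(IC_{w^1}(0)\boxtimes IC_{w^2}(0))$ equals $\dim\Hom(IC_{w^1}(0)\boxtimes IC_{w^2}(0),\tRes(IC_{w^3}(0))[\text{shift}])$. But the structure constant $\canStr^{w^3}_{w^1,w^2}(t)$ is the coefficient of $(IC_{w^1}(0)\boxtimes IC_{w^2}(0))$ in the \emph{Grothendieck-group class} $[\tRes(IC_{w^3}(0))]\in\KGp_{w^1}\otimes_\tBase\KGp_{w^2}$, which is defined through the explicit formula for $\tRes$ on the standard sheaves $\pi_w(v)$, not through Hom-spaces. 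These two numbers agree exactly when $\tRes(IC_{w^3}(0))$ is itself a direct sum of shifts of simple perverse sheaves. For a non-semisimple complex, Hom into a simple object need not detect the multiplicity of that simple in the Grothendieck-group class (already a non-split extension $0\to A\to B\to C\to 0$ of perverse sheaves with $A,C$ simple has $[B]=[A]+[C]$ but $\Hom(C,B)=0$). So the conclusion you want to draw from the adjunction already presupposes the semisimplicity of $\tRes(IC)$ --- which is precisely the content one must establish and is what Varagnolo--Vasserot supply.
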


\subsection{qt-characters}
\label{sec:qtCharacters}

Define the ring of formal power series
  \begin{align}
    \redTargSpace=\tBase[[Y_i(a)^\pm]]_{i\in \vtx, a\in \Z},
  \end{align}
  where $t$, $Y_i(a)$ are indeterminates. We denote its product
  by $\cdot$, and often omit this notation.

Given vectors $w$, $v$ as before, we denote the monomial $m=Y^{w-C_q
  v}$ by $m(v,w)$. By \eqref{eq:chopTwistDimension}, we have a naturally
defined bilinear form $\dT$ for such monomials. For $i\in I$, $b\in (\Hf+\Z)$, we sometimes denote
$m(e_{i,b},0)^{-1}$ by $A_{i,b}$.

Endow $\redTargSpace$ with the twisted product $*$ and the bar involution
$\overline{(\ )}$ \st for any two monomials $m^1=m(v^1,w^1)$, $m^2=m(v^2,w^2)$, we have
\begin{align}
\overline{t}=t^{-1},\ \overline{m^1}=m^1,\\
  m^1*m^2=t^{-\dT(m^1,m^2)+\dT(m^2,m^1)}m^1m^2.\label{eq:twistedMultiplication}
\end{align}

The $t$-analogue of the
$q$-character map is defined to be the $\tBase$-linear map $\tChar(\ )$ from
  $\quotKGp$ to $\redTargSpace$ \st we have
  \begin{align}
    \label{eq:redQtChar}
    \tChar(\ )=\sum_v \langle\  ,\pi_w(v)\rangle Y^{w-C_qv}.
  \end{align}

The following results follow from \cite{VaragnoloVasserot03} and \cite[Theorem 3.5]{Nakajima04}.

%1) $\qtChar(\ )$ is injective.
\begin{Thm}\label{thm:injectiveHom}
$\tChar(\ )$ is an injective algebra homomorphism from $\quotKGp$ to $\redTargSpace$.
\end{Thm}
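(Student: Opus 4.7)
The plan is to verify the two assertions separately, starting with the algebra homomorphism property and then proving injectivity via a triangularity argument on a suitable basis.

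For the homomorphism property, I would compute $\tChar(L(w^1) \otimes L(w^2))$ directly from the definitions and compare with $\tChar(L(w^1)) * \tChar(L(w^2))$. By the formula for $\tRes^w_{w^1,w^2}$ recalled just before the theorem, the restriction functor decomposes $\pi_w(v) = \oplus_{v^1+v^2=v} \pi_{w^1}(v^1) \boxtimes \pi_{w^2}(v^2)$ with the shift $d((v^2,w^2),(v^1,w^1)) - d((v^1,w^1),(v^2,w^2))$; after further shifting by $-\eMatrix(w^1,w^2)$ in the definition of $\res^w$, the pairing $\langle L(w^1)\otimes L(w^2), \pi_w(v)\rangle$ becomes a sum over $v=v^1+v^2$ of $\langle L(w^1),\pi_{w^1}(v^1)\rangle \langle L(w^2),\pi_{w^2}(v^2)\rangle$ multiplied by a power of $t$ whose exponent is determined by these shifts. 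Using \eqref{eq:chopTwistDimension} to rewrite $d((v^i,w^i),(v^j,w^j))$ in terms of the monomials $m^i = m(v^i,w^i)$, one checks that this exponent is exactly $-\dT(m^1,m^2) + \dT(m^2,m^1)$, which by \eqref{eq:twistedMultiplication} is precisely the twist in the product $Y^{w^1-C_q v^1} * Y^{w^2-C_q v^2}$ in $\redTargSpace$. Matching coefficients monomial-by-monomial then yields the homomorphism property.

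For injectivity, I would work with the basis $\{M(w)\}$ (the PBW-type basis coming from $\pi_w(0)$). By construction $\langle M(w), \pi_w(v)\rangle$ equals $\delta_{v,0}$ when $v=0$ (and is some polynomial otherwise), so $\tChar(M(w))$ has leading monomial $Y^{w}$ with coefficient $1$, plus a tail of monomials $Y^{w-C_q v}$ with $v\neq 0$. Define a partial order on monomials of $\redTargSpace$ by $Y^{w'} \preceq Y^{w}$ if $w - w' \in \sum_{i,a} \N\, C_q e_{i,a+\Hf}$ (equivalently, $w' = w - C_q v$ for some $v \in \N^{I\times(\Z+\Hf)}$), and note that by Corollary \ref{cor:degeneratedStratum} the monomials appearing in $\tChar(M(w))$ other than the leading one are strictly smaller in this order. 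Given any $\Z$-linear combination $\sum c_w M(w)$ in the kernel of $\tChar$, pick a $w$ which is maximal among those with $c_w \neq 0$; since the leading monomial $Y^w$ cannot be cancelled by the tails of other $\tChar(M(w'))$ with $w'$ also maximal (as distinct $w$ give distinct $Y^w$) or smaller, we obtain $c_w=0$, a contradiction. This gives injectivity.

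The main obstacle is the bookkeeping in the first step: confirming that the combined shift $d((v^2,w^2),(v^1,w^1)) - d((v^1,w^1),(v^2,w^2)) - \eMatrix(w^1,w^2)$, pushed through the quadratic forms using \eqref{eq:dimension}--\eqref{eq:chopTwistDimension} and the symmetry Lemma \ref{lem:symmetricCartan}, reduces cleanly to $-\dT(m^1,m^2) + \dT(m^2,m^1)$. This is essentially a reformulation of the computation in \cite[Theorem~3.5]{Nakajima04} and \cite{VaragnoloVasserot03}, so I would model the argument on those references, being careful that the modified $q$-Cartan matrix $C_q$ introduced in Definition \ref{def:cartanMatrix} only changes the bookkeeping, not the structure of the argument.
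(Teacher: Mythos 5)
Your proposal is correct and essentially reconstructs the argument that the paper delegates to its references: the paper states that Theorem~\ref{thm:injectiveHom} ``follows from \cite{VaragnoloVasserot03} and \cite[Theorem 3.5]{Nakajima04}'' and gives no proof. The bookkeeping you flag as the main obstacle does close up --- since $\dTW(w^2,w^1)-\dTW(w^1,w^2)=-\eMatrix(w^1,w^2)$, the combined shift $d((v^2,w^2),(v^1,w^1))-d((v^1,w^1),(v^2,w^2))-\eMatrix(w^1,w^2)$ from $\tRes^w_{w^1,w^2}$ and $\res^w$ equals $-\dT(m^1,m^2)+\dT(m^2,m^1)$ exactly --- and your triangularity argument for injectivity (leading monomial $Y^w$ of $\tChar(M(w))$ with coefficient $1$, tail strictly smaller in the dominance order; note this lowering is immediate from $v\neq 0$, so the appeal to Corollary~\ref{cor:degeneratedStratum} is not actually needed) is the standard one from those sources.
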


\begin{comment}
  [check] in the final version. We should be able to show this using
  multiplicative and bar involution.

The injectivity might also follow from the result of quantum cluster algebras.
\end{comment}

\section{Monoidal categorification}\label{sec:psedoModules}

Let us fix the following conventions.
\begin{itemize}
\item We always assume $w\in\N^{I\times\{-1,0\}}$ (level $1$
  case). Furthermore, we assume $v\in\N^{I\times\{-\Hf\}}$, which is
  naturally identified with a dimension vector $v\in\N^I$.
\item All the representations are those of the opposite quiver $Q\op$.

\item We assume the ice quiver $\tQ$ is of level $1$ with $z$-pattern.
\end{itemize}

\begin{comment}
  [Check] later.
\end{comment}

By putting the above restriction of $(v,w)$ on the definition of $\tChar$, we obtain the truncated character map $\tChar\trunc$. Notice
that the truncation preserves all the $l$-dominant pairs. Theorem \ref{thm:injectiveHom} still holds
  for the truncated characters, \cf \cite[Proposition 6.1]{HernandezLeclerc09}.

\subsection{Fourier-Sato-Deligne transform}
% The following construction have a representation theory interepretation, where $(X_i)$ and  $^\sigma W$ are submdoules of the injective $\tW(0)$, $E_w^*$ is the morphism space between two injectives. Some proofs could be simplified.

% \begin{Lem}
% The analogue of (3.11) in \cite{Nakajima09} still gives us a complex due to the definition of $\mu$.
% \end{Lem}

For each multiplicity parameter $m=(m_i)_{i\in I}\in\N^I$, define the
injective $Q\op$-representation $I^m=\oplus_{i\in I} I_i^{\oplus
  m_i}$ and similarly the projective representation $P^m=\oplus_{i\in I} P_i^{\oplus
  m_i}$.

Let $(B,\alpha,\beta)$ be any point of $\grRep(v,w)$ and $r$ any path of $Q\op$. Define\footnote{
In \cite{Nakajima09}, our $z_r$ is denoted by $y_r$ if $r$ is
nontrivial and $x_i$ if $r$ is $e_i$ for some vertex $i\in I$.}
the homomorphism $z_r$ to be the composition $\beta_{t(r)} B_r
\alpha_{s(r)}$ along the path $\beta_{t(r)} r\alpha_{s(r)}$ if $r$ is nontrivial and $\beta_{t(r)}
\alpha_{s(r)}$ if $r$ is $e_i$ for some vertex $i\in I$. Notice that
each $z_r$ determines a morphism from $P_{s(r)}^{w_{s(r)}(0)}$ to $I_{t(r)}^{w_{t(r)}(-1)}$. Furthermore, we have the following result.

\begin{comment}
  I change the convention from $(w(0),w(1))$ to $(w(-1),w(0))$. The backup parts
  might be changed.
\end{comment}

\begin{Prop}
When $v$ is big
enough, the affine quiver variety $\grMaff(v,w)$ stabilizes to the
affine space $E_w=\Hom(P^{w(0)},I^{w(-1)})$.
\end{Prop}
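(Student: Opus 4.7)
The plan is to identify $\grMaff(v,w)$ with $E_w$ via the morphism $\Phi$ induced by $(B,\alpha,\beta) \mapsto (z_r)_r$, and to show that $\Phi$ is an isomorphism once $v$ is componentwise large enough.

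First I would translate the setup into representation-theoretic language. Since $v$ is concentrated in degree $-\Hf$ and $w$ in degrees $\{-1,0\}$, the component $b_{\bar h}$ of $B$ lives in $\Hom(V_{s(\bar h)}(-\Hf),V_{t(\bar h)}(-\tfrac{3}{2}))=0$, so it vanishes. A direct inspection of the formula for $\mu$ then shows that its codomain $\grEndSp(v,v[-1])$ is itself zero, whence $\mu^{-1}(0)=\grRep(v,w)$. Using the universal properties of projective and injective modules, I would identify $\grRep(v,w)$ with the space of triples $(V,\alpha^*,\beta^*)$ where $V$ is a $Q\op$-representation of dimension vector $v$, $\alpha^*\colon P^{w(0)}\to V$, and $\beta^*\colon V\to I^{w(-1)}$. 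Under this identification, $(B,\alpha,\beta)\mapsto (z_r)_r$ becomes $(V,\alpha^*,\beta^*)\mapsto \beta^*\circ\alpha^*\in E_w$, which is manifestly $G_v$-invariant and so descends to a morphism $\Phi\colon \grMaff(v,w)\to E_w$.

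The core step is to show that $\Phi$ is a closed embedding for every $v$. For this I would invoke the first fundamental theorem of invariant theory for quivers with framing: viewing the triple data as a representation of an augmented quiver in which a single frozen vertex carries $W(0)\oplus W(-1)$, the $G_v$-invariant polynomials are generated by traces along oriented cycles. Since $Q\op$ is acyclic and the framing is one-way (incoming from $W(0)$, outgoing to $W(-1)$), the only such cycles are paths $r$ in $Q\op$ starting at a vertex receiving $W(0)$ and ending at a vertex mapping into $W(-1)$, yielding precisely the $z_r$. Under the decomposition $\Hom(P_i,I_j)=\bigoplus_{r\colon i\to j}\C$, these are the coordinate functions on $E_w$, so $\Phi^*\colon \C[E_w]\twoheadrightarrow \C[\grMaff(v,w)]$ is surjective.

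Next I would establish surjectivity of $\Phi$ on points once $v$ is large. Given $\phi\in E_w$, factor it canonically as $P^{w(0)}\twoheadrightarrow P^{w(0)}/\ker\phi\hookrightarrow I^{w(-1)}$. Choose a $Q\op$-representation $V$ of dimension vector $v$ containing $P^{w(0)}/\ker\phi$ as a direct summand, let $\alpha^*$ be the projection onto this summand, and let $\beta^*$ be the given inclusion into $I^{w(-1)}$ extended by zero on the complement; then $\beta^*\circ\alpha^*=\phi$. Since $\dim(P^{w(0)}/\ker\phi)\leq \dim P^{w(0)}$ componentwise and the right-hand side depends only on $w(0)$, this construction works whenever $v\geq \dim P^{w(0)}$ componentwise. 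A surjective closed embedding is an isomorphism, giving $\grMaff(v,w)\cong E_w$ in this range, which is exactly the asserted stabilization.

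The main obstacle is the invariant-theoretic step: proving that the $G_v$-invariants on $\grRep(v,w)$ are generated by the $z_r$ alone. Although classical, this requires a careful treatment of the framing. An alternative, avoiding the appeal to this theorem, is to argue directly using $S$-equivalence: a closed $G_v$-orbit is represented by a semisimple-like decomposition of $(V,\alpha^*,\beta^*)$, and two such triples with the same $\beta^*\circ\alpha^*=\phi$ can be shown to differ only by summands lying in $\ker\alpha^*\cap \ker\beta^*$, which are invisible in the categorical quotient once $v$ is large enough to accommodate the canonical factorization.
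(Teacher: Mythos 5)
Your proof is correct and spells out, in the representation-theoretic language the paper itself emphasizes, the argument that the text defers to \cite[Proposition~4.6(1)]{Nakajima09}: the moment map is identically zero at level one so that $\mu^{-1}(0)=\grRep(Q\op,v,w)$; the data is repackaged via the universal properties of $P^{w(0)}$ and $I^{w(-1)}$ into triples $(V,\alpha^*\colon P^{w(0)}\to V,\beta^*\colon V\to I^{w(-1)})$; the closed-immersion statement follows from the framed first fundamental theorem of invariant theory (as $Q\op$ is acyclic, the only generators are the matrix entries of the $z_r=\beta^*b_r\alpha^*$); and surjectivity for $v\geq\dimv P^{w(0)}$ is obtained by factoring $\phi$ through its image. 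This is indeed the route Nakajima takes in the bipartite case, here adapted cleanly to the acyclic setting.

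Two remarks. First, a slip of wording in the surjectivity step: with $V=(P^{w(0)}/\ker\phi)\oplus V'$, the map $\alpha^*$ should be the composition $P^{w(0)}\twoheadrightarrow P^{w(0)}/\ker\phi\hookrightarrow V$, not a ``projection onto the summand''; the projection $V\to P^{w(0)}/\ker\phi$ is instead the first half of $\beta^*$. Second, the sketched $S$-equivalence alternative would at best produce a set-theoretic bijection $\grMaff(v,w)\to E_w$, which alone does not yield an isomorphism of varieties; one would still need some additional input (e.g.\ normality of the quotient) to conclude. The invariant-theoretic surjection $\Phi^*\colon\C[E_w]\twoheadrightarrow\C[\grMaff(v,w)]$ is the essential step and cannot easily be circumvented.
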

\begin{proof}
The proof is the same as that of Proposition 4.6(1) {\cite{Nakajima09}}.
\end{proof}

\begin{Eg}
  Figure \ref{fig:affQuot} is an example of the space $E_w$.
\end{Eg}

\begin{figure}
 \centering
\beginpgfgraphicnamed{affQuot}
\begin{tikzpicture}[double distance=2pt, scale=0.7]
\node [color=blue] (deg2) at (9,4) {$\mathrm{deg}=0$};
\node [color=blue] (w1) at (8,-5) {$W_1(0)$};
\node [color=blue] (w2) at (10,-2) {$W_2(0)$};
\node [color=blue] (w3) at (10,2) {$W_3(0)$};

\node [color=blue] (deg3) at (1,4) {$\mathrm{deg}=-1$} ;
\node [color=blue] (w4) at (0,-5) {$W_1(-1)$};
\node [color=blue] (w5) at (2,-2) {$W_2(-1)$};
\node [color=blue] (w6) at (2,2) {$W_3(-1)$};

\draw[-triangle 60] (w1) edge  node[below] {$\beta_1\alpha_1$} (w4);
\draw[-triangle 60] (w2) edge (w5);
\draw[-triangle 60] (w3) edge (w6);

\draw[-latex] (w3) edge[line width=0.5 pt,double] (w4);
\draw[-triangle 60] (w2) edge (w4);
\draw[-triangle 60] (w3) edge (w5);

\end{tikzpicture}.
\endpgfgraphicnamed
\caption{Affine quiver variety $\grAffQuot(v,w)=E_w$}
\label{fig:affQuot}
\end{figure}

\begin{Def}
For each dimension vector $v\in \N^I$, let $\cF(v,w)$ denote the
variety parameterizing all the $I$-graded submodules
$X=(X_i)_{i\in I}$ of $I^{w(-1)}$, \st $\dimv X=v$.
\end{Def}

Since $Q\op$ is
acyclic, the variety $\cF(v,w)$ is smooth projective and irreducible \cite[Theorem 4.10]{Reineke:framed}.

\begin{comment}
  check
\end{comment}

\begin{comment}
Since $Q\op$ is acyclic and $I^{w(0)}$ is rigid, the variety
$\cF(v,w)$ is smooth projective, \cf \cite{CalderoReineke06}.

For injective module, the isomorphism between quiver Grassmannian and (one sided) framed moduli is given
in \cite{Reineke:framed}. It is also shown that framed moduli is an iterated Grassmannian bundle.
The proof of iterated Grassmannian bundle don't need to use generic smoothness theorem.
Hence the claim is valid over arbitrary field.

\end{comment}

We define the variety $\tcF(v,w)$ by
\begin{align}
  \tcF(v,w)=\{((X,z)\in \cF(v,w)\times E_w\mid\Im z\subset X\}.
\end{align}

\begin{comment}
  For simplicity we shall often omit the subscripts of the coordinates
  in the triple $((X_i)_i,(x_i,y_r))$. Notice that $X_i\subset
  W_i(0)\oplus (\oplus_{s(r)=i} W_{t(r)}(0))$ and we write
  $\tW_i(0)=W_i(0)\oplus (\oplus_{s(r)=i} W_{t(r)}(0))$. As a
  convention, the paths $r$ appearing in the present note are of of
  lengths at least $1$.

    The arrows $x_i$, $i=1,2,3$ in picture \ref{fig:3} are the
    coordinates on the $G_v$-invariant paths $\beta_i \alpha_i$ in
    picture \ref{fig:2}. We also have $y_1=\beta_1 b_1\alpha_2$,
    $y_2=\beta_2 b_2 \alpha_3$, $y_j=\beta_1 b_j \alpha_3$, $j=3,4$,
    $y_5=\beta_1 b_1 b_2\alpha_3$.

    By definition of $\tcF(v,w)$, we have
    \begin{align}
      &\Im x_1\subset X_1\subset W_1(0)\\
      &\Im (x_2\oplus y_1)\subset X_2 \subset W_2(0)\oplus X_1\subset W_2(0)\oplus W_1(0)\\
      &\Im (x_3\oplus (y_2\oplus y_5) \oplus y_3 \oplus y_4)\subset X_3 \subset W_2(0)\oplus X_2\oplus X_1\oplus X_1\\
      &\qquad \subset W_3(0)\oplus (W_2(0)\oplus W_1(0))\oplus
      W_1(0)\oplus W_1(0)
    \end{align}

\end{comment}

\begin{Prop}\label{prop:flag_bundle}
The quasi-projective quiver variety $\grProjQuot(v,w)$ is isomorphic to the
variety $\tcF(v,w)$.
\end{Prop}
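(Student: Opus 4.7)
Plan: The strategy is to exhibit mutually inverse algebraic morphisms between $\grProjQuot(v,w)$ and $\tcF(v,w)$, generalizing the argument of Proposition 4.6(2) of \cite{Nakajima09} from the bipartite case to the general acyclic setting. The key conceptual reinterpretation is that the framing data $\alpha$ and $\beta$ of a graded representation are the adjoints of genuine module homomorphisms, via the standard isomorphisms
\begin{align*}
  \Hom_{\C Q\op}(V, I^{w(-1)}) &\cong \bigoplus_i \Hom(V_i, W_i(-1)),\\
  \Hom_{\C Q\op}(P^{w(0)}, V) &\cong \bigoplus_i \Hom(W_i(0), V_i),
\end{align*}
which express the injectivity of $I^{w(-1)}$ and projectivity of $P^{w(0)}$ in the category of $Q\op$-representations.

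The forward map $\Phi : \grProjQuot(v,w) \to \tcF(v,w)$ is built as follows. Given $(B,\alpha,\beta) \in \mu^{-1}(0)$, note that since $v$ is supported in the single degree $-\Hf$, the $\oOmega$-components of $B$ vanish and the moment map equation is satisfied automatically, so $V$ is simply a $Q\op$-representation via the $\Omega$-components of $B$. The adjoint of $\beta$ yields a homomorphism $\iota_\beta : V \to I^{w(-1)}$ given explicitly by sending $v \in V_i$ to the tuple whose component at a path $r : i \to j$ in $Q\op$ is $\beta_{t(r)}(B_r v)$. Similarly, $\alpha$ is adjoint to a homomorphism $\iota_\alpha : P^{w(0)} \to V$. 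Set $X := \Im(\iota_\beta)$ and $z := \iota_\beta \circ \iota_\alpha \in E_w$; then the entries of $z$ are precisely $z_r = \beta_{t(r)} B_r \alpha_{s(r)}$, and $\Im(z) \subset X$ is immediate. The $\chi$-stability condition translates, via the standard GIT dictionary for Nakajima-style quiver varieties, to the injectivity of $\iota_\beta$, whence $\dim X = v$. The construction is $G_v$-invariant, so it descends to $\Phi$.

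For the inverse $\Psi : \tcF(v,w) \to \grProjQuot(v,w)$, given $(X,z)$, set $V_i(-\Hf) := X_i$, let $B$ be induced by the $Q\op$-action on $X$ (restricted from $I^{w(-1)}$), and let $\beta$ be the adjoint of the inclusion $X \hookrightarrow I^{w(-1)}$ under the first isomorphism above. Since $\Im(z) \subset X$ and $P^{w(0)}$ is projective, $z$ factors uniquely through $X$ as $\iota_\beta \circ \tilde z$, and the adjoint of $\tilde z$ under the second isomorphism yields $\alpha$. Any two choices of linear identification $V_i \cong X_i$ differ by a unique element of $G_v$; the resulting orbit is $\chi$-stable because $\iota_\beta$ is tautologically injective. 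Mutual inverseness $\Phi \circ \Psi = \id$ and $\Psi \circ \Phi = \id$ then follows from the uniqueness clauses in both adjunctions together with the tautological identification $X = \Im(\iota_\beta)$.

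The main obstacle will be verifying that $\Psi$ is an \emph{algebraic} morphism, not merely a set-theoretic bijection: the subspaces $X_i$ form a tautological subbundle over $\cF(v,w)$, so one must locally trivialize this subbundle in order to produce honest graded vector spaces $V$ together with algebraically varying structure maps $(B,\alpha,\beta)$. This is precisely the reason the isomorphism lives naturally on the GIT quotient rather than lifting to $\mu^{-1}(0)^{\chi\textrm{-s}}$. A secondary technical point is confirming that the stability condition attached to our specific character $\chi(g) = \prod_{i,a} (\det g_{i,a})^{-1}$ matches the injectivity of $\iota_\beta$; this is a direct application of standard results on stability for framed quiver representations, \cf \cite{Nakajima01}.
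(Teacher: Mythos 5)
Your proof is correct and follows essentially the same route as the paper, which defers to Nakajima's Proposition 4.6 and whose conceptual content — repackaging the stability condition as injectivity of the $Q^{\mathrm{op}}$-module map $V\to I^{w(-1)}$ obtained from $\beta$ via the injective adjunction, and similarly treating $\alpha$ via the projective adjunction — is exactly what your argument makes explicit. One small slip: the factorization of $z$ through $X$ needs only that $\Im z\subset X$ with $X$ a submodule of $I^{w(-1)}$, and has nothing to do with projectivity of $P^{w(0)}$.
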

\begin{proof}
  The proof goes the same as that of Proposition 4.6 {\cite{Nakajima09}}.
\end{proof}

\begin{comment} 
  
    Sketch of the proof

    1) Lemma 3.8 in \cite{Nakajima98} should still work.

    2) It follows that for any stable point, the maps $(\beta_i\oplus
    (\oplus_{h:s(h)=i}b_h))$ on all the spaces $V_i$ are
    injective. Therefore, all stable points are contained in
    $\tcF(v,w)$, where we take $X_i=(\beta_i\oplus
    (\oplus_{h:s(h)=i}b_h)) V_i$ and view it as a subspace of
    $\tW_i(0)$ via natural inclusions.

    3) Conversely, given any points in $\tcF(v,w)$, we take $V_i=X_i$
    and define $\beta_i$, $b_h$ to be the natural projections in the
    definition of $\cF(v,w)$. From the definition of $\tcF(v,w)$ we
    get a natural embedding $\alpha_i$ from $W_i(1)$ into $X_i$. For
    $i\in I$, we define $\phi_i=\beta_i\oplus (\oplus_{\text{path}\
      r:s(r)=i}\beta_{t(r)}p_r$). Then $\phi_i$ embeds $V_i$ into
    $\tW_i(0)$ by definition of $\tcF(v,w)$.

    Let $\oplus_i V'_i$ be a $(b_h)_h$-invariant subspace of $\oplus_i
    V_i$ contained in $\Ker(\oplus_i(\beta_i)\oplus \oplus_h(b_h))$,
    then $\phi_i(V'_i)=0$, $\forall i\in I$. The injectivity of
    $\phi_i$ implies that $V'_i=0$, $\forall i\in I$.

\end{comment}

\begin{Cor}
The quasi-projective quiver variety $\grM(v,w)$ is smooth and irreducible.
\end{Cor}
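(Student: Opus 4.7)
The plan is to deduce this directly from Proposition~\ref{prop:flag_bundle}, which gives an isomorphism $\grProjQuot(v,w) \cong \tcF(v,w)$, by showing that $\tcF(v,w)$ is a vector bundle over $\cF(v,w)$.

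First I would examine the obvious projection $p : \tcF(v,w) \to \cF(v,w)$, $(X,z) \mapsto X$. For a fixed point $X \in \cF(v,w)$, which is an $I$-graded submodule of $I^{w(-1)}$, the fiber is
\[
p^{-1}(X) = \{\, z \in E_w = \Hom(P^{w(0)}, I^{w(-1)}) \mid \Im z \subset X \,\} \;=\; \Hom\bigl(P^{w(0)}, X\bigr),
\]
where the second equality uses that $X$ is a submodule. Because each $P_i$ is projective, $\Hom(P_i, X) \cong X_i$ as a vector space, so
\[
\dim p^{-1}(X) \;=\; \sum_{i \in I} w_i(0)\, \dim X_i \;=\; \sum_{i \in I} w_i(0)\, v_i,
\]
which is independent of $X$. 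Thus the fibers of $p$ all have the same dimension and are linear subspaces of $E_w$ cut out by the linear incidence condition $\Im z \subset X$.

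Next I would argue that this makes $p$ a (Zariski-locally trivial) vector bundle. The incidence condition $\Im z \subset X$ is closed and linear in $z$, and varying $X$ over the tautological family on $\cF(v,w)$ gives a subbundle of the trivial bundle $\cF(v,w) \times E_w$; constancy of the fiber dimension ensures that the kernel of the resulting bundle map of locally free sheaves is itself locally free, hence $\tcF(v,w) \to \cF(v,w)$ is a vector bundle. By the statement already recorded in the excerpt (following \cite[Theorem~4.10]{Reineke:framed}), $\cF(v,w)$ is smooth, projective, and irreducible since $Q\op$ is acyclic. A vector bundle over a smooth irreducible base is smooth and irreducible, so $\tcF(v,w)$ is smooth and irreducible, and therefore so is $\grM(v,w)$ by Proposition~\ref{prop:flag_bundle}.

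There is essentially no obstacle; the only subtlety is the verification that $p$ is locally trivial rather than merely having constant fiber dimension, which follows from the linear-algebraic description above (kernel of a surjection of locally free sheaves). Everything else is a direct transport of the known properties of $\cF(v,w)$ across the isomorphism of Proposition~\ref{prop:flag_bundle}.
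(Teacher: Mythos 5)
Your proposal is correct and takes essentially the same approach as the paper: both identify $\tcF(v,w)$ as a subbundle of the trivial bundle $\cF(v,w)\times E_w$ over the smooth, projective, irreducible variety $\cF(v,w)$, and then conclude. You supply the detail (via $p^{-1}(X)\cong\Hom(P^{w(0)},X)$ and projectivity of $P^{w(0)}$) that the paper leaves implicit when it simply asserts $\tcF(v,w)$ is a subbundle.
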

\begin{proof}
  View $\cF(v,w)\times E_w$ as a trivial vector bundle over
  $\cF(v,w)$. Then $\tcF(v,w)$ is a subbundle. Therefore,
  $\tcF(v,w)$ is smooth. Moreover, since $\cF(v,w)$ is connected, so
  is $\tcF(v,w)$. As a smooth and connected variety, $\tcF(v,w)$ is irreducible.
\end{proof}

Let $E_w^*$ denote the natural dual space of the complex vector space
$E_w$. Let us write $\tcF(v,w)^\bot$ for the annihilator sub-bundle in $\cF(v,w)\times E_w^*$
of the sub-bundle $\tcF(v,w)\subset \cF(v,w)\times E_w$. Its fibre over any given
point $X=(X_i)\in \cF(v,w)$ consists of the linear maps
$z^*=(z_r^*)\in E_w^*$ such that, for any point $(X,z)\in \tcF(v,w)$,
the natural pairing $\langle z,z^*\rangle=\sum_r \Tr(z_r^*z_r)$
vanishes.

Using the Nakayama functor $\nu(\ )$, we obtain 
\begin{align*}
  E_w^*=(\Hom(P^{w(0)},I^{w(-1)}))^*=\Hom(I^{w(-1)},\nu(P^{w(0)}))=\Hom(I^{w(-1)},I^{w(0)}).
\end{align*}

 Notice that each $z_r^*$ determines a morphism from
$I_{t(r)}^{w_{t(r)}(-1)}$ to $I_{s(r)}^{w_{s(r)}(0)}$.

\begin{Eg}
  Figure \ref{fig:dualSpace} is an example of the dual space $E_w^*$.
\end{Eg}

\begin{figure}[htb!]
 \centering
\beginpgfgraphicnamed{dualSpace}
\begin{tikzpicture}[double distance=2pt, scale=0.7]
\node [color=blue] (deg2) at (9,4) {$\mathrm{deg}=0$};
\node [color=blue] (w1) at (8,-5) {$W_1(0)$};
\node [color=blue] (w2) at (10,-2) {$W_2(0)$};
\node [color=blue] (w3) at (10,2) {$W_3(0)$};

\node [color=blue] (deg3) at (1,4) {$\mathrm{deg}=-1$} ;
\node [color=blue] (w4) at (0,-5) {$W_1(-1)$};
\node [color=blue] (w5) at (2,-2) {$W_2(-1)$};
\node [color=blue] (w6) at (2,2) {$W_3(-1)$};

\draw[-triangle 60] (w4) edge (w1);
\draw[-triangle 60] (w5) edge (w2);
\draw[-triangle 60] (w6) edge (w3);

\draw[-latex] (w4) edge[line width=0.5 pt,double] (w3);
\draw[-triangle 60] (w4) edge (w2);
\draw[-triangle 60] (w5) edge (w3);

\end{tikzpicture}.
\endpgfgraphicnamed
\caption{Dual space $E_w^*$}
\label{fig:dualSpace}
\end{figure}

\begin{Lem}\label{lem:dual_fibre}
The fibre of $\tcF(v,w)^\bot$ over any given point $X\in\cF(v,w)$
consists of the maps $z^*=(z_r^*)\in \Hom(I^{w(-1)},I^{w(0)})$ \st
\begin{align}
z^*X=0
\end{align}
\end{Lem}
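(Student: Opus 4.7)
The plan is to identify both sides of the claimed equality as annihilators in a pair of dually exact sequences coming from Nakayama duality. Fix $X \in \cF(v,w)$. The fibre of $\tcF(v,w)$ over $X$ is, by definition, the linear subspace
\[
\tcF(v,w)_X = \{\, z\in E_w \mid \Im z\subset X\,\} \;\cong\; \Hom(P^{w(0)},X),
\]
embedded in $E_w = \Hom(P^{w(0)},I^{w(-1)})$ via the inclusion $i\colon X\hookrightarrow I^{w(-1)}$. Dually, the restriction-along-$i$ map
\[
\Hom(I^{w(-1)},I^{w(0)}) \;\longrightarrow\; \Hom(X,I^{w(0)}),\qquad z^*\mapsto z^*\circ i,
\]
has kernel exactly $\Hom(I^{w(-1)}/X,\,I^{w(0)}) = \{z^*\mid z^*X=0\}$. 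So I only need to verify that under the pairing $\langle z,z^*\rangle = \sum_r \Tr(z_r^*z_r)$, this kernel is precisely the annihilator of $\Hom(P^{w(0)},X)$.

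First I would apply $\Hom(P^{w(0)},-)$ (exact since $P^{w(0)}$ is projective) and $\Hom(-,I^{w(0)})$ (exact since $I^{w(0)}$ is injective) to the short exact sequence $0\to X\to I^{w(-1)}\to I^{w(-1)}/X\to 0$, obtaining the two short exact sequences
\begin{align*}
0 &\to \Hom(P^{w(0)},X)\to \Hom(P^{w(0)},I^{w(-1)})\to \Hom(P^{w(0)},I^{w(-1)}/X)\to 0, \\
0 &\to \Hom(I^{w(-1)}/X,I^{w(0)})\to \Hom(I^{w(-1)},I^{w(0)})\to \Hom(X,I^{w(0)})\to 0.
\end{align*}
The standard Auslander--Reiten formula $D\Hom(P,M)\cong \Hom(M,\nu P)$ (valid for any module $M$ and projective $P$), applied to $P=P^{w(0)}$ with $M\in\{X, I^{w(-1)}, I^{w(-1)}/X\}$ and combined with $\nu P^{w(0)}=I^{w(0)}$, identifies the second sequence with the $\C$-linear dual of the first.

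Next I would check that the pairing $\langle z,z^*\rangle=\sum_r\Tr(z_r^*z_r)$ featured in the definition of $\tcF(v,w)^\bot$ coincides with this Nakayama duality. Decomposing according to pairs of vertices and choosing a basis of paths $r\colon s(r)\to t(r)$ in $Q\op$ gives
\[
E_w=\bigoplus_r \Hom(\C^{w_{s(r)}(0)},\C^{w_{t(r)}(-1)}),\qquad E_w^*=\bigoplus_r \Hom(\C^{w_{t(r)}(-1)},\C^{w_{s(r)}(0)}),
\]
with the trace pairing on each summand being the usual matrix pairing; summing over $r$ is precisely the natural pairing associated to the path basis, which is the one furnished by the Nakayama isomorphism $E_w^*=\Hom(I^{w(-1)},I^{w(0)})$.

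Combining these two ingredients, the annihilator of $\Hom(P^{w(0)},X)\subset E_w$ is the kernel of the dual map $E_w^*\to \Hom(X,I^{w(0)})$, i.e.\ exactly $\{\,z^*\mid z^*X=0\,\}$, proving the lemma. The only step requiring care is the identification of the explicit trace pairing with Nakayama duality; this is essentially a compatibility check of two natural bases indexed by paths, and should present no real obstacle, but it is the place where one must be careful about conventions (paths in $Q$ versus in $Q\op$, and the direction of the Nakayama twist).
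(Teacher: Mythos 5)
Your proof is correct and takes essentially the same route as the paper: both identify the fibre of $\tcF(v,w)$ over $X$ with $\Hom(P^{w(0)},X)$, use the exactness of $\Hom(P^{w(0)},-)$ and $\Hom(-,I^{w(0)})$ on $0\to X\to I^{w(-1)}\to I^{w(-1)}/X\to 0$, and invoke Nakayama duality to identify the trace pairing's annihilator with $\{z^*\mid z^*X=0\}$. The paper packages the last step as an explicit vector-space decomposition $z=(z_X,z_Y)$, $z^*=(z_X^*,z_Y^*)$ together with non-degeneracy of the pairing on the $X$-component, whereas you phrase it via $D\Hom(P,M)\cong\Hom(M,\nu P)$, but this is only a presentational difference.
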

\begin{proof}
The composition $z^* z\in \Hom(P^{w(0)},I^{w(0)})$ is a direct
sum of components $(z^*z)_{ij}$, where $i,j\in I$ and
$(z^*z)_{ij}\in\Hom(P_j^{w_j(0)},I_i^{w_i(0)})$. We can write the
pairing $\langle z,z^*\rangle$ as $\sum_{i\in I} \Tr(z^*z)_{ii}$.

\begin{comment}
  This claim of pairing can be easily seen as below. Write down the $I\times I$
  matrices of homomorphisms $\Hom((P_i),(I_j))$ and $\Hom((I_j),(I_k))$ whose entries
  are labeling by paths, \ie $(z_r)$, $(z_r^*)$. The diagonal entries in the product have
  exactly the labels $z_r^*z_r$.

Notice that this construction requires $Q$ to be acyclic. If it has
oriented cycles, even
the usage of projective, injective in expressing the paths in $E_w$ are not good.
\end{comment}

Denote the quotient module $I^{w(-1)}/X$ by $Y$. We can choose decompositions of vector spaces
\begin{align*}
  \Hom(P^{w(0)},I^{w(-1)})\iso\Hom(P^{w(0)},X)\oplus \Hom(P^{w(0)},Y),\\
\Hom(I^{w(-1)},I^{w(0)})\iso\Hom(X,I^{w(0)})\oplus
  \Hom(Y,I^{w(0)})
\end{align*}
and write $z=(z_X,z_Y)$
and $z^*=(z_X^*,z_Y^*)$ correspondingly. Then
$(z^*z)_{ii}$ equals $(z_X^*z_X+z_Y^*z_Y)_{ii}$, $\forall i\in I$. Notice that the natural pairing $\langle
z_X,z_X^*\rangle=\sum_i\Tr(z_X^*z_X)_{ii}$ between
$\Hom(P^{w(0)},X)$ and $\Hom(X,I^{w(0)})$ is non-degenerate.

The fibre of $\tcF(v,w)$ over a given point $X$ consists of the pairs
$(X,z)$ \st $z_Y=0$. Therefore, the fibre of $\tcF(v,w)^\bot$ over $X$
consists of the pairs $(X,z^*)$ \st $z_X^*=0$. In other words, $z^*X$ vanishes.

\end{proof}

Fix any element $z^*\in
\Hom(I^{w(-1)},I^{w(0)})$. We define $\kerMod W=(\kerMod W_i)_{i\in I}$ to be
the kernel of $z^*$ and denote its dimension vector by $\kerMod
w$. Then $(\kerMod W, z^*)$ is contained in $\tcF^\bot(^\sigma
w,w)$. For any $1\leq i\leq n$, $X_i$ is a subspace of $^\sigma W_i$.

\begin{comment}
  Thanks to Lemma \ref{lem:sigma_w}, we can endow $^\sigma W$ with a
  structure of $Q\op$-representation via the inclusions of $\pr_h
  \tW_i(0)$ in $\tW_{t(h)}(0)$.
\end{comment}

\begin{Def}\label{def:pureCoeff}
  For any vector $w\in\N^{I\times\{-1,0\}}$, its
  \emph{coefficient-free part} $\coeffFree w\in\N^{I\times \{-1,0\}}$
  and its \emph{pure coefficient part} $\pureCoeff w\in
  span_{\N}\{e_{i,-1}+e_{i,0}|i\in I\}$ are defined \st $w=\coeffFree
  w+\pureCoeff w$ and, for any $i\in I$, either $\coeffFree w_i(-1)$
  or $\coeffFree w_i(0)$ vanishes. Let $\redWSet$ denote the set of
  $w$ \st $w=\coeffFree w$.
\end{Def}

\begin{Prop} 

1) The fibre of $\tcF^\bot(v,w)$ over $z^*$ is isomorphic to the
submodule Grassmannian consisting of the $v$-dimensional submodules of $\kerMod W$.

2) When $z^*$ is generic, the module $\kerMod W$ is a generic representation of $Q\op$ with dimension $^\sigma w$.
\end{Prop}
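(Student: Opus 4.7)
For part (1), the argument is purely formal and reads off from \lemref{lem:dual_fibre}. A point of $\tcF(v,w)^\bot$ lying over $z^* \in E_w^*$ is, by definition of the annihilator sub-bundle, a pair $(X,z^*)$ with $X \in \cF(v,w)$ such that the natural pairing $\langle z, z^* \rangle$ vanishes for every $z$ in the fibre of $\tcF(v,w)$ over $X$. The computation used in the proof of \lemref{lem:dual_fibre}---splitting $I^{w(-1)} = X \oplus Y$ to write $z = (z_X, z_Y)$ and $z^* = (z_X^*, z_Y^*)$ and using the nondegeneracy of the Nakayama pairing between $\Hom(P^{w(0)},X)$ and $\Hom(X,I^{w(0)})$---is symmetric in $X$ and $z^*$, and yields the equivalent condition $z^* X = 0$, i.e.\ $X \subseteq \Ker z^* = \kerMod W$. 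Since $\kerMod W$ is itself a subrepresentation of $I^{w(-1)}$, the set of such $X$ is canonically the submodule Grassmannian of $v$-dimensional subrepresentations of $\kerMod W$.

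For part (2), I would proceed in two steps. First, constancy of the dimension vector: each graded component of $z^*$ defines a linear map whose rank is lower semicontinuous in $z^*$, so the intersection of the open loci on which each of these ranks attains its maximum is a Zariski open dense subset $U \subseteq \Hom(I^{w(-1)}, I^{w(0)})$, and over $U$ the dimension vector $\dimv \kerMod W$ takes a constant value---this constant being by definition $\kerMod w$. Second, genericity of the isomorphism class: on a smaller open $U' \subseteq U$ one can trivialize the kernel bundle and choose a splitting, after which the assignment $z^* \mapsto \kerMod W$ becomes a morphism $\Psi : U' \to \Rep(Q\op, \kerMod w)$ whose image is $G_{\kerMod w}$-invariant and constructible. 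The task is to show the closure of this image is all of $\Rep(Q\op, \kerMod w)$, so that the image meets the (open) generic $G_{\kerMod w}$-orbit.

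To show density I would argue by realizing any representation $M$ in the generic orbit of $\Rep(Q\op, \kerMod w)$ as $\Ker z^*$ for some $z^* \in U$. Because $I^{w(-1)}$ is injective, such an embedding $M \hookrightarrow I^{w(-1)}$ exists as soon as $\soc M$ fits componentwise into $\soc I^{w(-1)} = \oplus_i S_i^{\oplus w(-1)_i}$; the cokernel $N$ then has $\dimv N = \dimv I^{w(-1)} - \kerMod w$, and one must further embed $N$ into $I^{w(0)}$, which is possible when $\soc N$ fits into $\oplus_i S_i^{\oplus w(0)_i}$. For $M$ in the generic orbit, Kac's canonical decomposition controls the socles involved; the required compatibility with $w(-1)$ and $w(0)$ holds precisely because $\kerMod w$ was itself defined as the generic kernel dimension for morphisms $I^{w(-1)} \to I^{w(0)}$, so the minimal injective envelopes of $M$ and of a generic cokernel are dominated by $I^{w(-1)}$ and $I^{w(0)}$ respectively.

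The main obstacle is precisely this density argument in step two: verifying that the socle bounds arising from a generic $M$ and a generic cokernel are controlled by $w(-1)$ and $w(0)$. This is where one needs to invoke the Schofield--Kac theory of general representations of acyclic quivers (or, alternatively, a dimension count of $\Psi$ using $\dim \Hom(I^{w(-1)}, I^{w(0)})$, the dimension of the fibre over a kernel, and $\dim \Rep(Q\op, \kerMod w)$). Once these two steps are in place, one concludes that for $z^* \in U'$ generic, $\kerMod W$ lies in the generic $G_{\kerMod w}$-orbit and is therefore a generic representation in the sense of Kac.
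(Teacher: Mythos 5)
Your part (1) is correct and takes the same route as the paper: both read off the description of the fibre directly from Lemma~\ref{lem:dual_fibre}, which gives the equivalent condition $z^* X = 0$, i.e.\ $X \subseteq \kerMod W$; conversely any $v$-dimensional submodule of $\kerMod W$ gives a point of the fibre, and the set of such $X$ is the submodule Grassmannian.

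Part (2) is where you diverge from the paper and where your proposal has a genuine gap, which you flag yourself. The paper's proof is short: first reduce to coefficient-free $w$ (the pure-coefficient summand $\pureCoeff w$ contributes a generically invertible block to $z^*$, hence nothing to the kernel), then cite \cite{Plamondon10c} for the statement that a generic kernel over coefficient-free $w$ is a generic module, via the bijection between generic objects of the cluster category and reduced $w$. Your step 1 (lower semicontinuity of the vertex-component ranks cuts out an open dense locus on which the kernel dimension vector $\kerMod w$ is constant) is fine. Your step 2 is the missing ingredient: you want to show the assignment $\Psi : z^* \mapsto \kerMod W$ is dominant onto $\Rep(Q\op, \kerMod w)$ by realizing a general $M$ of dimension vector $\kerMod w$ as $\Ker z^*$. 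This requires an embedding $M \hookrightarrow I^{w(-1)}$ whose cokernel in turn embeds into $I^{w(0)}$, and the socle/injective-copresentation bookkeeping needed to justify this is exactly what you would be borrowing from Plamondon (equivalently, from Schofield--Kac genericity plus a dimension count on $\Psi$). There is also a circularity to break: $\kerMod w$ is defined as the \emph{generic kernel} dimension, so you must separately verify that the general module of that dimension vector has minimal injective copresentation data governed by $(w(-1),w(0))$. Performing the coefficient-free reduction first, as the paper does, would at least ensure the copresentation you construct is minimal, which simplifies the socle argument; without it, the contractible summand $\pureCoeff w$ obscures the bookkeeping. As written, your plan for part (2) is a reasonable outline but not a proof; it would become one only after those verifications are supplied, and at that point you would essentially have re-derived the relevant content of \cite{Plamondon10c}.
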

\begin{proof}
1) Any element $(X,z^*)$ in the fibre must
satisfy $X\subset \kerMod W$. Conversely, given any submodule $\oplus
V_i$ of $^\sigma W$, the collection $(V_i)_i$ is contained in $\tcF^\bot(v,w)$ by the definition of $\kerMod W$ and Lemma \ref{lem:dual_fibre}.

2) Since we are interested in generic maps, we can replace $w$ by
$\coeffFree w$ without changing the
generic kernels. A generic kernel in this case is known to be a
generic module, \cf \cite{Plamondon10c}.
\footnote{More precisely, it follows from \cite{Plamondon10c} that there is a bijection
between the generic objects of the cluster category associated with our quiver $Q$ and
the reduced $w$ above.}
\begin{comment}
  For each arrow $h$ of $Q\op$, the associated linear map $B_h:^\sigma
  W_i\ra ^\sigma W_{t(h)}$ in the module structure is given by the
  inclusion of $\pr_h(^\sigma W_i)$ in $^\sigma W_{t(h)}$. Notice that
  $^\sigma W_i=\Ker q_i \cap (\cap_h \inc_h \Ker \tq_{t(h)})$, so we
  have
  \begin{align*}
    \pr_h(^\sigma W_i)&=\Ker \tq_{t(h)}\cap \pr_h \Ker q_i\\
    &=\pr_h(\inc_h\Ker \tq_{t(h)} \cap \Ker q_i).
  \end{align*}
  It follows from Lemma \ref{lem:generic_intersection} that if we fix
  any $\tq_{t(h)}$ and let $q_i$ be generic, then $\inc_h\Ker
  \tq_{t(h)} \cap \Ker q_i$ is a generic subspace in $\inc_h\Ker
  \tq_{t(h)}$. In other words, $\pr_h(^\sigma W_i)$ is a generic
  subspace in $^\sigma W_{t(h)}$. Let $(x^*,y^*)$ be generic, by
  induction of the index $i$ from $2$ to $n$, we obtain that all
  inclusions $B_h$ are generic.
\end{comment}
\end{proof}

 \begin{Rem}
    If the quiver $Q\op$ is bipartite, let $I_0$ denote the sink points
    and $I_1$ denote the source points. Assume that $w$ is contained
    in $\redWSet$. In \cite{Nakajima09}, Nakajima defined the dimensions $^\sigma
    W_i(q^a)_{i\in I, a\in\Z}$. Then we have $\kerMod w_i=^\sigma W_i(1)$ for $i\in
    I_0$ and $\kerMod w_j=^\sigma W_j(q^3)$ for $j\in I_1$.
  \end{Rem}

\begin{comment}
    We define the linear map $\ind(\ )$ from $\N^{I\times \{-1,0\}}$
    to $\Z^m$ \st $\ind(e_i(a))=\ind(P_i[1-a])$ for $a\in\{-1, 0\}$.

  Is this map bijective?

  The map $\ind(\ )$ is denoted by $\ind^z(\ )$ in \cite{Qin11}.
\end{comment}

% Strictly speaking, Fourier transform is defined for (perverse sheaves
% over) vector bundles.

\subsection{Generic characters}

Let the Fourier-Sato-Deligne transform from $\pi:\tcF(v,w)\ra E_w$
to $\pi^\bot:\tcF^\bot(v,w)\ra E_w^*$ be denoted by $\Psi$. Define the set
\begin{align*}
  \mathscr L_w=\{IC_w(v)\in \mathscr P_w| \supp\Psi IC_w(v)=E_w^*\}.
\end{align*}

Recall that $\Psi (IC_w(v))$ is a perverse
sheaf over $E_w^*$.
\begin{Def}[Twisted rank]\label{def:twistedRank}
  For any $w,w'\in\N^{I\times\{-1,0\}}$, we define the integer $r_{ww'}$ to be
  \begin{align}
    r_{ww'}=r(v,w)=(-1)^{\dim_\C \grAffQuot(v,w)}\rank \Psi (IC_w(v))
  \end{align}
  if we have $w'=w-C_q v$ for some $v\in\N^{I\times \{\Hf\}}$ \st $\supp \Psi (IC_w(v))=E_w^*$ and zero
  otherwise.
\end{Def}

Notice that $IC(0,w)$ is always contained in
$\mathscr L_w$. It follows that $r_{w,w}=1$. Furthermore, for each fixed $w$,
only finitely many of the $r_{w,w'}$ are nonzero. The matrix
$(r_{w,w'})$ is upper unitriangular with respect to the dominance order.
\begin{Rem}
  Our definition of $r_{w,w'}$ is the twisted version of that of \cite[6.1]{Nakajima09}.
\end{Rem}

We define the \emph{almost simple pseudo-module} $\gen(w)$ to be the
element in the Grothendieck group $\quotKGp$ \st we have
\begin{align}
  \gen(w)=\sum_{w'}r_{w,w'}\can(w').
\end{align}

\begin{comment}
\begin{align}
  \gen(w)=\sum_{v:IC_w(v)\in\mathscr L_w}v^{\dim_{\C}\grRegStratum(v,w)}r(v,w)L_w(v).
\end{align}
\end{comment}

Denote the truncated $qt$-characters $\tChar\trunc(\gen(w))$ by $\genRedTarg(w)$.

\begin{Thm}\label{thm:genChar} \label{thm:z_coeff}
  The truncated $qt$-characters of the almost simple pseudo-modules in
  $\redTargSpace$ are given by
  \begin{align}\label{eq:genChar}
    \genRedTarg(w)=\sum_{v}t^{-\dim (\Gr_v \kerMod W)}P_t(\Gr_v \kerMod
    W)Y^{w-C_q v}.
  \end{align}
\end{Thm}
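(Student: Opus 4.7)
My strategy is to evaluate the coefficient
$\langle \gen(w), \pi_w(v)\rangle_w$ of $Y^{w-C_q v}$ in $\genRedTarg(w)$ in two
ways: once by expanding $\gen(w)$ in the basis $\{\can(w')\}$ and applying the
decomposition theorem to $\pi_w(v)$, and once by computing the generic stalk
of the Fourier--Sato--Deligne transform $\Psi(\pi_w(v))$ on $E_w^{*}$. Matching
the two expressions yields the stated formula.

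First I would unfold the definition of the truncated character and substitute
$\gen(w)=\sum_{w'}r_{w,w'}\can(w')$, using the compatibility relation in
Definition~\ref{def:quotKGp} (so that the $w$-component of $\can(w')$ equals
$\can_w(v')$, where $v'$ is determined by $w-C_q v'=w'$), the duality between
$\{\can_w(v')\}$ and $\{IC_w(v')\}$, and the decomposition theorem
$\pi_w(v)=\bigoplus_{v'}a_{v,v';w}(t)\,IC_w(v')$ of Theorem~\ref{thm:decomposition}.
Combined with Definition~\ref{def:twistedRank}, this gives
\[
\langle\gen(w),\pi_w(v)\rangle
= \sum_{v':\,IC_w(v')\in\mathscr L_w}(-1)^{\dim\grAffQuot(v',w)}\,\rank\Psi(IC_w(v'))\cdot a_{v,v';w}(t).
\]
Applying $\Psi$ to the decomposition of $\pi_w(v)$ and restricting to the
generic stalk on $E_w^{*}$, only summands with full support survive, and
(modulo the perverse degree shifts for which the sign
$(-1)^{\dim\grAffQuot(v',w)}$ was designed) the right-hand side above is
identified with the graded rank, as a Laurent polynomial in $t$, of the generic
stalk of $\Psi(\pi_w(v))$.

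The second computation is geometric. Since $\tcF(v,w)$ is a subbundle of the
trivial bundle $\cF(v,w)\times E_w$, the compatibility of
Fourier--Sato--Deligne transform with pushforward along the base gives
$\Psi(\pi_!\,1_{\tcF(v,w)})\cong \pi^{\bot}_{!}\,1_{\tcF^{\bot}(v,w)}$ up to
an explicit shift depending on the ranks of $\tcF(v,w)$ and $\tcF^{\bot}(v,w)$.
By Lemma~\ref{lem:dual_fibre}, the fibre of $\pi^{\bot}$ over a point
$z^{*}\in E_w^{*}$ is precisely $\{X\subset\kerMod W\mid \dimv X=v\}$, and by
the Proposition preceding Definition~\ref{def:pureCoeff}, for generic $z^{*}$
the module $\kerMod W$ is a generic representation of $Q\op$ of dimension
$^{\sigma}w$. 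Thus the generic fibre of $\pi^{\bot}$ equals the submodule
Grassmannian $\Gr_v(\kerMod W)$, which is smooth (so its cohomology has no odd
part, allowing the twisted Poincar\'e polynomial to control the whole stalk).
Tracking the normalizing shift against the dimension of this Grassmannian
produces the factor $t^{-\dim\Gr_v\kerMod W}$, so that the generic stalk, as a
polynomial in $t$, is exactly $t^{-\dim\Gr_v\kerMod W}P_{t}(\Gr_v\kerMod W)$.

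The main obstacle will be the careful bookkeeping of degree shifts: one must
match the perverse normalization conventions in the definition of $IC_w(v')$
and $\pi_w(v)$, the $(-1)^{\dim\grAffQuot(v',w)}$ sign built into the twisted
rank, and the shift intrinsic to $\Psi$ on a subbundle of rank equal to the
fibre dimension of $\tcF(v,w)\to\cF(v,w)$, against the dimension
$\dim\Gr_v\kerMod W$ of the generic fibre on the dual side. Once these sign
and shift identifications are verified (using the smoothness of $\cF(v,w)$ and
$\tcF(v,w)$ and the transversal slice of
Theorem~\ref{thm:grTransversalSlice}), the two computations of
$\langle\gen(w),\pi_w(v)\rangle$ agree, and summing over $v$ gives the desired
formula for $\genRedTarg(w)$.
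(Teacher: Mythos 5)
Your overall strategy — evaluate $\langle\gen(w),\pi_w(v)\rangle$ once via the decomposition theorem and once via the generic stalk of the Fourier--Sato--Deligne transform on $E_w^*$ — is essentially the paper's (which packages it as: quote Nakajima's computation for the unsigned $\gen(w)'$ and then correct by a sign). The place where you cannot run the argument as you sketched it is the sentence asserting that $\Gr_v\kerMod W$ "is smooth (so its cohomology has no odd part, allowing the twisted Poincar\'e polynomial to control the whole stalk)." Two things go wrong here. First, smoothness of a complex projective variety does not imply vanishing of odd cohomology (a smooth projective curve of positive genus is a counterexample); the implication you want simply is not available. Second, and independently, the quiver Grassmannian of a \emph{generic} representation $\kerMod W$ of dimension vector $^\sigma w$ need not be smooth for arbitrary $w$: genericity is not rigidity, and the favourable situation (rigid module, smooth Grassmannian, even cohomology) covers only part of the range of $w$ for which Theorem~\ref{thm:genChar} is claimed. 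Without an input of this kind, the naive graded rank of the generic stalk of $\Psi(\pi_w(v))$ is $t^{-\dim\Gr_v\kerMod W}\sum_k t^k\dim H^k(\Gr_v\kerMod W)$ — no signs — and this is not \emph{a priori} equal to $t^{-\dim\Gr_v\kerMod W}P_t(\Gr_v\kerMod W)$.

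The paper closes exactly this gap with a different mechanism, and it is worth noting because it is the real content of the proof. Rather than arguing about the Grassmannian directly, it uses Theorems~\ref{thm:grTransversalSlice} and~\ref{thm:oddVanish} (odd-vanishing of the fibres $\grFib_{x}(v',w)$, reduced to the zero fibre $\grLag$ via transversal slices) to conclude that each $a_{v,v';w}(t)t^{\dim\flagVar(v',w)-\dim\grAffQuot(v,w)}$ lies in $\N[t^{\pm2}]$, i.e. the decomposition multiplicities have a fixed parity controlled by a dimension difference on the quiver-variety side. Combined with the elementary parity relation $\dim\flagVar(v',w)+\dim\Gr_{v'}\kerMod W \equiv 0 \pmod 2$ (because $\dim\flagVar+\dim\dualFlagVar=\dim\cF(v',w)+\dim E_w^*$), the sign $(-1)^{\dim\grAffQuot(v,w)}$ built into Definition~\ref{def:twistedRank} of $r_{w,w'}$ is exactly what is needed to turn the unsigned sum $\sum_k t^k\dim H^k$ into the alternating one $P_t$. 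You already flagged "sign and shift identifications" as the main obstacle; the point is that the tool you reached for (smoothness $\Rightarrow$ odd vanishing) does not do the job, and you should instead derive the parity of the $a_{v,v';w}(t)$ from the odd-vanishing theorem for the quiver variety fibres and track the dimension parities through the Fourier transform.
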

\begin{proof}

    By the arguments of the proof of \cite[Theorem 6.3]{Nakajima09}, for
    \begin{align*}
      \gen(w)'=\sum_{v:IC_w(v)\in\mathscr L_w}\rank\Psi IC_w(v)L_w(v),
    \end{align*}
we have
    \begin{align*}
      \tChar\trunc(\gen(w)')&=\sum_{v'}t^{-\dim(\Gr_{v'} \kerMod W)}\sum_k t^k\dim H^k(\Gr_{v'}
      \kerMod W)Y^{w-C_q v'}\\
&=\sum_{v'}\sum_{v:IC_w(v)\in\mathscr L_w}\rank\Psi
      IC_w(v)a_{v',v;w}(t)Y^{w-C_q v'}.
    \end{align*}
    By Theorem \ref{thm:grTransversalSlice} and \ref{thm:oddVanish}, for any point
    $x_{v,w}\in\grRegStratum(v,w)\subset\grAffQuot(v',w)$, the odd homology of
    $\grFib_{x_{v,w}}(v',w)$ vanishes. Therefore the contribution of
    $a_{v,v'}(t)IC_w(v)$ to the odd homology is zero, \ie
    $a_{v,v'}(t)t^{\dim\flagVar(v',w)-\dim\grAffQuot(v',w)}$ is
    contained in $\N[t^{\pm
      2}]$. Also, $\dim\flagVar(v',w)+\dim\Gr_{v'}\kerMod W=\dim\flagVar(v',w)+\dim\dualFlagVar(v',w)-\dim
    E_w^*$ is divisible by $2$. Therefore, by twisting the sign of
    the term $L(w-C_q v)$ appearing in $\gen(w)'$ by $(-1)^{\dim\grAffQuot(v,w)}$, we obtain the alternating sums of Betti
    numbers of the fibre $\Gr_{v'} \kerMod W$.

\end{proof}

\begin{comment}
  \begin{Rem}
    By theorem \ref{thm:genChar}, the right hand side of
    \eqref{eq:genChar} is independent of the choice of the generic
    module $\kerMod W$.  Assume that $w$ is contained in
    $\redWSet$. Then we can define $\genTorus(w)$ to be the
    \emph{generic quantum cluster character} associated with the index
    $\ind(w)$. This definition generalizes the quantum cluster
    character defined in \cite{Qin10}.

    Furthermore, for any any ice quiver associated with the acyclic
    quiver $Q$, we can define the generic quantum cluster characters
    similarly. They are studied in \cite{Qin11}.
  \end{Rem}
\end{comment}

Denote the possibly non-reductive group $\Aut(I^{w(-1)})\times \Aut(I^{w(0)})$ by
$\Aut(w)$. It acts on the affine space
$E_w^*=\Hom(I^{w(-1)},I^{w(0)})$. Let $p$ be any element in $E_w^*$. By \cite{DerksenFei09}, the space $E(p,p)=\Hom_{K^b(\C Q\op)}
  (p,p[1])$ describes the normal space of the orbit $\Aut(w)p$ in
  $E_w^*$, \cf also \cite[Lemma 5.4.6]{Plamondon10c}. Here $K^b(\C
  Q\op)$ is the bounded homotopy category of injective complexes of
  $Q\op$-representations.

\begin{Lem}[{\cite[Lemma 5.3.6]{Plamondon10c}}]\label{lem:minimalResolution}
    If $p$ is a minimal injective resolution of $\Ker p$, then we have
    \begin{align*}
      E(p,p)=\Hom(\ker p, \tau' \ker p),
    \end{align*}
    where $\tau'$ is the Auslander-Reiten-translation of the category
    of $\C Q\op$-modules.
\end{Lem}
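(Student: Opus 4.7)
The plan is to compute $E(p,p) = \Hom_{K^b(\C Q\op)}(p,p[1])$ explicitly at the chain level and then identify the resulting quotient with $\Hom(\ker p, \tau' \ker p)$ via Auslander--Reiten--Serre duality, with minimality of the resolution $p$ controlling the corrections in the AR-formula. First I would view $p$ as the two-term complex with $I^{w(-1)}$ in degree $0$ and $I^{w(0)}$ in degree $1$, differential $p$. Because $p$ is concentrated in two consecutive degrees, a chain map $f\colon p\to p[1]$ is determined by a single component $f^0\colon I^{w(-1)}\to I^{w(0)}$ (the other components vanish for degree reasons), and the chain-map condition is automatic. A chain homotopy is a pair $(s^0,s^1)\in\End(I^{w(-1)})\oplus\End(I^{w(0)})$ with $f^0 = s^1\circ p - p\circ s^0$, so one obtains
\[
 E(p,p)\;\cong\;\Hom(I^{w(-1)},I^{w(0)})\,\big/\,\bigl(\,\End(I^{w(0)})\cdot p \;+\; p\cdot\End(I^{w(-1)})\,\bigr).
\]

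Next I would use that $\C Q\op$ is hereditary (since $Q$ is acyclic), so every module has injective dimension at most one; together with minimality this means $p$ fits into a short exact sequence $0\to M\to I^{w(-1)}\xrightarrow{p} I^{w(0)}\to 0$ with $M=\ker p$, giving a quasi-isomorphism between $p$ and $M$. Since $p$ is a bounded complex of injectives, $\Hom_{K^b}(p,p[1])=\Hom_{D^b}(p,p[1])=\Ext^1(M,M)$. A direct alternative is to apply $\Hom(-,I^{w(0)})$ and then $\Hom(M,-)$ to the above short exact sequence: the two resulting long exact sequences (with vanishing $\Ext^1(-,I^{w(-1)})$ and $\Ext^1(-,I^{w(0)})$) combine to present $\Ext^1(M,M)$ exactly as the quotient obtained in Step~1, so the identification is canonical.

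The final step is to pass from $\Ext^1(M,M)$ to $\Hom(M,\tau' M)$. For the hereditary category $\mod\C Q\op$ one has a Serre functor given by $\tau'[1]$ (equivalently, by the Nakayama functor shifted by one), and the Auslander--Reiten--Serre duality then yields a natural isomorphism $\Ext^1(M,M)\cong\Hom(M,\tau' M)$ once one knows that no projective/injective summands appear that would force stable-Hom corrections. Here minimality of $p$ is essential: it guarantees that $M$ has no injective direct summand, so the AR-formula produces an honest $\Hom$ rather than a stable Hom. Combining the three steps gives $E(p,p)\cong\Hom(\ker p,\tau'\ker p)$.

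The main obstacle is the last step: the AR-formula is naturally a duality statement, and one must be careful with the convention for $\tau'$ (and the Nakayama/Serre functor) to produce the stated identification on the nose rather than up to $k$-linear duality. The cleanest route is probably to argue through the Nakayama functor $\nu$, writing $I^{w(-1)}=\nu P^{w(-1)}$, $I^{w(0)}=\nu P^{w(0)}$, pulling back $p$ to a map $d\colon P^{w(-1)}\to P^{w(0)}$ of projectives, and using that the cokernel $N$ of $d$ satisfies $\tau N=M$ and the quotient of $\Hom(P^{w(-1)},P^{w(0)})$ by the analogous two-sided ideal computes $\Hom(N,\tau N)=\Hom(\tau^{-1}M,M)$, which is what $\Hom(M,\tau' M)$ is interpreted to mean under Plamondon's convention.
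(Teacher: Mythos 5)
The paper itself does not prove this lemma; it cites it from Plamondon's thesis, so there is no internal proof to compare against. Reviewing the proposal on its own terms: Steps 1 and 2 (the chain-level description of $E(p,p)$ and its identification with $\Ext^1_{\C Q\op}(M,M)$ for $M=\ker p$) are correct, using that a bounded complex of injectives is K-injective and that over a hereditary algebra the minimal injective coresolution of $M$ is a two-term exact sequence, so $p$ is surjective and the complex is quasi-isomorphic to $M$.

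The justification of the final step is wrong, however, and the proposed repair via the Nakayama functor inherits the same defect. You claim that minimality of $p$ "guarantees that $M$ has no injective direct summand," but this is false: if $M=M'\oplus I$ with $I$ injective, the minimal injective resolution of $M$ is simply the minimal resolution of $M'$ direct-summed with the complex $I\to 0$, and $p$ remains minimal. For instance, over the $A_2$ quiver $Q\op\colon 2\to 1$, take $M=S_1\oplus S_2$ (so $S_2$ injective); the minimal $p$ is $(p_1,0)\colon I_1\oplus S_2\to S_2$. In the same way, the Nakayama-pullback $d=\nu^{-1}p$ in your alternative route fails to be a minimal projective presentation of $N=\coker d$ as soon as $M$ has an injective summand (the summand $I\to 0$ becomes $\nu^{-1}I\to 0$, a superfluous projective summand), and then $\tau N\neq M$; in the example above $\tau N=S_1\neq S_1\oplus S_2=M$. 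So that route, as written, breaks.

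The correct reason that the Auslander--Reiten formula
$\Ext^1(M,M)\cong D\,\overline{\Hom}(M,\tau M)$
lands on an honest $\Hom$ has nothing to do with $M$: it is that $\tau M$ never has an injective direct summand (the AR translate lands in modules without injective summands), combined with the hereditary fact that any map from an injective module to a module without injective summands vanishes (over a hereditary algebra the image of a map out of an injective is injective, hence a split submodule of the target). Thus every morphism $M\to\tau M$ that factors through an injective is already zero, so $\overline{\Hom}(M,\tau M)=\Hom(M,\tau M)$ unconditionally, and
$\dim E(p,p)=\dim\Ext^1(M,M)=\dim\Hom(M,\tau M)$. Replacing your "minimality forces no injective summands in $M$" with this argument about $\tau M$ closes the gap; once fixed, the chain-level computation and the $\Ext^1$ identification go through as you wrote them.
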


\begin{Lem}\label{lem:rigidKernel}
Let $p$ be generic. If the kernel $\Ker p$ is a rigid module, then the
codimension of the orbit $\Aut(w)p$ is zero.
\end{Lem}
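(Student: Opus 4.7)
The plan is to compute the dimension of the normal space to the orbit $\Aut(w)p$ at $p$ and show it vanishes. By the description recalled just before the lemma (due to Derksen--Fei, Plamondon), this normal space is identified with $E(p,p) = \Hom_{K^b(\C Q\op)}(p, p[1])$, so it suffices to show $E(p,p)=0$ whenever $\Ker p$ is rigid.

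To invoke the previous Lemma \ref{lem:minimalResolution}, I would first reduce to the case where $p$ is a minimal injective resolution of $\Ker p$. For a generic $p \in E_w^*$, any non-minimal part splits off as an identity endomorphism of some injective summand common to $I^{w(-1)}$ and $I^{w(0)}$; such a direct summand is null-homotopic, so it contributes trivially to $\Hom_{K^b}(p,p[1])$, and removing it does not change either $\Ker p$ or the codimension of the orbit. Once $p$ is a minimal injective resolution, Lemma \ref{lem:minimalResolution} gives
\begin{align*}
E(p,p) \;=\; \Hom(\Ker p,\ \tau' \Ker p).
\end{align*}

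Now the algebra $\C Q\op$ is hereditary, since $Q$ is acyclic. Auslander--Reiten duality over a hereditary algebra yields a natural isomorphism
\begin{align*}
\Ext^1(M,M) \;\cong\; D\,\Hom(M,\tau' M)
\end{align*}
for every module $M$ (with $\tau'$ the Auslander--Reiten translate as in Lemma \ref{lem:minimalResolution}). Applying this to the rigid module $M = \Ker p$, the left hand side vanishes, so $\Hom(\Ker p, \tau' \Ker p) = 0$, hence $E(p,p)=0$. Since $E(p,p)$ models the normal space to $\Aut(w)p$ inside the smooth affine space $E_w^*$, the orbit is open in $E_w^*$ and therefore has codimension zero.

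The only genuine subtlety I anticipate is justifying the reduction to a minimal resolution cleanly: one must verify that splitting off identity summands is compatible with the $\Aut(w)$-action (i.e., that the decomposition of $p$ can be realized by an element of $\Aut(w)$) and that the contribution of the split-off contractible part to $\Hom_{K^b}(p,p[1])$ is zero. Both are standard once one unwinds the definitions, but they are the only places where a careful check is required; everything else is a direct application of Lemma \ref{lem:minimalResolution} and Auslander--Reiten duality for hereditary algebras.
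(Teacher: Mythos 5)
Your overall strategy---split $p$ into summands, apply Lemma~\ref{lem:minimalResolution} to the minimal piece, and conclude by Auslander--Reiten duality over the hereditary algebra $\C Q\op$---is the right one, and the AR-duality step $\Ext^1(M,M)\cong D\Hom(M,\tau'M)$ is correct. But your reduction to the minimal case is incomplete, and the omission is exactly where the genericity hypothesis (which your argument never actually uses) has to enter. Over a hereditary algebra $\Im p$ is an injective direct summand of $I^{w(0)}$, so as a two-term complex of injectives $p$ decomposes, compatibly with the $\Aut(w)$-action, as $p_{\min}\oplus c\oplus n$, where $p_{\min}$ is a minimal injective resolution of $\Ker p$, $c$ is a contractible summand of the type you describe, \emph{and} $n=(0\to\Cok p)$ with $\Cok p$ an in general nonzero injective. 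You only treat $c$. The summand $n$ is not an ``identity endomorphism of a common injective summand,'' and it is not harmless: since $p_{\min}\simeq\Ker p[1]$ and $n\simeq\Cok p$ in $D^b(\mod\C Q\op)$, a direct computation gives
\begin{align*}
\Hom_{K^b}(p,p[1])\;\cong\;\Hom_{K^b}(p_{\min},p_{\min}[1])\oplus\Hom(\Ker p,\Cok p),
\end{align*}
the cross-term $\Ext^2(\Cok p,\Ker p)$ vanishing by heredity and $\Ext^1(\Cok p,\Cok p)$ vanishing since $\Cok p$ is injective. So removing the ``non-minimal part'' does change the codimension unless $\Hom(\Ker p,\Cok p)=0$, and your claim to the contrary is false in general.

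The missing fact, $\Hom(\Ker p,\Cok p)=0$ for \emph{generic} $p$, is what the genericity hypothesis buys. One way to see it: if $f\colon\Ker p\to\Cok p$ is nonzero, choose a splitting $\sigma\colon\Cok p\to I^{w(0)}$ of the quotient map, extend $f$ to $\tilde f\colon I^{w(-1)}\to\Cok p$ by injectivity of $\Cok p$, and set $g=\sigma\tilde f$. Using $I^{w(0)}=\Im p\oplus\sigma(\Cok p)$ one checks $\Ker(p+tg)=\Ker p\cap\Ker\tilde f$ for all $t\neq 0$, a proper submodule of $\Ker p$; this contradicts the minimality of $\dim\Ker p$ that upper semicontinuity guarantees at a generic point. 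Without this (or an equivalent) argument your proof does not go through, and the paper's own proof delegates precisely this point to \cite[Lemma~5.4.4]{Plamondon10c}.
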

\begin{proof} 
This lemma is the application of Lemma 5.4.4 \cite{Plamondon10c} to
generic maps with rigid kernels.
\end{proof}

\begin{Prop}\label{prop:clusterMonomial}
1) If $M$ is a rigid module with a minimal injective resolution
$z^*\in E_w$, then $\gen(w)=\can(w)$.

2) If a generic map $z^*$ in $E_w^*$ has a rigid kernel, then
$\gen(w)=\can(w)$.
\end{Prop}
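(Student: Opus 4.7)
The plan is to deduce Part (1) from Part (2), and then to establish Part (2) by comparing truncated $qt$-characters term by term.

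For the reduction, suppose $M$ is rigid with minimal injective resolution $z^\ast \in E_w^\ast$. By \lemref{lem:minimalResolution}, the obstruction space $E(z^\ast,z^\ast)$ equals $\Hom(M,\tau'M)$. The Auslander--Reiten formula gives $\Hom(M,\tau'M) \cong D\Ext^1(M,M)$, which vanishes because $M$ is rigid. Hence $E(z^\ast,z^\ast)=0$, so by the description of the normal space to the $\Aut(w)$-orbit and \lemref{lem:rigidKernel}, the codimension of $\Aut(w)z^\ast$ in $E_w^\ast$ is zero, i.e.\ the orbit is open and dense. Thus $z^\ast$ is a generic point of $E_w^\ast$ whose kernel is the rigid module $M$, placing us in the hypothesis of Part~(2).

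For Part~(2), recall that $\gen(w)=\sum_{w'} r_{w,w'}\can(w')$ with $r_{w,w}=1$, so it suffices to prove $r_{w,w'}=0$ for $w' \neq w$, or equivalently that the two truncated $qt$-characters $\tChar\trunc(\gen(w))$ and $\tChar\trunc(\can(w))$ coincide (using injectivity of $\tChar\trunc$, analogous to \thmref{thm:injectiveHom}). By \thmref{thm:genChar},
\[
\tChar\trunc(\gen(w)) \;=\; \sum_v t^{-\dim \Gr_v M}\, P_t(\Gr_v M)\, Y^{w-C_q v}.
\]
On the other hand, using that $\can(w)$ is the functional with $w$-component $\can_w(0)$ dual to $IC_w(0)$, one computes
\[
\tChar\trunc(\can(w)) \;=\; \sum_v a_{v,0;w}(t)\, Y^{w-C_q v},
\]
where $a_{v,0;w}(t)$ is the multiplicity of $IC_w(0)$ in the decomposition of $\pi_w(v)$ given by \thmref{thm:decomposition}, equivalently the local intersection cohomology of $\pi_w(v)$ at the origin $0\in\grAffQuot(w)$.

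The task then reduces to the identity $t^{-\dim \Gr_v M}P_t(\Gr_v M) = a_{v,0;w}(t)$ for every $v$. The main obstacle lies here: one must identify the submodule Grassmannian $\Gr_v(\ker z^\ast)$ (the generic fibre of $\pi^\bot:\tcF^\bot(v,w)\to E_w^\ast$ over the open orbit) with the stalk of $\pi_!(1_{\grProjQuot(v,w)})$ at the origin. This identification is provided by combining \propref{prop:flag_bundle} with the Fourier--Sato--Deligne transform $\Psi$: openness of $\Aut(w)z^\ast$ together with rigidity of $M$ ensures that the transversal slice at $0$ in $\grAffQuot(w)$ (via \thmref{thm:grTransversalSlice}) is controlled entirely by $\Gr_v M$, so the decomposition of $\pi_w(v)$ only picks up $IC_w(0)$ with the claimed multiplicity, and all other IC-summands $IC_w(v')$ with $v'\neq 0$ either do not appear or carry Fourier transforms of proper support. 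Rigidity of $M$ further implies smoothness (and purity of the mixed Hodge structure) of each $\Gr_v M$, so both sides lie in $\N[t^{\pm 1}]$ and the comparison is clean. Matching coefficients completes the proof of Part~(2), and hence of Part~(1).
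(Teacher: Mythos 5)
The reduction of Part~(1) to Part~(2) is essentially sound: you correctly combine \lemref{lem:minimalResolution} with the Auslander--Reiten formula to conclude $E(z^*,z^*)=0$, hence the orbit $\Aut(w)z^*$ is open dense and $z^*$ is generic with rigid kernel $M$. (A minor nit: for Part~(1) you do not need \lemref{lem:rigidKernel}; the description of the normal space to the orbit as $E(p,p)$ already suffices. You also tacitly correct the paper's apparent typo $E_w$ to $E_w^*$, which is right.)

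Your treatment of Part~(2), however, has a genuine gap. You reduce the claim to the coefficientwise identity $t^{-\dim \Gr_v M}P_t(\Gr_v M)=a_{v,0;w}(t)$, but the argument you then give for it does not work. The assertion that ``the decomposition of $\pi_w(v)$ only picks up $IC_w(0)$ with the claimed multiplicity'' is simply false: $\pi_w(v)$ does generically contain summands $IC_w(v')$ with $v'\neq 0$, and the transversal slice at $0\in\grAffQuot(w)$ has nothing to do with $\Gr_v M$, which is the fibre of $\pi^\bot$ over a \emph{generic} point of $E_w^*$, not at the origin. The content that is actually needed is that for $v'\neq 0$ the Fourier transform $\Psi(IC_w(v'))$ has proper support, \ie that the set $\mathscr L_w$ from Definition~\ref{def:twistedRank} reduces to $\{IC_w(0)\}$. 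Your $qt$-character route does not make this any easier to see --- since the $a_{v',v;w}(t)$ have nonnegative coefficients, the identity you want is in fact equivalent to $\mathscr L_w=\{IC_w(0)\}$, so the detour buys nothing, and the final step is left unproved.

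The paper's own argument is considerably shorter and goes directly at $\mathscr L_w$: the map $\pi^\bot:\tcF^\bot(v,w)\to E_w^*$ is $\Aut(w)$-equivariant, so each $\Psi(IC_w(v))$ is an $\Aut(w)$-equivariant \emph{irreducible} perverse sheaf on $E_w^*$ (irreducibility being preserved by the Fourier--Sato--Deligne transform). If there is an open dense $\Aut(w)$-orbit with connected stabilizer, then the only irreducible equivariant local system on that orbit is the trivial one, hence the only irreducible equivariant perverse sheaf with full support on $E_w^*$ is the (shifted) constant sheaf, which corresponds under $\Psi^{-1}$ to $IC_w(0)$. Thus $\mathscr L_w=\{IC_w(0)\}$ and $\gen(w)=\can(w)$ immediately from the definition of $\gen(w)$. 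You should replace your character-level comparison by this equivariance/irreducibility argument, which is what makes the proposition follow from Lemmas~\ref{lem:minimalResolution} and~\ref{lem:rigidKernel} as the paper claims.
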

\begin{proof}
Observe that the fibre map $\pi^\bot:\tcF^\bot(v,w)\ra E_w^*$ is
$\Aut(w)$-equivariant. Since the $\Aut(w)$-stabilizer in $E_{w}^{*}$ is connected, if there is an open dense orbit in
the affine space $E_w^*$, then $IC(0,w)$ is the only element in the set $\mathscr
L_w$. The proposition follows from Lemma \ref{lem:minimalResolution}
and Lemma \ref{lem:rigidKernel}.
\end{proof}

\begin{comment}
\begin{Cor}
  1) Let $w$ be coefficient-free. Assume there exists an element $p\in
  E_w*$ \st the orbit $Aut(I(w))p$ is open dense, then in the cluster
  category $\cC_Q$ the object with the injective presentation $p$ is
  rigid.

  2) Assume $M$ is a rigid object in the cluster category $\cC_Q$ with
  a minimal injective presentation $p$. Then the orbit $Aut(I(w))p$ is
  open dense.
\end{Cor}
\begin{Rem}
  Statement (1) seems to give an affirmative answer to the
  introduction in \cite{DerksenFei09}, \emph{If $V$ is a
    representation corresponding to a cluster variable, then $E(f, f)
    = 0$ where $f$ is the minimal presentation of $V$ . It is unknown
    whether the converse is true.} As commented by Keller, the origin
  question seems to concern non-hereditary case.
\end{Rem}

\begin{Prop}
Let $M$ be a rigid object in the cluster category $\cC_Q$ with
a minimal injective presentation $p\in E_w^*$ for some $w$, then $\genTorus(w)=\canTorus(w)$.
\end{Prop}
\end{comment}

\begin{Prop}
In the notation of Definition \ref{def:pureCoeff}, we have a factorization
  \begin{align}\label{eq:coeffFactorization}
    \genRedTarg(w)=\genRedTarg(\coeffFree w)\cdot\genRedTarg(\pureCoeff w).
  \end{align}
\end{Prop}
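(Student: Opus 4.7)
The plan is to compare both sides via the explicit formula from Theorem~\ref{thm:genChar} and to use the fact that the generic kernel is unchanged by the addition of a pure coefficient summand.

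First I will analyse $\genRedTarg(\pureCoeff w)$. Since $\pureCoeff w_i(-1)=\pureCoeff w_i(0)$ for every $i\in I$, the two injective modules $I^{\pureCoeff w(-1)}$ and $I^{\pureCoeff w(0)}$ are isomorphic, so a generic element of $E_{\pureCoeff w}^{*}=\Hom(I^{\pureCoeff w(-1)},I^{\pureCoeff w(0)})$ is invertible with trivial kernel. By the formula of Theorem~\ref{thm:genChar}, only the term $v=0$ survives, yielding
\[
\genRedTarg(\pureCoeff w)=Y^{\pureCoeff w}.
\]

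Next I will show that the generic kernel $\kerMod W$ associated with $w$ agrees with the one associated with $\coeffFree w$. Using the decompositions
\[
I^{w(-1)}=I^{\coeffFree w(-1)}\oplus I^{\pureCoeff w(-1)},\qquad I^{w(0)}=I^{\coeffFree w(0)}\oplus I^{\pureCoeff w(0)},
\]
a generic $z^{*}\in E_{w}^{*}$ breaks into four block components, and the $(2,2)$-block is generic between isomorphic modules, hence an isomorphism of $Q\op$-representations. Conjugating by the natural block-triangular automorphisms of $I^{w(-1)}$ and $I^{w(0)}$ (with off-diagonal entries given by composing the mixed blocks with the inverse of the $(2,2)$-block), one clears the off-diagonal blocks; this only changes $\Ker z^{*}$ up to an isomorphism of representations and leaves it isomorphic to $\Ker z^{*}_{11}$, where $z^{*}_{11}$ is a generic element of $E_{\coeffFree w}^{*}$. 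This is precisely the observation already invoked in the proof of part~(2) of the preceding proposition.

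Finally, combining these two ingredients with Theorem~\ref{thm:genChar} gives
\[
\genRedTarg(w)=\sum_{v}t^{-\dim\Gr_{v}\kerMod W}P_{t}(\Gr_{v}\kerMod W)\,Y^{w-C_{q}v}=Y^{\pureCoeff w}\cdot\genRedTarg(\coeffFree w),
\]
where one factors $Y^{w-C_{q}v}=Y^{\pureCoeff w}\,Y^{\coeffFree w-C_{q}v}$ in the commutative product of $\redTargSpace$ and the sum is indexed by the common generic kernel $\kerMod W$. Together with the first step this yields the claimed factorization. I do not anticipate a genuine obstacle: the nontrivial input, namely the invariance of the generic kernel under the addition of pure coefficients, is already available from the preceding argument, and the remainder is a formal manipulation of the truncated character formula.
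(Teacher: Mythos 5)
Your argument is correct and is exactly the intended (but unspelled-out) content of the paper's one-line proof, which simply cites Theorem~\ref{thm:genChar}: the generic kernel $\kerMod W$ for $w$ coincides, up to isomorphism, with that for $\coeffFree w$ (your block Gaussian elimination making this precise via the Schur complement), the pure-coefficient factor contributes only the monomial $Y^{\pureCoeff w}$, and the identity $Y^{w-C_qv}=Y^{\pureCoeff w}\,Y^{\coeffFree w-C_qv}$ then gives the factorization term by term.
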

\begin{proof}
This proposition is a direct consequence of Theorem \ref{thm:genChar}.
\end{proof}

\subsection{From deformed Grothendieck rings to quantum cluster algebras}
\label{sec:correspondence}
%\targSpace \dTorus \qtChar

In this section, we construct $\Z$-linear algebra homomorphisms from the ring of
formal power series $\redTargSpace$ in which (truncated) $qt$-characters live to the quantum
torus $\torus$ in which quantum cluster algebras
live. Detailed proofs of these maps' properties can be
found in \cite{Qin11}, where the ice quiver is not restricted to the $z$-pattern, and the maps might not be algebra homomorphisms.

As in Section \ref{sec:frozenQuiver}, let $\tQ$ be an ice quiver
whose principal part $Q$ is acyclic. Using the notation of Section
\ref{sec:cluster_category}, we define the linear map $\ind(\ )$ from
the set of finitely supported vectors in $\N^{I\times\Z}$ to $\Z^m$
\st $\ind(e_{i,a})$, $i\in I$, $a\in \Z$, is the vector of coordinates
in the basis $[e_i\Gamma]$, $1\leq i\leq m$, of the index of the
coefficient-free object whose image in the cluster category $\cC_Q$ is
$T_i[-a]$.

\begin{Lem}[{\cite{Qin11}}]\label{lem:zPatternInd}
  We have, for $1\leq k\leq n$,
  \begin{align}
    \ind(e_{k,0})=e_k,\\
\ind(e_{k,-1})=e_{k+n}-e_k.
  \end{align}
\end{Lem}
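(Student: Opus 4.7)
The plan is to unpack the definition of $\ind$ from \cite{Qin11} in each case and exhibit, concretely, a coefficient-free object $M\in\cC$ whose image in $\cC_Q$ is the prescribed shift $T_k[-a]$, then read off the index in the basis $\{[e_i\Gamma]\}_{1\leq i\leq m}$ from a short two-term presentation by summands of $\Gamma$.

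For $a=0$, the lift can be taken to be $M=T_k$ itself, viewed in $\cC$. Since $1\leq k\leq n$, the object $T_k$ is not a frozen summand of $T$, and $\Ext^1_\cC(T_i,T_k)=0$ for all $i$ because $T$ is cluster-tilting; both conditions of Definition~\ref{def:CoefFree} are therefore satisfied. Its image in $\cC_Q$ is tautologically $T_k=T_k[0]$. Applying Theorem~\ref{thm:g_vector_coordinate}(2) at the root $t=t_0$, where $T_k(t_0)=T_k$, gives $\ind_T T_k=e_k$, so $\ind(e_{k,0})=e_k$.

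For $a=-1$, the target in $\cC_Q$ is $T_k[1]$. In the $z$-pattern construction, the frozen vertex $n+k$ of $\tilde Q^z_1$ is by definition the AR-translate $\tau P_k$, which under the identification $\cC\leadsto\cC_Q$ (obtained by killing the frozen summands) represents $T_k[1]$. In $\cC$ itself, however, $T_{n+k}$ is frozen, hence not coefficient-free. The idea is to form $M$ as the cone in $\cC$ of the natural morphism $T_k\to T_{n+k}$ encoded by the unique arrow $k\to n+k$ of $\tilde Q^z_1$, yielding a triangle
\begin{equation*}
T_k \xra{} T_{n+k} \xra{} M \xra{} T_k[1].
\end{equation*}
Lifting via the equivalence $\cF\iso\pr_\cC(T)$ produces a triangle in $\cF=\pr_{\per\Gamma}(\Gamma)$ whose outer terms are $e_k\Gamma$ and $e_{n+k}\Gamma$; reading off $[\pi^{-1}M]\in\Kz(\per\Gamma)$ gives
\begin{equation*}
\ind_T M \;=\; [e_{n+k}\Gamma]-[e_k\Gamma]\;=\;e_{n+k}-e_k,
\end{equation*}
so $\ind(e_{k,-1})=e_{n+k}-e_k$ as desired.

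It remains to verify that this $M$ is genuinely coefficient-free and that its image in $\cC_Q$ is $T_k[1]$. Coefficient-freeness is checked by applying $\Hom_\cC(T_i,-)$ for $i>n$ to the defining triangle and invoking rigidity of the cluster-tilting object $T$ together with the AR-quiver description of the frozen summands; the identification in $\cC_Q$ follows because modding out by the frozen summands converts $T_{n+k}$ into $T_k[1]$ in the manner built into the setup of \cite{Qin11}. The main obstacle is handling these two points cleanly, and this is done most transparently by working with the minimal cofibrant resolution of $T_k[1]$ in $\per\Gamma$, whose structure, dictated by the Ginzburg algebra $\Gamma(\tilde Q^z_1,\tilde W)$, produces simultaneously the two-term presentation used above and the coefficient-free property.
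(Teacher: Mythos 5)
Your reduction for $a=0$ is fine. For $a=-1$, taking $M$ to be the cone of the morphism $f\colon T_k\to T_{n+k}$ given by the arrow $k\to n+k$ of $\tQ^z_1$ is indeed the right object, and reading off $\ind_T M=e_{n+k}-e_k$ from the presentation triangle is correct. The gap is in your justification that $M$ is coefficient-free. Rigidity of $T$ only kills $\Ext^1_\cC(T_i,T_k)$ and $\Ext^1_\cC(T_i,T_{n+k})$; plugging the triangle into $\Hom_\cC(T_i,-)$ and using the $2$-Calabi--Yau property then identifies $\Ext^1_\cC(T_i,M)$, for $i>n$, with the linear dual of the cokernel of precomposition with $f$, that is, of $\Hom_\cC(T_{n+k},T_i)\xrightarrow{-\circ f}\Hom_\cC(T_k,T_i)$. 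Its vanishing is the assertion that in the Jacobi algebra $H^0\Gamma=\cP(\tQ^z_1,\tW)$ every path from $k$ to a frozen vertex factors through the single arrow $k\to n+k$. This is a genuine property of the $z$-pattern quiver with generic potential that depends on the cyclic derivative relations $\partial\tW$ (in Figure~\ref{fig:levelOneQuiver}, for instance, the paths $1\to 2\to 5$ and $1\to 3\to 6$ die precisely because of the $3$-cycle relations), and it is not a consequence of rigidity, which is all you actually invoke.

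Two smaller points. The phrase ``modding out by the frozen summands converts $T_{n+k}$ into $T_k[1]$'' is off: under the reduction from $\cC$ to $\cC_Q$ it is $T_{n+k}$ that goes to zero, so that $M$ becomes the cone of $T_k\to 0$, namely $\Sigma T_k$; verifying this honestly means matching the $\cP(Q,W)$-module $\Ext^1_\cC(T,M)$ with $\Ext^1_{\cC_Q}(T,\Sigma T_k)$, which comes out of the same long exact sequence. Also, the closing appeal to a ``minimal cofibrant resolution of $T_k[1]$ in $\per\Gamma$'' is misdirected: the two-term preimage in $\pr_{\per\Gamma}(\Gamma)$ of $\Sigma T_k$ itself is $\Sigma e_k\Gamma$, which has index $-e_k$, not $e_{n+k}-e_k$; what you need is the resolution of $\pi^{-1}M$ for the coefficient-free lift $M$, and establishing its two-term shape $e_k\Gamma\to e_{n+k}\Gamma$ is exactly the content of the preceding paragraph. (The paper itself cites \cite{Qin11} for this lemma and contains no internal proof to compare against.)
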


\begin{Lem}[{\cite{Qin11}}]\label{lem:zReduction}
We have
\begin{align}
  \ind(w-C_qv)=\ind(w)+\tB v.
\end{align}
\end{Lem}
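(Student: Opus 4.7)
The plan is to reduce to a basis computation and then unwind definitions. Since both $\ind(-)$ and the map $v \mapsto \tB v$ are $\Z$-linear and the $w$-part of the identity matches trivially, the claim reduces to proving
\[
\ind(C_q e_{l,-\Hf}) = -\tB e_l \qquad \text{for every } l\in I,
\]
where $\tB e_l$ denotes the $l$-th column of $\tB$ viewed in $\Z^m = \Z^{2n}$. The rest of the argument is a direct calculation.

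The first step is to apply \defref{def:cartanMatrix} to $e_{l,-\Hf}$. Since this vector is concentrated in degree $-\Hf$, the only nonzero entries of $C_q e_{l,-\Hf}$ lie in degrees $0$ and $-1$, and they are expressible in terms of $\delta_{kl}$ and the entries $b_{kl}$, $b_{lk}$; two indicator terms appear, according to whether $k<l$ or $k>l$, each coming from one of the two partial sums in the definition of $C_q$.

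The second step is to apply \lemref{lem:zPatternInd} to pass to a vector in $\Z^{2n}$. Writing out $\ind(e_{k,0})=e_k$ and $\ind(e_{k,-1})=e_{k+n}-e_k$, the coefficient of $e_k$ (principal part) is the difference of the degree-$0$ and degree-$(-1)$ components of $C_q e_{l,-\Hf}$, which simplifies to $-b_{kl}$ after invoking skew-symmetry $b_{kl}+b_{lk}=0$; the coefficient of $e_{k+n}$ (frozen part) is exactly the degree-$(-1)$ component. Finally I would compare with $-\tB e_l$: the principal block matches immediately because the top $n\times n$ submatrix of $\tB$ is $B_Q$; for the frozen block, the $z$-pattern construction of \secref{sec:frozenQuiver} prescribes, for each vertex $i$, an arrow $i\to i+n$ (contributing $\tB_{i+n,i}=-1$) and, for each arrow $i\to j$ of $Q$, an Auslander--Reiten arrow $j+n\to i$ (contributing $\tB_{j+n,i}=b_{ij}$). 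A direct inspection shows that the $(k+n)$-th coordinate of $-\tB e_l$ coincides with the degree-$(-1)$ entry of $C_q e_{l,-\Hf}$ at vertex $k$.

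The calculation is essentially bookkeeping; the main (mild) obstacle is aligning two different indexing conventions: the $q$-Cartan formula, whose two partial sums $\sum_{i<k}$ and $\sum_{j>k}$ depend on the fixed linear ordering of $I$, and the bottom block of $\tB$, which encodes the arrows attached to the Auslander--Reiten translate $\tau P_i$ at the frozen vertex $i+n$. Once these conventions are synchronized via the acyclic ordering assumption $b_{ij}\leq 0$ for $i\geq j$, the identity collapses to the skew-symmetry of $B$.
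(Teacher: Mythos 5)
Your reduction to $\ind(C_q e_{l,-\Hf})=-\tB e_l$ and the ensuing bookkeeping are correct. I verified the two blocks: the principal coordinate of $\ind(C_q e_{l,-\Hf})$ is $(C_q e_{l,-\Hf})_k(0)-(C_q e_{l,-\Hf})_k(-1) = -b_{kl}[l>k]+b_{lk}[l<k]=-b_{kl}$ (skew-symmetry is used exactly in the $l<k$ case, as you flag), and the frozen coordinate is $(C_q e_{l,-\Hf})_k(-1)=\delta_{kl}-b_{lk}[l<k]$, which agrees with $-\tB_{k+n,l}=\delta_{kl}-\max(b_{lk},0)$ once you use the ordering convention $b_{ij}\le 0$ for $i\ge j$ to convert $\max(b_{lk},0)$ into $b_{lk}[l<k]$. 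Note, however, that this paper does not actually supply a proof of Lemma~\ref{lem:zReduction}: it is quoted from the companion paper \cite{Qin11}, so there is no in-paper argument to compare against. What you wrote is the natural direct verification and is almost certainly how the cited source proceeds. Two small points worth tightening if you write this up in full: the contribution "$\tB_{j+n,i}=b_{ij}$" from an arrow $i\to j$ only applies when $b_{ij}>0$ (otherwise there is no such arrow and the entry is $0$, while $b_{ij}$ would be negative), and the identity is an equality in $\Z^{I\times\Z}$ extended by linearity from $\N^{I\times\Z}$, since $w-C_qv$ can have negative entries.
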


Define $N=2n$.  Notice that
$\Lambda(e_{i},\tB v)=0$, for any $n+1\leq i\leq N$ and
$v\in\N^n$.

Recall that the associated quantum torus is the Laurent polynomial ring
  \begin{align}
    \label{eq:dTorus}
    \torus=\qBase[x_1^\pm,\ldots,x_N^\pm],
  \end{align}
together with the twisted product $*$ \st for any $g^1$, $g^2\in
\Z^N$, we have
\begin{align*}
  x^{g^1}* x^{g^2}=q^{\Hf \Lambda(g^1,g^2)}x^{g^1+g^2}.
\end{align*}
It has the bar involution $\overline{(\ )}$ given by $\overline{(q^{\Hf}x^g)}=q^{-\Hf}x^g$.

Define the coefficient ring to be
\begin{align*}
  \qBaseCoeff=\qBase[x_{n+1}^\pm,\ldots,x_{N}^\pm].
\end{align*}
Let $\ZCoeff$ denote its semi-classical limit
under the specialization $q^\Hf\mapsto 1$.

\begin{Def}[Correspondence map]\label{def:correction_map}
The $\Z$-linear map $\cor$ from $\redTargSpace$ to $\torus$ is given by
  \begin{align}
    \label{eq:cor}
    \cor(t^{\lambda}Y^w)=q^{\frac{\diag}{2}\lambda}x^{\ind(w)},
  \end{align}
for any $w$, and integer $\lambda$.
\end{Def}

\begin{Rem}
  The map $\cor$ is denoted by the composition $\contr\cor$ in
  \cite{Qin11}.
\end{Rem}

\begin{Lem}[{\cite{Qin11}}] We have, for any $w^i$, $i=1$, $2$,
  \begin{align}\label{eq:failureCor}
      \cor(Y^{w^1}*Y^{w^2})&=\cor(Y^{w^1})*\cor(Y^{w^2}).
  \end{align}
\end{Lem}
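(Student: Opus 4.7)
The plan is to expand both sides of \eqref{eq:failureCor} using the definitions of the two twisted products and reduce the statement to a numerical identity between the skew-symmetrization of $\dT$ and the pullback of $\Lambda$ along $\ind$. Setting $m^i = m(0, w^i)$ and using the twisted multiplication formula \eqref{eq:twistedMultiplication},
\begin{align*}
Y^{w^1} * Y^{w^2} = t^{\dT(m^2, m^1) - \dT(m^1, m^2)} Y^{w^1+w^2}.
\end{align*}
Applying $\cor$ to this and using that $\ind$ is additive (the $v=0$ case of Lemma \ref{lem:zReduction}) yields
\begin{align*}
\cor(Y^{w^1} * Y^{w^2}) = q^{\frac{\diag}{2}(\dT(m^2, m^1) - \dT(m^1, m^2))}\, x^{\ind(w^1) + \ind(w^2)}.
\end{align*}
On the other hand, by Definition \ref{def:quantum_torus},
\begin{align*}
\cor(Y^{w^1}) * \cor(Y^{w^2}) = q^{\Hf\, \Lambda(\ind(w^1), \ind(w^2))}\, x^{\ind(w^1) + \ind(w^2)}.
\end{align*}
The monomial factors already agree, so the lemma reduces to the scalar identity
\begin{align*}
\diag \cdot \bigl(\dT(m^2, m^1) - \dT(m^1, m^2)\bigr) = \Lambda(\ind(w^1), \ind(w^2)).
\end{align*}

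Since this identity is bilinear in $(w^1, w^2)$, it suffices to verify it on basis vectors $w^i = e_{k_i, a_i}$ with $a_i \in \{-1, 0\}$ (by the level-one convention). On the left side, $\dT((0, w^1), (0, w^2)) = \dTW(w^1, w^2) = -w^1[\Hf] \cdot C_q^{-1} w^2$, and the formula in Lemma \ref{lem:inverseAlgorithm} gives an explicit expression for $C_q^{-1} e_{k, a}$ in terms of the path numbers $p_{i,j}$ of $Q\op$. On the right side, Lemma \ref{lem:zPatternInd} gives $\ind(e_{k, 0}) = e_k$ and $\ind(e_{k, -1}) = e_{k+n} - e_k$, so $\Lambda(\ind(w^1), \ind(w^2))$ can be read off as a small combination of entries of the $2n \times 2n$ matrix $\Lambda$.

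The main obstacle is matching these two expressions. I would bridge them via the compatibility relation $\Lambda(-\tB) = [D;\, 0]$ from Definition \ref{def:compatible}: pairing $\Lambda$ with any vector of the form $\tB v$ collapses onto a diagonal pairing scaled by the entries of $D$ (which in this normalization equal $\diag$). Combining this with Lemma \ref{lem:zReduction}, which converts $C_q$-shifts on the $Y$-side into $\tB$-actions on the $\ind$-side, translates each bilinear expression in $C_q^{-1}$ into the corresponding pairing under $\Lambda$. The level-one $z$-pattern structure is essential here, because it rigidly fixes both $\ind$ and the shape of $\tB = \tB^z$, and these together force the numerical matching with the diagonal normalization $\diag$ coming from the compatible pair. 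A routine case-by-case check on the four pairs $(a_1, a_2) \in \{-1, 0\}^2$ completes the verification.
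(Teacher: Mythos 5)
The paper cites this lemma from \cite{Qin11} without reproducing a proof, so there is no in-paper argument to compare against; I assess the proposal on its own terms.

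Your initial reduction is sound: expanding both sides and using additivity of $\ind$ (Lemma \ref{lem:zReduction} with $v=0$), the lemma is equivalent to the bilinear identity
$-\diag\cdot\eMatrix(w^1,w^2)=\Lambda(\ind(w^1),\ind(w^2))$,
and it suffices to check this on the basis vectors $e_{k,a}$, $a\in\{-1,0\}$. The gap is in the step where you claim to obtain this identity from the compatibility relation $\Lambda(-\tB)=[D;0]$. Compatibility only constrains $\Lambda(u,\cdot)$ against vectors lying in the column space of $\tB$, an $n$-dimensional subspace of $\Z^{2n}$; but the vectors $\ind(e_{k,0})=e_k$ and $\ind(e_{k,-1})=e_{k+n}-e_k$ that actually occur on the right-hand side do \emph{not} lie in that column space (in the running example one checks directly that neither $e_1$ nor $e_4-e_1$ is in $\Im\tB$), so their $\Lambda$-pairings are simply not determined by compatibility. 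Lemma \ref{lem:zReduction} does not rescue this either: it converts $C_q$-shifts into $\tB$-shifts, i.e.\ controls $\ind(w-C_q v)-\ind(w)=\tB v$, but never expresses $\ind(w)$ itself through $\tB$. Saying that the $z$-pattern ``fixes the shape of $\tB$'' and therefore forces the matching is exactly the point that is left unproved, because fixing $\tB$ does not fix $\Lambda$.

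To close the gap you must invoke the explicit $\Lambda$ attached to the level-one $z$-pattern, namely the canonical choice $\Lambda=-B_{\tQ}^{-1}$ (so $D=\id_n$, $\diag=1$). With that $\Lambda$, the identity on basis vectors unwinds — via Lemma \ref{lem:inverseAlgorithm}, which gives $\te_{k,a}\cdot e_{k',a+\Hf}=p_{kk'}$ and vanishing in lower degree — to the concrete formulas $\Lambda_{kk'}=0$, $\Lambda_{k,k'+n}=p_{k'k}$, $\Lambda_{k+n,k'}=-p_{kk'}$, $\Lambda_{k+n,k'+n}=p_{k'k}-p_{kk'}$ for $k,k'\in I$. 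These are genuine structural facts about $B_{\tQ}^{-1}$ for the $z$-pattern, visible in the worked example, and they are what actually drives the lemma; proving them from the $z$-pattern quiver is the substance your plan has skipped by appealing to compatibility alone.
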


Let us define
\begin{align}
\pbwTorus(w)=\cor\tChar\trunc(\pbw(w)),\\
\canTorus(w)=\cor\tChar\trunc(\can(w)).
\end{align}
Explicitly, we have
  \begin{align*}
    \pbwTorus(w)&=\sum_vP_{q^{\frac{\diag}{2}}}(\lag(v,w))q^{-\frac{\diag}{2}
      \dim\grProjQuot(v,w)}x^{\ind(w)}x^{\tB v}\\
&=\sum_v\cor(\langle M_w(0),\pi_w(v)\rangle) x^{\ind(w)}x^{\tB v}.
  \end{align*}
  
It follows from definition that the truncated $qt$-characters of the simple modules are given by
  \begin{align}
    \label{eq:qtChar}
    \tChar\trunc(\can(w))=\sum_v a_{v,0;w}(t)Y^{w-C_qv}.
  \end{align}
Since $a_{v,0;w}(t)$ equals $a_{v,0;w}(t^{-1})$, $\tChar(\ )$ commutes with
$\overline{(\ )}$.

\begin{Prop}[{\cite{Qin11}}]
  \label{prop:mult_pbw}
Fix $w^1$ and $w^2$. If for all $i,j\in I$ and
$a>b\in\Z$, either $(w^1)_i(a)$ or $(w^2)_j(b)$ vanishes, then the multiplicative property holds:
\begin{align*}
\pbwTorus(w^2)*\pbwTorus(w^1)&= q^{\Hf\eMatrix(w^1,w^2)} \pbwTorus(w^1+w^2).
\end{align*}
\end{Prop}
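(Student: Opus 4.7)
The plan is to verify the identity by directly expanding both sides using the explicit formula for $\pbwTorus(w)$ given just after Definition~\ref{def:correction_map}, and then matching them term by term. The argument combines a geometric factorization of the graded quiver varieties under the separation hypothesis with a bookkeeping of $q$-powers.

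I first expand the left-hand side as a double sum over $(v^1,v^2)$ and reorganize it by $v=v^1+v^2$. Using the twisted product $x^a*x^b=q^{\Hf\Lambda(a,b)}x^{a+b}$ together with Lemma~\ref{lem:zReduction}, each term becomes $P_{q^{\delta/2}}(\lag(v^1,w^1))\cdot P_{q^{\delta/2}}(\lag(v^2,w^2))$ times a scalar prefactor and the monomial $x^{\ind(w^1+w^2)+\tB v}$. The geometric input is that, under the separation hypothesis on $(w^1,w^2)$, the fibre $\lag(v,w^1+w^2)$ decomposes as a disjoint union, over $v=v^1+v^2$, of direct products $\lag(v^1,w^1)\times\lag(v^2,w^2)$: the weight order strictly decreases along every arrow in $\grESp$ and $\grLSp$, so the separation hypothesis forces the splitting $W=W^1\oplus W^2$ to induce a $G_v$-equivariant splitting $V=V^1\oplus V^2$ on every point of $\mu^{-1}(0)\subset\grRep(v,w^1+w^2)$. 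Combined with bilinearity of $d$, one obtains the dimension identity
\[
\dim\grProjQuot(v,w^1+w^2)=\dim\grProjQuot(v^1,w^1)+\dim\grProjQuot(v^2,w^2)+d((v^1,w^1),(v^2,w^2))+d((v^2,w^2),(v^1,w^1)).
\]

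It then remains to check that the accumulated $q$-powers match $q^{\Hf\eMatrix(w^1,w^2)}$. The twist $\Lambda(\ind(w^2)+\tB v^2,\ind(w^1)+\tB v^1)$ from the $*$-product splits into four pieces. Those involving $\tB v^i$ are evaluated via the compatibility \eqref{eq:BZ_compatible} and cancel against the dimensional cross-terms above, while the piece $\Lambda(\ind(w^2),\ind(w^1))$ is rewritten using Lemma~\ref{lem:zReduction} and Lemma~\ref{lem:symmetricCartan} in terms of the $C_q^{-1}$-pairing of $w^1$ and $w^2$. By \eqref{eq:twistDimension} these pieces assemble precisely to $\Hf\eMatrix(w^1,w^2)$. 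The main obstacle is the geometric decomposition of $\lag(v,w^1+w^2)$: one must rigorously verify that the forced splitting of $V$ is compatible with the moment map $\mu$ and with $\chi$-semistability, so that it descends to the GIT quotient. Once this is in hand, the rest reduces to routine bookkeeping in the quadratic forms $d$, $\dTW$ and $\eMatrix$.
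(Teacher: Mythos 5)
The paper does not prove this Proposition --- it is cited from \cite{Qin11} --- so I can only assess your argument on its own terms, and there is a genuine gap in the geometric step.

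Under the separation hypothesis, the zero fibre $\lag(v,w^1+w^2)$ is \emph{not} a disjoint union of direct products $\lag(v^1,w^1)\times\lag(v^2,w^2)$. In the level-$1$ setting in force here, Proposition~\ref{prop:flag_bundle} identifies $\lag(v,w)$ with the submodule Grassmannian $\Gr_v(I^{w(-1)})$. Already for $Q$ a single vertex this is an ordinary Grassmannian $\Gr(v,a+b)$, which is connected and not a union of products; for instance $\Gr(1,2)=\P^1$ has Poincar\'e polynomial $1+t^2$ while $\sum_{v^1+v^2=1}P_t(\Gr(v^1,1))P_t(\Gr(v^2,1))=2$. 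The separation hypothesis makes the splitting $W=W^1\oplus W^2$ induce a $B$-invariant \emph{filtration} $V^1\subset V$ on each point, not a splitting; stratifying by $v^1=\dim V^1$ gives pieces that are \emph{affine bundles} over $\lag(v^1,w^1)\times\lag(v^2,w^2)$, and the rank of that bundle (roughly $d((v^1,w^1),(v^2,w^2))$) is the source of the $q$-Vandermonde exponent that is missing from your expansion. Your own dimension identity already forbids direct products: if the top stratum were $\lag(v^1,w^1)\times\lag(v^2,w^2)$, the top Betti degree of $\lag(v,w^1+w^2)$ would be $\dim\grProjQuot(v^1,w^1)+\dim\grProjQuot(v^2,w^2)$, strictly smaller than what the identity gives.

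The bookkeeping mismatch is also visible algebraically. The $(v^1,v^2)$-bilinear part of your $\Lambda$-twist, namely $\Lambda(\tB v^2,\tB v^1)$, is governed via \eqref{eq:BZ_compatible} by the skew form $DB$, whereas the $(v^1,v^2)$-bilinear part of $d((v^1,w^1),(v^2,w^2))+d((v^2,w^2),(v^1,w^1))$ equals $-2\,C_q v^1\cdot v^2[-\Hf]$, which is \emph{symmetric} by Lemma~\ref{lem:symmetricCartan}. A skew form cannot cancel a nonzero symmetric one, so the cancellation you invoke fails and the exponent does not reduce to $\Hf\eMatrix(w^1,w^2)$; the omitted affine-bundle rank is precisely what restores the balance (as the single-vertex Grassmannian check confirms). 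The intended route --- as in Nakajima, Varagnolo--Vasserot and \cite{Qin11} --- derives the statement from the restriction functor formula already displayed in Section~\ref{sec:notations}: the shift $[d((v^2,w^2),(v^1,w^1))-d((v^1,w^1),(v^2,w^2))]$ inside $\tRes^w_{w^1,w^2}(\pi_w(v))$ together with the normalisation $[-\eMatrix(w^1,w^2)]$ carries exactly the data your product claim drops. Your plan becomes correct once ``direct product'' is replaced by ``affine bundle over the product'' and the bundle rank is fed into the $q$-power identity.
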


\begin{Thm}[Deformed monoidal categorification]\label{thm:iso}
  The map $\tChar\trunc$ is an algebra isomorphism from $\quotKGp$ to
  $\qClAlg$. Furthermore, the preimage of any cluster monomial is the class of a
  simple module.
\end{Thm}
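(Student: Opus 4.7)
The plan is to analyze the composite map $\Phi := \cor \circ \tChar\trunc \colon \quotKGp \to \torus$ and establish that (i) $\Phi$ is an injective algebra homomorphism, (ii) its image equals $\qClAlg$, and (iii) the basis element $\can(w)$ maps to a quantum cluster monomial precisely when the generic kernel $\kerMod W$ over $E_w^*$ is a rigid $Q\op$-module. Since every coefficient-free rigid object of $\cD$ arises uniquely as such a kernel (cf.~\cite{Plamondon10c}, as used in the proof of Proposition~\ref{prop:clusterMonomial}), step~(iii) will automatically yield the stated bijection between simple classes and quantum cluster monomials.

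For step~(iii), the idea is to match the truncated qt-character formula with the quantum CC-formula. By Proposition~\ref{prop:clusterMonomial}(2), rigidity of $\kerMod W$ forces $\gen(w) = \can(w)$, so applying $\cor$ to Theorem~\ref{thm:genChar} and using Lemmas~\ref{lem:zPatternInd}~and~\ref{lem:zReduction} to rewrite $\ind(w - C_q v) = \ind(w) + \tB v$, one obtains
\begin{align*}
\Phi(\can(w)) = \sum_v P_{q^{\Hf}}(\Gr_v \kerMod W)\, q^{-\Hf \dim \Gr_v \kerMod W}\, x^{\ind(w) + \tB v}.
\end{align*}
Since the matrix of $\phi$ in Definition~\ref{def:CC_formula} equals $-\tB$ by~\eqref{eq:phi}, this is precisely the quantum CC-formula for the rigid object $M \in \cD$ with $\Ext^1_\cC(T, M) \cong \kerMod W$ and $\ind_T M = \ind(w)$. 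Theorem~\ref{thm:CC_formula} then identifies $\Phi(\can(w))$ with the corresponding quantum cluster monomial $x_M$. Frozen-variable contributions are handled via the factorization~\eqref{eq:coeffFactorization} applied to $\pureCoeff w$.

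For step~(i), Theorem~\ref{thm:injectiveHom} gives that $\tChar\trunc$ is an injective algebra homomorphism for the twisted multiplication~\eqref{eq:twistedMultiplication}, and~\eqref{eq:failureCor} shows $\cor$ preserves that twisted product on monomials. Proposition~\ref{prop:mult_pbw} supplies the key computational input: the PBW-type elements $\pbwTorus(w)$ satisfy exactly the same product law in $\torus$ (up to $q^{\Hf \eMatrix(w^1, w^2)}$) as the restriction functor prescribes for $\pbw(w)$ in $\quotKGp$. The unitriangularity~\eqref{eq:canToPbw} between the canonical and PBW bases transports multiplicativity from the latter to the former, so $\Phi$ is an algebra homomorphism on all of $\quotKGp$. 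Injectivity then follows from injectivity of $\tChar\trunc$ combined with injectivity of $\ind$ on the relevant sublattice (Lemma~\ref{lem:zPatternInd}).

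For step~(ii), the image contains all quantum cluster monomials by step~(iii), so by the quantum Laurent phenomenon (Theorem~\ref{thm:Laurent}) we get $\qClAlg \subseteq \Phi(\quotKGp)$. The reverse inclusion is obtained by observing that the PBW basis $\{\pbwTorus(w)\}$ already consists of Laurent polynomials lying in $\qClAlg$ (via the generalized CC construction of~\cite{Qin10}), and the canonical basis $\{\canTorus(w)\}$ lies in its $\qBase$-span via~\eqref{eq:canToPbw}. The main obstacle I anticipate is step~(i): one must verify the numerical identity equating $q^{\Hf \eMatrix(w^1, w^2)}$ (built from the inverse $q$-Cartan matrix via~\eqref{eq:twistDimension}) with the quantum-torus twist $q^{\Hf \Lambda(\ind(w^1), \ind(w^2))}$. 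This compatibility is the technical heart of why the level-$1$ $z$-pattern is precisely the right frozen extension for matching the acyclic quantum cluster structure, and it would not hold for an arbitrary ice quiver.
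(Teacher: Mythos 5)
Your proposal follows essentially the same route as the paper: (i) $\cor\circ\tChar\trunc$ is an injective algebra homomorphism (Theorem~\ref{thm:injectiveHom} plus~\eqref{eq:failureCor}); (ii) its image equals $\qClAlg$ because it contains all cluster and frozen variables (Theorems~\ref{thm:genChar} and~\ref{thm:CC_formula}) and conversely the $\pbwTorus(w)$ lie in $\qClAlg$; (iii) cluster monomials correspond to simple classes via Proposition~\ref{prop:clusterMonomial}. The one logical wrinkle is in your step~(i): multiplicativity of $\Phi$ is a property of the map, already supplied by the fact that both $\tChar\trunc$ and $\cor$ are ring homomorphisms --- you neither need nor can deduce it from checking a PBW-type identity and ``transporting'' it via the unitriangular change of basis~\eqref{eq:canToPbw}, which relates two bases but carries no multiplicative information. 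Similarly, the ``numerical identity'' you flag at the end as the main obstacle is precisely what~\eqref{eq:failureCor} (proved in~\cite{Qin11}) already records, so it is not an open point once that lemma is invoked. Apart from these redundancies the argument is sound and matches the paper's.
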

\begin{proof}
Let $\subQClAlg$ denote the vector space spanned by the standard
basis elements $\pbwTorus(w)$ over $\qBaseCoeff$. By Theorem
\ref{thm:injectiveHom}, it is the image of the injective algebra
homomorphism $\tChar\trunc$. In particular, it is closed under the
involution $\overline{(\ )}$ and the twisted products (\cf \cite{Qin11} for another proof).

By Theorem \ref{thm:genChar} and Theorem \ref{thm:CC_formula}, $\subQClAlg$ contains all the quantum
cluster variables and the frozen variables
$x_{n+1},\cdots,x_{2n}$. Therefore it is equal to $\qClAlg$.

The second statement follows from Theorem \ref{thm:genChar}, Theorem \ref{thm:CC_formula}, and
Proposition \ref{prop:clusterMonomial}.
\end{proof}

\begin{Def}[Strong positivity]
  A cluster algebra is called \emph{strongly positive}, if it has a basis
  such that the structure constants of the basis are positive and all
  the cluster monomials are contained in the basis.
\end{Def}

\begin{Cor}\label{cor:stronglyPositive}
  The quantum cluster algebra $\qClAlg$ is strongly positive.
\end{Cor}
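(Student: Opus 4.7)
The plan is to transport the dual canonical basis $\{\can(w)\}$ from the deformed Grothendieck ring $\quotKGp$ to the quantum cluster algebra $\qClAlg$ via the isomorphism established in Theorem \ref{thm:iso}, and verify that both requirements of strong positivity are inherited.

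First, I would set $\basis = \{\canTorus(w)\} := \{\cor\circ\tChar\trunc(\can(w))\}$, indexed by the vectors $w$ (in the appropriate range, i.e.\ $w \in \N^{I\times\{-1,0\}}$ together with the unfrozen degrees). Since Theorem \ref{thm:iso} gives that $\tChar\trunc: \quotKGp \iso \qClAlg$ is an algebra isomorphism (identifying $\quotKGp$ with the image $\subQClAlg = \qClAlg$ inside $\torus$), and $\{\can(w)\}$ is a $\tBase$-basis of $\quotKGp$, the image $\basis$ is a basis of $\qClAlg$ over the appropriate coefficient ring.

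Second, I would verify the positivity of the structure constants. By Theorem \ref{thm:positiveSimples}, one has
\[
\can(w^1)\otimes\can(w^2) = \sum_{w^3} \canStr^{w^3}_{w^1,w^2}(t)\,\can(w^3)
\]
with $\canStr^{w^3}_{w^1,w^2}(t)\in\N[t^\pm]$. Applying the algebra homomorphism $\cor\circ\tChar\trunc$ and using that it carries $\otimes$ to the twisted product $*$ in $\torus$, we obtain
\[
\canTorus(w^1)*\canTorus(w^2) = \sum_{w^3} \canStr^{w^3}_{w^1,w^2}(q^{\Hf})\,\canTorus(w^3),
\]
so the structure constants of $\basis$ lie in $\N[q^{\pm\Hf}]$, which is the positivity statement for $\qClAlg$.

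Third, I would show that every quantum cluster monomial lies in $\basis$. By the second statement of Theorem \ref{thm:iso}, the preimage of any quantum cluster monomial under $\tChar\trunc$ is the class of a simple module, that is, an element of the form $\can(w)$. Hence every cluster monomial is of the form $\canTorus(w)$ and thus belongs to $\basis$. Monomials in the frozen variables $x_{n+1},\ldots,x_{2n}$ correspond to the pure-coefficient factors via the factorization \eqref{eq:coeffFactorization} and Proposition \ref{prop:clusterMonomial}, and these too are simple, hence lie in $\basis$; combined with Theorem \ref{thm:CC_formula} this covers all cluster monomials of $\qClAlg$.

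I do not expect any genuine obstacle: the entire content has already been done, and this corollary is a bookkeeping consequence of Theorem \ref{thm:iso} combined with Theorem \ref{thm:positiveSimples}. The only mild subtlety worth making explicit is that the isomorphism of Theorem \ref{thm:iso} is compatible with the multiplication $\otimes$ on $\quotKGp$ and the twisted product $*$ on $\torus$, so positivity of structure constants transfers cleanly; this is precisely the content of \eqref{eq:failureCor} and Proposition \ref{prop:mult_pbw} used inside the proof of Theorem \ref{thm:iso}.
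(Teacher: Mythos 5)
Your proof is correct and follows essentially the same route as the paper's own one-line argument: transport the basis $\{\can(w)\}$ through the algebra isomorphism of Theorem~\ref{thm:iso}, invoke Theorem~\ref{thm:positiveSimples} for positivity of the structure constants, and Proposition~\ref{prop:clusterMonomial} (equivalently the second statement of Theorem~\ref{thm:iso}, which is proved from it) for containment of the cluster monomials. The only cosmetic difference is that the paper restricts the indexing set to $\redWSet$ so as to obtain a basis over the coefficient ring $\qBaseCoeff$, whereas you index over all level-$1$ vectors $w$ and get a $\qBase$-basis; both satisfy the definition of strong positivity, and the factorization \eqref{eq:coeffFactorization} reconciles the two.
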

\begin{proof}
The basis $\redCanBasis$ has positive structure constants by Theorem
\ref{thm:positiveSimples} and Theorem \ref{thm:injectiveHom}. It contains
all the cluster monomials by Proposition \ref{prop:clusterMonomial}.
\end{proof}

\begin{Cor}(Quantum positivity)\label{cor:positivity}
Any quantum cluster monomial $m$ can be written as a Laurent
polynomial of the quantum cluster variables $x_i$, $1\leq i\leq n$, in
any given seed with coefficients in $\N[q^{\pm\frac{1}{2}},x_{n+1}^\pm,\ldots, x_{m}^\pm]$.
\end{Cor}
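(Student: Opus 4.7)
By Proposition~\ref{prop:clusterMonomial} together with Theorem~\ref{thm:iso}, every quantum cluster monomial $m$ is the image of a dual canonical basis element $\can(w_m)\in\quotKGp$; in particular, $m$ belongs to the basis $\redCanBasis$ of $\qClAlg$. My plan is to deduce the desired Laurent positivity at an arbitrary given seed by combining the explicit quantum CC-formula (Definition~\ref{def:CC_formula} and Theorem~\ref{thm:CC_formula}), which delivers positive Laurent expansions in the initial acyclic seed $t_0$, with the strong positivity of the basis $\redCanBasis$ established in Corollary~\ref{cor:stronglyPositive}.

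First, positivity at the initial seed $t_0$ is immediate: for the rigid object $M\in\cD$ with $x_M=m$, the formula \eqref{eq:CC_formula} expresses $m$ as a sum of monomials $x^{\ind_T(M)-\phi(e)}$ weighted by $q^{-\Hf\dim \Gr_e}\,p_{q^{1/2}}(\Gr_e(\Ext^1_\cC(T,M)))$. These weights are twisted Poincar\'e polynomials of submodule Grassmannians, hence elements of $\N[q^{\pm 1/2}]$, and the contributions of the frozen-vertex components of $\ind_T(M)-\phi(e)$ are absorbed into the allowed coefficient ring $\N[q^{\pm 1/2},x_{n+1}^\pm,\ldots,x_m^\pm]$. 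Second, for a general given seed $t$, I would apply the analogous CC-type construction relative to the cluster-tilting object $T(t)\in\cD$ rather than $T=T(t_0)$: the resulting expression $x_M^t$ is a Laurent polynomial in $x(t)$ whose coefficients are Poincar\'e polynomials of the corresponding Grassmannians of $\Ext^1_\cC(T(t),M)$, and thus again lie in $\N[q^{\pm 1/2}]$. Since both expressions represent the same element $m$ of $\qClAlg$, the expansion of $m$ at seed $t$ inherits positivity.

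The main obstacle is to justify the CC-type formula at a non-initial seed $t$ when the mutated principal $B$-matrix $B(t)$ may fail to encode an acyclic quiver, since in that case the graded quiver variety construction of Sections~\ref{sec:reminders}--\ref{sec:psedoModules} cannot be directly re-run with $t$ as a new initial seed. The key point is that one need not change the ambient framework: the cluster category $\cC_Q$ is fixed with $Q$ acyclic, and all cluster-tilting objects $T(t)$, as well as the rigid objects producing cluster monomials, live inside $\cD\subset\cC_Q$. The quantum CC-formula in precisely this generality is what is established in \cite{Qin10}, so the positivity transfer depends only on the standing hypothesis that $Q$ is acyclic, not on any seed-by-seed acyclicity, and the corollary follows.
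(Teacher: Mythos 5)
Your argument for positivity at the initial seed $t_0$ is fine, but the passage to an arbitrary seed $t$ has a genuine gap, and the attempt to close it by reading more into \cite{Qin10} than is actually there does not work. Theorem~\ref{thm:CC_formula} (the quantum CC-formula from \cite{Qin10}) only expresses $x_M$ as a Laurent polynomial in the \emph{initial} variables $x_1,\ldots,x_m$, because $\ind_T(M)$, $\phi$ and the Grassmannians $\Gr_e(\Ext^1_\cC(T,M))$ are all taken relative to the fixed tilting object $T=T(t_0)$, whose endomorphism algebra is the hereditary algebra $kQ$. There is no established \emph{quantum} CC-formula in this paper or in \cite{Qin10} relative to a general $T(t)$, and the crucial geometric input---that $p_{q^{1/2}}(\Gr_e(\ ))\in\N[q^{\pm 1/2}]$, which rests on the quiver Grassmannian over $kQ$ being smooth projective with vanishing odd cohomology---breaks down once you replace $T$ by $T(t)$: the algebra $\End_\cC(T(t))$ is in general not hereditary (the quiver at seed $t$ can have oriented cycles), its quiver Grassmannians need not be smooth or irreducible, and their twisted Poincar\'e polynomials can have negative coefficients. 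So ``applying the analogous CC-type construction relative to $T(t)$'' is exactly the nontrivial statement being asserted, not something already in hand; you name this obstacle and then dismiss it without a real argument.

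The paper's own proof sidesteps the issue entirely and is worth contrasting with your plan. It starts from the quantum Laurent phenomenon, writes the given monomial $m$ as $\bigl(\sum_{m_*}c_{m_*}\prod_i x_i^{m_i}\bigr)\big/\prod_i x_i^{d_i}$ at the chosen seed with $c_{m_*}\in\qBaseCoeff$, and then uses that every quantum cluster monomial (and every frozen variable) equals some $\canCl(w)$ (Proposition~\ref{prop:clusterMonomial} plus Theorem~\ref{thm:iso}). Clearing denominators turns the right-hand side into a product of two canonical basis elements, which by Theorem~\ref{thm:positiveSimples} and the isomorphism $\tChar\trunc$ expands with coefficients in $\N[t^{\pm}]$; comparing with the left-hand side (also a $\canCl$-expansion) forces each $c_{m_*}$ into the nonnegative coefficient ring. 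So the mechanism is ``strong positivity of the basis containing all cluster monomials,'' not a seed-by-seed CC-formula. If you want to keep a CC-flavoured route you would need a separate, seed-independent positivity input; as written, your proposal reduces the general seed case to an unproved claim.
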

\begin{proof}
By the quantum Laurent phenomenon, we have
\begin{align*}
  m=\frac{\sum_{m_*=(m_i)} c_{m_*}\prod_{1\leq i\leq n}
    x_i^{m_i}}{\prod_i x_i^{d_i}},
\end{align*}
where $m_*=(m_i)_{i\in I}$, $d_*=(d_i)_{i\in I}$ are sequences of nonnegative integers and the
coefficients $c_{m_*}$ are contained in $\qBaseCoeff$. Notice that we
use the usual product $\cdot$ in this expression.

The quantum cluster monomial $m$ equals $\canCl(w)$ for some
$w$. Also, the
quantum $X$-variable $x_i$, $1\leq i\leq m$, equals $\canCl(w_i)$ for
some $w_i$. We can rewrite the above equation as
\begin{align*}
\sum_{m_*=(m_i)} c_{m_*}\canCl(\sum_i m_iw_i)&=\prod_i\canCl(w_i)^{d_i} \cdot \canCl(w)\\
&=q^{-\Hf\Lambda(\ind(\sum_i d_iw^i),\ind(w))}\canCl(\sum_i d_iw_i) * \canCl(w).
\end{align*}

The statement follows from Theorem \ref{thm:iso} and \eqref{eq:otimes}.
\end{proof}

% \begin{Thm}
%   The map $\cor\tChar$ is an injective $\qBase$-algebra
%   homomorphism (with respect to
%   the twisted product) from $\quotKGp$ to $\torus$. Its image is the
%   $\qBase[x_{n+1},\ldots,x_{2n}]$-algebra generated by all quantum
%   cluster variables.
% \end{Thm}

% \begin{draft}
% Both  $\cE$ in Nakajima and $\eMatrix$ in \cite{Qin11} can not be
% lifted to $\targSpace$ \st the form is compatible with transeverse
% slice.

% But their difference is compatible...

% Now $\cN$ of GLS is the sum of $\cE$ and $\eMatrix$. (or maybe the
% difference?)

% Notice the minus sign here!
% \end{draft}

\section{A reminder on quantum unipotent subgroups}\label{sec:unipotentSubgroup}

In this section, we recall the definitions and some properties of quantum groups, the dual canonical basis and quantum unipotent
subgroups following \cite{Kimura10}.
\subsection{Quantum groups}
A \emph{root datum} is a collection $(\mfr{h}, I, P, P^{\vee}, \{\alpha_{i}\}_{i\in I},
\{h_{i}\}_{i\in I}, (~, ~))$, where
\begin{enumerate}
\item $\mfr{h}$ is a finite-dimensional $\mbb{Q}$-vector space;
\item $I$ is a finite index set;
\item $P\subset \mfr{h}^*$ is a lattice (weight lattice);
\item $P^\vee=\Hom_\mbb{Z}(P,\mbb{Z})$ is the dual of $P$ with respect
  to the natural pairing $\bracket{~, ~}\colon P^{\vee}\otimes P\to \mbb{Z}$;
\item $\alpha_i$, $i\in I$, belongs to $P$ (simple root);
\item $h_i$, $i\in I$, belongs to $P^\vee$ (simple coroot);
\item $(\  ,\ )$ is a $\mbb{Q}$-valued symmetric bilinear form on $\mfr{h}^*$,
\end{enumerate}
\st we have
\begin{aenumerate}
\item $\bracket{h_i,\lambda}=2(\alpha_i,\lambda)/(\alpha_i,\alpha_i)$ for $i\in I$ and $\lambda\in P$;
\item the generalized Cartan matrix $C$, whose entry in position
  $(i,j)$ is defined as
  \begin{align*}
    a_{ij}=\bracket{h_i,\alpha_j}=2(\alpha_i,\alpha_j)/(\alpha_i,\alpha_i),
  \end{align*}
is symmetrizable;
% i.e.
% $\bracket{h_i,\alpha_i}=2$,
% and $\bracket{h_i,\alpha_j}\in \mbb{Z}_{\leq 0}$ and
% $\bracket{h_i,\alpha_j}=0 \Leftrightarrow \bracket{h_j,\alpha_i}=0$ for $i\neq j$,

\item for each $i\in I$, the value $(\alpha_i,\alpha_i)/2$ is
  contained in $\mbb{Z}_{>0}$, which we denote by
  $d_i$;
\item $\set{\alpha_i}_{i\in I}$ is linearly independent.
% i.e., $P$-regular in the sense of \cite[2.2.2]{Lus:intro}
%\item there exists $\Lambda_{i}~(i\in I)$ such that $\bracket{h_i,\Lambda_j}=\delta_{ij}$
%(fundamental weight).
\end{aenumerate}
The collection $(I, \mfr{h}, (~, ~))$ is called a \emph{Cartan datum}.

% \begin{NB}
% The condition is that $C$ is a generalized Cartan matrix.
% If so, by the definition (7) in the root data, it becomes symmetrizable automatically.
% \end{NB}

% \begin{draft}
% [For memorize] simple reflection $s_i$ is given by, for any $\gamma\in
% P$, \cf \cite{GeissLeclercSchroeer11}
% \begin{align*}
%   s_i(\gamma)=\gamma-\langle h_i,\gamma\rangle \alpha_i.
% \end{align*}
% It follows that $\gamma_i=s_i(\varpi_i)+\varpi_i=2\varpi_i-\alpha_i$,
% $\langle h_j,\gamma_i\rangle=2 \delta_{ij}-a_{ji}\geq 0$.
% \end{draft}
 
Define the \emph{root lattice} $Q$ to be the sub-lattice
$\bigoplus_{i\in I}{\mbb{Z}}\alpha_i$ of $P$. Let $Q_{\pm}$ denote
$\pm \sum_{i\in I}\mathbb{Z}_{\geq 0}\alpha_i$. For $\xi=\sum_{i\in
  I}\xi_i\alpha_i\in Q_{\pm}$, where $\xi_i\in\Z$, we define $\mathrm{tr}(\xi)=\sum_{i\in I}\xi_i$.
And we assume that there exists $\varpi_i\in P$  such that $\bracket{h_i, \varpi_{j}}=\delta_{i, j}$ for any $i,j \in I$.
We call $\varpi_{i}$ the \emph{fundamental weight} corresponding to $i\in I$.
We say $\lambda\in P$ is \emph{dominant} if $\bracket{h_i, \lambda}\geq 0$ for any $i\in I$ and denote by $P_+$
the set of dominant integral weights.
Define $\overline{P}=\bigoplus_{i\in I}\mbb{Z}\varpi_{i}$ and
$\overline{P}_+=\overline{P}\cap P_+=\bigoplus_{i\in I}\mbb{Z}_{\geq
  0}\varpi_i$.

% \begin{NB2}
% We note that the generalized Cartan matrix $a_{ij}=\bracket{h_i, \alpha_j}$ is symmetrizable,
% i.e. $d_ia_{ij}=d_ja_{ji}$. 
% \end{NB2}%
% \begin{NB}
%   I think this symmetric condition is not necessary when giving the framework of quantum
% unipotent group in this and next section?
% \end{NB}
% \begin{NB}
% For the comparison between $v$-power in the multiplication of the dual canonical basis element 
% and the dimension of $\Hom$ space between $\Ext^{1}$-orthogonal modules, we need the assumption of symmetricity.
% \end{NB}

We assume that the root datum is always symmetric, \ie the matrix $C$ is
symmetric. Then, for all $i\in I$, we have $d_i=d$ for some $d\in\Z_{>0}$. We introduce an indeterminate $v$.
For $i\in I$, we set $v_i=v^{(\alpha_i,\alpha_i)/2d}=v$ for all $i\in I$.
For $\xi=\sum_{i\in I}\xi_i\alpha_i \in Q$, we define $v_\xi=\prod_{i\in
  I}(v_i)^{\xi_i}=v^{(\xi, \rho)/d}=v^{\tr(\xi)}$, where $\rho$ is the sum of all the
fundamental weights.

% \begin{draft}
% 	$(\rho, \alpha_i)=\frac{(\alpha_i, \alpha_i)}{2}\bracket{h_i, \rho}=d_i$
% 	If $(I, (\cdot, \cdot))$ is symmetric, $q_i=q$ for any $i\in
%         I$ and $q_\xi=(q)^{\tr(\xi)}$
% \end{draft}

Let $\mfr{g}$ be the corresponding Kac-Moody Lie algebra. Let
$\Uv(\mfr{g})$ be the corresponding quantum enveloping algebra which
is the $\mbb{Q}(v)$-algebra generated by $\{e_{i}, f_{i}\}_{i\in
  I}\cup \{v^{h}\}_{h\in P^{\vee}}$ with the following relations:
\begin{renumerate}
\item $v^0=1,v^{h}v^{h'}=v^{h+h'}$,
\item $v^he_{i}v^{-h}=v^{\bracket{h,\alpha_i}}e_i, v^hf_{i}v^{-h}=v^{-\bracket{h,\alpha_i}}f_i$,
\item $e_if_j-f_je_i=\delta_{ij}{(t_i-t_i^{-1})}/{(v_i-v_i^{-1})}$,
\item $\displaystyle \sum_{k=0}^{1-a_{ij}}(-1)^ke_i^{(k)}e_je_i^{(1-a_{ij}-k)}=%
\sum_{k=0}^{1-a_{ij}}(-1)^kf_i^{(k)}f_jf_i^{(1-a_{ij}-k)}=0$,
\end{renumerate}
where 
$t_i=v^{d_{i}h_i}$,
$[n]_i=(v_i^n-v_i^{-n})/(v_i-v_i^{-1})$,
$[n]_i!=[n]_i[n-1]_i\cdots [1]_i$ for $n>0$ and
$[0]!=1$,
$e_i^{(k)}=e_i^k/[k]_i!, f_i^{(k)}=f_i^k/[k]_i!$
for $i\in I$ and $k\in {\mbb{Z}}_{\geq 0}$.

%\subsubsection{}

Let $\Uv^+(\mfr{g})$ (resp.\ $\Uv^-(\mfr{g})$) denote the $\Qv$-subalgebra of $\Uv(\mfr{g})$
generated by $e_i$ (resp.\ $f_i$) for $i\in I$.
Then we have the triangular decomposition
\[\Uv(\mfr{g})\simeq \Uv^-(\mfr{g})\otimes_{\Qv}\Qv[P^\vee]\otimes_{\Qv}\Uv^+(\mfr{g}),\]
where $\Qv[P^\vee]$ is the group algebra over $\Qv$, i.e., $\bigoplus_{h\in P^\vee}\Qv v^h$.
For $\xi=\sum \xi_{i}\alpha_{i} \in Q$, we set $t_{\xi}=v^{\sum_{i\in I}d_{i}\xi_{i}h_{i}}$.
We have $t_{\alpha_{i}}=t_{i}$.
We set $\Uv(\mfr{g})_{\xi}:=\{x\in \Uv(\mfr{g})\mid t_{i}x t_{i}^{-1}=v^{\langle h_{i},\xi\rangle}x\; \text{~for all~}i\in I\}$.
We have the following root space decomposition:
\[\Uv^{\pm}(\mfr{g})=\bigoplus_{\xi\in Q_{\pm}}\Uv^{\pm}(\mfr{g})_{\xi}.\]
\begin{NB}
We also have the natural decompositions of the unipotent quantum groups into homogeneous components
$\Uv^\pm(\mathfrak{g})=\oplus_{\xi\in\Phi_{\pm}}\Uv^\pm(\mathfrak{g})_{\xi}$,
where $\Phi_+$ (resp. $\Phi_-$) denote the set of all the positive roots
(resp. negative roots).
\end{NB}

\subsubsection*{Automorphisms of $\mathbf{U}_{v}(\mathfrak{g})$}
Let $\overline{\phantom{x}}$ denote the $\mathbb{Q}$-algebra involution $\overline{\phantom{x}}\colon \mathbf{U}_{v}(\mathfrak{g})\to \mathbf{U}_{v}(\mathfrak{g})$ given by
\begin{align*}
\overline{e_{i}}=e_{i}, && \overline{f_{i}}=f_{i}, && \overline{v}=v^{-1},&& \overline{v^{h}}=v^{-h}.
\end{align*}
%We call this the $\overline{\phantom{x}}$-involution.
  % It is not an anti-automorphism.
Let $*$ denote the $\mathbb{Q}(v)$-algebra anti-involution $*\colon \mathbf{U}_{v}(\mathfrak{g})\to \mathbf{U}_{v}(\mathfrak{g})$ given by
\begin{align*}
*(e_{i})=e_{i}, && *(f_{i})=f_{i},&& *(v^{h})=v^{-h}.
\end{align*}
%We call this the $*$-involution.
Let $\vee$ be the $\mathbb{Q}(v)$-algebra involution $\vee\colon \mathbf{U}_{v}(\mathfrak{g})\to \mathbf{U}_{v}(\mathfrak{g})$ given by 
\begin{align*}
\vee(e_{i})=f_{i},&& \vee(f_{i})=e_{i},&& \vee(v^{h})=v^{-h}.
\end{align*}
Let $\varphi$ be the composite $\vee\circ *$. Then $\varphi$ is a $\mathbb{Q}(v)$-linear anti-involution
satisfying 
\begin{align*}
\varphi(e_{i})=f_{i}, && \varphi(f_{i})=e_{i}, &&\varphi(v^{h})=v^{h}.
\end{align*}
%We call this the $\varphi$-involution.
Let $\Omega$ be the composition $\overline{\phantom{x}}\circ \vee\circ
*$. Then $\Omega$ is a $\mathbb{Q}$-linear anti-involution satisfying 
\begin{align*}
\Omega(e_{i})=f_{i}, && \Omega(f_{i})=e_{i}, &&\Omega(v^{h})=v^{-h}, && \Omega(v)=v^{-1}.
\end{align*}
This anti-involution is called by $\overline{\varphi}$ in \cite[Section 6.4]{GeissLeclercSchroeer11}.

\subsubsection*{Coproducts and twisted
  coproducts}\label{sec:coproduct}
We have two coproducts $\Delta_{\pm}$ on $\Uv(\mfr{g})$ (\cf \cite[Section 1.4]{Kas:crystal}):
\begin{subequations}
\begin{align}
	\Delta_{+}(v^h)&=v^h\otimes v^h, \\
	\Delta_{+}(e_i)&=e_i\otimes 1+t_{i}\otimes e_i, \\
	\Delta_{+}(f_i)&=f_i\otimes t_{i}^{-1}+1\otimes f_i;
\end{align}
\end{subequations}
\begin{subequations}
\begin{align}
	\Delta_{-}(v^h)&=v^h\otimes v^h, \\
	\Delta_{-}(e_i)&=e_i\otimes t_i^{-1}+1\otimes e_i, \\
	\Delta_{-}(f_i)&=f_i\otimes 1+t_i\otimes f_i.
\end{align}
\end{subequations}

Define the $\Qv$-algebra structure on
$\Uv^\pm (\mfr{g})\otimes\Uv^\pm(\mfr{g})$ \st we have
\[(x_1\otimes y_1)(x_2\otimes y_2)=v^{\pm(\wt(x_2), \wt(y_1))}x_1x_2\otimes y_1y_2,\]
for any homogeneous elements $x_i, y_i~(i=1, 2)$. Let 
$\rpm\colon \Uv^{\pm}(\mfr{g})\to \Uv^{\pm}(\mfr{g})\otimes
\Uv^{\pm}(\mfr{g})$ be the $\Qv$-algebra homomorphisms such that we
have, for any $i\in I$,
\begin{align*}
  r_{+}(e_i)=e_i\otimes 1+1\otimes e_i,\\
r_{-}(f_{i})=f_{i}\otimes 1+1\otimes f_{i}.
\end{align*}
They are called the \emph{twisted coproducts}.
The relations between the coproducts $\Delta_{\pm}$ and the twisted
coproducts $\rpm$ are given by the following Lemma.

\begin{Lem}
For any homogeneous element $x\in\mathbf{U}_{v}^{\pm}(\mathfrak{g})_{\xi}$, we have 
\[
\Delta_{\pm}(x)=\sum x_{(1)}t_{\pm\wt(x_{(2)})}\otimes x_{(2)},
\]
where $\rpm(x)=\sum x_{(1)}\otimes x_{(2)}$.
\end{Lem}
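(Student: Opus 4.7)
The plan is to verify the identity by induction on $\tr(\xi)$, using the defining formulas on the generators and the fact that both sides behave well with respect to multiplication. I will focus on the $+$ case; the $-$ case is symmetric.

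First I would treat the base case. For $\xi=0$ the element $x$ is a scalar and both sides evaluate to $x\otimes 1$. For $\tr(\xi)=1$, $x$ is a scalar multiple of $e_i$, and then $r_+(e_i)=e_i\otimes 1+1\otimes e_i$, so
\[
\sum x_{(1)}t_{\wt(x_{(2)})}\otimes x_{(2)} = e_i\cdot t_0\otimes 1 + 1\cdot t_{\alpha_i}\otimes e_i = e_i\otimes 1 + t_i\otimes e_i,
\]
which matches $\Delta_+(e_i)$.

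For the inductive step, I would show that if the identity holds for homogeneous elements $x'\in \mathbf{U}_v^+(\mathfrak{g})_{\mu}$ and $x''\in \mathbf{U}_v^+(\mathfrak{g})_{\nu}$ with $\tr(\mu),\tr(\nu)<\tr(\xi)$ and $\mu+\nu=\xi$, then it holds for $x=x'x''$. Since $\mathbf{U}_v^+(\mathfrak{g})$ is generated by the $e_i$, every homogeneous element of weight $\xi$ is a sum of such products, so this suffices. The main computation is to expand $\Delta_+(x')\Delta_+(x'')$ using multiplicativity of $\Delta_+$ and the commutation relation $t_\mu\, e_{i}=v^{(\mu,\alpha_i)}e_i\, t_\mu$, and to expand $\tilde\Delta_+(x'x''):=\sum (x'x'')_{(1)}t_{\wt((x'x'')_{(2)})}\otimes (x'x'')_{(2)}$ using the defining identity $r_+(x'x'')=r_+(x')\cdot_{\mathrm{tw}} r_+(x'')$ for the twisted product, where the twist is exactly $v^{(\wt(x_2),\wt(y_1))}$. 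The cross terms in both computations produce the same powers of $v$: on the $\Delta_+$ side they arise when a factor $t_{\wt(x'_{(2)})}$ coming from $\Delta_+(x')$ slides past the left component of $\Delta_+(x'')$, and on the $\tilde\Delta_+$ side they are precisely the twist in the product on $\mathbf{U}^+\otimes\mathbf{U}^+$ used to define $r_+$. The $t$-factors recombine because $t_{\mu}\cdot t_{\wt(x''_{(2)})}=t_{\wt((x'x'')_{(2)})}$ when the second tensor slot is $x'_{(2)}x''_{(2)}$, while isolated appearances of $x'_{(1)}$ or $x''_{(1)}$ on the left carry the expected $t$-factor from the other element's weight.

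The main obstacle is the bookkeeping of weights and $v$-powers in the inductive step: one must match, term by term, the contribution of the twisting in the algebra structure on $\mathbf{U}^+\otimes\mathbf{U}^+$ (used to make $r_+$ a homomorphism) with the contribution coming from commuting $t$-factors past $e$-factors inside $\mathbf{U}\otimes\mathbf{U}$. Once this is done carefully for a product of two homogeneous elements, the lemma follows by induction.
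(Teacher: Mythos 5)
Your proposal is correct. The paper states this lemma without proof (it is a standard fact, essentially Kashiwara's), but your inductive argument is sound and is exactly the kind of argument the paper itself uses for the immediately following lemma relating $r_\mp\circ\Omega$ and $r_\pm$: one checks the identity on the generators $e_i$ (resp.\ $f_i$) and then verifies closure under multiplication by matching, on the one hand, the $v$-power produced by commuting $t_{\wt(x'_{(2)})}$ past $x''_{(1)}$ in $\Delta_\pm(x')\Delta_\pm(x'')$, which is $v^{\pm(\wt x'_{(2)},\wt x''_{(1)})}$ by $t_\mu y t_\mu^{-1}=v^{(\mu,\wt y)}y$, with, on the other hand, the twist $v^{\pm(\wt x''_{(1)},\wt x'_{(2)})}$ built into the algebra structure on $\mathbf{U}_v^\pm\otimes\mathbf{U}_v^\pm$ used to make $r_\pm$ a homomorphism; these coincide since the form is symmetric, and the $t$-factors recombine additively in the weight of the second tensor slot. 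No gap.
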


We have the following relation between the twisted coproducts.
\begin{Lem}
We have 
\[\rmp\circ\Omega=\mathrm{flip}\circ(\Omega\otimes\Omega)\circ \rpm,\]
where $\mathrm{flip}(x\otimes y)=y\otimes x$ for any $x,y\in \Uv^\pm(\mathfrak{g})$.
\end{Lem}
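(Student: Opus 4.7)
The plan is to show that both sides of the asserted identity are $\mathbb{Q}$-linear anti-algebra homomorphisms from $\Uv^+(\mfr{g})$ to $\Uv^-(\mfr{g})\otimes \Uv^-(\mfr{g})$ (with respect to the appropriate twisted tensor-product algebra structures) and that they agree on the generators $e_i$, $i\in I$. Since the $e_i$ generate $\Uv^+(\mfr{g})$ as an algebra, and both sides are $\mathbb{Q}$-linear, this will suffice. I will treat the identity $r_+\circ\Omega=\mathrm{flip}\circ(\Omega\otimes\Omega)\circ r_-$ on $\Uv^-(\mfr{g})$ analogously (or equivalently, obtain it by applying $\Omega$ throughout and using $\Omega^2=\mathbf{1}$).

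First I would check the identity on the generators: $r_-\circ\Omega(e_i)=r_-(f_i)=f_i\otimes 1+1\otimes f_i$, while $\mathrm{flip}\circ(\Omega\otimes\Omega)\circ r_+(e_i)=\mathrm{flip}\circ(\Omega\otimes\Omega)(e_i\otimes 1+1\otimes e_i)=\mathrm{flip}(f_i\otimes 1+1\otimes f_i)=1\otimes f_i+f_i\otimes 1$, so the two coincide.

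Next I would verify the multiplicative behavior. Write $\Omega_\otimes:=\mathrm{flip}\circ(\Omega\otimes\Omega)$ and note that on elementary tensors of homogeneous elements $x_1,x_2,y_1,y_2$, one has in the $+$-twisted algebra on $\Uv^+\otimes\Uv^+$:
\[
\Omega_\otimes\bigl((x_1\otimes y_1)(x_2\otimes y_2)\bigr)
=v^{-(\wt(x_2),\wt(y_1))}\,\Omega(y_2)\Omega(y_1)\otimes\Omega(x_2)\Omega(x_1),
\]
since $\Omega$ reverses products, inverts $v$, and $\mathrm{flip}$ swaps tensor factors. On the other hand, using that $\wt(\Omega(z))=-\wt(z)$ and that the multiplication in the $-$-twisted algebra on $\Uv^-\otimes\Uv^-$ introduces the factor $v^{-(\wt(\cdot),\wt(\cdot))}$, one computes
\[
\Omega_\otimes(x_2\otimes y_2)\,\Omega_\otimes(x_1\otimes y_1)
=v^{-(\wt(y_1),\wt(x_2))}\,\Omega(y_2)\Omega(y_1)\otimes\Omega(x_2)\Omega(x_1).
\]
Symmetry of the bilinear form $(\,,\,)$ makes these equal, so $\Omega_\otimes$ is an anti-algebra homomorphism from $(\Uv^+\otimes\Uv^+,\Delta_+\text{-twist})$ to $(\Uv^-\otimes\Uv^-,\Delta_-\text{-twist})$. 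Since $r_+$ is a $\Qv$-algebra homomorphism and $\Omega$ is an anti-involution, the composites $\Omega_\otimes\circ r_+$ and $r_-\circ\Omega$ are both $\mathbb{Q}$-linear anti-homomorphisms of algebras.

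Having two anti-homomorphisms that agree on the algebra generators $e_i$, they must agree on all of $\Uv^+(\mfr{g})$ by a straightforward induction on the length of monomials in the $e_i$. The main (mild) obstacle throughout is bookkeeping: one must keep the two different twistings on $\Uv^\pm\otimes\Uv^\pm$ straight and exploit the symmetry of $(\,,\,)$ at the right moment to match the $v$-powers; once this is done, the argument reduces to the short verification above together with the generator check.
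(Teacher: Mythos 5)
Your proposal is correct and is essentially the same argument as the paper's: the paper proves the identity by induction on monomial length, and the inductive step it carries out is precisely your verification that $\mathrm{flip}\circ(\Omega\otimes\Omega)$ intertwines the $\pm$-twisted tensor-product multiplications (using symmetry of $(\,,\,)$ to match the $v$-powers), while the base case is your generator check. You have only repackaged the induction as the statement that both sides are $\mathbb{Q}$-linear anti-homomorphisms agreeing on generators.
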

\begin{proof}
% For the convenience of reader, we give a proof.
We prove the claim by induction. For any homogeneous element $x=x'x''$ such that the claim holds for $x'$ and $x''$, that is $\rmp(\Omega(x'))=\sum\Omega(x'_{(2)})\otimes\Omega(x'_{(1)})$
and $\rmp(\Omega(x''))=\sum\Omega(x''_{(2)})\otimes\Omega(x''_{(1)})$,
where $\rpm(x')=\sum x'_{(1)}\otimes x'_{(2)}$, $\rpm(x'')=\sum
x''_{(1)}\otimes x''_{(2)}$, we want to check the claim for $x$. Note that $\rpm(x)=\sum\sum v^{\pm(\wt x''_{(1)},\wt x'_{(2)})}x'_{(1)}x''_{(1)}\otimes x'_{(2)}x''_{(2)}$.
Therefore, we have 
\begin{align*}
\rmp(\Omega(x)) & =\rmp(\Omega(x'x''))\\
 & =\rmp(\Omega(x''))\rmp(\Omega(x'))\\
 & =\sum\sum\Omega(x''_{(2)})\otimes\Omega(x''_{(1)})\cdot\Omega(x'_{(2)})\otimes\Omega(x'_{(1)})\\
 & =\sum\sum v^{\mp(\wt x''_{(1)},\wt x'_{(2)})}\Omega(x''_{(2)})\Omega(x'_{(2)})\otimes\Omega(x''_{(1)})\Omega(x'_{(1)})\\
 & =(\Omega\otimes\Omega)\left(\sum\sum v^{\pm(\wt x''_{(1)},\wt x'_{(2)})}(x'_{(2)}x''_{(2)})\otimes(x'_{(1)}x''_{(1)})\right)\\
 & =\mathrm{flip}\circ(\Omega\otimes\Omega)\circ \rpm(x).
\end{align*}
Hence the assertion holds.
\end{proof}

\subsubsection*{Bilinear forms}

For $i\in I$, we define the unique $\Qv$-linear map  ${_ir}\colon \Uv^{\pm}(\mfr{g})\to \Uv^{\pm}(\mfr{g})$ (resp.\ $r_i\colon \Uv^{\pm}(\mfr{g})\to \Uv^{\pm}(\mfr{g})$) given by
${_ir}(1)=0, {_ir}(x^{\pm}_j)=\delta_{i, j}$ (resp.\ $r_i(1)=0,
r_i(x^{\pm}_j)=\delta_{i, j}$) for all $i, j\in I$ ($x$ is $e$ or $f$)
and 
\begin{subequations}
\begin{align}
	{_ir}(xy)&={_ir}(x)y+v^{(\wt x, \alpha_i)}{x}~{_ir(y)}, \label{eq:ir}\\
	r_i(xy)&=v^{(\wt y, \alpha_i)}r_i(x)y+xr_i(y)\label{eq:ri}
\end{align}
\end{subequations}
for homogeneous $x, y\in \Uv^-(\mfr{g})$.

%  has proved that there is a unique non-degenerate symmetric bilinear form $(\cdot, \cdot)_{K}\colon \Uv^-(\mfr{g})\times \Uv^-(\mfr{g})\to \Qv$ which satisfies
% \begin{subequations}
% 	\begin{align}
% 	(f_ix, y)_{K}&=(x, {_ir}(y))_K,\label{eq:Kformadj}\\
% 	(1, 1)_{0}&=1.
% 	\end{align}
% By using the $*$-involution, we obtain 
% \begin{align}
%   (xf_i, y)_{K}&=(x, {r_i}(y))_{K}.
% \end{align}
% \end{subequations}

By Kashiwara \cite[\S 3.4]{Kas:crystal}, there exist unique symmetric
non-degenerate bilinear forms $(\;,\;)_{\pm}$ on
$\mathbf{U}_{v}^{\pm}(\mathfrak{g})$ such that we have
\begin{align*}
(x_{i}^{\pm},x_{j}^{\pm})_{\pm} & =\delta_{i,j}\;(x=e\;\text{or}\; f)\\
(1,1) & =1\\
(\rpm(x),y\otimes z)_{\pm} & =(x,yz)\;\text{for}\; x,y,z\in\mathbf{U}_{v}^{\pm}(\mathfrak{g}).
\end{align*}
% Then they are non-degenerate, and we have $(\;,\;)_-=(\;,\;)_K$.

Define the dual bar-involutions $\sigma_{\pm}$ on $\mathbf{U}_{v}^{\pm}(\mathfrak{g})$
by 
\[
(\sigma_{\pm}(x),y)_{\pm}=\overline{(x,\overline{y})_{\pm}}\;\text{for arbitrary \ensuremath{x,y\in\mathbf{U}_{v}^{\pm}(\mathfrak{g})}}.
\]
We often denote $\sigma_\pm$ by $\sigma$ for simplicity.

We have the following compatibility properties between Kashiwara's bilinear form $(\;,\;)_{\pm}$ 
and the anti-involution $\Omega$.

\begin{Lem}[{\cite[Lemma 6.1(b)]{GeissLeclercSchroeer11}}]
For $x,y\in\mathbf{U}_{v}^{\pm}(\mathfrak{g})$, we have 
\[
\overline{(x,y)_{\pm}}=(\Omega(x),\Omega(y))_{\mp}.
\]

\end{Lem}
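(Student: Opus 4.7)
The plan is to exploit the uniqueness of Kashiwara's bilinear form. Define a new bilinear pairing
\[
B_{\pm}(x, y) := \overline{(\Omega(x), \Omega(y))_{\mp}}
\]
on $\mathbf{U}_v^{\pm}(\mathfrak{g})$, and show it satisfies the three axioms that characterize $(\;,\;)_{\pm}$. Since those axioms (values on generators, normalization, compatibility with the twisted coproduct) determine the form recursively on any product in $\mathbf{U}_v^{\pm}$, the equality $B_{\pm} = (\;,\;)_{\pm}$ will follow, which is precisely the claim.

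First I would check that $B_{\pm}$ is a well-defined $\mathbb{Q}(v)$-bilinear symmetric form. The key point is that $\Omega$ is $\mathbb{Q}$-linear with $\Omega(a x) = \bar{a}\,\Omega(x)$ for $a \in \mathbb{Q}(v)$, and the outer bar involution in the definition of $B_{\pm}$ exactly compensates: $B_{\pm}(a x, y) = \overline{\bar a\,(\Omega(x), \Omega(y))_{\mp}} = a B_{\pm}(x, y)$, and similarly in the second argument. Symmetry of $B_{\pm}$ is inherited from that of $(\;,\;)_{\mp}$.

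Next I would check the normalizations on generators: for $B_{+}$, using $\Omega(e_i) = f_i$, we get
\[
B_{+}(e_i, e_j) = \overline{(f_i, f_j)_{-}} = \overline{\delta_{ij}} = \delta_{ij}, \qquad B_{+}(1,1) = \overline{(1,1)_{-}} = 1,
\]
and symmetrically for $B_{-}$.

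The substantive step is the coproduct compatibility. For $x, y, z \in \mathbf{U}_v^{+}$, I would compute
\[
B_{+}(x, yz) = \overline{(\Omega(x), \Omega(z)\,\Omega(y))_{-}} = \overline{(r_{-}(\Omega(x)), \Omega(z) \otimes \Omega(y))_{-}},
\]
using the axiom for $(\;,\;)_{-}$. Now the compatibility $r_{-}\circ \Omega = \mathrm{flip}\circ (\Omega\otimes\Omega)\circ r_{+}$ established in the preceding lemma converts this, writing $r_{+}(x) = \sum x_{(1)} \otimes x_{(2)}$, into
\[
\overline{\sum (\Omega(x_{(2)}), \Omega(z))_{-}\,(\Omega(x_{(1)}), \Omega(y))_{-}} = \sum B_{+}(x_{(1)}, y)\,B_{+}(x_{(2)}, z),
\]
which is exactly $(B_{+}\otimes B_{+})(r_{+}(x), y\otimes z)$. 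The symmetric computation works for $B_{-}$.

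The only delicate point will be careful tracking of the bar involution past $\mathbb{Q}(v)$-scalars and of the flip in $r_{-}\circ\Omega$, but once that bookkeeping is in order, the three axioms hold by construction, so uniqueness of the Kashiwara form on $\mathbf{U}_v^{\pm}(\mathfrak{g})$ forces $B_{\pm} = (\;,\;)_{\pm}$, and the lemma is proved.
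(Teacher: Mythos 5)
Your argument is correct. The paper itself does not prove this lemma — it is cited directly from \cite[Lemma~6.1(b)]{GeissLeclercSchroeer11} — so there is no in-paper proof to compare against; but your uniqueness strategy is a clean and self-contained route. The three Kashiwara axioms, together with symmetry, do determine the form: the coproduct axiom recurses on the length of the second argument, and the base cases $(x,1)$ and $(x,e_i)$ are pinned down once one knows the form is symmetric and vanishes between distinct weight spaces (which follows from $(e_i,1)=2(e_i,1)=0$ and symmetry by an induction on weight). You correctly note the symmetry of $B_\pm$, so this point is covered. The bookkeeping with the bar involution is also right: $\Omega$ is $\mathbb{Q}$-linear with $\Omega(a x)=\bar a\,\Omega(x)$, and the outer bar in $B_\pm(x,y)=\overline{(\Omega x,\Omega y)_\mp}$ restores $\mathbb{Q}(v)$-bilinearity. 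Finally, the key step — invoking the preceding lemma $r_\mp\circ\Omega=\mathrm{flip}\circ(\Omega\otimes\Omega)\circ r_\pm$ to turn $(r_\mp(\Omega x),\Omega z\otimes\Omega y)_\mp$ into $\sum(\Omega x_{(2)},\Omega z)_\mp(\Omega x_{(1)},\Omega y)_\mp$ — is exactly the right ingredient, and commutativity of scalars takes care of the flipped order. The proof goes through.
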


\subsection{Dual canonical basis}
\subsubsection*{Crystal basis}
We define $\mbb{Q}$-subalgebras $\mca{A}_0$, $\mca{A}_\infty$ and $\mca{A}$ of $\Qv$ by 
\begin{align*}
	\mca{A}_0&=\{f\in \Qv; f \text{~is regular at~} v=0\}, \\
	\mca{A}_\infty&=\{f\in \Qv; f \text{~is regular at~} v=\infty\}, \\
	\mca{A}&=\mbb{Q}[v^\pm].
\end{align*}

\begin{Lem}[{\cite[Lemma 3.4.1]{Kas:crystal}, \cite{Nak:CBMS}}]\label{lem:qBoson}
For $x\in \Uv^-(\mfr{g})$ and any $i\in I$, we have
\[[e_i, x]=\frac{r_i(x)t_i-t_i^{-1}{_ir}(x)}{v_i-v_i^{-1}}.\]
\end{Lem}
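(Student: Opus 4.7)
The plan is to prove the identity by induction on the word length of $x$ as a monomial in the generators $f_j$, $j \in I$, using the Leibniz-type rules \eqref{eq:ir} and \eqref{eq:ri} for ${_ir}$ and $r_i$ together with the commutation relations between $t_i$ and elements of $\Uv^-(\mfr{g})$.

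First I would check the base cases. For $x = 1$ both sides vanish since $[e_i, 1] = 0$ and $r_i(1) = {_ir}(1) = 0$. For $x = f_j$, the left-hand side is $\delta_{ij}(t_i - t_i^{-1})/(v_i - v_i^{-1})$ by the defining relation (iii) of $\Uv(\mfr{g})$, while $r_i(f_j) = {_ir}(f_j) = \delta_{ij}$, so the right-hand side matches.

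For the inductive step, suppose $y, z \in \Uv^-(\mfr{g})$ are homogeneous and the formula holds for both. Using $[e_i, yz] = [e_i, y] z + y [e_i, z]$ and the inductive hypothesis, I get
\begin{align*}
(v_i - v_i^{-1})[e_i, yz] = r_i(y) t_i z - t_i^{-1} {_ir}(y) z + y r_i(z) t_i - y t_i^{-1} {_ir}(z).
\end{align*}
Now I push $t_i$ past $z$ and $t_i^{-1}$ past $y$ using the identities $t_i z = v^{(\wt z, \alpha_i)} z t_i$ and $y t_i^{-1} = v^{(\wt y, \alpha_i)} t_i^{-1} y$, which come from relation (ii). Regrouping gives
\begin{align*}
(v_i - v_i^{-1})[e_i, yz] = \bigl(v^{(\wt z, \alpha_i)} r_i(y) z + y r_i(z)\bigr) t_i - t_i^{-1} \bigl({_ir}(y) z + v^{(\wt y, \alpha_i)} y \, {_ir}(z)\bigr).
\end{align*}
By \eqref{eq:ri} the first parenthesis equals $r_i(yz)$, and by \eqref{eq:ir} the second equals ${_ir}(yz)$, completing the induction.

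This proof is essentially computational and I do not foresee a real obstacle; the only subtlety is bookkeeping the weight shifts so that the $v$-powers produced by commuting $t_i^{\pm 1}$ through homogeneous elements align exactly with the asymmetric $v$-powers appearing in the twisted Leibniz rules \eqref{eq:ir} and \eqref{eq:ri}. Since $\Uv^-(\mfr{g})$ is spanned (as a $\Qv$-vector space) by homogeneous monomials in the $f_j$, the inductive step suffices, and the general statement follows by $\Qv$-linearity.
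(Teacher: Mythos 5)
The paper does not give a proof of this lemma; it is cited as a known result from Kashiwara's crystal-base paper and Nakajima's CBMS notes, so there is no in-paper argument to compare against. Your proof is correct, and it is in fact the standard derivation of the $v$-boson commutation relation. The base cases amount to the defining relations of $\Uv(\mfr{g})$, and the inductive step hinges on the identity $t_i z = v^{(\wt z,\alpha_i)}zt_i$ for homogeneous $z$, which follows from $t_i = v^{d_i h_i}$ together with $(\alpha_i,\alpha_i)=2d_i$; with this normalization the $v$-powers produced by moving $t_i^{\pm 1}$ across $y$ and $z$ match exactly the twists in \eqref{eq:ir} and \eqref{eq:ri}, so the regrouping closes the induction. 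Linearity in $x$ then gives the statement on all of $\Uv^-(\mfr{g})$. No gaps.
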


The \emph{reduced $v$-analogue} $\mscr{B}_v(\mfr{g})$ of a symmetrizable Kac-Moody Lie algebra $\mfr{g}$
is the $\Qv$-algebra generated by ${_ir}$ and $f_i$ with the $v$-Boson relations ${_ir}f_j=v^{-(\alpha_i, \alpha_j)}{f_j}~{_ir}+\delta_{i, j}$ for $i, j\in I$
and the $v$-Serre relations for ${_ir}$ and $f_i$ for $i\in I$.
Then $\Uv^-(\mfr{g})$ becomes a $\mscr{B}_v(\mfr{g})$-module by \lemref{lem:qBoson}.

%[check] the above statement.

By the $v$-Boson relation, any element $x\in \Uv^-(\mfr{g})$ can be uniquely written as $x=\sum_{n\geq 0}f_i^{(n)}x_n$ with 
${_ir}(x_n)=0$ for any $n\geq 0$.
So we define Kashiwara's \emph{modified root operators} $\fit{i}$ and $\eit{i}$ by
\begin{align*}
	\eit{i}x&=\sum_{n\geq 1}f_i^{(n-1)}x_n,\\
	\fit{i}x&=\sum_{n\geq 0}f_i^{(n+1)}x_n.
\end{align*}
By using these operators, Kashiwara introduced the crystal basis
$(\mscr{L}(\infty), \mscr{B}(\infty))$ of $\Uv^-(\mfr{g})$:

% \begin{draft}
%   For GLS, change $\mca{A}_0$ to $\mca{A}_\infty$.
% \end{draft}

\begin{Thm}[{\cite{Kas:crystal}}]
We define
	\begin{align*}
	\mscr{L}(\infty)&=\sum_{l\geq 0, i_1, i_2, \cdots, i_l\in I}\mca{A}_0\fit{i_1}\cdots \fit{i_l}1\subset \Uv^-(\mfr{g}), \\
	\mscr{B}(\infty)&=\{\fit{i_1}\cdots \fit{i_l}1 \mod v\mscr{L}(\infty); {l\geq 0, i_1, i_2, \cdots, i_l\in I}\}\subset \mscr{L}(\infty)/v\mscr{L}(\infty).
	\end{align*}
Then we have the following:
\begin{enumerate}
	\item $\mscr{L}(\infty)$ is a free $\mca{A}_0$-module with $\Qv\otimes_{\mca{A}_0}\mscr{L}(\infty)=\Uv^-(\mfr{g})$;
	\item $\eit{i}\mscr{L}(\infty)\subset \mscr{L}(\infty)$ and $\fit{i}\mscr{L}(\infty)\subset \mscr{L}(\infty)$;
	\item $\mscr{B}(\infty)$ is a $\mbb{Q}$-basis of $\mscr{L}(\infty)/v\mscr{L}(\infty)$;
	\item $\fit{i} \colon \mscr{B}(\infty)\to \mscr{B}(\infty)$ and $\eit{i} \colon \mscr{B}(\infty)\to \mscr{B}(\infty)\cup \{0\}$;
	\item For $b\in\Binfty$ with $\eit{i}(b)\neq 0$, we have $\fit{i}\eit{i}b=b$.
\end{enumerate}
\end{Thm}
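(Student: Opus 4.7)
The plan is to follow Kashiwara's original \emph{grand loop} argument. The five assertions are not proved one by one in order; rather, one proves them simultaneously by induction on the weight, together with several auxiliary technical statements that feed back into the induction. Throughout, the key structural tool is the unique decomposition
\begin{align*}
x = \sum_{n \geq 0} f_i^{(n)} x_n, \qquad {_ir}(x_n) = 0,
\end{align*}
available for every $i \in I$ and every $x \in \Uv^-(\mathfrak{g})$, by the $v$-Boson relation together with Lemma~\ref{lem:qBoson}. This decomposition lets one define $\eit{i}$, $\fit{i}$ by manipulating the $n$-index and reduces many questions to the rank-one subalgebra generated by $f_i$.

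First I would handle the rank-one case $\mathfrak{g} = \mathfrak{sl}_2$ by direct computation: one checks that $\mscr{L}(\infty) = \bigoplus_{n \geq 0} \mathcal{A}_0 \, f^{(n)}$, that $\fit{}$ sends $f^{(n)} \mapsto f^{(n+1)}$ and $\eit{}$ sends $f^{(n)} \mapsto f^{(n-1)}$, and that modulo $v\mscr{L}(\infty)$ the basis $\mscr{B}(\infty) = \{f^{(n)} \bmod v\mscr{L}(\infty)\}_{n \geq 0}$ is permuted by these operators. Combining this with the $i$-string decomposition above shows that the rank-one story governs the behavior of $\eit{i}$, $\fit{i}$ on each $f_i$-isotypic component.

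For the general case, I would set up the induction on $\mathrm{tr}(\xi)$, proving for each weight $\xi \in Q_-$ the package of statements
\[
(L_\xi):\ \mscr{L}(\infty)_\xi \text{ is $\mathcal{A}_0$-stable under all } \eit{j},\fit{j},\qquad (B_\xi):\ \mscr{B}(\infty)_\xi \text{ is a $\mathbb{Q}$-basis of } \mscr{L}(\infty)_\xi/v\mscr{L}(\infty)_\xi,
\]
together with two compatibility statements between the $\fit{i}$-action and the $\fit{j}$-action for $j \neq i$ (needed so that $\fit{j}$ preserves the $i$-string decomposition modulo $v$). The base case $\xi = 0$ is trivial. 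For the inductive step at weight $\xi$, one fixes one index $i$ with $\xi + \alpha_i \in Q_-$, decomposes every element via the $i$-string decomposition, and uses the rank-one computation on each string to deduce $(L_\xi)$, then extracts $(B_\xi)$ from the compatibility relations between different indices $i$. Assertions (4) and (5) of the theorem fall out of the stability and compatibility once (1)--(3) hold for all weights.

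The main obstacle is the cross-compatibility between different indices $i \neq j$: one must show that $\fit{j}$ and $\eit{j}$ preserve the decomposition $\mscr{L}(\infty)_\xi = \bigoplus_n \fit{i}^n\bigl(\ker{_ir}\cap \mscr{L}(\infty)_{\xi + n\alpha_i}\bigr)$ modulo $v$. This is precisely the point where the inductive hypothesis at smaller weight is needed, and it is typically where the argument bootstraps --- proving this cross-compatibility requires a careful analysis of ${_ir}\fit{j}$ and $r_j \fit{i}$ using the commutation identities \eqref{eq:ir} and \eqref{eq:ri}, combined with the non-degeneracy of Kashiwara's bilinear form $(\,,\,)_-$. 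Once the grand loop closes, statement (4) is immediate (each $\fit{i_1}\cdots\fit{i_l}1$ is by construction in $\mscr{B}(\infty)$, and $\eit{i}$ either lowers the index or yields zero), and statement (5) follows from the observation that in the rank-one reduction $\fit{i}\eit{i}$ acts as the identity on the strictly positive part of any $i$-string.
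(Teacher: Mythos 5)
The statement you are proving is cited verbatim from Kashiwara's original paper and is \emph{not} proved in the present article, so there is no ``paper's own proof'' to compare against; what you have written is a sketch of Kashiwara's argument, and it should be judged as such.

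Your sketch gets the outer shell right --- the $i$-string decomposition coming from the $v$-Boson relation, the rank-one ($\mathfrak{sl}_2$) calculation as the base case, and a weight-by-weight ``grand loop'' induction whose main difficulty is the cross-compatibility between $\fit{i}$ and $\fit{j}$ for $i\neq j$. But there is a genuine gap in how you propose to close that cross-compatibility step. You suggest resolving it by ``a careful analysis of ${_ir}\fit{j}$ and $r_j\fit{i}$ \ldots combined with the non-degeneracy of Kashiwara's bilinear form,'' staying entirely inside $\Uv^-(\mfr{g})$. Kashiwara's actual grand loop does not (and essentially cannot) stay inside $\Uv^-(\mfr{g})$: the induction is run \emph{simultaneously} on $\Uv^-(\mfr{g})$ and on all the integrable highest-weight modules $V(\lambda)$, $\lambda\in P_+$, with their own crystal lattices $\mscr{L}(\lambda)$ and crystals $\mscr{B}(\lambda)$. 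The list of statements being proven at each weight step includes compatibility of the projections $\pi_\lambda\colon \Uv^-(\mfr{g})\to V(\lambda)$, $P\mapsto Pu_\lambda$, with the modified root operators, and it is by pushing into $V(\lambda)$ for large enough $\lambda$ (where the $i$-string decomposition is governed by honest integrable $\mathfrak{sl}_2$-representation theory and the tensor-product rule is available) that one obtains the independence of the $\eit{i}$-, $\fit{i}$-actions modulo $v$ from the auxiliary index $j$. Without carrying the $V(\lambda)$ package along, your induction would not bootstrap: the commutator identities \eqref{eq:ir}, \eqref{eq:ri} alone do not control the error terms modulo $v\mscr{L}(\infty)$ when you compare the $i$-string and $j$-string decompositions. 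So the plan is sound in outline but omits the ingredient that actually makes the loop close, namely the parallel statements about highest-weight modules and the interplay through $\pi_\lambda$.
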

We call $(\mscr{L}(\infty), \mscr{B}(\infty))$ the \emph{(lower) crystal basis} of $\Uv^-(\mfr{g})$,
and $\mscr{L}(\infty)$ the \emph{(lower) crystal lattice}.
We denote $1\mod v\mscr{L}(\infty)\in \mscr{B}(\infty)$ by $u_{\infty}$ hereafter.
For $b\in \mscr{B}(\infty)$, we set $\vep_{i}(b)=\max\{n\in \mbb{Z}_{\geq 0}; \eit{i}^nb\neq 0\}<\infty$,
and $\eit{i}^{\max}(b)=\eit{i}^{\vep_{i}(b)}b\in \mscr{B}(\infty)$.
\subsubsection*{Canonical basis}
Let $\overline{\phantom{A}}\colon \Qv\to \Qv$ be the $\mbb{Q}$-algebra involution sending $v$ to $v^{-1}$.
Let $V$ be a vector space over $\Qv$,
$\mscr{L}_0$ be an $\mca{A}_0$-submodule of $V$,
$\mscr{L}_\infty$ be an $\mca{A}_\infty$-submodule of $V$, 
and $V_{\mca{A}}$ be an $\mca{A}$-submodule of $V$.
We define $E=\mscr{L}_0\cap \mscr{L}_\infty \cap V_{\mca{A}}$.
\begin{Def}
We say that a triple $(\mscr{L}_0, \mscr{L}_{\infty}, V_{\mca{A}})$ is \emph{balanced} if 
each $\mscr{L}_0, \mscr{L}_{\infty}$, and $V_{\mca{A}}$ generates $V$ as $\Qv$-vector space and if one of the following equivalent conditions is satisfied
\begin{enumerate}
	\item $E\to \mscr{L}_0/v\mscr{L}_0$ is an isomorphism,
	\item $E\to \mscr{L}_\infty/v^{-1}\mscr{L}_\infty$ is an isomorphism,
	\item $(\mscr{L}_0\cap V_{\mca{A}})\+ (v^{-1}\mscr{L}_\infty\cap V_{\mca{A}})\to V_{\mca{A}}$
	is an isomorphism,
	\item $\mca{A}_0\otimes_{\mbb{Q}} E\to \mscr{L}_0$,
	$\mca{A}_\infty\otimes_{\mbb{Q}} E\to \mscr{L}_\infty$,
	$\mca{A}\otimes_{\mbb{Q}} E\to V_{\mca{A}}$,
	and $\Qv\otimes_{\mbb{Q}}E\to V$ are isomorphisms.
\end{enumerate}
\end{Def}

%[check] the above equivalence.
Let $\Uv^{-}(\mfr{g})_{\mca{A}}$ be the $\mca{A}$-subalgebra generated by $\{f_{i}^{(n)}\}_{i\in I,n\geq 1}$.
This is called Kostant-Lusztig $\mca{A}$-form.
\begin{Thm}[{\cite[Theorem 6]{Kas:crystal}}]
	The triple $(\mscr{L}(\infty), \overline{\mscr{L}(\infty)}, \Uv^-(\mfr{g})_{\mca{A}})$
	is balanced.
\end{Thm}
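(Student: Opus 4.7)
The plan is to establish balancedness by explicitly exhibiting a common $\mathbb{Q}$-basis of $E := \mscr{L}(\infty) \cap \overline{\mscr{L}(\infty)} \cap \Uv^-(\mfr{g})_{\mca{A}}$ that maps bijectively onto $\mscr{B}(\infty)$ under the reduction $E \to \mscr{L}(\infty)/v\mscr{L}(\infty)$. Such a basis is called the (lower) global basis, and its existence immediately yields condition (1) in the definition of balancedness, from which (2)--(4) follow by the standard equivalence of balanced-triple conditions. Concretely, I would construct for each $b \in \mscr{B}(\infty)$ a unique element $\Glow(b) \in \Uv^-(\mfr{g})_{\mca{A}}$ satisfying $\overline{\Glow(b)} = \Glow(b)$ and $\Glow(b) \equiv b \pmod{v\mscr{L}(\infty)}$.

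Uniqueness is the easy half. If two such lifts $G$, $G'$ existed, their difference would lie in $v\mscr{L}(\infty) \cap \overline{v\mscr{L}(\infty)} \cap \Uv^-(\mfr{g})_{\mca{A}}$ and be bar-invariant; a standard argument using the finiteness of $\mscr{B}(\infty)_{-\xi}$ in each weight and comparing leading coefficients in $v$ forces $G = G'$. So the heart of the matter is existence, which I would attack by induction on the height $\tr(\xi)$ of the weight $-\xi$. The base case $\xi = 0$ is immediate: take $\Glow(u_{\infty}) = 1$. For the inductive step, given $b \in \mscr{B}(\infty)_{-\xi}$ with $\tr(\xi) > 0$, choose $i \in I$ with $c := \vep_{i}(b) > 0$ and set $b' := \eit{i}^{c}(b) \in \mscr{B}(\infty)_{-\xi + c\alpha_{i}}$, so that $\vep_{i}(b') = 0$. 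By the inductive hypothesis, $\Glow(b')$ is already constructed. I would then form the trial element
\begin{equation*}
x \;:=\; f_{i}^{(c)}\cdot \Glow(b'),
\end{equation*}
which is automatically bar-invariant and lies in $\Uv^-(\mfr{g})_{\mca{A}}$, and use the $v$-Boson relations of Lemma \ref{lem:qBoson} together with the defining properties of Kashiwara's operators to check that $x \in \mscr{L}(\infty)$ and that its class modulo $v\mscr{L}(\infty)$ equals $b$ plus a combination of other crystal basis elements $b''$ of the same weight $-\xi$ satisfying $\vep_{i}(b'') < c$.

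To convert this trial element into the desired $\Glow(b)$, I would subtract off bar-invariant correction terms of the shape $\Glow(b'')$. The subtle point is that these corrections live in the same weight space $-\xi$ and thus are not directly produced by the weight induction; one must instead run a \emph{secondary induction}, ordered by the $\vep_{i}$-value, inside each weight space, so that the $b''$ appearing as correction targets have already been handled (they have $\vep_{i}(b'') > c$, hence were built from $\Glow$ applied to elements of strictly higher weight). This triangularity argument, combined with the uniqueness step above, closes the induction and produces the canonical basis.

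The main obstacle is precisely setting up this combined induction in a way that handles the general symmetrizable Kac-Moody case: one must know \emph{a priori} that $x \in \mscr{L}(\infty)$ and that the correction process terminates, which in Kashiwara's original treatment is handled by the so-called grand-loop argument that interweaves the construction of $\mscr{L}(\infty)$, $\mscr{B}(\infty)$, and the bar-invariant lifts all at once. In effect, the heart of the proof is showing that the bar involution is \emph{compatible} with the filtration induced by the Kashiwara operators modulo $v\mscr{L}(\infty)$, so that the correction terms one is forced to add have coefficients in $v^{-1}\mathbb{Z}[v^{-1}]$ and hence leave the residue class of $b$ in $\mscr{L}(\infty)/v\mscr{L}(\infty)$ unchanged. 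Once this compatibility is in hand, the global basis $\{\Glow(b)\}_{b \in \mscr{B}(\infty)}$ gives the required common basis of the three lattices, proving the triple $(\mscr{L}(\infty), \overline{\mscr{L}(\infty)}, \Uv^{-}(\mfr{g})_{\mca{A}})$ is balanced.
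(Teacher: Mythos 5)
The paper does not prove this statement: it is cited verbatim as Theorem 6 of Kashiwara's crystal basis paper \cite{Kas:crystal}, so there is no internal proof to compare against. Your sketch outlines what is indeed the standard route to Kashiwara's theorem — construct a common $\mathbb{Q}$-basis $\{\Glow(b)\}_{b\in\mscr B(\infty)}$ of the triple intersection by an induction that produces, for each crystal element $b$, a unique bar-invariant lift in $\Uv^-(\mfr g)_{\mca A}$ reducing to $b$ modulo $v\mscr L(\infty)$. That roadmap is sound.

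However, two concrete points in the write-up are off. First, the direction of the correction terms is contradicted within your own argument: after forming $x=f_i^{(c)}\Glow(b')$ with $\vep_i(b')=0$, the reduction modulo $v\mscr L(\infty)$ is $b$ plus crystal elements $b''$ with $\vep_i(b'')>c$ (this follows from $f_i^{(c)}f_i^{(n)}=\begin{bmatrix}c+n\\c\end{bmatrix}_i f_i^{(c+n)}$, pushing the string length strictly up), which is what you say in the final paragraph, not "$\vep_i(b'')<c$" as written in the inductive step. The $>c$ direction is exactly what makes the secondary induction well-founded, since $\eit{i}^{\max}(b'')$ then lives at strictly higher weight. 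Second, the closing claim that the correction terms "have coefficients in $v^{-1}\mathbb{Z}[v^{-1}]$ and hence leave the residue class of $b$ unchanged" cannot be right: any nonzero $v^{-1}\mathbb{Z}[v^{-1}]$-combination of crystal-lattice elements is not even contained in $\mscr L(\infty)$, let alone reduces to zero there. In the standard argument one writes $x\bmod v\mscr L(\infty)=b+\sum n_{b''}b''$ with $n_{b''}\in\mathbb Z$ and subtracts $\sum n_{b''}\Glow(b'')$; the corrections are integer combinations of already-constructed global basis elements, which visibly preserve the triple intersection and remove the unwanted classes mod $v$.

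More substantively, the step you flag as the "main obstacle" — proving that $f_i^{(c)}\Glow(b')$ actually lies in $\mscr L(\infty)$ and that the bar involution is compatible with the Kashiwara filtration in the general symmetrizable Kac–Moody case — is precisely the content of Kashiwara's grand-loop argument, which simultaneously establishes the crystal basis theorem for $\Uv^-(\mfr g)$ and for all $V(\lambda)$ together with the existence of the global bases. Without that, what you have is a correct description of the shape of the proof, not a proof; in particular you cannot assume $\mscr B(\infty)$, $\mscr L(\infty)$, or the stability of $\mscr L(\infty)$ under $f_i^{(c)}$ as given data and deduce balancedness afterward, because those facts are themselves established inside the same induction.
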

Let $\Glow\colon \mscr{L}(\infty)/v\mscr{L}(\infty)\to E=\mscr{L}(\infty)\cap \overline{\mscr{L}(\infty)} \cap \Uv^-(\mfr{g})_{\mca{A}}$
be the inverse of the isomorphism $E\xrightarrow{\sim} \mscr{L}(\infty)/v\mscr{L}(\infty)$.
Then $\mbf{B}^{\low}_{-}:=\{\Glow(b) ; b\in \mscr{B}(\infty)\}$ forms an $\mca{A}$-basis of $\Uv^-(\mfr{g})_{\mca{A}}$.
This basis is called the \emph{canonical basis} of $\Uv^-(\mfr{g})$.
%\subsubsection{}

We define the \emph{dual canonical basis} $\mbf{B}_-^{\mathrm{up}}$ of $\Uv^-(\mfr{g})$ as the dual basis of $\mbf{B}$ under Kashiwara's  bilinear form $(~, ~)_-$. 
\begin{Prop}
We set 
\[\Uv^-(\mfr{g})_{\mca{A}}^{\mathrm{up}}=\{x\in \Uv^-(\mfr{g}); (x, \Uv^-(\mfr{g})_{\mca{A}})_{-}\subset \mca{A}\}.\]
Then $(\mscr{L}(\infty), \sigma(\mscr{L}(\infty)), \Uv^-(\mfr{g})_{\mca{A}}^{\mathrm{up}})$
is a balanced triple for the dual canonical basis $\mbf{B}^{\mathrm{up}}$.
\end{Prop}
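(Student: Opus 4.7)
The plan is to transport the balanced-triple structure of the canonical basis $(\mscr{L}(\infty), \overline{\mscr{L}(\infty)}, \Uv^-(\mfr{g})_{\mca{A}})$ to its dual under Kashiwara's bilinear form $(\,,\,)_-$. The essential point is that each ingredient in the proposed triple is the appropriate dual of an ingredient in the canonical-basis triple, and that duality preserves the balanced condition because the form on $\mscr{L}(\infty)/v\mscr{L}(\infty)$ is perfect.

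First, I would recall Kashiwara's self-duality of the crystal lattice: $\mscr{L}(\infty)=\{x\in\Uv^-(\mfr{g})\mid (x,\mscr{L}(\infty))_-\subset \mca{A}_0\}$, and moreover the induced $\mbb{Q}$-valued pairing on $\mscr{L}(\infty)/v\mscr{L}(\infty)$ is non-degenerate, with the class of $G^{\low}(b)$ mod $v\mscr{L}(\infty)$ being $b\in\mscr{B}(\infty)$. Thus the dual $\mca{A}_0$-lattice to $\mscr{L}(\infty)$ is $\mscr{L}(\infty)$ itself, while by its very definition the dual $\mca{A}$-lattice to $\Uv^-(\mfr{g})_{\mca{A}}$ is $\Uv^-(\mfr{g})_{\mca{A}}^{\up}$. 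The dual canonical basis $\mbf{B}_-^{\up}=\{G^{\up}(b)\}_{b\in\mscr{B}(\infty)}$ then lies in $\mscr{L}(\infty)\cap \Uv^-(\mfr{g})_{\mca{A}}^{\up}$, and its image in $\mscr{L}(\infty)/v\mscr{L}(\infty)$ is the $\mbb{Q}$-basis dual to $\mscr{B}(\infty)$.

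Next, I would identify $\sigma(\mscr{L}(\infty))$ as the $\mca{A}_\infty$-lattice naturally dual to $\overline{\mscr{L}(\infty)}$. Using the defining relation $(\sigma(x),y)_-=\overline{(x,\overline{y})_-}$ together with self-duality of $\mscr{L}(\infty)$, one checks directly that $x\in\sigma(\mscr{L}(\infty))$ if and only if $(x,\overline{\mscr{L}(\infty)})_-\subset \mca{A}_\infty$. Since each $G^{\low}(b)\in\overline{\mscr{L}(\infty)}$ and $(G^{\up}(b'),G^{\low}(b))_-=\delta_{b,b'}\in\mca{A}\subset\mca{A}_\infty$, this gives $G^{\up}(b)\in\sigma(\mscr{L}(\infty))$. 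Hence every element of $\mbf{B}_-^{\up}$ lies in $E:=\mscr{L}(\infty)\cap \sigma(\mscr{L}(\infty))\cap \Uv^-(\mfr{g})_{\mca{A}}^{\up}$.

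Finally, to verify one of the equivalent balanced conditions, I would check condition (i): the natural map $E\to \mscr{L}(\infty)/v\mscr{L}(\infty)$ is an isomorphism. Surjectivity is immediate, because the images $\{G^{\up}(b)\mod v\mscr{L}(\infty)\}$ already form a $\mbb{Q}$-basis of $\mscr{L}(\infty)/v\mscr{L}(\infty)$ by the perfect-pairing statement from step~1. Injectivity follows by a standard argument: an element of $E$ whose image is zero in $\mscr{L}(\infty)/v\mscr{L}(\infty)$ lies in $v\mscr{L}(\infty)\cap\sigma(\mscr{L}(\infty))\cap \Uv^-(\mfr{g})_{\mca{A}}^{\up}$, and an elementary $v$-adic argument using the balanced property of the canonical-basis triple then forces it to vanish. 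The main subtlety in the whole proof is tracking precisely how the bar-involution, its dual $\sigma$, and the bilinear form interact so that $\sigma(\mscr{L}(\infty))$—rather than $\overline{\mscr{L}(\infty)}$—is the correct $\mca{A}_\infty$-lattice for the dual triple; once this identification is fixed, the balanced property transfers by formal duality.
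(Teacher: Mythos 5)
Your argument is correct. The paper states this proposition without proof (it is a recollection from \cite{Kimura10}), so the honest comparison is with the standard lattice-duality argument, which is exactly what you give. The three key ingredients are all present and deployed in the right order: (a) self-duality of $\mscr{L}(\infty)$ under $(\;,\;)_-$, coming from Kashiwara's result that the pairing descends to a perfect form on $\mscr{L}(\infty)/v\mscr{L}(\infty)$ in which $\mscr{B}(\infty)$ is orthonormal; (b) $\Uv^-(\mfr{g})_{\mca{A}}^{\up}$ being \emph{by definition} the $\mca{A}$-dual lattice to $\Uv^-(\mfr{g})_{\mca{A}}$; and (c) the correct identification of $\sigma(\mscr{L}(\infty))$ — and not $\overline{\mscr{L}(\infty)}$ — as the $\mca{A}_\infty$-dual of $\overline{\mscr{L}(\infty)}$, which follows from the defining relation $(\sigma(x),y)_-=\overline{(x,\overline{y})_-}$ together with self-duality of $\mscr{L}(\infty)$. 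Once these identifications are in place, the balanced property transports formally.

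The only place you elide details is the injectivity of $E\to\mscr{L}(\infty)/v\mscr{L}(\infty)$; it can be made precise as follows, and confirms that the ``elementary $v$-adic argument'' you invoke is indeed available. For $z\in E$ write $z=\sum_b c_b\, G^{\up}(b)$ with $c_b\in\Qv$ (finitely many nonzero in each weight space). Pairing with $G^{\low}(b)$ gives $c_b=(z,G^{\low}(b))_-$; the membership $z\in\mscr{L}(\infty)$, $z\in\sigma(\mscr{L}(\infty))$, $z\in\Uv^-(\mfr{g})_{\mca{A}}^{\up}$, combined with $G^{\low}(b)$ lying in $\mscr{L}(\infty)$, $\overline{\mscr{L}(\infty)}$, $\Uv^-(\mfr{g})_{\mca{A}}$ respectively, forces $c_b\in\mca{A}_0\cap\mca{A}_\infty\cap\mca{A}=\mbb{Q}$. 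If moreover $z\in v\mscr{L}(\infty)$, then $c_b=(z,G^{\low}(b))_-\in v\mca{A}_0$, so $c_b\in\mbb{Q}\cap v\mca{A}_0=\{0\}$ and $z=0$. This also shows $E=\bigoplus_b\mbb{Q}\,G^{\up}(b)$, from which the other equivalent balancedness conditions follow directly.
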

Here we have the following isomorphism of $\mbb{Q}$-vector spaces:
\[\mscr{L}(\infty)\cap \sigma(\mscr{L}(\infty))\cap \Uv^-(\mfr{g})_{\mca{A}}^{\mathrm{up}}\xrightarrow{\sim} \mscr{L}(\infty)/v\mscr{L}(\infty).\]
Denote its inverse by $\Gup$.
Then we have $\mbf{B}_-^{\mathrm{up}}=\Gup(\mscr{B}(\infty))$, \cf
\cite[Theorem 4.26]{Kimura10}.
Then the \emph{dual canonical basis}
$\mbf{B}_+^{\mathrm{up}}$ of $\Uv^+(\mfr{g})$ is defined to be
$\Omega(\mbf{B}_-^{\mathrm{up}})$. Notice that the dual canonical bases are
dual bar-involution invariant ($\sigma_\pm$-invariant).

% \begin{draft}
% In Kimura's paper,  $\Gup$ is written in the form $\Gup(b(\mbf{c},w))$,
% where $b(\ )$ is the crystal element associated with $(\mbf{c},w)$.
% \end{draft}

\subsection{Quantum unipotent subgroup}
Let $W$ be the Weyl group associated with the given root datum and
$s_i$ the reflection associated with the root $\alpha_i$, $1\leq
i\leq n$. Let $\ell\colon W\to \Z_{\geq 0}$ denote the natural length function on $W$.
For any given group element $w\in W$, we denote by $R(w)$ the set of reduced
words of $w$. Define $\Phi_{+}(w)=\{\alpha\in \Phi_{+};
w^{-1}\alpha\in \Phi_{-}\}$.

Following \cite[37.1.3]{Lus:intro}, we define the $\Qv$-algebra automorphisms\footnote{This automorphism is denoted by $T'_{i, -1}$ in \cite[37.1.3]{Lus:intro} and this is denoted by $T_{i}^{-1}$ in \cite{Kimura10}.} $T_{i}\colon \mbf{U}_{q}(\mfr{g})\to \mbf{U}_{q}(\mfr{g})$ for $i\in I$ by
\begin{subequations}
\begin{align}
	T_{i}(v^h)&=v^{s_i(h)}, \\
	T_{i}(e_i)&=-t_i^{-1}f_i, \\
	T_{i}(f_i)&=-e_it_i, \\
	T_{i}(e_j)&=\sum_{r+s=-\braket{h_i, \alpha_j}}(-1)^r v_i^{-r}e_i^{(r)}e_j e_i^{(s)} \text{~for~} j\neq i, \\
	T_{i}(f_j)&=\sum_{r+s=-\braket{h_i, \alpha_j}}(-1)^r v_i^{r}f_i^{(s)}f_j f_i^{(r)} \text{~for~} j\neq i.
\end{align}
\end{subequations}

Fix an element $w\in W$ with $\ell(w)=\ell$ and a reduced word $\overrightarrow{w}=(i_{1}, \cdots, i_{\ell})\in R(w)$.
We set 
\[\beta_{k}=s_{i_{1}}\cdots s_{i_{k-1}}(\alpha_{i_k}).\]
Then we have $\{\beta_{k}\}_{1\leq k\leq \ell}=\Phi_+(w)$. 
%\begin{NB}
\begin{Eg}\label{eg:A_3}
  Let the Cartan matrix $C$ be given by
  \begin{align*}
    C=
    \begin{pmatrix}
      2&0&-1\\
      0&2&-1\\
      -1&-1&2
    \end{pmatrix}.
  \end{align*}
Take the Coxeter element $c=s_3s_2s_1$. Take $w=c^2$ and choose the reduced
word $\overrightarrow{w}=(3,2,1,3,2,1)\in R(w)$. Then we have
\begin{align*}
  \beta_1&=\alpha_3,\\
  \beta_2&=\alpha_2+\alpha_3,\\
  \beta_3&=\alpha_1+\alpha_3,\\
  \beta_4&=\alpha_1+\alpha_2+\alpha_3,\\
  \beta_5&=\alpha_1,\\
  \beta_6&=\alpha_2.
\end{align*}
\end{Eg}
%\end{NB}

Define the lexicographic order $\wLess$ on $\mathbb{Z}_{\geq0}^{\ell}$ associated
with $\overrightarrow{w}\in R(w)$ by 
\begin{align*}
 \mathbf{c}=(c_{1},c_{2},\cdots,c_{\ell})<_{\overrightarrow{w}}\mathbf{c}'=(c'_{1},c'_{2},\cdots,c'_{\ell})
\end{align*}
if and only if there exists $1\leq p\leq\ell$ such that we have $c_{1}=c'_{1},\cdots,c_{p-1}=c'_{p-1},c_{p}<c'_{p}$.

Following Lusztig, for any non-negative integer $m$ and any vector
$\mbf{c}\in\N^\ell$, we denote
\begin{align*}
F(m\beta_{k})&=T_{i_{1}}\cdots T_{i_{k}}(f_{i_{k}}^{(m)}),\\
E(m\beta_{k})&=T_{i_{1}}\cdots T_{i_{k}}(e_{i_{k}}^{(m)}).\\
F(\mbf{c},\ow)&=F(c_\ell\beta_{\ell})F(c_{\ell-1}\beta_{\ell-1})\cdots
F(c_1\beta_{1}),\\
E(\mbf{c},\ow)&=E(c_1\beta_1)E(c_2\beta_2)\cdots E(c_l\beta_{l}).
\end{align*}

Let $\Uv^{-}(w)$ and $\Uv^{+}(w)$ be the $\Qv$-subspace of $\Uv^{-}(\mfr{g})$ and $\Uv^{+}(\mfr{g})$ spanned by 
\[\mscr{P}_{\overrightarrow{w}}=\{F(\mbf{c},
\overrightarrow{w})\mid \mbf{c}\in \mbb{Z}_{\geq 0}^{\ell}\}.\]
and $\{E(\mbf{c}, \overrightarrow{w})\mid\mbf{c}\in \mbb{Z}_{\geq
  0}^{\ell}\}$ respectively. 
Lusztig has shown that the space $\mbf{U}_{v}^{-}(w)$ is independent
of the choice of the reduced word $\overrightarrow{w}\in R(w)$.
As  a consequence of Levendorskii-Soibelman's formula, it can be shown that the subspace $\Uv^-(w)$ is the $\Qv$-subalgebra generated by $\{F(\beta_k)\}_{1\leq k\leq \ell}$.
(\cf \cite[2.4.2 Proposition Theorem b)]{LevSoi:qWeyl} and \cite[2.2 Proposition]{DKP:solvable}.)

\begin{Rem}
  By its construction, $\Uv^\pm(w)$ can be considered as a quantum
  analogue of the universal enveloping algebra of the nilpotent Lie
  algebra $n_{\pm}(w)=\bigoplus_{\pm\alpha\in
    \Phi_{+}(w)}\mfr{g}_{\alpha}$.
\end{Rem}

For any  $\mathbf{c}\in\mathbb{Z}_{\geq0}^{\ell}$, we set
\begin{align*}
F^{\mathrm{up}}(\mathbf{c},\overrightarrow{w})&=\frac{1}{(F(\mathbf{c},\overrightarrow{w}),F(\mathbf{c},\overrightarrow{w}))_{-}}F(\mathbf{c},\overrightarrow{w}),\\
E^{\mathrm{up}}(\mathbf{c},\overrightarrow{w})&=\frac{1}{(E(\mathbf{c},\overrightarrow{w}),E(\mathbf{c},\overrightarrow{w}))_{+}}E(\mathbf{c},\overrightarrow{w}).
\end{align*}

Define the $\mathbb{Q}[v^{\pm1}]$-form $\mathbf{U}_{v}^{-}(w)_{\mathbb{Q}[v^{\pm1}]}^{\mathrm{up}}$
of $\mathbf{U}_{q}^{-}(w)$ by 
\[
\mathbf{U}_{v}^{-}(w)_{\mathbb{Q}[v^{\pm1}]}^{\mathrm{up}}=\bigoplus_{\mathbf{c}\in\mathbb{Z}_{\geq0}^{\ell}}\mathbb{Q}[v^{\pm1}]F^{\mathrm{up}}(\mathbf{c},\overrightarrow{w}).
\]
$\mathbf{U}_{v}^{+}(w)_{\mathbb{Q}[v^{\pm1}]}^{\mathrm{up}}$ is
defined similarly.

By the Levendorskii-Soibelman formula \cite[Theorem 4.24]{Kimura10}
with respect to the set $\mathscr{P}_{\overrightarrow{w}}^{\mathrm{up}}=\{F(\mbf{c}, \overrightarrow{w})|\mbf{c}\in \mbb{Z}_{\geq 0}^{\ell}\}$,
$\mathbf{U}_{v}^{-}(w)_{\mathbb{Q}[v^{\pm1}]}^{\mathrm{up}}$
is a $\mathbb{Q}[v^{\pm1}]$-subalgebra generated by $\{F^{\mathrm{up}}(\beta_{k})\}_{1\leq k\leq\ell}$.
We also obtain the following upper unitriangular property of dual bar involution $\sigma_{-}$ with respect to $\mathscr{P}_{\overrightarrow{w}}^{\mathrm{up}}$ by the Levendorskii-Soibelman formula.

\begin{Prop}[{\cite[Proposition 4.25]{Kimura10}}]
We have 
\[\sigma(F^{\mathrm{up}}(\mathbf{c},\overrightarrow{w}))-F^{\mathrm{up}}(\mathbf{c},\overrightarrow{w})\in\sum_{\mathbf{c}'<_{\overrightarrow{w}}\mathbf{c}}\mathbb{Q}[v^{\pm1}]F^{\mathrm{up}}(\mathbf{c}',\overrightarrow{w}).\]
\end{Prop}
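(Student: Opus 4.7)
My plan is to deduce the upper-unitriangularity of $\sigma$ on $\mathscr{P}_{\overrightarrow{w}}^{\mathrm{up}}$ from a matching upper-triangularity of the classical bar involution $\overline{\phantom{x}}$ on $\mathscr{P}_{\overrightarrow{w}}$, transported through Kashiwara's bilinear form via the duality $(\sigma(x),y)_{-}=\overline{(x,\overline{y})_{-}}$.

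The first step is to show that each divided power $F(c_k\beta_k)=T_{i_1}\cdots T_{i_{k-1}}(f_{i_k}^{(c_k)})$ is bar-invariant, using the bar-invariance of $f_{i_k}^{(c_k)}$ and the standard compatibility of Lusztig's braid operators $T_{i_j}$ with $\overline{\phantom{x}}$ on the relevant subspace of $\Uv^-(\mathfrak{g})$. Next I would apply the Levendorskii--Soibelman formula, which for $j<k$ asserts that the twisted commutator
\[
F(\beta_k)F(\beta_j)-v^{-(\beta_j,\beta_k)}F(\beta_j)F(\beta_k)
\]
lies in the span of PBW monomials supported on strictly intermediate indices $F(\beta_i)$ with $j<i<k$. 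Iterating this formula to bring $\overline{F(\mathbf{c},\overrightarrow{w})}$ back into PBW-ordered form then gives
\[
\overline{F(\mathbf{c},\overrightarrow{w})}=v^{N(\mathbf{c})}F(\mathbf{c},\overrightarrow{w})+\sum_{\mathbf{c}'\wLess\mathbf{c}}a_{\mathbf{c},\mathbf{c}'}\,F(\mathbf{c}',\overrightarrow{w}),
\]
for some $N(\mathbf{c})\in\mathbb{Z}$ and coefficients $a_{\mathbf{c},\mathbf{c}'}\in\mathbb{Q}[v^{\pm 1}]$. The lex-strict decrease is automatic: each rearrangement at a pair of positions $(j,k)$ produces corrections supported strictly between them, so the resulting tuple $\mathbf{c}'$ agrees with $\mathbf{c}$ on a longer initial segment before dropping.

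Finally I dualize. The PBW monomials are orthogonal under $(\,,\,)_-$ (by iterating $(r_-(x),y\otimes z)_-=(x,yz)_-$ together with the Levendorskii--Soibelman identity for $r_-$), so $F^{\mathrm{up}}(\mathbf{c},\overrightarrow{w})$ is a genuine dual basis. The duality identity then shows that the matrix of $\sigma$ in $\mathscr{P}_{\overrightarrow{w}}^{\mathrm{up}}$ is, up to bar-conjugation and rescaling by the diagonal self-pairings, the transpose of the matrix of $\overline{\phantom{x}}$ in $\mathscr{P}_{\overrightarrow{w}}$; transposition converts the strictly lower-triangular off-diagonal part of the latter (in the order $\wLess$) into the strictly upper-triangular off-diagonal part of the former. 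The main obstacle will be verifying that the diagonal entry for $\sigma$ is exactly $1$ rather than a spurious power of $v$: this amounts to checking that $(F(\mathbf{c},\overrightarrow{w}),F(\mathbf{c},\overrightarrow{w}))_-$ factorizes as $\prod_k (F(c_k\beta_k),F(c_k\beta_k))_-$ (via the orthogonality above), and that the bar-transformation of each single-root self-pairing absorbs precisely the exponent $v^{N(\mathbf{c})}$ produced by the Levendorskii--Soibelman rearrangement. Together these reductions give the stated upper-unitriangularity.
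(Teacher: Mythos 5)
The overall skeleton — first establish a triangularity of the bar involution $\overline{\phantom{x}}$ with respect to the PBW basis $\{F(\mathbf{c},\overrightarrow{w})\}$, then transport it to $\sigma$ via the pairing $(\sigma(x),y)_-=\overline{(x,\overline{y})_-}$ and the orthogonality of the PBW monomials — is sound and is indeed how the cited result is obtained. The final dualization paragraph of your proposal is correct. However, the implementation of the first two steps contains a fatal error.

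Your first step asserts that each $F(c_k\beta_k)=T_{i_1}\cdots T_{i_{k-1}}(f_{i_k}^{(c_k)})$ is bar-invariant. This is false. Lusztig's braid automorphisms do not commute with $\overline{\phantom{x}}$: one has $\overline{T'_{i,e}(x)}=T'_{i,-e}(\overline{x})$, not $T'_{i,e}(\overline{x})$. In rank $2$, taking $Q$ of type $A_2$ and $\overrightarrow{w}=(1,2,1)$, the root vector $F(\beta_2)=T_1(f_2)=f_1f_2-vf_2f_1$ has bar-conjugate $f_1f_2-v^{-1}f_2f_1\neq F(\beta_2)$. Worse, even granting the claim, it makes your second step vacuous: $\overline{\phantom{x}}$ is a $\mathbb{Q}$-\emph{algebra} involution (a homomorphism, not an anti-homomorphism), so $\overline{F(c_\ell\beta_\ell)\cdots F(c_1\beta_1)}=\overline{F(c_\ell\beta_\ell)}\cdots\overline{F(c_1\beta_1)}$ in the same order. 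If each factor were bar-fixed, $\overline{F(\mathbf{c},\overrightarrow{w})}$ would already equal $F(\mathbf{c},\overrightarrow{w})$, no Levendorskii--Soibelman reordering would be needed, and you would conclude $\sigma(F^{\mathrm{up}}(\mathbf{c},\overrightarrow{w}))=F^{\mathrm{up}}(\mathbf{c},\overrightarrow{w})$ exactly — a strictly stronger and false statement, which would force the PBW and canonical bases to coincide.

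The direction of the expansion you write down is also incorrect: you claim $\overline{F(\mathbf{c},\overrightarrow{w})}-v^{N(\mathbf{c})}F(\mathbf{c},\overrightarrow{w})$ is supported on $\mathbf{c}'<_{\overrightarrow{w}}\mathbf{c}$, but in the same $A_2$ example with $\mathbf{c}=(0,1,0)$ one finds $\overline{F(\mathbf{c},\overrightarrow{w})}=F(\mathbf{c},\overrightarrow{w})+(v-v^{-1})F((1,0,1),\overrightarrow{w})$, and $(1,0,1)>_{\overrightarrow{w}}(0,1,0)$: the correction goes \emph{up}, not down. (This is consistent with the stated proposition: an upward-supported bar expansion on $\{F\}$ dualizes to a downward-supported $\sigma$ expansion on $\{F^{\mathrm{up}}\}$.) The genuine content, which your proposal skips by declaring the root vectors bar-fixed, is the inductive analysis of $\overline{F(\beta_k)}$ itself: using that $\overline{F(\beta_k)}-F(\beta_k)$ lies in $\Uv^-(s_{i_1}\cdots s_{i_{k-1}})$ and then applying the Levendorskii--Soibelman straightening, one shows $\overline{F(\beta_k)}-F(\beta_k)\in\sum_{\mathbf{c}'>_{\overrightarrow{w}}e_k}\mathbb{Q}[v^{\pm1}]F(\mathbf{c}',\overrightarrow{w})$, and only then does the product case and the dualization you describe go through.
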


% \begin{draft}
%   find a good way to define $\Gup(b(\mbf{c},w))$. Triangular solved basis is
%   denoted by $B^{up}(\mbf{c},w)$, need to use crystal to show it is the same
%   as the $\Gup$ defined via crystal before.
% \end{draft}

\begin{Prop}[{\cite[Theorem 4.26]{Kimura10}}]
We have the following upper unitriangular property:
\begin{align*}
  G^{\mathrm{up}}(b(\mbf{c}, \overrightarrow{w}))-\Fup(\mbf{c},
  \overrightarrow{w})\in \sum_{\mbf{c'}<\mbf{c}}v\mbb{Z}[v]\Fup(\mbf{c}',
  \overrightarrow{w}).
\end{align*}
\end{Prop}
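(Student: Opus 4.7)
The plan is to run an inductive Kazhdan--Lusztig argument along the order $\wLess$, combining three ingredients: the $\sigma$-invariance of $\Gup(b(\mbf{c},\ow))$ (built into the definition of the dual canonical basis); the upper unitriangular action of $\sigma$ on the dual PBW basis just established in the preceding Proposition; and the coincidence of the images of $\Gup(b(\mbf{c},\ow))$ and $\Fup(\mbf{c},\ow)$ in the crystal quotient $\mscr{L}(\infty)/v\mscr{L}(\infty)$.

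First I would expand $\Gup(b(\mbf{c},\ow))$ in the dual PBW basis. Since $\{\Fup(\mbf{c}',\ow)\}_{\mbf{c}'}$ is a $\mbb{Q}[v^{\pm 1}]$-basis of the integral form $\Uv^-(w)_{\mbb{Q}[v^{\pm 1}]}^{\mathrm{up}}$ and $\Gup(b(\mbf{c},\ow))$ lies in this form, we may write
\begin{align*}
\Gup(b(\mbf{c},\ow))=\sum_{\mbf{c}'}\lambda_{\mbf{c}'}\,\Fup(\mbf{c}',\ow),\qquad \lambda_{\mbf{c}'}\in\mbb{Q}[v^{\pm 1}],
\end{align*}
the sum being finite within the weight $-\sum_{k}c_{k}\beta_{k}$. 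Reducing modulo $v\mscr{L}(\infty)$, and using the standard compatibility that the normalized PBW vector $\Fup(\mbf{c}',\ow)$ represents the crystal basis element $b(\mbf{c}',\ow)$ in $\mscr{L}(\infty)/v\mscr{L}(\infty)$, I obtain $\lambda_{\mbf{c}}(0)=1$ and $\lambda_{\mbf{c}'}(0)=0$ for $\mbf{c}'\ne\mbf{c}$.

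Next I would apply $\sigma$ to both sides and invoke the preceding Proposition, which gives $\sigma(\Fup(\mbf{c}',\ow))=\Fup(\mbf{c}',\ow)+\sum_{\mbf{c}''\wLess\mbf{c}'}a_{\mbf{c}',\mbf{c}''}\Fup(\mbf{c}'',\ow)$. Matching coefficients in the $\sigma$-invariance identity $\Gup(b(\mbf{c},\ow))=\sigma(\Gup(b(\mbf{c},\ow)))$ yields the recursive relations
\begin{align*}
\lambda_{\mbf{c}'}-\overline{\lambda_{\mbf{c}'}}=\sum_{\mbf{c}''\,:\,\mbf{c}'\wLess\mbf{c}''}\overline{\lambda_{\mbf{c}''}}\,a_{\mbf{c}'',\mbf{c}'}.
\end{align*}
Taking $\mbf{c}_{\max}$ to be the largest $\mbf{c}'$ with $\lambda_{\mbf{c}'}\ne 0$, the right side is empty, so $\lambda_{\mbf{c}_{\max}}$ is bar-invariant, hence a constant; combined with the normalization $\lambda_{\mbf{c}'}(0)\ne 0\iff\mbf{c}'=\mbf{c}$ from the previous step, this forces $\mbf{c}_{\max}=\mbf{c}$, $\lambda_{\mbf{c}}=1$, and $\lambda_{\mbf{c}'}=0$ for $\mbf{c}'$ strictly greater than $\mbf{c}$. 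For $\mbf{c}'\wLess\mbf{c}$, proceeding by downward induction the right side is known, and the Kazhdan--Lusztig Lemma (the equation $p-\overline{p}=q$ with $q$ bar-antiinvariant admits a unique solution $p\in v\mbb{Z}[v]$ with $p(0)=0$) uniquely pins down $\lambda_{\mbf{c}'}\in v\mbb{Z}[v]$.

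The hardest step will be the lattice-level compatibility: verifying $\Fup(\mbf{c},\ow)\equiv b(\mbf{c},\ow)\pmod{v\mscr{L}(\infty)}$ for Lusztig's specific normalization of the dual PBW vector, and establishing $\mbb{Z}[v^{\pm 1}]$-integrality (rather than merely $\mbb{Q}[v^{\pm 1}]$-integrality) of the transition coefficients $a_{\mbf{c}',\mbf{c}''}$ and hence of the $\lambda_{\mbf{c}'}$ produced by the induction. This requires a careful analysis of the interaction between Kashiwara's bilinear form and Lusztig's braid group automorphisms $T_i$, and lies outside the purely formal Kazhdan--Lusztig mechanism.
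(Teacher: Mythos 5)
The paper offers no proof of this proposition --- it is stated as a direct citation of \cite[Theorem 4.26]{Kimura10}, and the Remark immediately following merely points out that the version there had $\mathbb{Q}[v]$ on the right-hand side, the upgrade to $\mathbb{Z}[v]$ following from Kashiwara's integrality \cite[6.1]{Kas:crystal} and Lusztig's $\mathcal{A}$-form result \cite[Proposition 41.1.3]{Lus:intro}. Your sketch reproduces the argument underlying the cited theorem: a Kazhdan--Lusztig bootstrap from the upper unitriangular action of $\sigma$ on the dual PBW basis (the preceding Proposition), the $\sigma$-invariance of $\Gup$ built into the balanced-triple definition, and the crystal-lattice compatibility of the dual PBW basis.

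The broad structure is right, but as written your argument is circular at one point. You first evaluate $\lambda_{\mbf{c}'}(0)$ and later conclude that a bar-invariant $\lambda_{\mbf{c}_{\max}}$ must be constant; both steps presuppose $\lambda_{\mbf{c}'}\in\mca{A}_0$, whereas a bar-invariant element of $\mathbb{Q}[v^{\pm 1}]$ (such as $v+v^{-1}$) need not be a scalar. The missing input is that $\{\Fup(\mbf{c}',\ow)\}$ is an $\mca{A}_0$-basis of $\mscr{L}(\infty)\cap\Uv^-(w)$, established in \cite{Kimura10}; combined with $\Gup(b(\mbf{c},\ow))\in\mscr{L}(\infty)\cap\Uv^-(w)$, this yields $\lambda_{\mbf{c}'}\in\mca{A}_0$ \emph{before} the recursion starts, after which your $v=0$ evaluation and the ``bar-invariant in $\mca{A}_0$ implies constant'' step both become legitimate. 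Beyond that repair you have correctly identified where the remaining nontrivial work lies --- the mod-$v$ identification $\Fup(\mbf{c},\ow)\equiv b(\mbf{c},\ow)$ in $\mscr{L}(\infty)/v\mscr{L}(\infty)$, and the passage from $\mathbb{Q}[v]$ to $\mathbb{Z}[v]$ coefficients; the latter is precisely the point the paper's Remark is calibrated to supply via the cited integrality results.
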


\begin{Rem}
In \cite[Theorem 4.26]{Kimura10}, we stated a slightly weaker statement about the coefficient in the right hand side.
But we have the integrality property \cite[6.1]{Kas:crystal} and \cite[Proposition 41.1.3]{Lus:intro}, so we get the result in the above form.
\end{Rem}

Via the identifications of the dual canonical bases $\mbf{B}_\pm^{\up}$ with
the $G^{\mathrm{up}}(\mscr{B}(\infty))$, the corresponding dual
canonical basis elements are denoted by $B_\pm(\mbf{c},\ow)$ respectively.

%\subsubsection{Dual PBW basis and dual canonical basis}

We have the following properties between $\Omega$ and
$T_{i,\epsilon}$ on $\mathbf{U}_{v}(\mathfrak{g})$.

\begin{Lem}[{\cite[Lemma 7.2]{GeissLeclercSchroeer11}}]
We have $T_{i,\epsilon}\circ\Omega=\Omega\circ T_{i,\epsilon}$. \end{Lem}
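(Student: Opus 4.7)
The plan is to verify the identity on the generators of $\mathbf{U}_{v}(\mathfrak{g})$ and then invoke the fact that both sides are (anti-)algebra maps over $\mathbb{Q}$, so agreement on a generating set forces agreement on the whole algebra. Concretely, since $\Omega$ is a $\mathbb{Q}$-linear anti-involution and $T_{i,\epsilon}$ is a $\mathbb{Q}(v)$-algebra automorphism with well-known explicit action on Chevalley generators (\cf \cite[37.1.3]{Lus:intro}), both $T_{i,\epsilon}\circ\Omega$ and $\Omega\circ T_{i,\epsilon}$ are $\mathbb{Q}$-linear anti-homomorphisms sending $v$ to $v^{-1}$. Hence it is enough to check the equality on $e_j$, $f_j$ ($j\in I$) and $v^h$ ($h\in P^\vee$).

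First I would handle the easy cases. For the torus part one has $T_{i,\epsilon}(v^h)=v^{s_i(h)}$ and $\Omega(v^h)=v^{-h}$, so both compositions send $v^h$ to $v^{-s_i(h)}$. For the index $j=i$, using $T_{i}(e_i)=-t_i^{-1}f_i$, $T_{i}(f_i)=-e_i t_i$, together with $\Omega(e_i)=f_i$, $\Omega(f_i)=e_i$ and $\Omega(t_i^{-1})=t_i$, the anti-homomorphism property of $\Omega$ yields $\Omega T_i(e_i)=-e_i t_i=T_i(f_i)=T_i\Omega(e_i)$, and symmetrically for $f_i$.

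The main step is the case $j\neq i$, where one must match the Lusztig formulas for $T_{i,\epsilon}(e_j)$ and $T_{i,\epsilon}(f_j)$. Applying $\Omega$ to the expression $T_{i}(e_j)=\sum_{r+s=-\langle h_i,\alpha_j\rangle}(-1)^r v_i^{-r}\, e_i^{(r)} e_j e_i^{(s)}$ and using that $\Omega$ reverses products, sends $v^{-r}$ to $v^{r}$, and sends $e_i^{(r)},e_j$ to $f_i^{(r)},f_j$, one obtains
\begin{equation*}
\Omega T_i(e_j)=\sum_{r+s=-\langle h_i,\alpha_j\rangle}(-1)^r v_i^{r}\, f_i^{(s)} f_j f_i^{(r)}=T_i(f_j)=T_i\Omega(e_j),
\end{equation*}
which is exactly Lusztig's formula for $T_i(f_j)$; the case of $f_j$ is treated symmetrically. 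This matching of sign conventions $v_i^{-r}$ vs.\ $v_i^{r}$ after passing through $\Omega$ (which inverts $v$) is what makes the formula work, and is the only substantive point in the verification.

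For the general parameter $\epsilon$ one argues identically: the formulas for the four Lusztig automorphisms $T'_{i,\pm 1}$, $T''_{i,\pm 1}$ differ only by signs and interchange of $e$'s and $f$'s, so the same computation goes through with the appropriate sign bookkeeping. The expected obstacle is purely notational — correctly tracking the $v$-powers, the reversal of products under the anti-involution $\Omega$, and the summation index conventions $r\leftrightarrow s$ induced by that reversal — rather than any conceptual difficulty; this is why the result is quoted from \cite[Lemma 7.2]{GeissLeclercSchroeer11} without elaboration.
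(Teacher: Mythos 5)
Your proof is correct and matches the paper's own argument: both rely on the observation that $T_{i,\epsilon}\circ\Omega$ and $\Omega\circ T_{i,\epsilon}$ are $\mathbb{Q}$-linear anti-homomorphisms sending $v\mapsto v^{-1}$, so it suffices to check equality on the generators $v^h$, $e_i$, $f_i$, $e_j$, $f_j$ using Lusztig's explicit formulas, which is exactly what the paper does.
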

 \begin{NB}
 The following is just a check.
 \begin{proof}
 Since both are $\mathbb{Q}$-algebra anti-isomorphism which maps $v$
 to $v^{-1}$. We only have to check on generators of $\mathbf{U}_{v}(\mathfrak{g})$
 \begin{align*}
 (T'_{i,\epsilon}\circ\Omega)(v^{h}) & =T'_{i,\epsilon}(v^{-h})=v^{-s_{i}(h)}\\
 (T'_{i,\epsilon}\circ\Omega)(e_{i}) & =T'_{i,\epsilon}(f_{i})=-e_{i}t_{i}^{-\epsilon}\\
 (T'_{i,\epsilon}\circ\Omega)(f_{i}) & =T'_{i,\epsilon}(e_{i})=-t_{i}^{\epsilon}f_{i}\\
 (T'_{i,\epsilon}\circ\Omega)(e_{j}) & =T'_{i,\epsilon}(f_{j})=\sum_{r+s=-a_{ij}}(-1)^{r}v_{i}^{-\epsilon r}f_{i}^{(s)}f_{j}f_{i}^{(r)}\;\text{for \ensuremath{i\neq j}}\\
 (T'_{i,\epsilon}\circ\Omega)(f_{j}) & =T'_{i,\epsilon}(e_{j})=\sum_{r+s=-a_{ij}}(-1)^{r}v_{i}^{\epsilon r}e_{i}^{(r)}e_{j}e_{i}^{(s)}\;\text{for \ensuremath{i\neq j}}
 \end{align*}

 \begin{align*}
 (\Omega\circ T'_{i,\epsilon})(v^{h}) & =\Omega(v^{s_{i}(h)})=v^{-s_{i}(h)}\\
 (\Omega\circ T'_{i,\epsilon})(e_{i}) & =-\Omega(t_{i}^{\epsilon}f_{i})=-\Omega(f_{i})\Omega(t_{i}^{\epsilon})=-e_{i}t_{i}^{-\epsilon}\\
 (\Omega\circ T'_{i,\epsilon})(f_{i}) & =-\Omega(e_{i}t_{i}^{-\epsilon})=-\Omega(t_{i}^{-\epsilon})\Omega(e_{i})=-t_{i}^{\epsilon}f_{i}\\
 (\Omega\circ T'_{i,\epsilon})(e_{j}) & =\Omega\left(\sum_{r+s=-a_{ij}}(-1)^{r}v_{i}^{\epsilon r}e_{i}^{(r)}e_{j}e_{i}^{(s)}\right)=\sum_{r+s=-a_{ij}}(-1)^{r}v_{i}^{-\epsilon r}f_{i}^{(s)}f_{j}f_{i}^{(r)}\text{for \ensuremath{i\neq j}}\\
 (\Omega\circ T'_{i,\epsilon})(f_{j}) & =\Omega\left(\sum_{r+s=-a_{ij}}(-1)^{r}v_{i}^{-\epsilon r}f_{i}^{(s)}f_{j}f_{i}^{(r)}\right)=\sum_{r+s=-a_{ij}}(-1)^{r}v_{i}^{\epsilon r}e_{i}^{(r)}e_{j}e_{i}^{(s)}\text{for \ensuremath{i\neq j}}
 \end{align*}
 \end{proof}

 \end{NB}

\begin{Lem}
We have $\Omega(F^{\mathrm{up}}(\mathbf{c},\overrightarrow{w}))=E^{\mathrm{up}}(\mathbf{c},\overrightarrow{w})$.
\end{Lem}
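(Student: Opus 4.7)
The plan is to use the commutation relation $T_i \circ \Omega = \Omega \circ T_i$ from the previous lemma together with the compatibility of $\Omega$ with Kashiwara's bilinear forms, namely $\overline{(x,y)_{\pm}} = (\Omega(x), \Omega(y))_{\mp}$.

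First I would establish the statement for the building blocks $F(m\beta_k)$. Since $\Omega$ is a $\mathbb{Q}$-algebra anti-involution sending $v \mapsto v^{-1}$, and since the quantum integer $[m]_{i_k}!$ is bar-invariant, we have $\Omega(f_{i_k}^{(m)}) = e_{i_k}^{(m)}$. Applying the commutation relation $T_{i_1}\cdots T_{i_k}\circ\Omega = \Omega\circ T_{i_1}\cdots T_{i_k}$ then yields
\[
\Omega(F(m\beta_k)) = T_{i_1}\cdots T_{i_k}(e_{i_k}^{(m)}) = E(m\beta_k).
\]

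Next, because $\Omega$ is an anti-involution, it reverses the order of products. Combined with the previous step, this gives
\[
\Omega(F(\mathbf{c},\overrightarrow{w})) = \Omega\bigl(F(c_\ell\beta_\ell)\cdots F(c_1\beta_1)\bigr) = E(c_1\beta_1)\cdots E(c_\ell\beta_\ell) = E(\mathbf{c},\overrightarrow{w}),
\]
which matches perfectly the opposite orderings used in the definitions of $F(\mathbf{c},\overrightarrow{w})$ and $E(\mathbf{c},\overrightarrow{w})$.

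Finally, I would handle the normalizing scalars. Using the compatibility lemma for the bilinear forms, and noting that these scalars are elements of $\mathbb{Q}(v)$ so $\Omega$ acts on them as the bar-involution, we compute
\[
\overline{(F(\mathbf{c},\overrightarrow{w}), F(\mathbf{c},\overrightarrow{w}))_{-}} = (\Omega F(\mathbf{c},\overrightarrow{w}), \Omega F(\mathbf{c},\overrightarrow{w}))_{+} = (E(\mathbf{c},\overrightarrow{w}), E(\mathbf{c},\overrightarrow{w}))_{+}.
\]
Applying $\Omega$ to $F^{\mathrm{up}}(\mathbf{c},\overrightarrow{w}) = F(\mathbf{c},\overrightarrow{w})/(F(\mathbf{c},\overrightarrow{w}), F(\mathbf{c},\overrightarrow{w}))_{-}$ and combining the three observations yields the result. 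There is no real obstacle here; the only subtlety to be careful about is making sure the order reversal in the second step exactly accounts for the opposite conventions in the definitions of $F(\mathbf{c},\overrightarrow{w})$ and $E(\mathbf{c},\overrightarrow{w})$, and that $\Omega$ acts as the bar involution on scalars when bringing it past the reciprocal of the norm.
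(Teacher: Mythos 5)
Your proof is correct, but the handling of the normalizing scalar diverges from what the paper does. The paper's proof first rewrites $F^{\mathrm{up}}(\mathbf{c},\overrightarrow{w})$ as the product $F^{\mathrm{up}}(c_\ell\beta_\ell)\cdots F^{\mathrm{up}}(c_1\beta_1)$ — implicitly invoking the fact, due to Lusztig, that the PBW vectors are orthogonal so that the norm $(F(\mathbf{c},\overrightarrow{w}),F(\mathbf{c},\overrightarrow{w}))_-$ factorizes as $\prod_k (F(c_k\beta_k),F(c_k\beta_k))_-$ — and then applies $\Omega$ factor by factor, using $\Omega(F^{\mathrm{up}}(m\beta_k))=E^{\mathrm{up}}(m\beta_k)$. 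You instead apply $\Omega$ directly to the unnormalized $F(\mathbf{c},\overrightarrow{w})$, getting $E(\mathbf{c},\overrightarrow{w})$ in one pass (using $T_i\circ\Omega = \Omega\circ T_i$, $\Omega(f_i^{(m)})=e_i^{(m)}$, and the product-reversal), and then carry the single scalar $1/(F(\mathbf{c},\overrightarrow{w}),F(\mathbf{c},\overrightarrow{w}))_-$ across $\Omega$ via $\overline{(x,y)_\pm}=(\Omega x,\Omega y)_\mp$, which identifies the conjugate of the $F$-norm with the $E$-norm. What your route buys is that you never need the orthogonality/factorization of the PBW norm: the compatibility of $\Omega$ with Kashiwara's forms replaces it entirely. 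Both approaches share the same core ingredients ($\Omega$ anti-automorphism, $\Omega$ commuting with braid operators, $\Omega(f)=e$), so the difference is modest, but your treatment of the scalar is self-contained in a way the paper's is not.
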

\begin{proof}
Applying the anti-automorphism $\Omega$, we obtain
  \begin{align*}
    \Omega(F^{\mathrm{up}}(\mathbf{c},\overrightarrow{w})) & =\Omega(F^{\mathrm{up}}(\mbf{c}_{\ell}\beta_{\ell})\cdots F^{\mathrm{up}}(\mbf{c}_{1}\beta_{1}))\\
    & =\Omega(F^{\mathrm{up}}(\mbf{c}_{1}\beta_{1}))\cdots\Omega(F^{\mathrm{up}}(\mbf{c}_{\ell}\beta_{\ell}))\\
    & =E^{\mathrm{up}}(\mbf{c}_{1}\beta_{1})\cdots
    E^{\mathrm{up}}(\mbf{c}_{\ell}\beta_{\ell}).
  \end{align*}
\end{proof}

As a consequence, we obtain the following property.
\begin{Prop}[{\cite[Proposition 12.8]{GeissLeclercSchroeer11}}]
We have 
\[
B_{+}^{\mathrm{up}}(\mathbf{c},\overrightarrow{w})\in E^{\mathrm{up}}(\mathbf{c},\overrightarrow{w})+\sum_{\mathbf{c}'<\mathbf{c}}v^{-1}\mathbb{Z}[v^{-1}]E^{\mathrm{up}}(\mathbf{c}',\overrightarrow{w}).
\]
\end{Prop}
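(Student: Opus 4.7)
The plan is to deduce this statement from the already-established analogue for the minus side by applying the anti-involution $\Omega$. From the proposition preceding the statement, we have
\[
G^{\mathrm{up}}(b(\mathbf{c},\overrightarrow{w})) - F^{\mathrm{up}}(\mathbf{c},\overrightarrow{w}) \in \sum_{\mathbf{c}'<\mathbf{c}} v\mathbb{Z}[v]\, F^{\mathrm{up}}(\mathbf{c}',\overrightarrow{w}),
\]
which is exactly the uni-triangular expansion of $B_-^{\mathrm{up}}(\mathbf{c},\overrightarrow{w})$ in the dual PBW basis $\mathscr{P}_{\overrightarrow{w}}^{\mathrm{up}}$ of $\Uv^-(w)$.

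First I would apply $\Omega$ term by term. The ingredients for this are already in place in the excerpt: (i) $\Omega$ is a $\mathbb{Q}$-linear anti-involution with $\Omega(v)=v^{-1}$, so scalar coefficients lying in $v\mathbb{Z}[v]$ are sent to coefficients in $v^{-1}\mathbb{Z}[v^{-1}]$; and (ii) by the lemma just proved, $\Omega(F^{\mathrm{up}}(\mathbf{c},\overrightarrow{w})) = E^{\mathrm{up}}(\mathbf{c},\overrightarrow{w})$, since $\Omega$ reverses the product order but the definition of $F^{\mathrm{up}}(\mathbf{c},\overrightarrow{w})$ already encodes the reverse ordering of the factors compared with $E^{\mathrm{up}}(\mathbf{c},\overrightarrow{w})$. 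Applying $\Omega$ to the $F^{\mathrm{up}}$-expansion of $B_-^{\mathrm{up}}(\mathbf{c},\overrightarrow{w})$ therefore yields
\[
\Omega(B_-^{\mathrm{up}}(\mathbf{c},\overrightarrow{w})) \in E^{\mathrm{up}}(\mathbf{c},\overrightarrow{w}) + \sum_{\mathbf{c}'<\mathbf{c}} v^{-1}\mathbb{Z}[v^{-1}]\, E^{\mathrm{up}}(\mathbf{c}',\overrightarrow{w}).
\]

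It remains to identify the left-hand side with $B_+^{\mathrm{up}}(\mathbf{c},\overrightarrow{w})$. By definition, $\mbf{B}_+^{\mathrm{up}} = \Omega(\mbf{B}_-^{\mathrm{up}})$, so $\Omega(B_-^{\mathrm{up}}(\mathbf{c},\overrightarrow{w}))$ is indeed some element of $\mbf{B}_+^{\mathrm{up}}$. To see that it is the one labeled by $\mathbf{c}$, I would argue via the uniqueness of the uni-triangular characterization: the element $\Omega(B_-^{\mathrm{up}}(\mathbf{c},\overrightarrow{w}))$ is $\sigma_+$-invariant (because $\Omega$ intertwines $\sigma_-$ and $\sigma_+$, by the compatibility of $\Omega$ with the bar-involution and with the bilinear forms via $\overline{(x,y)_{\pm}} = (\Omega(x),\Omega(y))_{\mp}$), it lies in the upper integral form $\mathbf{U}_v^+(w)^{\mathrm{up}}_{\mathbb{Q}[v^\pm]}$, and by the displayed expansion above its leading term in $\mathscr{P}_{\overrightarrow{w}}^{\mathrm{up}}$ is $E^{\mathrm{up}}(\mathbf{c},\overrightarrow{w})$. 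These three properties characterize $B_+^{\mathrm{up}}(\mathbf{c},\overrightarrow{w})$ uniquely, completing the identification.

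The main obstacle I anticipate is the bookkeeping around the parametrization: ensuring that the $\mathbf{c}$-labeling of $\mbf{B}_+^{\mathrm{up}}$ used in the statement is consistent with the one induced by $\Omega$ from the labeling of $\mbf{B}_-^{\mathrm{up}}$. This is a matter of convention but must be checked, and it is precisely the uni-triangularity argument sketched above that pins it down. Everything else is a direct transport of the minus-side statement across $\Omega$.
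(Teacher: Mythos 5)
Your proof is correct and matches the paper's intended argument: the proposition is stated immediately after, and ``as a consequence'' of, the lemma $\Omega(F^{\mathrm{up}}(\mathbf{c},\overrightarrow{w}))=E^{\mathrm{up}}(\mathbf{c},\overrightarrow{w})$, and your transport of the minus-side uni-triangularity across $\Omega$ (using $\Omega(v)=v^{-1}$) is exactly that consequence. The only slight over-engineering is your closing uniqueness argument: since $B_{\pm}^{\mathrm{up}}(\mathbf{c},\overrightarrow{w})$ are defined by carrying the $\mathscr{B}(\infty)$-labeling through the identification $\mbf{B}_{+}^{\mathrm{up}}=\Omega(\mbf{B}_{-}^{\mathrm{up}})$, the identity $\Omega(B_{-}^{\mathrm{up}}(\mathbf{c},\overrightarrow{w}))=B_{+}^{\mathrm{up}}(\mathbf{c},\overrightarrow{w})$ holds by definition, so the uni-triangularity computation you already performed constitutes the whole proof.
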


\subsection{Compatibility}

For $b_1, b_2\in \mscr{B}(\infty)$,
we say $b_1$ and $b_2$ (or $\Gup(b_1)$ and $\Gup(b_2)$) are \emph{multiplicative} or \emph{compatible} if
 there exists a unique element in $\mscr{B}(\infty)$, which we denote
 by $b_1\circledast b_2$, such that 
$\Gup(b_1\circledast b_2)$ equals $v^N \Gup(b_1)\Gup(b_2)$ for some $N\in \Z$. By \cite[Corollary 3.8]{Kimura10}, this condition is independent of the order on $b_{1}$ and $b_{2}$.
We write $b_{1}\bot b_{2}$ when this holds.

\section{$T$-system in quantum unipotent subgroup}\label{sec:GLS}

\subsection{Quantum coordinate ring}

Following Kashiwara \cite[\S 7]{Kashiwara93} and Gei\ss-Leclerc-Schr\"{o}er
\cite[\S 2]{GeissLeclercSchroeer11}, we define the quantum coordinate
ring $A_{v}(\mathfrak{g})$ as the subspace of $\Hom(\mathbf{U}_{v}(\mathfrak{g}),\mathbb{Q}(v))$
consisting of the linear forms $\psi$ such that the left module
$\mathbf{U}_{v}(\mathfrak{g})\psi$ belongs to $\mathcal{O}_{\mathrm{int}}(\mathfrak{g})$
and the right module $\psi\mathbf{U}_{v}(\mathfrak{g})$ belong to $
\mathcal{O}_{\mathrm{int}}(\mathfrak{g}^{\mathrm{op}})$. Its
multiplication is chosen to be the transpose of one of the coproducts
$\Delta_\pm$.

  For any $\lambda\in P_+$, let $V(\lambda)$ and $V(\lambda)^r$ denote
  the left irreducible highest weight module and the right irreducible
  highest weight module respectively with highest weight $\lambda$. The highest
  weight vectors are denoted by $m_\lambda$ and $n_\lambda$ respectively.
  Let $\langle~,~\rangle_{\lambda}\colon V(\lambda)^{r}\otimes V(\lambda)\to \Qv$ be the 
  bilinear form which is characterized by 
  $\langle n_{\lambda},m_{\lambda} \rangle_{\lambda}=1$ and $\langle n,xm\rangle_{\lambda}=\langle nx,m\rangle$
  for $n\in V(\lambda)^{r}$, $m\in V(\lambda)$ and $x\in \Uv(\mathfrak{g})$.
  We also have the bilinear form $(\;,\;)_{\lambda}\colon V(\lambda)\otimes V(\lambda)\to \Qv$ which is
  characterized by $(m_{\lambda},m_{\lambda})_{\lambda}=1$ and $(m,xm')_{\lambda}=(\varphi(x)m,m')_{\lambda}$
  for $x\in \Uv(\mfr{g})$ and $m,m'\in V(\lambda)$.
  
  The following is a $v$-analogue of the Peter-Weyl decomposition theorem for the strongly regular functions on the Kac-Moody group $G_{\min}$ in the sense of Kac-Peterson.
\begin{Prop}[{\cite[Proposition 7.2.2]{Kashiwara93}, \cite[Proposition 2.1]{GeissLeclercSchroeer11}}]

Let $\Phi_{\lambda}\colon V(\lambda)^{r}\otimes V(\lambda)\to A_{v}(\mathfrak{g})$
defined by 
\[
\langle\Phi_{\lambda}(n\otimes m),x\rangle=\langle n,xm\rangle_{\lambda}\;(n\in V(\lambda)^{r},m\in V(\lambda),x\in\mathbf{U}_{v}(\mathfrak{g}))
\]
Then $\Phi=\bigoplus_{\lambda\in P_{+}}\Phi_{\lambda}\colon\bigoplus_{\lambda\in P_{+}}V(\lambda)^{r}\otimes V(\lambda)\to A_{v}(\mathfrak{g})$
gives an isomorphism of $\mathbf{U}_{v}(\mathfrak{g})$-bimodules.
\end{Prop}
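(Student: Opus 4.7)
The plan is to construct the inverse map explicitly using the fact that every $\psi \in A_v(\mathfrak{g})$ generates a finite direct sum of highest weight modules on each side, and then read off its components as matrix coefficients.

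First I would check that $\Phi_\lambda$ lands in $A_v(\mathfrak{g})$: for fixed $n \otimes m$, the left action gives $(x \cdot \Phi_\lambda(n \otimes m))(y) = \langle n, yx \cdot m \rangle_\lambda$, so $\mathbf{U}_v \cdot \Phi_\lambda(n\otimes m)$ is a quotient of the integrable module $V(\lambda)$ (via $m' \mapsto \Phi_\lambda(n \otimes m')$), hence lies in $\mathcal{O}_{\mathrm{int}}(\mathfrak{g})$. Symmetrically for the right action, giving the membership. Bimodule equivariance is immediate from the definition of the actions on $A_v(\mathfrak{g})$ as transposes of multiplication in $\mathbf{U}_v(\mathfrak{g})$.

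Next I would tackle injectivity. Since each $V(\lambda)^r \otimes V(\lambda)$ is irreducible as a bimodule (both factors being irreducible on their respective sides and the pairing being nondegenerate), each $\Phi_\lambda$ is either injective or zero. It is nonzero because $\Phi_\lambda(n_\lambda \otimes m_\lambda)$ evaluated at $1$ gives $1$. For injectivity of the direct sum, I would use weight considerations: the image of $\Phi_\lambda$ is contained in the $(\lambda,-\lambda)$-weight component with respect to the bi-weight action, so if distinct $\lambda$'s would collide we can separate by applying a Cartan element of weight $\lambda$ on the right and of weight $-\lambda'$ on the left and use that the highest weight vectors $m_\lambda$ are characterized uniquely by their weights inside $V(\lambda)$. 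More cleanly, one shows that the sum $\sum_\lambda \Phi_\lambda$ is direct because the images are bimodules of distinct isomorphism types (each $V(\mu)^r \otimes V(\lambda)$ is irreducible and they are pairwise non-isomorphic when $(\mu,\lambda)$ vary).

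The main obstacle is surjectivity. Given $\psi \in A_v(\mathfrak{g})$, by definition the left $\mathbf{U}_v$-module $M := \mathbf{U}_v \psi$ lies in $\mathcal{O}_{\mathrm{int}}(\mathfrak{g})$ and similarly on the right. Because $\psi$ is a single vector, $M$ has finite-dimensional weight spaces and is finitely generated as a $\mathbf{U}_v$-module; combining with the bi-integrability (both sides integrable, and the left action and right action commute), one shows $M$ decomposes as a finite direct sum $\bigoplus V(\lambda)^{\oplus n_\lambda}$ of irreducible integrable highest weight modules. Writing $\psi = \sum_\lambda \psi_\lambda$ according to this decomposition and analyzing the right action of $\mathbf{U}_v$ on each isotypical piece, one identifies each $\psi_\lambda$ with an element of $V(\lambda)^r \otimes V(\lambda)$ via the evaluation pairing $\langle\ ,\ \rangle_\lambda$, giving the preimage under $\Phi$. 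The key technical input here is the complete reducibility of integrable modules in $\mathcal{O}_{\mathrm{int}}(\mathfrak{g})$ (Lusztig), which supplies both the decomposition on the left and the identification on the right.

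Finally, the inverse construction produces exactly the $V(\lambda)^r \otimes V(\lambda)$ component of the sum, and together with the injectivity argument this shows $\Phi$ is the desired bimodule isomorphism. Throughout I would follow Kashiwara's original treatment in \cite{Kashiwara93} closely, as the argument is standard once the complete reducibility of $\mathcal{O}_{\mathrm{int}}(\mathfrak{g})$ is in hand.
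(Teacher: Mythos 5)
The paper states this proposition as a recalled result, citing Kashiwara's Proposition 7.2.2 and Gei\ss--Leclerc--Schr\"oer's Proposition 2.1, and gives no proof of its own. Your argument (well-definedness of $\Phi_\lambda$ via integrability of the left/right cyclic module, injectivity from irreducibility of the bimodules $V(\lambda)^r\otimes V(\lambda)$ and their pairwise non-isomorphism, surjectivity from complete reducibility in $\mathcal{O}_{\mathrm{int}}$) is exactly Kashiwara's Peter--Weyl argument and is correct in outline, so it matches the cited source.
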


\subsection{Quantum $T$-systems for quantum minors}
Following \cite[Section 9.3]{BerensteinZelevinsky05}, we define the
$v$-analogue $\Delta^{\lambda}$ of a principal minor as :
\[
\langle\Delta^{\lambda},x\rangle=\langle
n_{\lambda},xm_{\lambda}\rangle\;\forall
x\in\mathbf{U}_{v}(\mathfrak{g}), \lambda\in P_{+}.
\]

By its definition, 
we have $\langle\Delta^{\lambda},yv^{h}x\rangle=\varepsilon(x)v^{\langle h,\lambda\rangle}\varepsilon(y)$
for $x\in\mathbf{U}_{v}^{+}(\mathfrak{g}),y\in\mathbf{U}_{v}^{-}(\mathfrak{g}),h\in P^{\vee}$,
where $\varepsilon\colon \mathbf{U}_{v}(\mathfrak{g})\to \mathbb{Q}(v)$ is the counit.

For $w\in W$ and $\lambda\in P_{+}$, let us denote by $m_{w\lambda}$ the (dual) canonical basis element
of weight $w\lambda$. We have the following description:
\[m_{w\lambda}=f_{i_{1}}^{(a_{1})}\cdots f_{i_{\ell}}^{(a_{\ell})}m_{\lambda},\]
where $(i_{1},i_{2},\cdots,i_{\ell})\in R(w)$ and $a_{k}=\langle s_{i_{\ell}}\cdots s_{i_{k+1}}(h_{i_{k}}),\lambda\rangle~(1\leq k\leq \ell)$. It is known that $m_{w\lambda}$ does not depend on the choice of a reduced word $(i_{1},\cdots,i_{\ell})\in R(w)$.
Similarly we define $n_{w\lambda}$ by $n_{w\lambda}=n_{\lambda}e_{i_{\ell}}^{(a_{\ell})}\cdots e_{i_{1}}^{(a_{1})}$.

For $w_{1}, w_{2}\in W$ and $\lambda\in P_{+}$, we define the \emph{(generalized) quantum minor} $\Delta_{w_{1}\lambda, w_{2}\lambda}$ associated with $(w_{1}\lambda,w_{2}\lambda)$ by
\[\Delta_{w_{1}\lambda, w_{2}\lambda}=\Phi_{\lambda}(n_{w_{1}\lambda}\otimes m_{w_{2}\lambda}).\]
By construction, we have $\Delta_{w_{1}\lambda, w_{2}\lambda}\in A_{v}(\mathfrak{g})_{w_{1}\lambda,w_{2}\lambda}$
and 
\begin{align*}
\langle\Delta_{w_{1}\lambda,w_{2}\lambda},x\rangle&=\langle n_{w_{1}\lambda},xm_{w_{2}\lambda}\rangle_{\lambda}=\langle n_{w_{1}\lambda}x,m_{w_{2}\lambda}\rangle_{\lambda} \\
&=(m_{w_{1}\lambda}, xm_{w_{2}\lambda})_{\lambda}=(\varphi(x)m_{w_{1}\lambda}, m_{w_{2}\lambda})_{\lambda}
\end{align*}
for $x\in \mathbf{U}_{v}(\mathfrak{g})$.

Denote $\gamma_{i}=\varpi_{i}+s_{i}\varpi_{i}\in P_{+}$.

\begin{Prop}[{\cite[Proposition 3.2]{GeissLeclercSchroeer11}}]\label{prop:DeltaTsys}

\textup{(1)} For $i\in I$, we have 
\begin{align*}
\Delta_{\gamma_{i},\gamma_{i}} & =\Delta_{s_{i}\varpi_{i},s_{i}\varpi_{i}}\Delta_{\varpi_{i},\varpi_{i}}-v_{i}^{-1}\Delta_{s_{i}\varpi_{i},\varpi_{i}}\Delta_{\varpi_{i},s_{i}\varpi_{i}}\\
 & =\Delta_{\varpi_{i},\varpi_{i}}\Delta_{s_{i}\varpi_{i},s_{i}\varpi_{i}}-v_{i}\Delta_{s_{i}\varpi_{i},\varpi_{i}}\Delta_{\varpi_{i},s_{i}\varpi_{i}}.
\end{align*}

\textup{(2)} For $w_{1},w_{2}\in W$ and $i\in I$ with $\ell(w_{1}s_{i})=\ell(w_{1})+1$
and $\ell(w_{2}s_{i})=\ell(w_{2})+1$, we have 
\begin{align*}
\Delta_{w_{1}\gamma_{i},w_{2}\gamma_{i}} & =\Delta_{w_{1}s_{i}\varpi_{i},w_{2}s_{i}\varpi_{i}}\Delta_{w_{1}\varpi_{i},w_{2}\varpi_{i}}-v_{i}^{-1}\Delta_{w_{1}s_{i}\varpi_{i},w_{2}\varpi_{i}}\Delta_{w_{1}\varpi_{i},w_{2}s_{i}\varpi_{i}}\\
 & =\Delta_{w_{1}\varpi_{i},w_{2}\varpi_{i}}\Delta_{w_{1}s_{i}\varpi_{i},w_{2}s_{i}\varpi_{i}}-v_{i}\Delta_{w_{1}s_{i}\varpi_{i},w_{2}\varpi_{i}}\Delta_{w_{1}\varpi_{i},w_{2}s_{i}\varpi_{i}}.
\end{align*}
\end{Prop}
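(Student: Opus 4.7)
The plan is to first prove part (1) — the case $w_1=w_2=e$ — by realizing $V(\gamma_i)$ as a direct summand of $V(\varpi_i)\otimes V(\varpi_i)$, and then bootstrap part (2) using Lusztig's braid automorphisms $T_{w_1}$, $T_{w_2}$ to translate all the extremal weight vectors across the Weyl orbit.

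For part (1), note that $\gamma_i = 2\varpi_i - \alpha_i$ and the Clebsch-Gordan decomposition gives $V(\varpi_i)\otimes V(\varpi_i)\cong V(2\varpi_i)\oplus V(\gamma_i)$. The highest-weight vector of the $V(\gamma_i)$-summand is unique up to scalar and, after a short quantum $\mathfrak{sl}_2$ computation using $\Delta_{+}(f_i)(m_{\varpi_i}\otimes m_{\varpi_i})=v_i^{-1}m_{s_i\varpi_i}\otimes m_{\varpi_i}+m_{\varpi_i}\otimes m_{s_i\varpi_i}$ and the requirement $e_i\cdot m_{\gamma_i}=0$, takes the form $m_{\gamma_i}=m_{\varpi_i}\otimes m_{s_i\varpi_i}-v_i\, m_{s_i\varpi_i}\otimes m_{\varpi_i}$ up to a fixed normalization. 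A dual calculation in $V(\varpi_i)^r\otimes V(\varpi_i)^r$ identifies $n_{\gamma_i}$ analogously. Now evaluate both sides of the identity on an arbitrary $x\in\Uv(\mathfrak{g})$: since multiplication in $A_v(\mathfrak{g})$ is transpose to the coproduct, a product $\Delta_{\mu_1,\mu_2}\Delta_{\nu_1,\nu_2}$ paired with $x$ equals the matrix coefficient of $x$ acting on $V(\varpi_i)\otimes V(\varpi_i)$ against the vectors $n_{\mu_1}\otimes n_{\nu_1}$ and $m_{\mu_2}\otimes m_{\nu_2}$. Substituting the explicit expressions for $m_{\gamma_i}$ and $n_{\gamma_i}$ and expanding, the four resulting terms precisely match the four products on the right-hand side, with the coefficient $-v_i^{-1}$ produced by combining the $-v_i$ scalar in $m_{\gamma_i}$ with the $v$-factor coming from the coproduct rule on tensor products. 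The second formula, with coefficient $-v_i$, follows either by using the opposite coproduct $\Delta_-$ in place of $\Delta_+$ (equivalently via the bar involution), or by invoking the $v$-commutation $\Delta_{s_i\varpi_i,s_i\varpi_i}\Delta_{\varpi_i,\varpi_i}-\Delta_{\varpi_i,\varpi_i}\Delta_{s_i\varpi_i,s_i\varpi_i}=(v_i^{-1}-v_i)\Delta_{s_i\varpi_i,\varpi_i}\Delta_{\varpi_i,s_i\varpi_i}$, which itself is a direct Hopf-pairing consequence.

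For part (2), apply Lusztig's automorphisms $T_{w_1}$, $T_{w_2}$. The length conditions $\ell(w_j s_i)=\ell(w_j)+1$ ensure that the extremal vectors $m_{w_j\varpi_i}$, $m_{w_j s_i\varpi_i}$, $n_{w_j\varpi_i}$, $n_{w_j s_i\varpi_i}$ are obtained from $m_{\varpi_i}$, $m_{s_i\varpi_i}$, and their duals by the action of $T_{w_j}^{\pm 1}$ without extra $v$-corrections. Translating the identity of part (1) under $T_{w_1}^{-1}\otimes T_{w_2}^{-1}$ and using that $T_w$ intertwines the relevant coproducts (modulo $v$-scalars absorbed by the length hypothesis) yields part (2). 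The main obstacle is the careful bookkeeping of $v$-scalars: one must pin down exactly the coefficient $-v_i^{-1}$ (respectively $-v_i$) in the identity, which depends on the chosen normalization of the embedding $V(\gamma_i)\hookrightarrow V(\varpi_i)^{\otimes 2}$ and on the sign/power conventions for the minors $\Delta_{\mu,\mu'}$; the length hypothesis in part (2) is essential precisely to prevent the appearance of additional compensating $v_i$-powers when transporting via the braid operators.
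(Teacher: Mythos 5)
The paper itself does not prove this proposition; it is cited from \cite[Proposition 3.2]{GeissLeclercSchroeer11}, so there is no in-paper argument to compare against, and your proposal must stand on its own.

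Part (1) is essentially sound, modulo one inaccuracy. The claimed decomposition $V(\varpi_i)\otimes V(\varpi_i)\cong V(2\varpi_i)\oplus V(\gamma_i)$ is false for a general Kac--Moody $\mathfrak{g}$ (the tensor square typically has further irreducible summands), but what you actually use is weaker and correct: the weight-$\gamma_i$ subspace of the tensor square is spanned by $m_{\varpi_i}\otimes m_{s_i\varpi_i}$ and $m_{s_i\varpi_i}\otimes m_{\varpi_i}$, and $V(\gamma_i)$ occurs there with multiplicity one, so $m_{\gamma_i}$ is the unique (up to scalar) element of that plane annihilated by $e_i$. The $\Delta_+$ calculation giving $m_{\gamma_i}\propto m_{\varpi_i}\otimes m_{s_i\varpi_i}-v_i\,m_{s_i\varpi_i}\otimes m_{\varpi_i}$ is correct, and matching the resulting matrix coefficient against the products of minors (together with the dual computation for $n_{\gamma_i}$ and care with normalization) yields the first identity; the second comes from the $\Delta_-$ version or from the $v$-commutation of $\Delta_{s_i\varpi_i,s_i\varpi_i}$ and $\Delta_{\varpi_i,\varpi_i}$.

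Part (2) has a genuine gap. You propose to transport (1) to (2) by $T_{w_1}$, $T_{w_2}$, asserting that ``$T_w$ intertwines the relevant coproducts (modulo $v$-scalars absorbed by the length hypothesis).'' This is not true: the compatibility between $T_i$ and $\Delta$ is a conjugation by a rank-one quasi-$R$-matrix (an infinite sum built from $e_i^{(n)}\otimes f_i^{(n)}$), not a scalar twist, so passing the identity through $T_{w_1}\otimes T_{w_2}$ does not simply produce the displayed identity with shifted indices. It is plausible that on the particular extremal-weight tensors occurring here the correction collapses to a $v$-power --- extremal vectors are killed by $e_i$ or $f_i$, so most summands of the quasi-$R$-matrix die --- but establishing that collapse is exactly the nontrivial content, and the length hypothesis alone does not deliver it. The route actually taken by Gei\ss--Leclerc--Schr\"oer (following Berenstein--Zelevinsky) avoids the braid translation: it works directly with the coproduct expansions of the extremal weight vectors inside Demazure modules, where the length conditions guarantee $m_{w_js_i\varpi_i}=f_im_{w_j\varpi_i}$ and reduce the computation to the $\mathfrak{sl}_2$-calculation of (1). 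To salvage your approach, you would need to carry out the quasi-$R$-matrix bookkeeping explicitly rather than assert it.
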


We note that this relation does not depend on a choice of coproduct
$\Delta_{+}$ or $\Delta_{-}$.
\subsection{Quantum $T$-systems for unipotent quantum minors}
Let $A_{v}(\mfr{n}_{\pm})$ be the graded dual of $\Uv^{\pm}(\mfr{g})$
with respect to Kashiwara's bilinear form. We define the product $\rpm^{*}\colon A_{v}(\mfr{n}_{\pm})\otimes_{\Qv} A_{v}(\mfr{n}_{\pm})\to A_{v}(\mfr{n}_{\pm})$ by
\[\braket{r^{*}_{\pm}(\psi_{1}\otimes \psi_{2}),
  x}=\braket{\psi_{1}\otimes\psi_{2}, \rpm(x)}.\]
Let $\rho_{\pm}\colon A_{v}(\mfr{g}) \to A_{v}(\mfr{n}_{\pm})$ be the restriction linear homomorphism which is defined by
$\braket{\rho_{\pm}(\psi), x}=\braket{\psi, x}$ for $x\in \Uv^{\pm}(\mfr{g})$.

We have the following twisting formula:
\begin{Lem}\label{lem:rtwist}
Let $\psi_{1}\in A_{v}(\mfr{g})_{\nu_{1}, \mu_{1}}$ and  $\psi_{2}\in A_{v}(\mfr{g})_{\nu_{2}, \mu_{2}}$.
Then we have
\begin{align}
\rho_{\pm}(\Delta^{*}_{\pm}(\psi_{1}\otimes \psi_{2}))=v^{\pm(\nu_{2}-\mu_{2}, \mu_{1})}r^{*}_{\pm}(\rho_{\pm}(\psi_{1})\otimes \rho_{\pm}(\psi_{2})).
\end{align}
\end{Lem}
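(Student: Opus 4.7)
The plan is to test the identity against an arbitrary homogeneous element $x \in \Uv^{\pm}(\mfr{g})$. By the definition of $\rho_{\pm}$ and of $\Delta_{\pm}^{*}$ as the transpose of the coproduct, the left-hand side paired with $x$ is
\[
\langle \rho_{\pm}(\Delta_{\pm}^{*}(\psi_{1} \otimes \psi_{2})), x \rangle
= \langle \Delta_{\pm}^{*}(\psi_{1} \otimes \psi_{2}), x \rangle
= \langle \psi_{1} \otimes \psi_{2}, \Delta_{\pm}(x) \rangle.
\]
The crucial input is the lemma in \S\ref{sec:coproduct} which relates the two coproducts by $\Delta_{\pm}(x) = \sum x_{(1)} t_{\pm \wt(x_{(2)})} \otimes x_{(2)}$, where $r_{\pm}(x) = \sum x_{(1)} \otimes x_{(2)}$. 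Substituting yields
\[
\sum \langle \psi_{1}, x_{(1)} t_{\pm \wt(x_{(2)})} \rangle \, \langle \psi_{2}, x_{(2)} \rangle.
\]

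Next I would exploit the bi-weight homogeneity of $\psi_{1}$ and $\psi_{2}$. For $\psi_{1} \in A_{v}(\mfr{g})_{\nu_{1}, \mu_{1}}$, the Peter-Weyl decomposition writes $\psi_{1} = \Phi_{\lambda}(n \otimes m)$ with $m$ of weight $\mu_{1}$, so $t_{\xi} m = v^{(\xi, \mu_{1})} m$ and hence
\[
\langle \psi_{1}, x_{(1)} t_{\pm \wt(x_{(2)})} \rangle = v^{\pm (\wt(x_{(2)}), \mu_{1})} \langle \psi_{1}, x_{(1)} \rangle.
\]
Similarly, $\langle \psi_{2}, x_{(2)} \rangle$ vanishes unless $\wt(x_{(2)}) = \nu_{2} - \mu_{2}$, since $\psi_{2}$ is supported on weight $\nu_{2} - \mu_{2}$ of $\Uv(\mfr{g})$ by its bi-weight. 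Consequently the scalar factors out cleanly as $v^{\pm(\nu_{2} - \mu_{2}, \mu_{1})}$.

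Finally, since $r_{\pm}(x)$ lies in $\Uv^{\pm}(\mfr{g}) \otimes \Uv^{\pm}(\mfr{g})$, the residual sum $\sum \langle \psi_{1}, x_{(1)} \rangle \langle \psi_{2}, x_{(2)} \rangle$ equals $\langle \rho_{\pm}(\psi_{1}) \otimes \rho_{\pm}(\psi_{2}), r_{\pm}(x) \rangle$, which by the very definition of $r_{\pm}^{*}$ is $\langle r_{\pm}^{*}(\rho_{\pm}(\psi_{1}) \otimes \rho_{\pm}(\psi_{2})), x \rangle$. Since $x$ was arbitrary in $\Uv^{\pm}(\mfr{g})$, the identity follows. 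The calculation is short; the only real care needed is tracking the sign in the exponent and correctly identifying that $\mu_{i}$ (not $\nu_{i}$) is the weight contributed by the Peter-Weyl factor on which $t_{\xi}$ acts in the right pairing, which is where the asymmetric exponent $(\nu_{2}-\mu_{2},\mu_{1})$ comes from.
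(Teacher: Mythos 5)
Your proof is correct and is essentially the same calculation the paper gives: pair against a test element $x$, substitute the lemma $\Delta_{\pm}(x)=\sum x_{(1)}t_{\pm\wt(x_{(2)})}\otimes x_{(2)}$, and extract the scalar $v^{\pm(\nu_{2}-\mu_{2},\mu_{1})}$ from the bi-weight homogeneity (the paper moves $t_{\pm\wt(x_{(2)})}$ onto $\psi_{1}$ as a bimodule action while you keep it on $x_{(1)}$, but these are the same step). The explicit observation that $\langle\psi_{2},x_{(2)}\rangle$ vanishes unless $\wt(x_{(2)})=\nu_{2}-\mu_{2}$ is exactly the ``because $\wt x_{(2)}+\mu_{2}=\nu_{2}$'' in the paper.
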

\begin{proof}
This can be proved by the following straightforward calculation.
\begin{align*}
\braket{\Delta^{*}_{\pm}(\psi_{1}\otimes \psi_{2}), x}&=\braket{\psi_{1}\otimes \psi_{2}, \Delta_\pm(x)}~(x\in \Uv^{\pm}(\mfr{g}))\\
&=\sum \braket{\psi_{1}\otimes \psi_{2}, x_{(1)}t_{\pm\wt(x_{(2)})}\otimes x_{(2)}}\\
&=\sum \braket{(t_{\pm\wt(x_{(2)})}\psi_{1})\otimes \psi_{2}, x_{(1)}\otimes x_{(2)}} \\
&=v^{\pm(\nu_{2}-\mu_{2}, \mu_{1})}\braket{r^{*}_\pm(\psi_{1}\otimes
  \psi_{2}), x}~(\mathrm{because}\ \wt x_{(2)}+\mu_{2}=\nu_{2}).
\end{align*}
\end{proof}

\begin{NB}
By the normalization between Kashiwara's form and Lusztig's form,
we have 
\[(x'x'',y)_{\pm}=(x'\otimes x'',\rpm(y))_{\pm}\]
for $x',x'',y\in \Uv^{\pm}(\mathfrak{g})$ and 
the linear isomorphisms $\psi_{\pm}\colon \Uv^{\pm}(\mathfrak{g})\to A_{v}(\mathfrak{n}_{\pm})$ defined by $\langle\psi_{\pm}(x),y\rangle=(x,y)_{\pm}$ give algebra isomorphisms between $\Uv^{\pm}(\mathfrak{g})$ and 
$(A_{v}^{\pm}(\mathfrak{n}_{\pm}),r^{*}_{\pm})$.
Hence it means $(\Uv^{\pm}(\mathfrak{g}), (~,~)_{\pm})$ form self-dual
(twisted) bialgebras.
\end{NB}
Let $\psi_{\pm}\colon  A_{v}(\mathfrak{n}_{\pm})\to \Uv^{\pm}(\mathfrak{g})$ be the $\Qv$-linear isomorphism defined by 
$(\psi_\pm(f),x)_\pm=\langle f,x\rangle$.
It can be shown that $\psi_\pm$ are $\mathbb{Q}(v)$-algebra isomorphisms which intertwine $r_{\pm}^{*}$ and the usual product of $\mathbf{U}_v^\pm(\mathfrak{g})$.
For any given $w\in W$, the associated \emph{quantum coordinate ring} $A_{v}(\mathfrak{n}_+(w))$
is defined to be the subalgebra $(\psi_+)^{-1}(\Uv^+(w))$ of
$A_{v}(\mathfrak{n}_+)$.

\subsubsection*{Unipotent quantum minors}

\begin{Def}
We define the quantum unipotent minor $D_{w_{1}\lambda,w_{2}\lambda}^{\pm}$
on $\mathbf{U}_{v}^{\pm}(\mathfrak{g})$ by the following formula.
\[
(D_{w_{1}\lambda,w_{2}\lambda}^{\pm},x)_{\pm}=(m_{w_{1}\lambda},xm_{w_{2}\lambda})_{\lambda}=(\varphi(x)m_{w_{1}\lambda},m_{w_{2}\lambda})_{\lambda},
\]
where $m_{w_{1}\lambda}$ (resp. $m_{w_{2}\lambda}$) is the \emph{extremal weight
vector} of weight $w_1\lambda$ (resp. $w_2\lambda$).
\end{Def}

\begin{Rem}
We have $D_{w_{1}\lambda,w_{2}\lambda}^{\pm}=\psi_{\pm}\rho_{\pm}(\Delta_{w_{1}\lambda,w_{2}\lambda}^{\pm})$.
\end{Rem}

By construction we have
$\sigma^{\pm}D_{w_{1}\lambda,w_{2}\lambda}^{\pm}=D_{w_{1}\lambda,w_{2}\lambda}^{\pm}$. We
also have, for any $x\in\mathbf{U}_{v}^{-}(\mathfrak{g})$, 
\begin{align*}
(\Omega(D_{w_{1}\lambda,w_{2}\lambda}^{+}),x)_{-} & =\overline{(D_{w_{1}\lambda,w_{2}\lambda}^{+},\Omega(x))_{+}}=(\sigma_{+}(D_{w_{1}\lambda,w_{2}\lambda}^{+}),\varphi(x))_{+}\\
 & =(D_{w_{1}\lambda,w_{2}\lambda}^{+},\varphi(x))_{+}=(u_{w_{1}\lambda,}\varphi(x)u_{w_{2}\lambda})_{\lambda}\\
 & =(xu_{w_{1}\lambda},u_{w_{2}\lambda})_{\lambda}=(u_{w_{2}\lambda},xu_{w_{1}\lambda})_{\lambda}=(D_{w_{2}\lambda,w_{1}\lambda}^{-},x)_{-}.
\end{align*}

Since the opposite Demazure module $V^{w}(\lambda)=\Uv^{-}(\mfr{g})m_{w\lambda}$ is compatible with the canonical basis (see \cite[Proposition 4.1 (i)]{Kas:Demazure})
and $m_{w\lambda}$ is also a dual canonical basis element, we have
$D^-_{w_{1}\lambda, w_{2}\lambda}\in \mbf{B}_-^{\mathrm{up}}\cup
\{0\}$. It follows that $D^+_{w_{2}\lambda, w_{1}\lambda}$ is
contained in $\mbf{B}_+^{\mathrm{up}}\cup
\{0\}$

Fix $w_{1}, w_{2}\in W$ and $i\in I$ with $\ell(w_{j}s_{i})=\ell(w_{j})+1$.
The following just follows from \lemref{lem:rtwist}.
\begin{Lem}\label{lem:deltaMinor}
	We have 
\begin{NB}
	\begin{align*}
	\rho_-(\Delta_{w_{1}s_{i}\varpi_{i}, w_{2}s_{i}\varpi_{i}}\Delta_{w_{1}\varpi_{i}, w_{2}\varpi_{i}})
	=&v^{-A}D^-_{w_{1}s_{i}\varpi_{i}, w_{2}s_{i}\varpi_{i}}D^-_{w_{1}\varpi_{i}, w_{2}\varpi_{i}}, \\
	\rho_-(\Delta_{w_{1}\varpi_{i}, w_{2}s_{i}\varpi_{i}}\Delta_{w_{1}s_{i}\varpi_{i}, w_{2}\varpi_{i}})
	=&v^{-B}D^-_{w_{1}\varpi_{i}, w_{2}s_{i}\varpi_{i}}D^-_{w_{1}s_{i}\varpi_{i}, w_{2}\varpi_{i}},
	\end{align*}
	where 
	\begin{align*}
	A&=(w_{1}\varpi_{i}-w_{2}\varpi_{i}, w_{2}s_{i}\varpi_{i}), \\
	B&=(w_{1}s_{i}\varpi_{i}-w_{2}\varpi_{i}, w_{2}s_{i}\varpi_{i}).
	\end{align*}
\end{NB}
\begin{align*}
\psi_{+}\rho_{+}(\Delta_{+}^{*}(\Delta_{w_{1}s_{i}\varpi_{i},w_{2}s_{i}\varpi_{i}}\otimes\Delta_{w_{1}\varpi_{i},w_{2}\varpi_{i}})) & =v^{A}D_{w_{1}s_{i}\varpi_{i},w_{2}s_{i}\varpi_{i}}^{+}D_{w_{1}\varpi_{i},w_{2}\varpi_{i}}^{+},\\
\psi_{+}\rho_{+}(\Delta_{+}^{*}(\Delta_{w_{1}s_{i}\varpi_{i},w_{2}\varpi_{i}}\otimes\Delta_{w_{1}\varpi_{i},w_{2}s_{i}\varpi_{i}})) & =v^{B}D_{w_{1}s_{i}\varpi_{i},w_{2}\varpi_{i}}^{+}D_{w_{1}\varpi_{i},w_{2}s_{i}\varpi_{i}}^{+},
\end{align*}
where 
\begin{align*}
A&=(w_{1}\varpi_{i}-w_{2}\varpi_{i},w_{2}s_{i}\varpi_{i}),\\
B&=(w_{1}\varpi_{i}-w_{2}s_{i}\varpi_{i},w_{2}\varpi_{i}).
\end{align*}
\end{Lem}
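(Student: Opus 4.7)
The plan is to reduce the lemma directly to Lemma~\ref{lem:rtwist}. Recall that by the definition given just before the lemma statement, we have $D^{+}_{w_{1}\lambda,w_{2}\lambda}=\psi_{+}\rho_{+}(\Delta_{w_{1}\lambda,w_{2}\lambda})$, and that $\psi_{+}\colon (A_{v}(\mathfrak{n}_{+}),r_{+}^{*})\to \mathbf{U}_{v}^{+}(\mathfrak{g})$ is an algebra isomorphism intertwining $r_{+}^{*}$ with the usual product. So once we apply Lemma~\ref{lem:rtwist} to convert $\rho_{+}\circ \Delta_{+}^{*}$ into $r_{+}^{*}\circ(\rho_{+}\otimes\rho_{+})$ up to a power of $v$, applying $\psi_{+}$ will turn $r_{+}^{*}$ into ordinary multiplication and the two factors will become the desired $D^{+}$'s.

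First I would read off the bi-weights. The generalized quantum minor $\Delta_{w_{1}\lambda,w_{2}\lambda}$ lies in $A_{v}(\mathfrak{g})_{w_{1}\lambda,w_{2}\lambda}$, so for the first identity we set
\[
\psi_{1}=\Delta_{w_{1}s_{i}\varpi_{i},w_{2}s_{i}\varpi_{i}},\qquad \psi_{2}=\Delta_{w_{1}\varpi_{i},w_{2}\varpi_{i}},
\]
with $(\nu_{1},\mu_{1})=(w_{1}s_{i}\varpi_{i},w_{2}s_{i}\varpi_{i})$ and $(\nu_{2},\mu_{2})=(w_{1}\varpi_{i},w_{2}\varpi_{i})$. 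Then Lemma~\ref{lem:rtwist} gives
\[
\rho_{+}\bigl(\Delta_{+}^{*}(\psi_{1}\otimes\psi_{2})\bigr)=v^{(\nu_{2}-\mu_{2},\mu_{1})}\,r_{+}^{*}\bigl(\rho_{+}(\psi_{1})\otimes \rho_{+}(\psi_{2})\bigr),
\]
and the exponent is precisely $A=(w_{1}\varpi_{i}-w_{2}\varpi_{i},w_{2}s_{i}\varpi_{i})$. Applying $\psi_{+}$ on both sides and using that it intertwines $r_{+}^{*}$ with multiplication yields the first formula. For the second identity the calculation is identical, with the bi-weights of the second factor replaced by $(w_{1}\varpi_{i},w_{2}s_{i}\varpi_{i})$, which gives the exponent $B=(w_{1}\varpi_{i}-w_{2}s_{i}\varpi_{i},w_{2}\varpi_{i})$.

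There is no real obstacle here; the only thing to be careful about is matching the sign and ordering conventions between Lemma~\ref{lem:rtwist} (where $(\nu_{2}-\mu_{2},\mu_{1})$ involves the weight of the \emph{second} factor and the co-weight of the \emph{first}) and the explicit formula for $A,B$ stated in the lemma. So the whole proof reduces to a one-line invocation of the twisting formula together with the algebra-homomorphism property of $\psi_{+}$, which is why the statement is flagged as an immediate consequence.
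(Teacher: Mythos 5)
Your proof is correct and is exactly what the paper intends: the paper gives no proof beyond remarking that the lemma "just follows from Lemma~\ref{lem:rtwist}," and you have simply filled in that one-line reduction by reading off the bi-weights, specializing the twisting formula, and applying the algebra-isomorphism property of $\psi_{+}$. The exponents $A$ and $B$ match $(\nu_{2}-\mu_{2},\mu_{1})$ for the two choices of $(\psi_{1},\psi_{2})$ exactly as you compute.
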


\subsubsection*{Lusztig's parametrization}

Fix $w\in W$ with $\ell(w)=\ell$ and a reduced word $\overrightarrow{w}=(i_{1}, \dots, i_{\ell})\in R(w)$.
For $1\leq a\leq b\leq \ell$ with $i_{a}=i_{b}=i$, we define the vector $\mbf{c}[a, b]\in \mbb{Z}_{\geq 0}^{\ell}$ by
\[\mbf{c}[a, b]_{k}=\begin{cases}1 & \text{if $a\leq k\leq
    b$ and $i_{k}=i_{a}=i_{b}$}, \\ 0 & \text{otherwise},\end{cases}\]
    for $1\leq k\leq \ell$.
By convention, we define $\mbf{c}[0,0]=0$, $\mbf{c}[0,b]=\mbf{c}[1,b]$ if $b\geq 1$, and
$\mbf{c}[a,b]=0$ if $a>b$. We set $\wt[a, b]=-\wt\Gup(\mbf{c}[a, b], \overrightarrow{w})=\sum_{a\leq k\leq b \text{~with~}i_{a}=i_{k}=i_{b}}\beta_{k}$.
For $1\leq k\leq \ell$, we have 
$\wt[k_{\min},k]=\varpi_{i_{k}}-s_{i_{1}}\cdots
s_{i_{k}}\varpi_{i_{k}}$.

As in \cite[Section 13]{GeissLeclercSchroeer10}, for any $1\leq k\leq \ell$ and
$j\in I$, we define
\begin{align}
	k(j)&=\#\{1\leq s\leq k-1; i_{s}=j\},\\
	k_{\min}&=\min\{1\leq s\leq \ell; i_{s}=i_{k}\},\\
	k_{\max}&=\max\{1\leq s\leq \ell; i_{s}=i_{k}\},\\
k^{-}&=\max\{\{1\leq s\leq k-1; i_{s}=i_{k}\}\cup\{0\}\},\\
k^{+}&=\min\{\{k+1\leq s\leq \ell; i_{s}=i_{k}\}\cup\{\ell+1\}\}.
\end{align}

\begin{Prop}
For $1\leq a\leq b\leq \ell$ with $i_{a}=i_{b}=i$,
we have 
\begin{align}
\Gup(\mbf{c}[a, b], \overrightarrow{w})
&=D^-_{s_{i_{1}}\dots s_{i_{b}}\varpi_{i}, s_{i_{1}}\cdots s_{i_{a^{-}}}\varpi_{i}},\\
&=D^-_{s_{i_{1}}\dots s_{i_{b^{+}-1}}\varpi_{i}, s_{i_{1}}\cdots s_{i_{a-1}}\varpi_{i}}.
\end{align}
\end{Prop}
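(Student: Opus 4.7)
The strategy is to establish both displayed equalities by first recognizing that the two right-hand sides coincide as quantum minors, then verifying that the weight agrees, and finally identifying the common quantum minor with $\Gup(\mbf{c}[a,b], \ow)$ by induction using the quantum $T$-system.

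\medskip

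For $a^{-} < s \leq a-1$ and for $b < s \leq b^{+}-1$, the definitions of $a^{-}$ and $b^{+}$ force $i_{s} \neq i_{a} = i_{b} = i$, so $s_{i_{s}}\varpi_{i} = \varpi_{i}$. Hence
\[
s_{i_{1}}\cdots s_{i_{a-1}}\varpi_{i} \;=\; s_{i_{1}}\cdots s_{i_{a^{-}}}\varpi_{i}, \qquad s_{i_{1}}\cdots s_{i_{b^{+}-1}}\varpi_{i} \;=\; s_{i_{1}}\cdots s_{i_{b}}\varpi_{i},
\]
so the two displayed quantum minors are literally equal, and it suffices to prove one of the two identities. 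For the weight, I would expand
\[
\wt[a,b] \;=\; \sum_{\substack{a\leq k\leq b\\ i_{k}=i}}\beta_{k} \;=\; \sum_{\substack{a\leq k\leq b\\ i_{k}=i}} s_{i_{1}}\cdots s_{i_{k-1}}(\alpha_{i}),
\]
substitute $\alpha_{i} = \varpi_{i} - s_{i}\varpi_{i}$, and use again that reflections $s_{j}$ with $j \neq i$ fix $\varpi_{i}$. The sum then telescopes to $s_{i_{1}}\cdots s_{i_{a^{-}}}\varpi_{i} - s_{i_{1}}\cdots s_{i_{b}}\varpi_{i}$, which is exactly the weight of the quantum minor $D_{s_{i_{1}}\cdots s_{i_{b}}\varpi_{i},\, s_{i_{1}}\cdots s_{i_{a^{-}}}\varpi_{i}}^{-}$.

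\medskip

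By the paragraph preceding the proposition, $D_{w_{1}\lambda, w_{2}\lambda}^{-}$ lies in $\mbf{B}_{-}^{\mathrm{up}} \cup \{0\}$, so I may write $D^{-} = \Gup(b')$ for a unique $b' \in \Binfty$ of the weight just computed. To show $b' = b(\mbf{c}[a,b], \ow)$ I would induct on the number $p$ of indices $k \in [a,b]$ with $i_{k} = i$. In the base case $p = 1$ (where $a = b$), the vector $\mbf{c}[a,a]$ is the $a$-th unit vector and $\Fup(\mbf{c}[a,a],\ow) = F^{\mathrm{up}}(\beta_{a})$; using the factorization $m_{s_{i_{1}}\cdots s_{i_{a}}\varpi_{i}} = f_{i_{1}}^{(n_{1})}\cdots f_{i_{a}}^{(n_{a})} m_{\varpi_{i}}$ together with the dual PBW-triangularity of Kimura recalled above, one checks directly that $b' = b(\mbf{c}[a,a], \ow)$. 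For the inductive step, apply Proposition~\ref{prop:DeltaTsys}(2) with $i = i_{a} = i_{b}$, $w_{1} = s_{i_{1}}\cdots s_{i_{b^{-}}}$, $w_{2} = s_{i_{1}}\cdots s_{i_{a^{-}}}$; the weight-preservation observation above identifies $w_{j}s_{i}\varpi_{i}$ with $s_{i_{1}}\cdots s_{i_{b}}\varpi_{i}$ and $s_{i_{1}}\cdots s_{i_{a^{-}}}\varpi_{i}$. Transport this identity from $A_{v}(\mfr{g})$ to $\mbf{U}_{v}^{-}(\mfr{g})$ via the $D^{-}$-analogue of \lemref{lem:deltaMinor}; this yields a quadratic identity in which three of the four minors are indexed by proper sub-intervals of $[a,b]$. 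By induction those three coincide with the corresponding $\Gup(\mbf{c}[\cdot,\cdot],\ow)$; comparing with the parallel exchange (multiplicativity) relation satisfied by the $\Gup(\mbf{c}[\cdot,\cdot],\ow)$ of Section~4.4 then forces $D^{-} = \Gup(\mbf{c}[a,b], \ow)$, closing the induction.

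\medskip

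The main obstacle is bookkeeping of $v$-powers. Lemma~\ref{lem:deltaMinor} introduces prefactors $v^{A}, v^{B}$ depending on the weights, and the multiplicativity relation $\Gup(b_{1})\Gup(b_{2}) = v^{N}\Gup(b_{1}\circledast b_{2})$ introduces its own $v$-twist; these must be shown to cancel cleanly so that the $T$-system identities on the two sides match on the nose. A subsidiary obstacle is to verify the compatibility $\mbf{c}[a,b^{-}]\bot\mbf{c}[a^{+},b]$ (and its companion) needed to invoke multiplicativity at each step; this will rest on the bar-invariance $\sigma_{-}D^{-} = D^{-}$ recorded above, which anchors all $v$-exponents in the exchange relation.
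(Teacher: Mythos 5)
Your route is genuinely different from the paper's. The paper's proof is two lines: it invokes Kimura \cite[Proposition~6.3]{Kimura10} for the case of the interval $[1,m]$ in the short reduced word $(i_a,\ldots,i_b)$, and then Kimura \cite[Theorem~4.20]{Kimura10} (the compatibility of the Saito/braid-group operators $T_i$ with the dual canonical basis) to translate that identity to the interval $[a,b]$ in the full reduced word $\ow$. You instead propose a $T$-system induction, closer in spirit to \cite[Proposition~7.1]{GeissLeclercSchroeer11}, which the paper cites only parenthetically as an alternative. Your preliminary reduction (the two stated minors coincide since the intermediate reflections fix $\varpi_i$) and the telescoping weight computation are both correct.

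However, the inductive step as written does not go through. You take $w_1 = s_{i_1}\cdots s_{i_{b^-}}$, $w_2 = s_{i_1}\cdots s_{i_{a^-}}$; but $i_{b^-}=i_{a^-}=i$, so $w_1 s_i = s_{i_1}\cdots s_{i_{b^--1}}$ has \emph{smaller} length, violating the hypothesis $\ell(w_j s_i)=\ell(w_j)+1$ of Proposition~\ref{prop:DeltaTsys}(2). Consequently $w_1 s_i\varpi_i$ is $s_{i_1}\cdots s_{i_{b^--1}}\varpi_i$, \emph{not} $s_{i_1}\cdots s_{i_b}\varpi_i$ (the intermediate reflections fix $\varpi_i$ but not $s_i\varpi_i$), and the minors produced are not the ones you want. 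The correct choice is $w_1 = s_{i_1}\cdots s_{i_{b-1}}$ and $w_2 = s_{i_1}\cdots s_{i_{a-1}}$; with this fix the $T$-system involves $D^{-}[a,b]$ together with $D^{-}[a^+,b]$, $D^{-}[a,b^-]$, $D^{-}[a^+,b^-]$ and a coefficient factor, all of which have strictly fewer occurrences of $i$ inside, so the induction does close. Two further gaps remain. First, your final ``compare with the parallel exchange relation satisfied by $\Gup(\mbf{c}[\cdot,\cdot],\ow)$'' is circular if it means Proposition~\ref{prop:TsysMinor}: that proposition is phrased for $B_+^{\mathrm{up}}(\mbf{c}[\cdot,\cdot],\ow)$ only \emph{because} it already uses the identification you are proving, and likewise the compatibility $\mbf{c}[a,b^-]\bot\mbf{c}[a^+,b]$ is not established a priori. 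A non-circular version must instead pin down the crystal index $b'$ by comparing leading dual-PBW terms via Kimura's unitriangularity $\Gup(b)\in \Fup(\mbf{c})+\sum_{\mbf{c}'\wLess\mbf{c}}v\Z[v]\Fup(\mbf{c}')$. Second, the base case $p=1$, $a=b$ with $a^->0$ is where the real content lives: showing $D^-_{s_{i_1}\cdots s_{i_a}\varpi_i,\,s_{i_1}\cdots s_{i_{a^-}}\varpi_i}=\Gup(\mbf{c}[a,a],\ow)$ for general $a$ is not something ``one checks directly''; it is essentially Kimura's Proposition~6.3 (the case $a^-=0$) combined with Theorem~4.20 (to shift past the prefix), i.e.\ precisely the two ingredients of the paper's proof. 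So the $T$-system induction does not actually avoid them; it defers them to the base case.
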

\begin{proof}
For a reduced word $(i_{a}, \cdots, i_{b})$ of $s_{i_{a}}\dots s_{i_{b}}$, we have
\[\Gup(\mbf{c}[1,b-a+1], (i_{a}, \cdots, i_{b}))=D^-_{s_{i_{a}}\cdots s_{i_{b}}\varpi_{i}, \varpi_{i}},\]
by \cite[Proposition 6.3]{Kimura10}.

By applying \cite[Theorem 4.20]{Kimura10} (see also \cite[Proposition 7.1]{GeissLeclercSchroeer11}), we get the result.
\end{proof}
\begin{Rem}
It follows that $B^{\mathrm{up}}_+(\mbf{c}[a, b], \overrightarrow{w})$
equals $D^+_{s_{i_{1}}\dots s_{i_{a^-}}\varpi_{i}, s_{i_{1}}\cdots
  s_{i_{b}}\varpi_{i}}$, which is denoted by $D(a^-,b)$ in \cite{GeissLeclercSchroeer11}.
\end{Rem}

\begin{Prop}[{\cite[Theorem
  6.20]{Kimura10}}]\label{prop:initialMinors}
$\{\Gup(\mbf{c}[k_{\min},k]; \overrightarrow{w})\}_{1\leq k\leq \ell}$ forms a strongly compatible family.
\end{Prop}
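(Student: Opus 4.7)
The plan is to prove this by induction on $\ell$, using the quantum $T$-systems for unipotent minors as the main tool, in the spirit of the strong compatibility arguments for initial seeds in Geiss--Leclerc--Schr\"oer's work.

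First, I would apply the identification from the immediately preceding proposition (taking $a = k_{\min}$, $b = k$) to recognize
\[
\Gup(\mbf{c}[k_{\min}, k]; \ow) = D^{-}_{s_{i_{1}} \cdots s_{i_{k}} \varpi_{i_{k}},\, \varpi_{i_{k}}}.
\]
So the claim is about strong compatibility of the family of quantum unipotent minors attached to the ``initial'' chamber weights $s_{i_{1}} \cdots s_{i_{k}} \varpi_{i_{k}}$ along $\ow$. A weight-bookkeeping calculation using Kashiwara's bilinear form then shows that any two of these minors $q$-commute, \ie $D_{1} D_{2} = v^{N} D_{2} D_{1}$ for an integer $N$ depending only on the two relevant weights; this reduces the problem to showing that for each multi-index $\mathbf{n} = (n_{k})_{1 \le k \le \ell} \in \mathbb{Z}_{\ge 0}^{\ell}$, a suitable power of $v$ times the (already well-defined, thanks to $q$-commutation) monomial $\prod_{k} \Gup(\mbf{c}[k_{\min}, k])^{n_{k}}$ lies in $\mathbf{B}_{-}^{\up}$.

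Second, I would run an induction whose outer parameter is $\ell$ and whose inner parameter is the total degree $\sum_k n_k$. The inductive lever is the quantum $T$-system: Proposition \ref{prop:DeltaTsys}, applied with $w_{1} = s_{i_{1}} \cdots s_{i_{k^{-}-1}}$, $w_{2} = e$, and $i = i_{k}$, and then pushed to the unipotent setting through Lemma \ref{lem:deltaMinor}, produces a relation of the shape
\[
D^{-}_{s_{i_{1}} \cdots s_{i_{k}} \varpi_{i}, \varpi_{i}} \cdot D^{-}_{s_{i_{1}} \cdots s_{i_{k^{-}}} \varpi_{i}, \varpi_{i}} \;=\; v^{\alpha} D^{-}_{s_{i_{1}} \cdots s_{i_{k^{-}}} \gamma_{i}, \gamma_{i}} \;+\; v^{\beta}\, (\text{product of two other unipotent minors}),
\]
where the two terms on the right-hand side have strictly smaller degree once the ``$\gamma_{i}$-minor'' is split back into a product using the first identity of Proposition \ref{prop:DeltaTsys}. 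Iterating this relation lets one rewrite any product in the family in terms of elements attached to shorter initial subwords of $\ow$, to which the inductive hypothesis applies.

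The main obstacle will be the bookkeeping of the $v$-exponents and showing that, after clearing the $T$-system denominators, the remainder term is actually absorbed. Concretely, one needs to check that the extra summand $v^{\beta}(\cdots)$ in the $T$-system contributes only lower-order terms with respect to the lexicographic order $\wLess$, and this is exactly the setting where the upper-unitriangularity property
\[
G^{\up}(b(\mathbf{c}, \ow)) - F^{\up}(\mathbf{c}, \ow) \in \sum_{\mathbf{c}' \wLess \mathbf{c}} v \mathbb{Z}[v]\, F^{\up}(\mathbf{c}', \ow)
\]
together with the dual-bar-invariance of both sides (which forces the normalizing $v$-power to be the unique integer making the right-hand side $\sigma$-fixed) pins the normalized product down to a single dual canonical basis element. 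This last verification -- matching the combinatorially defined $v$-power from the $T$-system against the one coming from the dual-bar fixed-point characterization -- is where the technical heart of the argument lies.
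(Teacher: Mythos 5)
The paper itself gives no proof of Proposition \ref{prop:initialMinors}: it is cited verbatim from \cite[Theorem 6.20]{Kimura10}, where the proof goes through crystal basis and Demazure module theory (Kashiwara's criterion, in terms of $\vep_i$ and its $*$-twisted analogue, for when the product of two dual canonical basis elements stays, up to a $v$-power, in the dual canonical basis). Your attempt via quantum $T$-systems is a genuinely different route, but it has a gap that I do not see how to repair: the $T$-system does not stay inside the family $\{\Gup(\mbf{c}[k_{\min},k];\ow)\}$. First, a computational bug: with your stated choices $w_1=s_{i_1}\cdots s_{i_{k^--1}}$, $w_2=e$, $i=i_k$, the minors appearing in Proposition \ref{prop:DeltaTsys} involve $w_1 s_i\varpi_{i} = s_{i_1}\cdots s_{i_{k^-}}\varpi_{i}$ (not $s_{i_1}\cdots s_{i_k}\varpi_{i}$), so the element $D^-_{s_{i_1}\cdots s_{i_k}\varpi_i,\varpi_i}$ that you want to reduce never actually occurs. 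Fixing this to $w_1=s_{i_1}\cdots s_{i_{k-1}}$, the relation does produce the desired initial minor, but it also unavoidably brings in the off-diagonal minors $D_{s_{i_1}\cdots s_{i_k}\varpi_i,\,s_i\varpi_i}$ and $D_{s_{i_1}\cdots s_{i_{k^-}}\varpi_i,\,s_i\varpi_i}$, whose right parameter is $s_i\varpi_i\neq\varpi_i$; these are not of the form $\Gup(\mbf{c}[m_{\min},m];\ow)$. So the $T$-system never expresses a product of initial minors as a $\Z[v^\pm]$-combination of products of lower-degree initial minors, and your inner induction on total degree does not close. Specializing Proposition \ref{prop:TsysMinor} directly to $a=k_{\min}$, $a^-=0$ fares no better: the two sides of the identity then contain the same pair of minors in opposite orders, so one obtains only a quasi-commutation relation, not a degree reduction.

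The structural reason the approach cannot work is that the $T$-system \emph{is} the quantum exchange relation of the cluster structure, and writing it down already presupposes that the initial minors form a $q$-commuting compatible seed --- precisely the content of Proposition \ref{prop:initialMinors}. This is why the paper feeds the proposition \emph{into} the proof of Proposition \ref{prop:isoAlg} rather than deriving it from the $T$-systems. Two further claims in your sketch also need independent proof rather than assertion: the $q$-commutation of the initial minors (your ``weight bookkeeping'') is equivalent to the well-definedness of the $\Lambda$-matrix of the seed and is part of what Kimura's theorem establishes, not a preliminary; and your ``upper-unitriangularity plus dual-bar-invariance'' step needs the off-leading coefficients in the dual PBW expansion of the normalized product to lie in $v\Z[v]$, which is the substance of the compatibility statement and does not follow formally from $\sigma$-invariance together with a triangular transition matrix over $\Z[v^{\pm}]$.
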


\subsubsection*{$T$-system}

  \begin{Prop} For any $i\in I$ and any $w_1,w_2\in W$ such that
    $l(w_js_i)=l(w_j)+1$ for $j=1$ and $2$, we have
    \begin{align}\label{eq:negativeTsystem}
      v^{A}D^+_{w_1s_i \varpi_i,w_2s_i
        \varpi_i}D^+_{w_1\varpi_i,w_2\varpi_i}=v^{-1+B}D^+_{w_1s_i\varpi_i,w_2\varpi_i}D^+_{w_1\varpi_i,w_2s_i\varpi_i}+D^-_{w_1\gamma_i,w_2\gamma_i}.
\end{align}
  \end{Prop}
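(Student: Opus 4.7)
The strategy is to deduce this relation from the classical $T$-system for full quantum minors in Proposition~\ref{prop:DeltaTsys}(2) by passing through the restriction map $\rho_{+}\colon A_{v}(\mathfrak{g})\to A_{v}(\mathfrak{n}_{+})$ and the $\Qv$-algebra isomorphism $\psi_{+}\colon(A_{v}(\mathfrak{n}_{+}),r_{+}^{*})\iso\mathbf{U}_{v}^{+}(\mathfrak{g})$. By definition, $D^{+}_{w_{1}\lambda,w_{2}\lambda}=\psi_{+}\rho_{+}(\Delta_{w_{1}\lambda,w_{2}\lambda})$, so the unipotent minors are the images of the full minors under this composition.

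First, I would apply $\psi_{+}\rho_{+}$ term-by-term to the identity
\begin{equation*}
\Delta_{w_{1}\gamma_{i},w_{2}\gamma_{i}}=\Delta_{w_{1}s_{i}\varpi_{i},w_{2}s_{i}\varpi_{i}}\Delta_{w_{1}\varpi_{i},w_{2}\varpi_{i}}-v_{i}^{-1}\Delta_{w_{1}s_{i}\varpi_{i},w_{2}\varpi_{i}}\Delta_{w_{1}\varpi_{i},w_{2}s_{i}\varpi_{i}}.
\end{equation*}
Although $\rho_{+}$ is $\Qv$-linear, it is not an algebra map from $(A_{v}(\mathfrak{g}),\Delta_{+}^{*})$ to $(A_{v}(\mathfrak{n}_{+}),r_{+}^{*})$: Lemma~\ref{lem:rtwist} tells us that converting the product on the LHS (the product $\Delta_{+}^{*}$ in $A_{v}(\mathfrak{g})$) into the product $r_{+}^{*}$ in $A_{v}(\mathfrak{n}_{+})$ (which $\psi_{+}$ intertwines with the usual product of $\mathbf{U}_{v}^{+}(\mathfrak{g})$) introduces a scalar factor $v^{(\nu_{2}-\mu_{2},\mu_{1})}$ depending on the weights of the two factors. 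This is precisely the content of Lemma~\ref{lem:deltaMinor}, which hands us explicit exponents: the two products on the RHS get multiplied by $v^{A}$ and $v^{B}$ respectively, with
\begin{equation*}
A=(w_{1}\varpi_{i}-w_{2}\varpi_{i},w_{2}s_{i}\varpi_{i}),\qquad B=(w_{1}\varpi_{i}-w_{2}s_{i}\varpi_{i},w_{2}\varpi_{i}).
\end{equation*}

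Combining these, applying $\psi_{+}\rho_{+}$ to the $T$-system yields
\begin{equation*}
D^{+}_{w_{1}\gamma_{i},w_{2}\gamma_{i}}=v^{A}D^{+}_{w_{1}s_{i}\varpi_{i},w_{2}s_{i}\varpi_{i}}D^{+}_{w_{1}\varpi_{i},w_{2}\varpi_{i}}-v_{i}^{-1}v^{B}D^{+}_{w_{1}s_{i}\varpi_{i},w_{2}\varpi_{i}}D^{+}_{w_{1}\varpi_{i},w_{2}s_{i}\varpi_{i}},
\end{equation*}
and rearranging, while using $v_{i}=v$ from our symmetric-root-datum convention, gives exactly the desired identity (modulo the $D^{\pm}$ bookkeeping for the $\gamma_{i}$-term, which one identifies via the compatibility $\Omega(D^{+}_{w_{1}\lambda,w_{2}\lambda})=D^{-}_{w_{2}\lambda,w_{1}\lambda}$ noted after the definition of $D^{\pm}$).

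The main obstacle is the careful bookkeeping of the exponents: one must verify that the weight-space assignments invoked in Lemma~\ref{lem:rtwist}, with $\nu_{j}=w_{1}\cdot$ and $\mu_{j}=w_{2}\cdot$ taken from the weight bi-grading $A_{v}(\mathfrak{g})_{\nu,\mu}$, produce exactly the exponents $A$ and $B$ claimed in Lemma~\ref{lem:deltaMinor} — this is a purely mechanical computation but one that must be done consistently with the chosen coproduct $\Delta_{+}$. Once this arithmetic matches, the identity follows immediately.
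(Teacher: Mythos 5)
Your proof is exactly the argument the paper has in mind: the paper's own proof is the one-line statement that the result follows from Proposition~\ref{prop:DeltaTsys} and Lemma~\ref{lem:deltaMinor}, and your proposal fills in precisely those steps — apply $\psi_{+}\rho_{+}$ to the first form of the $T$-system in Proposition~\ref{prop:DeltaTsys}(2), use Lemma~\ref{lem:deltaMinor} (itself a consequence of Lemma~\ref{lem:rtwist}) to produce the $v^A$ and $v^B$ factors when converting $\Delta_{+}^{*}$-products to $r_{+}^{*}$-products, then rearrange.

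One small caution on the parenthetical remark at the end. Applying $\psi_{+}\rho_{+}$ straightforwardly to $\Delta_{w_1\gamma_i,w_2\gamma_i}$ yields $D^{+}_{w_1\gamma_i,w_2\gamma_i}$, so the calculation you outline in fact produces
\[
v^{A}D^+_{w_1s_i \varpi_i,w_2s_i\varpi_i}D^+_{w_1\varpi_i,w_2\varpi_i}
=v^{-1+B}D^+_{w_1s_i\varpi_i,w_2\varpi_i}D^+_{w_1\varpi_i,w_2s_i\varpi_i}
+D^{+}_{w_1\gamma_i,w_2\gamma_i}.
\]
The $D^-$ superscript on the $\gamma_i$-term in the statement as printed appears to be a typo for $D^+$ (and indeed this is the form one needs when deducing Proposition~\ref{prop:TsysMinor}). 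Your attempt to patch it by invoking $\Omega(D^+_{w_1\lambda,w_2\lambda})=D^-_{w_2\lambda,w_1\lambda}$ does not quite land, since $\Omega$ would swap the two $w$-indices and give $D^-_{w_2\gamma_i,w_1\gamma_i}$ rather than $D^-_{w_1\gamma_i,w_2\gamma_i}$; no application of $\Omega$ is needed here, and the clean statement already follows as you derived it before that parenthesis.
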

  \begin{proof}
    The statement follows from Proposition \ref{prop:DeltaTsys} and
    Lemma \ref{lem:deltaMinor}.
  \end{proof}

\begin{Prop}[{\cite[Proposition 5.5]{GeissLeclercSchroeer11}}]\label{prop:TsysMinor}
For any $i\in I$ and any $1\leq a\leq b\leq l$ with $i_a=i_b=i$, we have
  \begin{align}\label{eq:positivTsystem}
    \begin{split}
      & v^A B_+^{\mathrm{up}}(\mbf{c}[a, b], \overrightarrow{w})
      B_+^{\mathrm{up}}(\mbf{c}[a^-,b^-],
      \overrightarrow{w})\\
      &=v^{-1+B}B_+^{\mathrm{up}}(\mbf{c}[a,b^-],
      \overrightarrow{w})B_+^{\mathrm{up}}(\mbf{c}[a^-,b],
      \overrightarrow{w})+B_+^{\mathrm{up}}(-\sum_{j\neq
        i}c_{ij}\mbf{c}[a^-(j),b^-(j)], \overrightarrow{w}).
    \end{split}
  \end{align}
\end{Prop}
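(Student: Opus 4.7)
The proposition is the translation of equation \eqref{eq:negativeTsystem} via the dictionary, recorded in the Remark above, between the dual canonical basis element $B_+^{\up}(\mbf{c}[p,q],\ow)$ and the unipotent quantum minor $D^+_{u_{p^-}\varpi_i,u_q\varpi_i}$, where $u_k=s_{i_1}\cdots s_{i_k}$. The plan is to apply \eqref{eq:negativeTsystem} with
\[
w_1 = u_{a^--1},\qquad w_2 = u_{b-1},
\]
both reduced prefixes of $\ow$. Since $i_{a^-}=i_b=i$, the length hypothesis $\ell(w_j s_i)=\ell(w_j)+1$ is satisfied automatically.

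The four $D^+$-factors in \eqref{eq:negativeTsystem} are then handled by the elementary observation that $s_j\varpi_i=\varpi_i$ for $j\neq i$, so $u_k\varpi_i$ is constant on intervals of $k$ free of $i$-letters. In particular $w_1\varpi_i=u_{(a^-)^-}\varpi_i$, $w_1 s_i\varpi_i=u_{a^-}\varpi_i$, $w_2\varpi_i=u_{b^-}\varpi_i$ and $w_2 s_i\varpi_i=u_b\varpi_i$; substituting these into the dictionary reproduces exactly the four products $B_+^{\up}(\mbf{c}[a,b])\,B_+^{\up}(\mbf{c}[a^-,b^-])$ and $B_+^{\up}(\mbf{c}[a,b^-])\,B_+^{\up}(\mbf{c}[a^-,b])$ appearing in \eqref{eq:positivTsystem}, with the scalars $v^{A}$ and $v^{-1+B}$ inherited unchanged.

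For the residual term I would exploit the symmetric-Cartan identity $\gamma_i = \varpi_i + s_i\varpi_i = -\sum_{j\neq i}c_{ij}\varpi_j$ (all coefficients $-c_{ij}\geq 0$), which together with iterated use of Lemma \ref{lem:rtwist} implies that $\psi_+\rho_+(\Delta_{w_1\gamma_i,w_2\gamma_i})$ factors, up to an explicit $v$-power, as $\prod_{j\neq i} \bigl(D^+_{w_1\varpi_j,w_2\varpi_j}\bigr)^{-c_{ij}}$. Each factor $D^+_{u_{a^--1}\varpi_j,u_{b-1}\varpi_j}$ identifies, via the same dictionary and the fixed-point argument reapplied now to $\varpi_j$, with $B_+^{\up}(\mbf{c}[a^-(j),b^-(j)],\ow)$, where $a^-(j),b^-(j)$ denote the predecessors of $a,b$ in the $j$-subword of $\ow$ (empty factor being $1$). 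A pairwise $\bot$-compatibility argument in the spirit of Proposition \ref{prop:initialMinors} then collapses the product into the single basis element $B_+^{\up}\bigl(-\sum_{j\neq i}c_{ij}\mbf{c}[a^-(j),b^-(j)]\bigr)$.

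The hard part is the $v$-power bookkeeping: the scalars $v^{A},v^{-1+B}$ from Lemma \ref{lem:deltaMinor}, the $v$-power produced by iterating the twisted coproduct to decompose the $\gamma_i$-minor into a product of $\varpi_j$-minors, and the $v$-power picked up by the product-to-$B_+^{\up}$ collapse must all combine to give exactly the claimed coefficient $1$ in front of the residual term. This is a mechanical bilinear-form computation using the weights $w_k\varpi_j$, made uniform by the symmetric-Cartan hypothesis; verifying the pairwise $\bot$-compatibility of the elements $B_+^{\up}(\mbf{c}[a^-(j),b^-(j)])$ for $j\neq i$, which are not of the initial-minor form covered verbatim by Proposition \ref{prop:initialMinors}, is the main conceptual input and can be argued either via the Levendorskii--Soibelman formula or directly from the quasi-commutation rules between quantum minors attached to distinct fundamental weights.
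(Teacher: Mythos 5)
Your plan is essentially the argument of the cited source \cite[Proposition 5.5]{GeissLeclercSchroeer11}; the paper itself only cites it, so the natural comparison is with GLS's proof, which your sketch reconstructs faithfully. The choice $w_1=u_{a^--1}$, $w_2=u_{b-1}$ is exactly the right specialization of \eqref{eq:negativeTsystem}, and the identification of the four $D^+$-factors with $B_+^{\up}(\mbf{c}[a,b])$, $B_+^{\up}(\mbf{c}[a^-,b^-])$, $B_+^{\up}(\mbf{c}[a,b^-])$, $B_+^{\up}(\mbf{c}[a^-,b])$ is correct. A few points worth making precise, though. First, the $D^-_{w_1\gamma_i,w_2\gamma_i}$ appearing in \eqref{eq:negativeTsystem} must be read as $D^+_{w_1\gamma_i,w_2\gamma_i}$: since that equation is obtained from Proposition~\ref{prop:DeltaTsys} by applying $\psi_+\rho_+$, every term lies in $\Uv^+(\mfr{g})$; otherwise your translation into $B_+^{\up}$'s would not even be type-consistent. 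Second, you attribute the factorization of the $\gamma_i$-minor into a product of $\varpi_j$-minors to ``iterated use of Lemma~\ref{lem:rtwist},'' but that lemma only manages the $v$-twist when passing from $\Delta^*_+$-products to $r^*_+$-products. The actual splitting $\Delta_{w_1\gamma_i,w_2\gamma_i}=\prod_{j\neq i}\Delta_{w_1\varpi_j,w_2\varpi_j}^{-c_{ij}}$ in $A_v(\mfr{g})$ is a separate, substantive input (the multiplicativity of extremal-weight minors, in the spirit of \cite[\S 9.5]{BerensteinZelevinsky05}) and should be invoked explicitly. Third, I would not call the pairwise $\bot$-compatibility of the $B_+^{\up}(\mbf{c}[a^-(j),b^-(j)])$, $j\neq i$, the ``main conceptual input''---once the $\gamma_i$-minor has been shown to be the dual canonical basis element with Lusztig parameter $-\sum_{j\neq i}c_{ij}\mbf{c}[a^-(j),b^-(j)]$, the compatibility is a consequence (via the Levendorskii--Soibelman relations, as in Remark~\ref{rem:compatible}) rather than an independent obstacle, and it is precisely what makes the $v$-power bookkeeping close up. Modulo those clarifications and the (acknowledged) explicit $v$-power check, the route is the intended one.
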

\begin{NB}
\begin{proof}
In \eqref{eq:negativeTsystem}, take $u=s_{i_{1}}s_{i_{2}}\dots s_{i_{b-1}}, v=s_{i_{1}}s_{i_{2}}\dots
 s_{i_{a-1}}$. Then we apply the antiautomorphism $\Omega$ to this
 equation, and the statement follows.
\end{proof}
\end{NB}

\begin{Rem}\label{rem:compatible}
  It follows from \cite[Theorem 4.24]{Kimura10} that the last term in
  \eqref{eq:positivTsystem} can be rewritten as 
  \begin{align*}
    B_+^{\mathrm{up}}(-\sum_{j\neq i}c_{ij}\mbf{c}[a^-(j),b^-(j)],
    \overrightarrow{w})= q^C\prod_{j\neq i}B_+^{\mathrm{up}}(\mbf{c}[a^-(j),b^-(j)],\overrightarrow{w})^{-c_{ij}}
  \end{align*}
for some specific power $C$. Let us denote
$B_+^{\mathrm{up}}(\mbf{c}[a,b],\overrightarrow{w})$ by $D^+[a,b]$. Then we obtain
\begin{align*}
  v^AD^+[a,b]D^+[a^-,b^-]=v^{-1+B}D^+[a,b^-]D^+[a^-,b]+q^C\prod_{j\neq
  i}D^+[a^-(j),b^-(j)]^{-c_{ij}}.
\end{align*}
\end{Rem}

%\begin{NB}
\begin{Eg}[{\cite[Example 6.2]{HernandezLeclerc11}}]
  Let the root datum be given as in Example \ref{eg:A_3}. Then we have
  \begin{align*}
    v^{-1}D[4,4]D[1,1]&=v^{-1}D[1,4]+D[2,2]D[3,3],\\
D[5,5]D[2,2]&=v^{-1}D[2,5]+D[4,4],\\
D[6,6]D[3,3]&=v^{-1}D[3,6]+D[4,4].
  \end{align*}
\end{Eg}
%\end{NB}
%\input{part2-2.tex}

\section{Twisted $t$-analogue of $q$-characters}
\label{sec:twistedQTCharacters}

In this section, we introduce new quantizations and define a twisted $t$-analogue of $q$-characters
$\HChar$, which are slightly different from those used in Section \ref{sec:psedoModules}.

\begin{Rem}
  In fact, our $\HChar$ is a $t$-analogue of the $q$-character defined for the finite dimensional representations
  of the quantum loop algebra $U_q(Lg)$, where $g$ is any skew-symmetric
  Kac-Moody Lie algebra. It should be compared with the character
  introduced by \cite{Hernandez02}, which is defined for the case
  where $g$ is a simple Lie
  algebra.
\end{Rem}

\subsection{A new bilinear form}

Let $\tau$ denote the Auslander-Reiten translation of the derived category
$D^b(\C
Q-mod)$. It induces an automorphism of the Grothendieck group
$K_0(D^b(\C Q-\mod))$ which is denoted by $c$.

For any object $M$ of $D^b(\C Q-\mod)$, let $[M]$ denote its
class in the Grothendieck group. We identify the root lattice $Q=\oplus_{i\in I} \Z \alpha_i$ with the Grothendieck group $K_0(D^b(\C
Q-\mod))$ by sending the simple root $\alpha_i$ to $[S_i]$ the class of the
$i$-th simple module $S_i$, for all $i\in I$. Notice that $\{\alpha_i,i\in I\}$ is a $\Z$-basis of the
Grothendieck group. For any $i\in I$, denote the injective $\C Q$-module with the socle
$S_i$ by $\Iop_i$ and the projective $\C Q$-module with
the top $S_i$ by $\Pop_i$, .

Let $\beta$ be the linear map from $\N^{I\times \Z}$ to $K_0(D^b(\C
Q-\mod))$ \st
\begin{align*}
  \beta(w)=\sum_{(i,a)}w_i(a)*[\tau^{a}\Iop_i[-1]].
\end{align*}
In particular we have $\beta(e_i(0))=[\Iop_i[-1]]=[\tau\Pop_i]$,
$\beta(e_i(-1))=[\Pop_i]$.

% \begin{draft}
%   My $\tau$ goes in opposite direction on the picture of the quiver
%   variety with that of $\tau$ in \cite{HernandezLeclerc11}. But the
%   associated coexter element are opposite as well. As the consequence,
%   we associate the same $\beta(e_i(a))$ in the root system for each $e_i(a)$.
% \end{draft}

% \begin{NB}
%   A change of initial seed from $\Pop_i$ (mine) to $\Pop_i[1]$ (the
%   one somehow which we might be tempted to take in the spirit of GLS)
%   is not good on $T$-system! because we have coefficients. But now
%   this mutation sequence does not fix the ice quiver: the coefficient
%   pattern will change! Therefore, the original mutation relation which
%   take the form of a $T$-system equation will no longer preserve this
%   form. Also, the Lambda matrix computing the twisting between two
%   extended $g$-vector is not invariant under this mutation of
%   $g$-vector. We should stick to the correct choice of initial seed
%   such that our deformation is the correct one.
% \end{NB}

\begin{Lem}
  For any $v\in \N^{I\times (\Z+\Hf)}$, we have $\beta(C_q v)=0$.
\end{Lem}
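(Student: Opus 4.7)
The plan is first to reduce, by the $\Z$-linearity of both $C_q$ and $\beta$, to the case $v=e_{k,b}$ with $k\in I$ and $b\in\Z+\Hf$. Reading off the defining formula for $C_q$ shows that $C_q e_{k,b}$ has nonzero components exactly at
\begin{equation*}
(C_q e_{k,b})_k(b\pm\Hf)=1,\quad (C_q e_{k,b})_i(b-\Hf)=-b_{k,i}\ (i>k),\quad (C_q e_{k,b})_i(b+\Hf)=-b_{i,k}\ (i<k).
\end{equation*}
Substituting into the definition of $\beta$ and factoring out the $K_0$-automorphism induced by $\tau^{b-\Hf}\circ[-1]$ (both $\tau$ and the shift act invertibly on $K_0$), the vanishing $\beta(C_q e_{k,b})=0$ reduces to the single identity
\begin{equation*}
[\Iop_k]+[\tau\Iop_k]=\sum_{i>k}b_{k,i}\,[\Iop_i]+\sum_{i<k}b_{i,k}\,[\tau\Iop_i]
\end{equation*}
in $K_0(D^b(\C Q\text{-}\mod))$.

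To establish this identity, I would compare the two natural resolutions of the simple module $S_k$. The convention $b_{ij}\leq 0$ for $i\geq j$ forces every arrow of $Q$ to go from a lower to a higher index, so the minimal injective resolution of $S_k$ takes the form $0\to S_k\to\Iop_k\to\bigoplus_{i<k}\Iop_i^{b_{i,k}}\to 0$, yielding $[\Iop_k]=[S_k]+\sum_{i<k}b_{i,k}[\Iop_i]$. Applying the Coxeter automorphism induced by $\tau$ gives
\begin{equation*}
[\tau\Iop_k]=[\tau S_k]+\sum_{i<k}b_{i,k}[\tau\Iop_i].
\end{equation*}
The minimal projective resolution $0\to\bigoplus_{j>k}\Pop_j^{b_{k,j}}\to\Pop_k\to S_k\to 0$ similarly yields $[\Pop_k]=[S_k]+\sum_{j>k}b_{k,j}[\Pop_j]$. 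Using the derived-category identification $\tau=\nu[-1]$ with the Nakayama functor $\nu$ satisfying $\nu\Pop_i=\Iop_i$ (so that $[\tau\Pop_i]=-[\Iop_i]$, in agreement with the equality $\beta(e_i(0))=[\tau\Pop_i]=[\Iop_i[-1]]$ displayed just above the lemma), applying $\tau$ produces $[\tau S_k]=-[\Iop_k]+\sum_{j>k}b_{k,j}[\Iop_j]$. Substituting this into the preceding expression for $[\tau\Iop_k]$ gives exactly the desired identity.

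The main obstacle is purely a matter of conventions: one must reconcile the sign of $[-1]$ acting on $K_0$, the orientation of $Q$ implied by $b_{ij}\leq 0$ for $i\geq j$, and the identification $\tau=\nu[-1]$ of the Auslander--Reiten translate on the derived category. Once these are in place, the target identity in $K_0$ is an immediate consequence of the two resolutions of $S_k$ together with Nakayama duality.
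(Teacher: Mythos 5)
Your proof is correct, and since the paper states this lemma without any proof there is nothing in the text to compare against; your argument fills the gap. The route is the natural one: after reducing by linearity to a unit vector $e_{k,b}$, you read off $C_q e_{k,b}$, factor out the $K_0$-automorphism $\tau^{b-\frac{1}{2}}\circ[-1]$, and thereby reduce the claim to the single identity $[\Iop_k]+[\tau\Iop_k]=\sum_{i>k}b_{ki}[\Iop_i]+\sum_{i<k}b_{ik}[\tau\Iop_i]$ in $K_0(D^b(\C Q\text{-}\mod))$. The derivation of that identity from the minimal injective resolution $0\to S_k\to\Iop_k\to\bigoplus_{i<k}\Iop_i^{b_{ik}}\to 0$ and the minimal projective resolution $0\to\bigoplus_{j>k}\Pop_j^{b_{kj}}\to\Pop_k\to S_k\to 0$, together with $[\tau\Pop_i]=-[\Iop_i]$ (which you correctly check against the displayed equality $\beta(e_i(0))=[\tau\Pop_i]=[\Iop_i[-1]]$), is exactly right and makes transparent that the entries of $C_q$ encode these two resolutions. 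Two small bookkeeping points, which you handle implicitly and are worth making explicit: $C_q e_{k,b}$ has negative entries, so $\beta$ must be read as its $\Z$-linear extension to finitely supported vectors in $\Z^{I\times\Z}$, which the paper also does silently; and the shapes of the two resolutions depend on $\Iop_i,\Pop_i$ being representations of $Q$ itself (so that arrows go from lower to higher index), consistent with the notation $\C Q\text{-}\mod$ and verifiable on the rank-two example $1\to 2$, where the identity would fail under the opposite convention.
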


% \begin{comment}
% 2)  \begin{Lem}
%     Describe $\invCq$ as in \cite[(2)]{HernandezLeclerc11}. Using
%     \cite[Lem 4.1.5]{Qin11},  we
%     should have a formula similar to the following form

% $(\invCq(e_k(a)))_{k'}(b)=\langle c^{-(b-a-\Hf)}\beta(I_{k'}),\beta(I_k) \rangle $ if $b\geq a+\Hf$, and
% vanishes if not.

% Here $\langle \beta_1, \beta_2 \rangle$ is the Euler form induced by Gabriel's
% theorem. Maybe I should find a good expression.
%   \end{Lem}

% Then definition \cite[(6)]{HernandezLeclerc11} can be adapated to
% acyclic case. We need to check the quantum torus here.

% Maybe I could write part 1) and 2) using AR-quivers of derived
% category, whose combinatorial picture is better.
% \end{comment}

Let $\langle\ ,\ \rangle$ denote the Euler form on $K_0(D^b(\C
Q-\mod))$.
\begin{Lem}
For any pairs $(k,a)$, $(k',b)$ in $I\times \Z$,  $e_{k'}(b)\cdot \invCq(e_k(a))$ equals $\langle[\tau^{b-\Hf}\Pop_{k'}], [\tau^{a}\Pop_k]\rangle $ if $b\geq a+\Hf$ and
vanishes if not.
\end{Lem}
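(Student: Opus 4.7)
The plan is to identify the claimed expression with $\tilde e_{k,a}:=C_q^{-1}(e_k(a))$ by uniqueness. Since $\tilde e_{k,a}$ is characterized by $C_q(\tilde e_{k,a})=e_k(a)$ together with its vanishing on $\{b<a+\Hf\}$, I will introduce the candidate
\[
F(k',b)=\langle[\tau^{b-\Hf}\Pop_{k'}],[\tau^{a}\Pop_k]\rangle\quad(b\geq a+\Hf),\qquad F(k',b)=0\quad\text{otherwise},
\]
and verify these two defining properties for $F$. The vanishing condition is built into the definition, and the part of the statement for $b<a+\Hf$ is precisely the vanishing half of Lemma~\ref{lem:inverseAlgorithm}.

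The identity $C_qF=e_k(a)$ is then checked by evaluating $(C_qF)_{k'}(b')$ for $b'\in\Z$. For $b'<a$ every argument in the defining sum is below the support level, so the result is zero. For $b'=a$ only the $F(\cdot,a+\Hf)$ terms survive, and pulling out the common factor $\tau^{a}$ using auto-equivalence invariance of the derived Euler form reduces the calculation to
\[
\Bigl\langle[\Pop_{k'}]-\sum_{i<k'}b_{ik'}[\Pop_i],\ [\Pop_k]\Bigr\rangle=\langle[S_{k'}],[\Pop_k]\rangle=\dim\Hom(\Pop_k,S_{k'})=\delta_{k,k'},
\]
where the first step uses the short exact sequence $0\to\bigoplus_{i<k'}\Pop_i^{\oplus b_{ik'}}\to\Pop_{k'}\to S_{k'}\to 0$ describing the radical of $\Pop_{k'}$ in the hereditary algebra $\C Q$. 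This same exact sequence also reconciles the base case $b=a+\Hf$ of Lemma~\ref{lem:inverseAlgorithm} — namely $\tilde e_{k,a}(k',a+\Hf)=p_{kk'}$ — with the Euler form $\langle[\Pop_{k'}],[\Pop_k]\rangle$ in the paper's convention.

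The main case is $b'>a$, where every $F$-value appearing in the defining sum lies in the Euler-form regime. Using the dictionary $[\tau^c\Pop_i]=\beta(e_i(c-1))$ built into the preceding definition of $\beta$, the integer combination of classes produced by expanding $(C_qF)_{k'}(b')$ is exactly $\beta\bigl(C_q(e_{k',b'-3/2})\bigr)$, after matching the two summations in the definition of $C_q$ against the indexing of the $\tau^{b'}$- and $\tau^{b'-1}$-terms. The previous lemma $\beta\circ C_q=0$ then forces this class to vanish, so $(C_qF)_{k'}(b')=\langle 0,[\tau^{a}\Pop_k]\rangle=0$. Combining the three regimes yields $C_qF=e_k(a)$, and uniqueness gives $F=\tilde e_{k,a}$, as required.

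The principal obstacle will be bookkeeping in this last step: tracking the half-integer shift $\beta(e_i(c))=[\tau^{c+1}\Pop_i]$ through the two asymmetric summation patterns $\sum_{i<k'}b_{ik'}\eta_i(a+\Hf)$ and $\sum_{j>k'}b_{k'j}\eta_j(a-\Hf)$ that enter the definition of $C_q$, so that the combination of $\tau$-translates of projectives produced by $(C_qF)$ is literally $\beta$ of $C_q$ evaluated on the single half-integer basis vector $e_{k',b'-3/2}$. Once this identification is in place, everything reduces to the already-established vanishing $\beta\circ C_q=0$ and the radical short exact sequence for $\Pop_{k'}$.
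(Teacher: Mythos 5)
Your overall strategy matches the paper's: define the candidate $F$ and verify $C_qF=e_k(a)$ degree by degree, invoking uniqueness. Your treatment of the degrees $b'<a$ and $b'>a$ is correct; in the latter case, recognizing the first argument of the Euler form in $(C_qF)_{k'}(b')$ as $\beta\bigl(C_q(e_{k',b'-3/2})\bigr)$ and quoting the preceding lemma $\beta\circ C_q=0$ is exactly the content the paper obtains ``by exact triangles,'' made explicit, and it is a clean way to phrase that step.

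The degree $b'=a$ step, however, has a genuine gap. The short exact sequence you invoke, $0\to\bigoplus_{i<k'}\Pop_i^{\oplus b_{ik'}}\to\Pop_{k'}\to S_{k'}\to 0$, is wrong in this convention: with $b_{ij}\geq 0$ for $i<j$ counting arrows $i\to j$ in $Q$, the radical of the projective $\Pop_{k'}$ with top $S_{k'}$ is $\bigoplus_{l>k'}\Pop_l^{\oplus b_{k'l}}$, indexed by arrows \emph{out of} $k'$, not into it. Hence $[\Pop_{k'}]-\sum_{i<k'}b_{ik'}[\Pop_i]\neq[S_{k'}]$; the correct value is $-\tau^{-1}[S_{k'}]$, as one sees from the injective coresolution $[\Iop_{k'}]-\sum_{i<k'}b_{ik'}[\Iop_i]=[S_{k'}]$ together with $[\Pop_i]=-\tau^{-1}[\Iop_i]$ (equivalently, read it off the same $\beta\circ C_q=0$ identity you use for $b'>a$). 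You also silently transpose the Euler form by writing $\langle[S_{k'}],[\Pop_k]\rangle=\dim\Hom(\Pop_k,S_{k'})$; with the standard Euler form and $\Pop_k$ projective it is $\langle[\Pop_k],[S_{k'}]\rangle$ that equals $\dim\Hom(\Pop_k,S_{k'})$. As a concrete check, take $Q\colon 1\to 2$ and $k=k'=1$: the quantity to compute is $\langle[\Pop_1],[\Pop_1]\rangle=1$, whereas $\langle[S_1],[\Pop_1]\rangle=\langle(1,0),(1,1)\rangle=0$, so the first equality in your chain already fails, and the middle term $\langle[S_{k'}],[\Pop_k]\rangle$ agrees with neither endpoint. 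The two slips happen to cancel in the final value $\delta_{kk'}$, but the argument as written is not valid. The paper establishes this case by the direct combinatorial identity $p_{kj}-\sum_{i<j}b_{ij}p_{ki}=\delta_{kj}$ (last--arrow decomposition of paths). If you prefer the conceptual route, the correct chain is
\begin{align*}
\Bigl\langle[\Pop_{k'}]-\sum_{i<k'}b_{ik'}[\Pop_i],\,[\Pop_k]\Bigr\rangle
=\langle -\tau^{-1}[S_{k'}],[\Pop_k]\rangle
=\langle[\Pop_k],[S_{k'}]\rangle
=\dim\Hom(\Pop_k,S_{k'})=\delta_{kk'},
\end{align*}
the middle equality being the Auslander--Reiten duality that the paper records as $\langle x,cy\rangle=-\langle y,x\rangle$.
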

\begin{proof}
Take the vector $v=(v_{k'}(b))$, \st $v_{k'}(b)=
\langle[\tau^{b-a-\Hf}\Pop_{k'}], [\Pop_{k}]\rangle$ if $b\geq a+\Hf$
and $v_{k'}(b)=0$ if $b< a+\Hf$. We want to show $C_q v=e_k(a)$. Fix
any $j\in I$.

First, $e_j(d)\cdot C_q(v)$ is zero for any $d< a$.

Second, we have
\begin{align*}
  e_j (a)\cdot C_q(v)&=v_j(a+\Hf)-\sum_{i:i<j}b_{ij}v_j(a+\Hf)\\
&=p_{kj}-\sum_{i:i<j}b_{ij}p_{ki}\\
&=\delta_{kj}.
\end{align*}

Finally, for any $b>a+\Hf$, we use exact triangles in $D^b(\C Q-\mod)$
to obtain the following result.
\begin{align*}
  e_j (b-\Hf)\cdot C_q(v)&=v_j(b-1)-\sum_{l:l>j}b_{jl}v_l(b-1)
  +v_j(b)-\sum_{i:i<j}b_{ij}v_j(b)\\
&=\langle[\tau^{(b-1)-a-\Hf}\Pop_j], \Pop_{k}]\rangle -\sum_{l:l>j}b_{jl}\langle[\tau^{(b-1)-a-\Hf}\Pop_l], \Pop_{k}]\rangle
\\&\qquad +\langle[\tau^{b-a-\Hf}\Pop_j], \Pop_k]\rangle-\sum_{i:i<j}b_{ij}\langle[\tau^{b-a-\Hf}\Pop_i], \Pop_k]\rangle\\
&=\langle \tau^{b-a-\Hf} ([\tau^{-1}
\Pop_{j}]-\sum_{l:l>j}b_{jl}[\tau^{-1} \Pop_l]+\Pop_j-\sum_{i:i<j}b_{ij}[\Pop_i]),[\Pop_k]\rangle \\
&=0.
\end{align*}
\end{proof}

Recall that we have \begin{align*}
  \eMatrix(w^1,w^2)=-w^1[ \Hf]\cdot C_q^{-1} w^2 +w^2[\Hf]\cdot C_q^{-1}
  w^1.
\end{align*}
Let us define a new bilinear form $\cN(\ ,\ )$ on $\N^{I\times Z}$ \st for
any $w^1,$ $w^2$ in $\N^{I\times \Z}$, we have
\begin{align*}
  \cN(w^1,w^2)=w^1[\Hf]\cdot C_q^{-1}w^2-w^1[- \Hf]\cdot C_q^{-1}
w^2 -w^2[ \Hf]\cdot C_q^{-1} w^1 +w^2[- \Hf]\cdot
C_q^{-1} w^1.
\end{align*}

Clearly, we have $\cN(w^1,w^2)=-\cN(w^2,w^1)$. 

Define a symmetric bilinear form $(\ ,\ )$ on $K_0(D^b(\C Q-\mod) )$
\st for any $x,y\in K_0(D^b(\C Q-\mod) )$, $(x,y)=\langle
x,y\rangle+\langle y,x\rangle $. Notice that $\langle x, cy\rangle
=-\langle y,x\rangle$. Further define $\langle\ ,\ \rangle_a$ to be
the anti-symmetrized Euler form \st we have $\langle x,y\rangle_a=\langle x,y\rangle -\langle y,x\rangle$.

% \begin{draft}
%   Warn: we don't have $(x,cy)=-(y,x)$!

% $\Hom(X,\tau Y)=D\Ext(Y,X)$, $\Ext(X,\tau Y)=D\Hom(Y,X)$.
% \end{draft}

\begin{Lem}
  Given any integer $d\geq 1$ and any $i,j\in I$, we have
  \begin{align*}
    \cN(e_i(0),e_j(d))= (\beta(e_i(0)),\beta(e_j(d))),\\
    \eMatrix(e_i(0),e_j(d))=\langle c^{-1}\beta(e_j(d)), \beta(e_i(0))
    \rangle.
  \end{align*}
\end{Lem}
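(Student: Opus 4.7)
The plan is to reduce each identity to a direct calculation using the preceding lemma, which expresses $e_{k'}(b)\cdot\invCq(e_k(a))$ as the Euler pairing $\langle[\tau^{b-\Hf}\Pop_{k'}], [\tau^{a}\Pop_k]\rangle$ whenever $b\geq a+\Hf$ and as $0$ otherwise. Plugging this into the definitions of $\cN(e_i(0),e_j(d))$ and $\eMatrix(e_i(0),e_j(d))$, and using the convention $\eta[d]_k(a)=\eta_k(a+d)$ for the shift, the hypothesis $d\geq 1$ forces every term of the form $e_i(\pm\Hf)\cdot\invCq(e_j(d))$ to vanish (since $\pm\Hf<d+\Hf$). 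Only the two terms $e_j(d\pm\Hf)\cdot\invCq(e_i(0))$ survive; they evaluate respectively to $\langle c^{d-1}[\Pop_j],[\Pop_i]\rangle$ and $\langle c^{d}[\Pop_j],[\Pop_i]\rangle$. This yields the simplifications $\eMatrix(e_i(0),e_j(d))=\langle c^{d-1}[\Pop_j],[\Pop_i]\rangle$ and $\cN(e_i(0),e_j(d))=\langle c^{d}[\Pop_j],[\Pop_i]\rangle-\langle c^{d-1}[\Pop_j],[\Pop_i]\rangle$.

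On the right-hand side, the key identification is $\beta(e_k(a))=[\tau^{a+1}\Pop_k]=c^{a+1}[\Pop_k]$, immediate from $\tau\Pop_k=\Iop_k[-1]$. For the $\eMatrix$-identity this makes $\langle c^{-1}\beta(e_j(d)),\beta(e_i(0))\rangle = \langle c^{d}[\Pop_j], c[\Pop_i]\rangle$, and the $c$-invariance of the Euler form (automatic since $\tau$ is an auto-equivalence of $D^{b}(\mod \C Q)$) reduces this to $\langle c^{d-1}[\Pop_j],[\Pop_i]\rangle$, matching the simplified LHS.

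For the $\cN$-identity I would expand the symmetric pairing as $\langle c[\Pop_i], c^{d+1}[\Pop_j]\rangle+\langle c^{d+1}[\Pop_j], c[\Pop_i]\rangle$, apply $c$-invariance to rewrite it as $\langle[\Pop_i], c^{d}[\Pop_j]\rangle+\langle c^{d}[\Pop_j],[\Pop_i]\rangle$, and then invoke the Serre duality identity $\langle x, cy\rangle=-\langle y, x\rangle$ recorded just before the lemma, specialized with $x=[\Pop_i]$ and $y=c^{d-1}[\Pop_j]$, to rewrite $\langle[\Pop_i], c^{d}[\Pop_j]\rangle = -\langle c^{d-1}[\Pop_j],[\Pop_i]\rangle$. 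Substituting produces exactly $\langle c^{d}[\Pop_j],[\Pop_i]\rangle-\langle c^{d-1}[\Pop_j],[\Pop_i]\rangle$, matching the simplified LHS.

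The main obstacle is disciplined bookkeeping rather than any deep ingredient: one must carefully align the direction of the degree shift $[\pm\Hf]$ with the support condition $b\geq a+\Hf$ to confirm the correct terms vanish for $d\geq 1$, and then line up the resulting half-integer shifts with the appropriate $c$-powers on the Grothendieck group side. All the substantive input (the formula for $\invCq$ in terms of Euler pairings, $c$-invariance of the Euler form, and the Serre duality identity $\langle x, cy\rangle=-\langle y, x\rangle$) has already been packaged in the lemma and the preparatory commentary, so the remaining argument is essentially an index-tracking computation.
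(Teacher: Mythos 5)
Your proposal is correct and follows essentially the same line as the paper: both reduce to the preceding lemma evaluating $e_{k'}(b)\cdot\invCq(e_k(a))$, observe that for $d\geq 1$ the terms with $\invCq(e_j(d))$ paired against $e_i(\pm\Hf)$ vanish while the two terms paired against $\invCq(e_i(0))$ give $\langle c^{d-1}[\Pop_j],[\Pop_i]\rangle$ and $\langle c^d[\Pop_j],[\Pop_i]\rangle$, and then translate to the Grothendieck-group side via $\beta(e_k(a))=c^{a+1}[\Pop_k]$, $c$-invariance of $\langle\ ,\ \rangle$, and $\langle x,cy\rangle=-\langle y,x\rangle$. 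Your bookkeeping is in fact cleaner than the paper's printed computation for the $\eMatrix$ identity, which carries a stray overall factor of $2$ and writes $e_i(0)$ where $\beta(e_i(0))$ is meant; both are evidently typographical and your version gives the correct normalization.
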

\begin{proof} We have
  \begin{align*}
    \cN(e_i(0),e_j(d))=&e_i(0)[\Hf]\cdot
    C_q^{-1}e_j(d)-e_i(0)[-\Hf]\cdot C_q^{-1}e_j(d)\\&\qquad -e_j(d)[\Hf]\cdot
    C_q^{-1}e_i(0)+e_j(d)[-\Hf]\cdot C_q^{-1}e_i(0)\\
=&0+0-\langle [\tau^{d-1} \Pop_j],[\Pop_i]\rangle +\langle [\tau^d
\Pop_j],[\Pop_i]\rangle\\
=&0+0-\langle c^{d-1} [\Pop_j],[\Pop_i]\rangle +\langle c^d[
\Pop_j],[\Pop_i]\rangle\\
=&(c^dc[\Pop_j],c[\Pop_i])\\
=&(\beta(e_j(d)),\beta(e_i(0))).
\end{align*}
Similarly, we have
\begin{align*}
  \eMatrix(e_i(0),e_j(d))=&-2 e_i(0)[\Hf]\cdot C_q^{-1}e_j(d)+2
  e_j(d)[\Hf]\cdot C_q^{-1}e_i(0)\\
=& 2 \langle [\tau^{d-1}\Pop_j],[\Pop_i]\rangle\\
=&2 \langle c^{-1}\beta(e_j(d)),e_i(0)\rangle
\end{align*}

\end{proof}

% \begin{draft}
%   \begin{Lem}
% We expect that $\cN+\eMatrix=-w^1[-\Hf]\cdot C_q^{-1}w^2+\ldots$ correspond to
% $-\cN' /2$ in \cite{HernandezLeclerc11}. To check
% this, we compute
%     \begin{align*}
%       \cN+\eMatrix=\langle \beta(e_j(d)),\beta(e_i(0))\rangle.
%     \end{align*}
% This should be compared with Rem 3.3 in \cite{HernandezLeclerc11}.
%   \end{Lem}

% Leclerc or Hernandez has mentioned that $\cN'$ give the untwisted Hall
% algebra. How is that statement possible corret?
% In the view of Thm 6.0.19 (\cf also \ref{lem:quadraticFormDifference}, our $2\cE'$ is not far from give the right
% Toen's Hall algebra, because the Euler twist is essentially corret.
% \end{draft}

Similarly, we have the following result.

\begin{Lem}
The following equations hold:
  \begin{align*}
    \cN(e_i(0),e_j(0))&= \langle \Pop_j,\Pop_i\rangle-\langle \Pop_i,\Pop_j\rangle,\\
    \eMatrix(e_i(0),e_j(0))&=0,\\
    \cN(e_i(0),e_j(-1))&= \langle
    [\Pop_i],[\Pop_j]-c^{-1}[\Pop_j]\rangle .
  \end{align*}
\end{Lem}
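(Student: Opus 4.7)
The plan is to carry out a direct case-by-case computation, expanding each of the three quantities into a sum of terms of the form $e_{k'}(b)\cdot C_q^{-1} e_k(a)$ and applying the formula from the immediately preceding lemma, namely
\[
e_{k'}(b)\cdot C_q^{-1}(e_k(a))=\begin{cases}\langle[\tau^{b-\Hf}\Pop_{k'}],[\tau^{a}\Pop_{k}]\rangle & b\ge a+\Hf,\\ 0 & \text{otherwise}.\end{cases}
\]
For each term appearing after expansion, the first step is to locate the support of $e_\bullet(c)[\pm\Hf]$ (which is $(\bullet,c\mp\Hf)$) and hence read off the pair $(k',b)$, then check the inequality $b\ge a+\Hf$ to see whether the term vanishes. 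The nonvanishing contributions are simplified via the Serre invariance $\langle\tau X,\tau Y\rangle=\langle X,Y\rangle$ of the Euler form on $D^{b}(\C Q\text{-}\mathrm{mod})$ to clear any fractional powers of $\tau$, and then rewritten in terms of $c=[\tau]$ on $K_{0}(D^{b}(\C Q\text{-}\mathrm{mod}))$.

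Concretely, for $\eMatrix(e_{i}(0),e_{j}(0))$ both of the two shifted vectors $e_\bullet(0)[\Hf]$ are supported at degree $-\Hf<\Hf=a+\Hf$, so both terms of $\eMatrix$ are forced to vanish by the inequality test, giving the second identity immediately. For $\cN(e_{i}(0),e_{j}(0))$, the $[\Hf]$-shifted terms vanish for the same reason, while the two $[-\Hf]$-shifted terms (with $b=\Hf$, $a=0$, hence $b=a+\Hf$) contribute $\langle[\Pop_{i}],[\Pop_{j}]\rangle$ and $\langle[\Pop_{j}],[\Pop_{i}]\rangle$ respectively, with the signs from the defining formula producing the asserted antisymmetric difference.

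The third identity $\cN(e_{i}(0),e_{j}(-1))$ is the one that requires care: here the boundary case $b=a+\Hf$ is actually attained by the term $e_{i}(0)[\Hf]\cdot C_q^{-1}e_{j}(-1)$ (with $b=-\Hf$, $a=-1$), so this term does \emph{not} vanish; Serre invariance then reduces $\langle[\tau^{-1}\Pop_{i}],[\tau^{-1}\Pop_{j}]\rangle$ to $\langle[\Pop_{i}],[\Pop_{j}]\rangle$. The second term yields $\langle[\Pop_{i}],c^{-1}[\Pop_{j}]\rangle$, while the two $(e_{j}(-1)[\pm\Hf])$-terms land at degrees $-\Hf-1$ and $-\Hf$, both strictly less than $\Hf=a+\Hf$ with $a=0$, so they both vanish. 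Combining with signs gives the stated formula.

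The only real obstacle is the careful tracking of the boundary cases $b=a+\Hf$; these are precisely where the naive extrapolation of the previous lemma (which gave $\cN(e_{i}(0),e_{j}(d))=(\beta(e_{i}(0)),\beta(e_{j}(d)))$ for $d\ge 1$) breaks down, and where the antisymmetric contribution in the first identity emerges. Aside from this bookkeeping, the verification is routine.
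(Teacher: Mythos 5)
Your computation is correct and is exactly the routine the paper intends: it omits the proof with a "Similarly," pointing back to the preceding two lemmas, and your argument carries out precisely that recipe — expand $\cN$ and $\eMatrix$ into terms $e_{k'}(b)\cdot C_q^{-1}e_k(a)$, test the threshold $b\geq a+\Hf$, and simplify the surviving terms via $\tau$-invariance of the Euler form. Your remark that the boundary case $b=a+\Hf$ is what makes $d\in\{0,-1\}$ behave differently from $d\geq 1$ correctly identifies why this lemma is not a corollary of the previous one.
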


% \begin{draft}
%   Warn: don't confuse $\cN(\ )$ with $N(w)$ later! $N(C_qv)=0$ but
%   $\cN(\ )$ is nontrivial when considering $C_q v$.
% \end{draft}

\begin{Lem}\label{lem:quadraticFormDifference}
  The difference between the quadratic forms $\cN$ and $-2\eMatrix$ is the
  anti-symmetrized Euler form, \ie $\cN+2\eMatrix=\langle\ ,\ \rangle_a$.
\end{Lem}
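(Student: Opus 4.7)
The plan is to establish the identity by bilinearity and a reduction to generators $e_i(a) \in \N^{I\times\Z}$. Both $\cN(\cdot,\cdot)$ and $\eMatrix(\cdot,\cdot)$ are $\Z$-bilinear, and the pullback $(w^1,w^2) \mapsto \langle \beta(w^1), \beta(w^2)\rangle_a$ of the anti-symmetrized Euler form along $\beta$ is also bilinear. Moreover, both sides are antisymmetric in $(w^1,w^2)$: antisymmetry of $\cN$ is immediate from the definition, antisymmetry of $\eMatrix$ is already recorded in Section~\ref{sec:notations}, and the anti-symmetrized Euler form is antisymmetric by construction. It therefore suffices to verify the identity on pairs of unit vectors of the form $(e_i(0), e_j(d))$ with $d \in \Z_{\geq 0}$, since both sides commute with the simultaneous shift $[d]$ on $\N^{I\times \Z}$ (on the right-hand side this uses that $\beta(w[d]) = c^d \beta(w)$ together with the $\tau$-invariance of the Euler form).

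The next step is to combine the two preceding lemmas with the Serre-duality-type relation $\langle x, cy\rangle = -\langle y, x\rangle$ valid on $K_0(D^b(\C Q\text{-}\mod))$ because $\C Q$ is hereditary; equivalently, $\langle c^{-1}y, x\rangle = -\langle x, y\rangle$. For $d \geq 1$, the preceding lemmas give
\begin{align*}
\cN(e_i(0), e_j(d)) &= (\beta(e_i(0)), \beta(e_j(d))) = \langle\beta(e_i(0)), \beta(e_j(d))\rangle + \langle \beta(e_j(d)), \beta(e_i(0))\rangle,\\
2\eMatrix(e_i(0), e_j(d)) &= 2\langle c^{-1}\beta(e_j(d)), \beta(e_i(0))\rangle = -2\langle \beta(e_i(0)), \beta(e_j(d))\rangle.
\end{align*}
Summing and simplifying yields $\langle \beta(e_j(d)), \beta(e_i(0))\rangle - \langle \beta(e_i(0)), \beta(e_j(d))\rangle$, which is the anti-symmetrized Euler form on the pair $(\beta(e_i(0)), \beta(e_j(d)))$ up to the sign convention chosen for $\langle\ ,\ \rangle_a$.

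The remaining boundary cases $d=0$ and $d=-1$ are handled by direct substitution of the explicit formulas for $\cN(e_i(0),e_j(0))$, $\eMatrix(e_i(0),e_j(0))=0$, and $\cN(e_i(0),e_j(-1))$ recorded in the previous lemma, using $\beta(e_i(0)) = c[\Pop_i]$ and $\beta(e_j(-1)) = [\Pop_j]$, and then matching the result with $\langle\beta(e_i(0)),\beta(e_j(0))\rangle_a$ and $\langle\beta(e_i(0)),\beta(e_j(-1))\rangle_a$ via the same Serre-duality manipulation.

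The main obstacle is bookkeeping, not substance: one has to be careful with the degree shift $[\Hf]$ hidden inside the definition of $\eMatrix$, with the factor $c = \tau$ appearing in $\beta(e_i(0))=[\tau\Pop_i]$, and with ensuring that the identity $\langle c^{-1}y, x\rangle = -\langle x, y\rangle$ is consistently applied (rather than its inverse). Once these are tracked, the proof reduces to an algebraic manipulation with no further geometric input.
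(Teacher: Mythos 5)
Your proof is correct and takes essentially the same route as the paper, whose own argument is a one-line reduction to unit vectors by bilinearity; you simply carry out that reduction in full, invoking the two preceding lemmas together with the Serre--duality identity $\langle x,cy\rangle=-\langle y,x\rangle$. The sign you flag is real: the computation gives $\cN(w^1,w^2)+2\eMatrix(w^1,w^2)=\langle\beta(w^2),\beta(w^1)\rangle_a$, which is the argument order the paper actually uses in the proof that $\HChar$ is an algebra homomorphism, so the lemma statement is merely imprecise about which order of arguments $\langle\ ,\ \rangle_a$ is applied in. One tiny slip: $\beta(w[d])=c^{-d}\beta(w)$, not $c^{d}\beta(w)$, but this is immaterial since the Euler form is $c$-invariant.
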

\begin{proof}
  It suffices to check the statements for the unit vectors $e_i(a)$,
  $(i,a)\in I\times \Z$.
\end{proof}

\subsection{A new quantization of the cluster algebras}

Define the $2n\times 2n$ matrix $L$ whose entries are given by, for
any $i,j\in I$,
\begin{align*}
  L_{ij}&=\cN(e_i(0),e_j(0))=\langle \Pop_j,\Pop_i\rangle-\langle \Pop_i,\Pop_j\rangle,\\
L_{i,j+n}&=\cN(e_i(0),e_j(0)+e_j(-1))=\langle \Pop_j,\Pop_i\rangle-\langle\Pop_i,\tau^{-1}\Pop_j\rangle,\\
L_{i+n,j}&=\cN(e_i(0)+e_i(-1),e_j(0)),\\
L_{i+n,j+n}&=\cN(e_i(0)+e_i(-1),e_j(0)+e_j(-1)).\\
\end{align*}
It is easy to check that $L$ is skew-symmetric and $L_{i+n,j+n}$
equals $L_{i,j+n}-L_{j,i+n}$.

Let $\tB$ be given as in Section \ref{sec:psedoModules}.
\begin{Prop}
We have $L(-\tB)=\begin{bmatrix}2 \id_n\\0 \end{bmatrix}$.
\end{Prop}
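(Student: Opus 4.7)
The plan is to express the $k$-th column of $-\tB$ as the coordinates of $C_q e_{k,-\Hf}$ in a natural basis, and then reduce the matrix identity to a short computation of $\cN$ using the vanishing $\beta(C_q v)=0$.

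First, I would apply Lemma~\ref{lem:zReduction} with $w=0$ and $v=e_{k,-\Hf}$ (viewed as an element of $\N^n$) to obtain $\tB e_k = \ind(-C_q e_{k,-\Hf})$. Combined with Lemma~\ref{lem:zPatternInd}, the map $\ind$ restricts to a bijection between the subspace $\bigoplus_{i\in I}(\Z e_{i,0}\oplus \Z e_{i,-1})$ and $\Z^{2n}$, whose inverse sends $e_i\mapsto u_i:=e_{i,0}$ for $i\leq n$ and $e_{i+n}\mapsto u_{i+n}:=e_{i,0}+e_{i,-1}$. Under this identification the coordinate vector $-\tB e_k$ corresponds to $C_q e_{k,-\Hf}$. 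Since $L_{ij}=\cN(u_i,u_j)$ by definition, bilinearity of $\cN$ gives
\begin{align*}
(L(-\tB))_{ik} = \cN(u_i,\, C_q e_{k,-\Hf}).
\end{align*}

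Next, I would combine Lemma~\ref{lem:quadraticFormDifference} --- which says $\cN+2\eMatrix=\langle\ ,\ \rangle_a\circ(\beta\times\beta)$ --- with the preceding lemma asserting $\beta(C_q v)=0$. Together they collapse the previous line to
\begin{align*}
\cN(u_i,\, C_q e_{k,-\Hf}) = -2\,\eMatrix(u_i,\, C_q e_{k,-\Hf}).
\end{align*}
Expanding the definition of $\eMatrix$, the first summand becomes $-u_i[\Hf]\cdot e_{k,-\Hf}=-(u_i)_k(0)$, while for the second summand I would apply Lemma~\ref{lem:symmetricCartan} to transfer $C_q$ across the inner product, reducing it to $e_{k,-\Hf}\cdot u_i[-\Hf]=(u_i)_k(-1)$. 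Therefore
\begin{align*}
(L(-\tB))_{ik} = 2\bigl((u_i)_k(0)-(u_i)_k(-1)\bigr).
\end{align*}

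Finally, I would check the two cases directly. For $1\leq i\leq n$ one has $u_i=e_{i,0}$, so $(u_i)_k(0)=\delta_{ik}$ and $(u_i)_k(-1)=0$, giving entry $2\delta_{ik}$ and hence the top block $2\id_n$. For $n+1\leq i\leq 2n$ one has $u_i=e_{i-n,0}+e_{i-n,-1}$, so both values equal $\delta_{i-n,k}$ and cancel, giving the bottom zero block. The main obstacle is bookkeeping --- correctly tracking degree shifts and the dichotomy between the mutable and frozen rows of $\tB$ --- but conceptually the identity hinges on the cancellation $\beta\circ C_q=0$, which cleanly reduces any $\cN$-pairing against a $C_q$-image to a manifestly computable expression in $\eMatrix$.
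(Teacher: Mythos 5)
Your proposal is correct, and it takes a genuinely different route from the paper. The paper's proof computes $(L\tB)_{lk}$ and $(L\tB)_{l+n,k}$ by hand, expanding the entries $L_{ij}$ as Euler-form expressions $\langle\Pop_\bullet,\Pop_\bullet\rangle$ and then invoking exact triangles (equivalently the relation $\sum_i b_{ik}[\Pop_i]-[\Pop_k]+[\tau^{-1}\Pop_k]-\sum_j b_{kj}[\tau^{-1}\Pop_j]=0$ in $K_0$) to force the off-diagonal and frozen-row entries to vanish. You instead package the $k$-th column of $-\tB$ as $\ind(C_q e_{k,-\Hf})$ via Lemmas~\ref{lem:zPatternInd} and \ref{lem:zReduction}, pull it back through the identification $u_i=e_{i,0}$, $u_{i+n}=e_{i,0}+e_{i,-1}$, and observe that $\beta(C_q e_{k,-\Hf})=0$ kills the anti-symmetrized Euler form in $\cN+2\eMatrix=\langle\ ,\ \rangle_a\circ(\beta\times\beta)$, leaving only $-2\eMatrix(u_i,C_q e_{k,-\Hf})$, which collapses instantly because $C_q^{-1}C_q$ cancels in one summand and Lemma~\ref{lem:symmetricCartan} transfers $C_q$ across the pairing in the other. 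Your version is shorter and more conceptual: it makes visible that the identity is driven by $\beta\circ C_q=0$, and it avoids redoing the Euler-form manipulations (those are amortized into the already-proved Lemma~\ref{lem:quadraticFormDifference}). The only implicit extensions you rely on -- that $\cN$, $\eMatrix$ and $\ind$ extend $\Z$-bilinearly/linearly to finitely supported integer vectors -- are harmless and consistent with how the paper uses them.
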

\begin{proof}
For $1\leq l,k\leq n$, we have
  \begin{align*}
(L \tB)_{lk}   =&
\sum_{i:i<k}L_{li}b_{ik}+\sum_{j:j>k}L_{lj}(-b_{kj})-L_{l,k+n}+\sum_{j:j>k}b_{kj}L_{l,j+n}\\
=&(\langle \sum_i b_{ik}\Pop_i,\Pop_l\rangle-\langle \Pop_l,\sum_i
b_{ik}\Pop_i\rangle) + (\langle -\sum_j b_{kj}\Pop_j,\Pop_l\rangle-\langle
\Pop_l,-\sum_jb_{kj}\Pop_j\rangle  )\\& +(\langle -\Pop_k,\Pop_l\rangle,
\langle \Pop_l,\tau^{-1}\Pop_k\rangle)+(\langle \sum_j
b_{kj}\Pop_j,\Pop_l\rangle+\langle \Pop_l,-\sum_j
b_{kj}\tau^{-1}\Pop_j\rangle)\\
=&\langle \sum_i b_{ik}\Pop_i-\Pop_k,\Pop_l\rangle +\langle \Pop_l,-\sum_i
b_{ik}\Pop_i+\sum_j
b_{kj}\Pop_j+\tau^{-1}\Pop_k-\sum_j b_{kj}\tau^{-1}\Pop_j\rangle\\
=&\langle \sum_i b_{ik}\Pop_i-\Pop_k,\Pop_l\rangle +\langle \Pop_l,-\Pop_k+\sum_j
b_{kj}\Pop_j\rangle,
  \end{align*}
where we use exact triangles in the last equality. If $l=k$, the entry
equals $-2$. Else, it becomes
\begin{align*}
&\langle \sum_{i<k} b_{ik}\Pop_i,\Pop_l\rangle +\langle \Pop_l,\sum_{j>k}
b_{kj}\Pop_j\rangle -(\Pop_k,\Pop_l\rangle +\Pop_l,\Pop_k\rangle)\\
=&\sum_{i<k}p_{li}b_{ik}+\sum_{j>k}b_{kj}p_{jl}-(p_{lk}+p_{kl})
=0.
  \end{align*}
Similarly, we can compute $(L\tB)_{l+n,k}$:
\begin{align*}
 (L\tB)_{l+n,k}=&\sum_{i:i<k}L_{l+n,i}b_{ik}+\sum_{j:j>k}L_{l+n,j}(-b_{kj})-L_{l+n,k+n}+\sum_{j:j>k}b_{kj}L_{l+n,j+n}\\
=&\sum_{i:i<k}L_{l+n,i}b_{ik}+\sum_{j:j>k}L_{l+n,j}(-b_{kj})-L_{l+n,k+n}\\
& +\sum_{j:j>k}b_{kj}(L_{l,j+n}+L_{l+n,j})\\
=&\sum_{i:i<k}L_{l+n,i}b_{ik}-L_{l+n,k+n}+\sum_{j:j>k}b_{kj}L_{l,j+n}\\
=&-\langle \Pop_l,\sum_i b_{ik}\Pop_i\rangle+\langle \sum_i
b_{ik}\Pop_i,\tau^{-1}\Pop_l\rangle\\
&-(\langle\Pop_k,\Pop_l\rangle-\langle
\Pop_l,\tau^{-1}\Pop_k\rangle-\langle \Pop_l,\Pop_k\rangle+\langle
\Pop_k,\tau^{-1}\Pop_l\rangle)\\
&+\langle \sum_j b_{kj}\Pop_j,\Pop_l\rangle-\langle \Pop_l,\sum_j
b_{kj}\tau^{-1}\Pop_j\rangle\\
=&\langle \Pop_l,-\sum_{i<k}b_{ik}
\Pop_i+\tau^{-1}\Pop_k+\Pop_k-\sum_{j>k}b_{kj}\tau^{-1}\Pop_j\rangle\\
&+\langle \sum_i b_{ik}\Pop_i-\tau^{-1}\Pop_k-\Pop_k+\sum_j
b_{kj}\tau^{-1}\Pop_j,\tau^{-1}\Pop_l\rangle
=0.
\end{align*}
\end{proof}

\begin{Eg}\label{eg:quiverA_3}
Let the quiver $Q$ and the ice quiver $\tQ^z_1$ be given by Figure
\ref{fig:A3Quiver} and \ref{fig:levelOneA3Quiver} respectively. The associated $B$-matrix is
\begin{align*}
  \tB=
  \begin{pmatrix}
    0&0&1\\
    0&0&1\\
    -1&-1&0\\
    -1&0&0\\
    0&-1&0\\
    1&1&-1
  \end{pmatrix}.
\end{align*}
We have the matrices
\begin{align*}
\cN&=
  \begin{pmatrix}
    0&0&1&1&-1&-1\\
    0&0&1&-1&1&-1\\
    -1&-1&0&1&1&0\\
    -1&1&-1&0&0&1\\
    1&-1&-1&0&0&1\\
    1&1&0&-1&-1&0
  \end{pmatrix},\\
L&=
  \begin{pmatrix}
    0&0&1&1&-1&0\\
    0&0&1&-1&1&0\\
    -1&-1&0&0&0&0\\
    -1&1&0&0&0&0\\
    1&-1&0&0&0&0\\
    0&0&0&0&0&0
  \end{pmatrix}.
\end{align*}
It is easy to check that $L\cdot(-\tB)=
\begin{pmatrix}
  2\id_3\\
0
\end{pmatrix}.
$
\end{Eg}

\begin{figure}[htb!]
 \centering
\beginpgfgraphicnamed{fig:A3Quiver}
  \begin{tikzpicture}
    \node [shape=circle, draw] (v1) at (1,-3) {1}; \node
    [shape=circle, draw] (v2) at (3,-2) {2}; \node [shape=circle,
    draw] (v3) at (3,0) {3};

\draw[-triangle 60] (v2) edge
    (v3); \draw[-triangle 60] (v1) edge (v3);
  \end{tikzpicture}
\endpgfgraphicnamed
\caption{A quiver $Q$ of type $A_3$}
\label{fig:A3Quiver}
\end{figure}

\begin{figure}[htb!]
 \centering
\beginpgfgraphicnamed{fig:levelOneA3Quiver}
  \begin{tikzpicture}
    \node [shape=circle, draw] (v1) at (1,-3) {1}; 
    \node  [shape=circle, draw] (v2) at (3,-2) {2}; 
    \node [shape=circle,  draw] (v3) at (3,0) {3};

\node [shape=circle, draw] (v4) at (-4,-3) {4}; 
    \node  [shape=circle, draw] (v5) at (-2,-2) {5}; 
    \node [shape=circle,  draw] (v6) at (-2,0) {6};

    \draw[-triangle 60] (v2) edge (v3); 
    \draw[-triangle 60] (v1) edge (v3);

    \draw[-triangle 60] (v6) edge (v1); 
    \draw[-triangle 60] (v6) edge (v2); 
    
    \draw[-triangle 60] (v1) edge (v4); 
    \draw[-triangle 60] (v2) edge (v5); 
    \draw[-triangle 60] (v3) edge (v6); 
       
  \end{tikzpicture}.
\endpgfgraphicnamed
\caption{A level $1$ ice quiver with $z$-pattern of $A3$-type principal part}
\label{fig:levelOneA3Quiver}
\end{figure}

Let us define the involution $\rev:I\ra
I$ \st $\rev(i)=n+1-i$ for any $1\leq i\leq n$. For $1\leq i\leq
n$, define 
\begin{align}\label{eq:beta}
  \beta_i:=c^{-1}\beta(e_{\rev(i)}(0)),\  \beta_{n+i}:=c^{-1}\beta(e_{\rev(i)}(-1)).
\end{align}
These notations are justified by the following example.

\begin{Eg}
Assume we are given the root datum and choose the Weyl group element $w=c^2$ as
in Example \ref{eg:A_3}, where we have obtained the positive roots
$\beta_j$, $1\leq j\leq 6$. Let
the quiver $Q$ be given as in Example \ref{eg:quiverA_3}. Then we
have, for $1\leq i\leq 3$, $c^{-1}\beta(e_i(0))=[\Pop_i]=\beta_{\rev{(i)}}$,
$c^{-1}\beta(e_i(-1))=[\tau^{-1}\Pop_i]=\beta_{\rev{(i)}+3}$.
\end{Eg}

Let $\tL$ be the $2n\times 2n$ skew-symmetric matrix defined in
\cite[(10.2)]{GeissLeclercSchroeer11}. By \cite[Proposition
9.5]{GeissLeclercSchroeer11}, it is uniquely determined by the
following conditions: for any $1\leq j<i\leq n$,
\begin{align*}
  \tL_{ij}=(\beta_i,\beta_j),\\
\tL_{i+n,j}=(\beta_i,\beta_j)+(\beta_{i+n},\beta_j),\\
\tL_{i+n,j+n}=(\beta_i,\beta_j)+(\beta_{i+n},\beta_{j+n})+(\beta_{i+n},\beta_j)-(\beta_{j+n},\beta_i).
\end{align*}
% \begin{draft}
%   In \cite{GeissLeclercSchroeer11}, for our situation Weyl group
%   element $w=c^2$, we can associate $V_i=M_i$ with $\beta_i$ and $V_{i+n}=M_{i+n}+M_i$
%   with $\beta_{i+n}+\beta_i$.
% \end{draft}

% \begin{draft}
%   [check] to finish: What about $\tL_{i+n,i}$?
% \end{draft}

\begin{Lem}
For all $i,j\in I$, we have $L_{ij}=\tL_{\rev(i),\rev(j)}$,
$L_{i+n,j}=\tL_{\rev(i)+n,\rev(j)}$,
$L_{i,j+n}=\tL_{\rev(i),\rev(j)+n}$,
$L_{i+n,j+n}=\tL_{\rev(i)+n,\rev(j)+n}$. Namely, we can identify $L$
with $\tL$ by permuting the indices.
\end{Lem}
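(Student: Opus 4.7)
The strategy is a direct verification. Since both $L$ and $\tL$ are skew-symmetric $2n\times 2n$ matrices, the four claimed identities need only be checked for $i\neq j$, and within each $n\times n$ block only on one triangle. The key translation is the unwinding of \eqref{eq:beta}, namely
\[
\beta_i = [\Pop_{\rev(i)}], \qquad \beta_{n+i} = [\tau^{-1}\Pop_{\rev(i)}],
\]
which follows at once from the definitions $\beta(e_i(0))=[\Iop_i[-1]]=[\tau\Pop_i]$ and $\beta(e_i(-1))=[\Pop_i]$ together with $c=[\tau]$ on the Grothendieck group.

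First, I would substitute these identifications into the three defining formulas for $\tL_{\rev(i),\rev(j)}$, $\tL_{\rev(i)+n,\rev(j)}$ and $\tL_{\rev(i)+n,\rev(j)+n}$ quoted from GLS, expanding the bilinear form appearing in those defining conditions into Euler form contributions between $\Pop_\bullet$ and $\tau^{-1}\Pop_\bullet$. On the other side, the explicit expressions for $L_{ij}$, $L_{i+n,j}$, $L_{i,j+n}$, $L_{i+n,j+n}$ displayed at the start of this subsection are already given as Euler form combinations of exactly the same objects. Comparing term by term yields each identity after the appropriate sign flip produced by moving between the triangle $j<i$ (where the $\tL$-formulas are stated) and the image of the corresponding $L$-triangle under $\rev$, which is performed using skew-symmetry of $\tL$.

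Strictly speaking only three of the four equalities are independent: once the identities for $L_{ij}$, $L_{i+n,j}$ and $L_{i,j+n}$ are established, the fourth one follows from the already-noted relation $L_{i+n,j+n}=L_{i,j+n}-L_{j,i+n}$, which exactly parallels the structural form of the GLS expression for $\tL_{i+n,j+n}$ as a signed sum of lower-block entries. The diagonal entries vanish on both sides and need no separate check.

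The main obstacle will be pure bookkeeping: tracking the $\rev$-permutation, the skew-symmetry sign flips coming from the triangle convention, and reconciling terms of the form $\langle\Pop_i,\tau^{-1}\Pop_j\rangle$ that appear explicitly in the $L$-formulas with the $\beta_{n+\bullet}$ contributions in $\tL$. These are handled uniformly via the identity $\langle x,\tau^{-1}y\rangle = \langle c^{-1}x,y\rangle$ for the Euler form and, where needed, by rewriting $[\tau^{-1}\Pop_k] = -[\Pop_k]+\sum_h b_{kh}[\Pop_h]$, read off from the minimal injective resolution of $\Pop_k$. This is precisely the computational machinery just used to establish $L(-\tB)=\left[\begin{smallmatrix}2\id_n\\0\end{smallmatrix}\right]$, so no new ideas beyond careful sign-tracking are required.
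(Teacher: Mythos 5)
Your approach is essentially the same as the paper's: identify $\beta_i=[\Pop_{\rev(i)}]$, $\beta_{n+i}=[\tau^{-1}\Pop_{\rev(i)}]$ (this unwinding of \eqref{eq:beta} is correct) and then expand both sides into Euler-form combinations. The paper's proof likewise just recalls the relevant $\cN$-versus-$(\ ,\ )$ identities and says ``straightforward computation.'' The observation that the fourth identity follows from the other three via $L_{i+n,j+n}=L_{i,j+n}-L_{j,i+n}$ is a nice economy the paper does not make explicit.

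However, your account of \emph{why} the term-by-term comparison closes is off. The $L$-entries are, by definition, values of the skew-symmetric form $\cN$, which expand into differences $\langle\Pop_a,\Pop_b\rangle-\langle\Pop_b,\Pop_a\rangle$. The GLS entries $\tL_{\rev(i),\rev(j)}$ expand instead into the \emph{symmetric} form $(\beta_{\rev(i)},\beta_{\rev(j)})=\langle\Pop_a,\Pop_b\rangle+\langle\Pop_b,\Pop_a\rangle$. No permutation of indices or skew-symmetry sign flip turns $A-B$ into $A+B$. What actually reconciles the two is the vanishing $\langle\Pop_i,\Pop_j\rangle=0$ whenever $i>j$ (the acyclic ordering convention means there are no paths in the ``wrong'' direction), so on one triangle $A-B=A+B$ because $B=0$. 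This is precisely why the paper restricts to $i>j$ and then invokes identities of the shape $\cN(e_i(0),e_j(0))=(\beta(e_i(0)),\beta(e_j(0)))$ valid only there. Your proposal attributes the bookkeeping entirely to ``skew-symmetry of $\tL$'' and the $\rev$-permutation, which would not produce the equality on its own; the directedness of the quiver is the essential ingredient and should be stated.

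Two smaller slips in the auxiliary tools you list: the Euler-form identity should read $\langle x,\tau^{-1}y\rangle=\langle\tau x,y\rangle=\langle cx,y\rangle$, not $\langle c^{-1}x,y\rangle$; and the class $[\tau^{-1}\Pop_k]$ is governed by the recursion $[\tau^{-1}\Pop_k]=-[\Pop_k]+\sum_{i<k}b_{ik}[\Pop_i]+\sum_{j>k}b_{kj}[\tau^{-1}\Pop_j]$ obtained from exact triangles (as used in the compatibility proof for $L\tB$), not by a one-step expansion purely in the $[\Pop_h]$. Both slips would surface as soon as you actually carried out the calculation, but as written the proposal would not go through verbatim.
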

\begin{proof}
  It suffices to consider the case $i>j$. Recall that we have
  \begin{align*}
    \cN(e_i(0),e_j(0))=(\beta(e_i(0)),\beta(e_j(0)),\\
    \cN(e_i(-1),e_j(0))=(\beta(e_i(-1),\beta(e_j(0)),\\
    \cN(e_i(0),e_j(-1))=-\cN(e_j(-1),e_i(0))=-(\beta(e_j(-1),\beta(e_i(0)).
  \end{align*}
Straightforward computation verifies the statement.
\end{proof}

% \begin{comment}

% 3) It is straghtforward to write the rescale function 
% $N(\beta(w))=\Hf (\beta(w),\beta(w))+\deg \beta(w)$, where we denote $\deg \sum_i c_i \alpha_i=\sum_i c_i$.

% Notice that our $\deg(\beta(w))$  is different from
% \cite{HernandezLeclerc11} by shifts of AR-translations, Nevertherless, a change of linear term does not
% affect the construction of an algebra homomorphism, and
% $(\beta,\beta)$ is invariant under AR-translation.

% What's important is that,\emph{ the function $N(\ )$ is preserved
% by the mesh relations}.  More precisely, we have the following property
% $N(\beta(w))=N(\beta(w-C_q v))$, $\forall v$, because $\beta(w)=\beta(w-C_q v)$ by the mesh triangle.

% This property guarantees that the rescaled basis transition is as good
% as before the rescaling.

% Notice that $(\beta,\beta)=2\langle \beta,\beta\rangle$. Therefore
% $N(\ )$ take integer values.
% \end{comment}
% \begin{comment}
%   \begin{Rem}
%     The bilinear form $\cN(\ )$ was first defined for Dynkin Cartan
%     datum in \cite{Hernandez02}. It could be used to defined a
%     $t$-analogue of $q$-character which is different from the one
%     defined in \cite{Qin11}. The two characters share similar good
%     properties.

%     For the purpose of this paper we don't purse this new character.
%   \end{Rem}
% \end{comment}

\subsection{New $t$-deformations of Grothendieck rings and characters}

We modify the multiplication $\otimes$ of $\quotKGp\otimes_{\tBase}
\Z[t^{\pm\Hf}]$ \st \eqref{eq:otimes} is replaced by
\begin{align*}
  L(w^1)\otimes L(w^2)=(t^\Hf)^{\langle \beta(w^2),\beta(w^1)\rangle_a}\sum_{w^3} b^{w^3}_{w^1,w^2}(t^{-1})L(w^3)
\end{align*}
and denote this modified version of
$\quotKGp\otimes_{\tBase}\Z[t^{\pm\Hf}]$ by $\HGp$. Similarly, we
modify the twisted multiplication $*$ of
$\redTargSpace\otimes_{\tBase}\Z[t^{\pm\Hf}]$ \st \eqref{eq:twistedMultiplication} is replaced by
\begin{align}\label{eq:newTwistProd}
  m^1*m^2=t^{\Hf\cN(m^1,m^2)}m^1m^2.
\end{align}
and denote this modified version of $\redTargSpace\otimes_{\tBase}\Z[t^{\pm\Hf}]$ by
$\HRing$.

In analogy to $\tChar(\ )$, we define the $\tBase$-linear map $\HChar(\ )$ from
  $\HGp$ to $\HRing$ such that for all $w\in \N^{I\times \Z}$, we have
  \begin{align}
    \label{eq:HQtChar}
    \HChar(\can(w))=\sum_v \langle L_w(0) ,\pi_w(v)\rangle
    t^{\dim\grProjQuot(v,w)}Y^{w-C_qv}.
  \end{align}
The map $\HChar(\ )$ is called the \emph{twisted $t$-analogue of
$q$-characters}. Its truncation $\HChar\trunc(\ )$ is defined similarly.
\begin{Thm}
  $\HChar(\ )$ is an injective algebra homomorphism from $\HGp$ to $\HRing$.
\end{Thm}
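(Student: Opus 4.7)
The plan is to deduce this from Theorem \ref{thm:injectiveHom} by viewing $\HChar$ as a twist of $\tChar$ that is compatible with the twists of the products defining $\HGp$ and $\HRing$. First I would handle injectivity: since the coefficient of $Y^{w-C_q v}$ in $\HChar(\can(w))$ equals $t^{\dim \grProjQuot(v,w)}$ times the coefficient of $Y^{w-C_q v}$ in $\tChar(\can(w))$, and these $t$-powers are invertible in $\Z[t^{\pm\Hf}]$, any element of $\HGp$ whose image under $\HChar$ vanishes also has vanishing image (after extension of scalars) under $\tChar$, hence is zero by Theorem \ref{thm:injectiveHom}.

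For the algebra homomorphism property, I would compare
\[
\HChar(\can(w^1) \otimes \can(w^2)) \;\stackrel{?}{=}\; \HChar(\can(w^1)) * \HChar(\can(w^2))
\]
against the analogous identity for $\tChar$. Substituting the definitions of the new twisted product $(t^\Hf)^{\langle \beta(w^2),\beta(w^1)\rangle_a}\sum_{w^3} b^{w^3}_{w^1,w^2}(t^{-1})\can(w^3)$ on $\HGp$ and the new twisted product $t^{\Hf\cN(m^1,m^2)} m^1 m^2$ on $\HRing$, and using the bar-invariance of $a_{v,0;w}(t)$ and of the structure constants $b^{w^3}_{w^1,w^2}(t)$, the problem reduces to a pointwise identity among quadratic forms in $(v^1,w^1)$, $(v^2,w^2)$, $(v^3,w^3)$. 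The key ingredients are Lemma \ref{lem:quadraticFormDifference}, $\cN + 2\eMatrix = \langle\ ,\ \rangle_a$, together with the symmetry Lemma \ref{lem:symmetricCartan} and the formula $\dim\grProjQuot(v,w) = d((v,w),(v,w))$ from Theorem \ref{thm:oddVanish}.

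In practice it is cleanest to verify the compatibility first on the PBW basis $\pbw(w)$. Here Proposition \ref{prop:mult_pbw} provides an explicit product formula governed by $\eMatrix$, so the required identity becomes a direct comparison of exponents of $t$; Lemma \ref{lem:quadraticFormDifference} then takes care of the cross-term involving $\langle\beta(w^2),\beta(w^1)\rangle_a$. The upper unitriangular transition \eqref{eq:canToPbw} from $\pbw(w)$ to $\can(w)$ (plus Theorem \ref{thm:decomposition}) propagates the identity from the PBW basis to the canonical basis.

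The main obstacle will be the bookkeeping of exponents in Step~2. The twist $t^{\dim\grProjQuot(v,w)}$ built into $\HChar$ is quadratic in the pair $(v,w)$ rather than only in the monomial $\mu = w - C_q v$, so it does not factor as a change of variables $Y^\mu \mapsto t^{\phi(\mu)}Y^\mu$ on $\redTargSpace$. One must therefore separate the monomial-intrinsic contributions (which will recombine into $\Hf\cN(\mu^1,\mu^2)$ via Lemma \ref{lem:symmetricCartan}) from the residual $v^i$-dependent pieces that must cancel against the antisymmetrized Euler form $\langle\beta(w^2),\beta(w^1)\rangle_a$ appearing in the $\HGp$-twist, invoking Lemma \ref{lem:quadraticFormDifference} at the final step.
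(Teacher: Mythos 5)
Your overall route is aligned with the paper's: both rest on Lemma~\ref{lem:quadraticFormDifference} to split the new twist into the antisymmetrized Euler form times the old twist, and you correctly identify the central bookkeeping issue, namely that $t^{\dim\grProjQuot(v,w)}$ depends on the pair $(v,w)$, not just on $\mu=w-C_qv$. However, your injectivity step has a genuine gap. You argue that since each coefficient of $Y^{w-C_qv}$ in $\HChar(\can(w))$ is a unit multiple of the corresponding coefficient in $\tChar(\can(w))$, $\HChar(x)=0$ forces $\tChar(x)=0$. But for $x=\sum_w c_w\,\can(w)$, the coefficient of a fixed monomial $Y^\mu$ in $\HChar(x)$ is $\sum_{(v,w):\,w-C_qv=\mu}c_w\,a_{v,0;w}(t)\,t^{\dim\grProjQuot(v,w)}$, and the $t$-powers vary from summand to summand (indeed $\dim\grProjQuot(v,w)=d((v,w),(v,w))$ does not factor through $\mu$, as you yourself observe in your last paragraph); so vanishing of this sum does not give vanishing of $\sum c_w\,a_{v,0;w}(t)$. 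The correct and simpler argument is upper unitriangularity: the leading term of $\HChar(\can(w))$ in the dominance order is $Y^w$ with coefficient $t^{\dim\grProjQuot(0,w)}=1$, so the transition matrix to the monomial basis is upper unitriangular with unit diagonal, giving injectivity directly without any comparison to $\tChar$.

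On the algebra-homomorphism side your plan has the right ingredients, but two points need attention. First, you invoke ``bar-invariance of the structure constants $b^{w^3}_{w^1,w^2}(t)$'' to reconcile the $b^{w^3}_{w^1,w^2}(t^{-1})$ appearing in the $\HGp$-product with the $b^{w^3}_{w^1,w^2}(t)$ in the $\quotKGp$-product; this bar-invariance is nowhere established in the paper, and in fact the role of the $t\mapsto t^{-1}$ in the definition of $\HGp$ is to be absorbed together with the $t^{\dim\grProjQuot(v,w)}$ correction inserted by $\HChar$, not cancelled by a symmetry of $b$. Second, your PBW detour via Proposition~\ref{prop:mult_pbw} needs care: that proposition carries a support hypothesis on $(w^1,w^2)$ and is formulated for $\pbwTorus(w)=\cor\,\tChar\trunc(\pbw(w))$ in the quantum torus, not for $\pbw(w)$ in the Grothendieck ring itself. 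The paper's proof sidesteps both issues by matching the twist factors abstractly (and citing the correction technique of \cite{Qin11}); either way, make explicit the use of the unlabelled lemma $\beta(C_qv)=0$, which is what guarantees that $\langle\beta(\mu^2),\beta(\mu^1)\rangle_a=\langle\beta(w^2),\beta(w^1)\rangle_a$ for $\mu^i=w^i-C_qv^i$ and hence that the two ``antisymmetrized Euler'' factors on $\HGp$ and $\HRing$ agree.
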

\begin{proof}
By Lemma \ref{lem:quadraticFormDifference}, equation
\eqref{eq:newTwistProd}, which defines the twisted product of
$\HRing$, can be written as
  \begin{align*}
    m^1*m^2=&t^{\Hf\cN(m^1,m^2)}m^1m^2\\
=&(t^\Hf)^{\langle \beta(w^2),\beta(w^1)\rangle_a}
    (t^{\Hf})^{-2\eMatrix(w^1,w^2)}m^1m^2.
  \end{align*}
Then the statement can be easily deduced from Theorem
\ref{thm:injectiveHom} (\cf the correction technique in \cite{Qin11}).
\end{proof}

\section{Dual canonical basis}
\label{sec:dualCanonicalBasis}

% \begin{NB}
%   For the moment we simply take \cite{GeissLeclercSchroeer11}'s convention
%   of quantum groups, so that we don't need to recompute quantum
%   T-systems under a different convention,
%   which saves some time.
% \end{NB}

Let $(\tB,L)$ be given as in Section
\ref{sec:twistedQTCharacters}. Consider the quantum cluster algebra
$\qGLSClAlg$ whose initial compatible pair is chosen to be $(\tB,
L)$. It is a subalgebra of the quantum torus $\torus(L)$.

For any linear combination $\sum_i d_i \alpha_i$, $d_i\in \Z$, we
define its degree $\deg(\sum_i d_i\alpha_i )$ to be $\sum_i
d_i$. Define the quadratic function $N(\ ):\N^{I\times \Z}\ra \Z$ \st
for any $w\in\N^{I\times \Z}$, we have
\begin{align*}
  N(w)=\langle\beta(w),\beta(w)\rangle+\deg c^{-1}\beta(w).
\end{align*}

By abuse of notation, let $\cor$ denote the $\Z$-linear map from $\HRing$ to
$\torus(L)$ \st we have
\begin{align}
\cor(t^{\frac{\lambda}{2}} Y^w)=q^{\frac{\lambda}{2}} x^{\ind(w)}
\end{align}
for any $w$ and any integer $\lambda$. The arguments of Section
\ref{sec:correspondence} (or \cite{Qin11}) imply that $\cor$ is an algebra homomorphism.

% \begin{draft}
%   In \cite[Lemma 11.2]{GeissLeclercSchroeer11}, we have the parameter $q$. Notice that
%   $q$ is also the usual parameter for defining the quantum torus in
%   that paper.

% The lamma is compared to \cite[(12)]{HernandezLeclerc11}. So $q$ in \cite{GeissLeclercSchroeer11} is
% $v$ in \cite{HernandezLeclerc11}. $v$ is mapped to the square root of
% the cardinality of finite field.

% $q$ and $v$ are mapped to the defining parameter of the quantum group,
% which is $v$ in our paper.

% The map $\cor$ compare the $t$ in our paper with the above three
% parameters.

% Finally, from the point view of Hall algebra, the two twisted produces
% differ by a anti-symmetrized Euler form twist $(t^\Hf)^{\langle
%   \beta(w^2),\beta(w^1)\rangle_a}$. This is ``equivalent'' to the
% Euler form twist
% $t^{\langle
%   \beta(w^2),\beta(w^1)\rangle}$ we usual do in Hall algebras, \cf
% the convention of Schiffmann's note on Hall algebras (1.2) of 0611617v2.

% Therefore, all these convention agree.
% \end{draft}

Denote the image $\HChar\trunc(\HGp)$ by $\rscQClAlg$. For any $w$, denote $\cor\HChar\trunc(\pbw(w))$ by $\pbwCl(w)$,
$\cor\HChar\trunc(\can(w))$ by $\canCl(w)$, and $\cor\HChar\trunc(\gen(w))$ by
$\genCl(w)$. Then $\pbwClBasis$, $\canClBasis$, $\genClBasis$ are three
homogeneous bases of the $K_0(D^b(\C Q-\mod)$-graded algebra
$\rscQClAlg$. 

\begin{Prop}\label{prop:contain}
  $\canClBasis$ and $\genClBasis$ contain all the quantum cluster
  monomials.
\end{Prop}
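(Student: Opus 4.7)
The plan is to mimic the strategy of Theorem \ref{thm:iso} and Corollary \ref{cor:stronglyPositive} in the new quantization. The key observation is that the matrix $L$ and the matrix $\Lambda$ of Section \ref{sec:correspondence} are both compatible with the same $B$-matrix $\tB$, so the two quantum cluster algebras $\qClAlg$ and $\qGLSClAlg$ share the same $B$-matrix mutation combinatorics and differ only by the quasi-commutation relations encoded in the choice of $\Lambda$-matrix. All statements that depend only on combinatorial data (Grothendieck group indices, dimension vectors, $g$-vectors, Euler forms, mutation rules) transport directly; what must be recomputed are precisely the powers of $q^{1/2}$.

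First, I would verify that $\{\pbwCl(w)\}$, $\{\canCl(w)\}$, and $\{\genCl(w)\}$ are three $\ZCoeff$-bases of $\rscQClAlg$ which are related by the same transition matrices (over $\Z[t^{\pm 1}]$) as in the $\Lambda$-quantization. Next, I would establish the analogue of Proposition \ref{prop:mult_pbw} for the new twisted product $*$ of $\HRing$: whenever the weight-order hypothesis on $w^1,w^2$ holds, one has
\[
\pbwCl(w^2)*\pbwCl(w^1) = q^{\Hf\,\cN(w^1,w^2)/\,}\cdot \pbwCl(w^1+w^2),
\]
with the exponent governed by $\cN$ instead of $\eMatrix$, by Lemma \ref{lem:quadraticFormDifference} (which relates $\cN$, $-2\eMatrix$, and the antisymmetrized Euler form). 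Combined with Theorem \ref{thm:genChar} and the factorization of $\genCl(w)$ into coefficient-free and pure-coefficient parts, this will give a closed-form expression for $\genCl(w)$ as a sum of monomials in $\torus(L)$ of the shape
\[
\genCl(w)=\sum_{e} q^{*}\,P_{q^{\Hf}}(\Gr_e(\kerMod W))\,x^{\ind(w)+\tB e}.
\]

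The heart of the argument is then to identify this expression, for each coefficient-free rigid $w$ (equivalently, for each coefficient-free rigid object $M$ of $\cD$), with the quantum cluster character of $M$ inside $\qGLSClAlg$. For this, one feeds the compatible pair $(L,\tB)$ into the quantum CC-formula of Definition \ref{def:CC_formula} and Theorem \ref{thm:CC_formula}: the combinatorial ingredients $\ind_T M$, $\phi$, $\Ext^1_\cC(T,M)$, and $\Gr_e$ are unchanged, and only the $q^{\Hf\Lambda(\ind_T M-\phi(e),\,\cdot\,)}$ prefactor must be replaced by its $L$-counterpart. The computation reduces to checking that the exponents produced by $\cN$ through the correspondence map $\cor$ of Definition \ref{def:correction_map} agree with those dictated by the $L$-matrix; this can be done once at the level of unit vectors $e_{i,a}$ using Lemma \ref{lem:quadraticFormDifference} and the identities relating $\beta$, $c$, and $\tau$ to $\ind$ and $\phi$ established in Section \ref{sec:twistedQTCharacters}.

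Granting this identification, Proposition \ref{prop:clusterMonomial} (whose proof is purely geometric and not affected by the change of quantization) gives $\canCl(w)=\genCl(w)$ whenever $M$ is rigid. Combining this with Theorem \ref{thm:CC_formula}, which is the bijection between coefficient-free rigid objects of $\cD$ and quantum cluster monomials, we obtain that every quantum cluster monomial of $\qGLSClAlg$ is simultaneously of the form $\canCl(w)$ and $\genCl(w)$ for a suitable $w$. The frozen variables $x_{n+1},\ldots,x_{2n}$ come in as the pure-coefficient factors via \eqref{eq:coeffFactorization}. The main obstacle is the bookkeeping in the middle paragraph: one must ensure that the particular $q$-powers produced by $\cor$, $\HChar\trunc$, and $\cN$ are exactly the quasi-commutation powers of $\torus(L)$ in which $\qGLSClAlg$ lives, and that the multiplicative property propagates under iterated mutation so that the initial-seed identification extends to every seed.
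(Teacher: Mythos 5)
Your proposal re-derives the argument of Theorem~\ref{thm:iso} from scratch in the new quantization, and at the crucial step---propagating the initial-seed identification of $\canCl(w)$ with cluster monomials to arbitrary seeds---you explicitly flag the ``bookkeeping under iterated mutation'' as the main obstacle and leave it unresolved. That is exactly where the gap is. Theorem~\ref{thm:CC_formula}, which your plan invokes to obtain the bijection between coefficient-free rigid objects and quantum cluster monomials, is proved for a specific compatible pair $(\Lambda,\tB)$; its validity for the new pair $(L,\tB)$ is precisely what must be established and is not a routine change of prefactor. The quantum mutation formula involves $\Lambda$-dependent $q$-powers at every mutation step, so it does not follow automatically that the formal Laurent sum $\cor\HChar\trunc(\can(w))$ is the quantum cluster monomial of $\qGLSClAlg$ just because the unit-vector computations match at the initial seed.

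The paper's one-line proof sidesteps this entirely: it invokes the existence of quantum $F$-polynomials \cite{Tran09} (equivalently, the correction technique of \cite{Qin11}), which furnish a uniqueness principle stating that a bar-invariant element of $\torus(L)$ pointed at a given $g$-vector, whose $q\mapsto 1$ specialization is the classical $F$-polynomial, is determined without any reference to mutation sequences. Theorem~\ref{thm:iso} already supplies the classical identification of $\can(w)$ with a cluster monomial, and Lemma~\ref{lem:zReduction} together with the construction of $\cor$ and $\HChar\trunc$ supplies the correct leading term and bar-invariance in $\torus(L)$; the uniqueness principle then forces $\canCl(w)$ to coincide with the quantum cluster monomial. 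To complete your approach you would either have to reprove the $(L,\tB)$-analogue of Theorem~\ref{thm:CC_formula} from scratch, or appeal to the quantum $F$-polynomial uniqueness result as the paper does---the explicit seed-by-seed bookkeeping you describe is not a viable shortcut.
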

\begin{proof}
  The statement follows from Theorem \ref{thm:iso} and the existence
  of quantum $F$-polynomials (\cf \cite{Tran09} or the correction
  technique in \cite{Qin11}).
\end{proof}
Therefore, $\rscQClAlg$ is the subalgebra of $\qGLSClAlg$ generated by all the quantum cluster variables and the
frozen variables $x_{n+1},\cdots,x_{2n}$. So we also call $\rscQClAlg$
a quantum cluster algebra. 

Notice that the
structure constants of either $\pbwCl$ or $\canCl$ take values in
$\Z[q^\pm]$, since the map $\cor$ sends $t$ to $q$. Also, the
non-diagonal entries of the transition matrix between them takes values in $q\Z[q]$.

% \begin{comment}

% In order to be in accordance with the PBW basis order (and transition
% matrix in $q^{-1}\Z[q^{-1}]$), we need to apply bar to our standard
% modules to change its orientation. 
% \end{comment}

In order to be in accordance with the usual convention in constructing PBW basis (and transition
matrix in $q^{-1}\Z[q^{-1}]$), we modify the bases $\pbwClBasis$ and
$\canClBasis$ by introducing $\rscPbwCl(w)=q^{-\Hf\langle
  \beta(w),\beta(w)\rangle}\pbwCl(w)$, and $\rscCanCl(w)=q^{-\Hf\langle
  \beta(w),\beta(w)\rangle}\canCl(w)$. The elements
$\rscPbwCl(e_i(a))=\rscCanCl(e_i(a))$, $i\in I$, $a\in \{0,-1\}$, are called the dual
PBW generators of $\rscQClAlg$.

% \begin{NB}
%   We could add any linear term $\deg(\ )$ in the rescaling such that $\deg(C_q v)=0$
%   in rescaling. This choice does not affect the effectiveness of
%   later arguments.
% \end{NB}

% \begin{comment}
%   \begin{Prop}
%     $\rscQClAlg$ is closed under the twisted product.
%   \end{Prop}
%   \begin{draft}
%     \begin{proof}
%       The statement follows from the fact that $\qClAlg$ is a
%       $K_0(D^b(\C Q-\mod)$-graded algebra and its structure constants
%       with respect to the homogeneous basis $\pbwBasis$ take value in
%       $\Z[q^\pm]$.
%     \end{proof}
%   \end{draft}
%   Therefore, $\rscQClAlg$ is a $K_0(D^b(\C Q-\mod)$-graded algebra
%   over $\Z[q^\pm]$ with two homogeneous bases $\rscPbwBasis$ and
%   $\rscCanBasis$.
% \end{comment}

We follow the convention of Section \ref{sec:unipotentSubgroup} (and
thus of \cite{GeissLeclercSchroeer11}). Choose the Coxeter element $c$ to
be $s_{\rev(1)}s_{\rev(2)}\ldots s_{\rev(n)}$.

Denote $\intA=\Q[v^\pm]$. The image $\psi_+ \Uv^+(c^2)_{\mca{A}}^{\mathrm{up}}$
  is called the integral form of $A_v(n(c^2))$, which we denote by $A_\intA(n(c^2))$. This is an
  $\intA$-algebra. Denote the $\intA$-algebra $\rscQClAlg
  \otimes_{\Z[v^\pm]}{\intA}$ by $\rscQClAlg_{\intA}$.

\begin{Prop}\label{prop:isoAlg}
  There is a $K_0(D^b(\C Q-\mod))$-graded algebra isomorphism
  $\tilde{\kappa}$ from $\rscQClAlg_{\intA}$ to $A_{\intA}(n(c^2))$,
  which sends $\rscPbwCl(w)$ to the dual PBW basis element $E^{\mathrm{up}}(c^{-1}\beta(w))$.
\end{Prop}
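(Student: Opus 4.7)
The strategy is to construct $\tilde{\kappa}$ by matching the dual PBW generators on both sides and then to verify that the resulting candidate preserves the algebra structure. I will not try to work intrinsically from relations; rather, I will rely on the $T$-system presentation, since both sides admit such a presentation from common initial data.

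First, I will define $\tilde{\kappa}$ on the dual PBW generators: send $\rscPbwCl(e_i(0))$ to $E^{\mathrm{up}}(\beta_{\rev(i)})$ and $\rscPbwCl(e_i(-1))$ to $E^{\mathrm{up}}(\beta_{\rev(i)+n})$, where the positive roots $\beta_k$ are those of $\overrightarrow{w}=(s_{\rev(1)}\cdots s_{\rev(n)})^2$, as in \eqref{eq:beta}. The first task is to check that the two quantum tori carrying these generators coincide. Under the bijection $i\mapsto \rev(i)$, our matrix $L$ was shown to equal the GLS matrix $\tilde L$, so the $v$-commutations among $\rscPbwCl(e_i(a))$ in $\torus(L)$ agree exactly with the $v$-commutations among $E^{\mathrm{up}}(\beta_k)$ in $A_\intA(\mathfrak{n}(c^2))$ given by the Levendorskii-Soibelman formula on dominant pairs.

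Next, I will verify that the $2n$ elements $\{E^{\mathrm{up}}(\beta_k)\}$ and $\{\rscPbwCl(e_i(a))\}$ satisfy the same set of exchange relations. On the GLS side these are the quantum $T$-systems for dual unipotent minors of Proposition \ref{prop:TsysMinor} (see also Remark \ref{rem:compatible}), which give $v^A D^+[a,b]D^+[a^-,b^-]=v^{-1+B}D^+[a,b^-]D^+[a^-,b]+q^C\prod_{j\neq i}D^+[a^-(j),b^-(j)]^{-c_{ij}}$. On the cluster side, after renormalising $\pbwCl\mapsto \rscPbwCl$, the dual PBW generators $\rscPbwCl(e_i(0))$, $\rscPbwCl(e_i(-1))$ are by Proposition~\ref{prop:contain} the initial cluster variables and their first mutations in $\rscQClAlg$, and the corresponding quantum exchange relations produced by the cluster structure of $\rscQClAlg$ have the same shape. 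The matching of the $q$-powers $A$, $B$, $C$ is forced by the fact that the quasi-commutation matrices on the two sides coincide, and by the homogeneity under the $K_0(D^b(\C Q\text{-}\mod))$-grading.

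Once the generators and defining relations match, $\tilde{\kappa}$ extends uniquely to a $K_0(D^b(\C Q\text{-}\mod))$-graded algebra homomorphism. To see that it sends $\rscPbwCl(w)$ to $E^{\mathrm{up}}(c^{-1}\beta(w))$ for every $w\in\N^{I\times\{-1,0\}}$, I will use Proposition \ref{prop:mult_pbw}, which expresses $\rscPbwCl(w)$ as a $q$-power multiple of an ordered $*$-product of the generators $\rscPbwCl(e_i(a))$, combined with the analogous ordered-product expression for $E^{\mathrm{up}}(\mathbf{c},\overrightarrow{w})$ in terms of the dual root vectors. Both orderings can be chosen compatibly with the weight order of $\overrightarrow{w}=(s_{\rev(1)}\cdots s_{\rev(n)})^2$, and with the same normalising $q$-power because $L=\tilde L$. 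Consequently $\tilde{\kappa}$ restricts to a bijection between the two dual PBW bases, hence is an isomorphism.

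The main obstacle will be the careful bookkeeping of the half-integer $q$-powers: between Lemma \ref{lem:quadraticFormDifference}, the renormalisation $\rscPbwCl(w)=q^{-\Hf\langle\beta(w),\beta(w)\rangle}\pbwCl(w)$, the action of $\cor$, and the coefficients $A$, $B$, $C$ in the quantum $T$-system, one has to check the $q$-powers precisely rather than just up to a monomial factor. This is essentially a symmetric-bilinear-form check, made tractable by the fact that both sides are homogeneous for the root lattice grading, so the ambiguous $q$-power is determined by the weight and can be pinned down on a single generator in each degree.
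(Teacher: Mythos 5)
Your strategy is essentially the paper's own: set up the quantum-torus identification via $L=\tL$, compare the two quantum $T$-systems, conclude that the dual PBW generators correspond, and then pass to all of $\rscPbwCl(w)\mapsto E^{\up}(c^{-1}\beta(w))$ via ordered-product expressions (Proposition~\ref{prop:mult_pbw} on one side, the Levendorskii--Soibelman/PBW multiplicativity on the other). One point of imprecision: the claim that the powers $A$, $B$, $C$ in the $T$-system are ``forced'' by the coincidence of quasi-commutation matrices together with the $K_0$-grading is not literally true — these constraints fix the monomials appearing but not their $q$-exponents; what actually pins them down is either the bar-invariance of both sides or the explicit bilinear-form computation the paper carries out (evaluating $(\varpi_k,\beta_{\rev(k)+n})$, $A=-1-(\beta_{\rev(k)},\beta_{\rev(k)+n})$, $B=0$, and matching them against $L(e_{n+k},e_k)$). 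You flag this bookkeeping at the end as the main obstacle, and it indeed cannot be waved away; once done, your argument and the paper's coincide.
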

\begin{proof}
As in \cite[Proposition 12.1]{GeissLeclercSchroeer11}, we want to
compare the $T$-systems in both algebras. In $A_{\intA}(n(c^2))$, we
have, for any $k\in I$,
\begin{align*}
v^A\BPos(c[\rev(k)+n,& \rev(k)+n],\ow)
\BPos(c[\rev(k),\rev(k)],\ow)=v^{-1+B}\BPos(c[\rev(k),\rev(k)+n],\ow)\\
& +\BPos(\sum_{i<k}b_{ik}c[\rev(i),\rev(i)]+\sum_{j>k}b_{kj}c[\rev(j)+n,\rev(j)+n],\ow),
\end{align*}
where $A=(\mu(\rev(k)+n,k),\varpi_k-\mu(\rev(k),k))$,
$B=(\mu(\rev(k),k),\varpi_k-\mu(\rev(k)+n,k))$, and we denote
$\mu(d,i)=s_{i_1}\cdots s_{i_d}\varpi_i$ for any $d\in\N$, \cf
\cite[Proposition 5.5]{GeissLeclercSchroeer11}.

Notice that the PBW generators $\BPos(c[a,b],\ow)$ of $A_{\intA}(n(c^2))$, $1\leq a\leq b\leq
  2n$, $i_a=i_b$, satisfy the $T$-systems in Proposition \ref{prop:TsysMinor}. Further using
Remark \ref{rem:compatible}, we deduce that $A_\intA(n(c^2))$ is
contained in the algebra $\cT'$ which is generated by
$\BPos(c[\rev(i),\rev(i)],\ow)$, $\BPos(c[\rev(i),\rev(i)+n],\ow)$, $i\in I$, and their inverses.

It follows from Proposition \ref{prop:initialMinors} and the
definition of $L$ that there is an algebra isomorphism $\tilde{\kappa}$
from the quantum torus $\cT_{\Z[v^\pm]}{\intA}$ to $\cT'$ such that we have
\begin{align*}
\tilde{\kappa}(\BPos (\mbf{c}[\rev(i),\rev(i)],\ow))=\rscCanCl(e_i(0)),\\
\tilde{\kappa}(\BPos (\mbf{c}[\rev(i),\rev(i)+n],\ow))=\rscCanCl(e_i(-1)+e_i(0)).
\end{align*}
We refer the reader to \cite[Proposition 11.5]{GeissLeclercSchroeer11}
for a detailed examination of $\tilde{\kappa}$ for general $w$.

Notice that for any $i,j \in I$, $1 \leq r\leq 2n$, we have $(\varpi_i,\alpha_j)=\delta_{ij}$, $(\varpi_i,\beta_{\rev(i)})=1$,
$(\beta_r,\beta_r)=2$. 

From now on, fix any $k\in I$. We compute $(\varpi_k,\beta_{\rev(k)+n})$ as
\begin{align*}
  (\varpi_k,\beta_{\rev(k)+n})&=(s_ks_{k+1}\cdots
s_n\varpi_k,s_{k-1}s_{k-2}\cdots s_1s_ns_{n-1}\cdots
s_{k+1}\alpha_k)\\
&=(\varpi_k-\alpha_k,s_{k-1}s_{k-2}\cdots
s_1s_ns_{n-1}\cdots s_{k+1}\alpha_k)\\
&=(\varpi_k,s_{k-1}s_{k-2}\cdots
s_1s_ns_{n-1}\cdots s_{k+1}\alpha_k)\\& \qquad\qquad-(\alpha_k,s_{k-1}s_{k-2}\cdots
s_1s_ns_{n-1}\cdots s_{k+1}\alpha_k)\\
&=(\varpi_k,\alpha_k)-(s_ns_{n-1}\cdots s_k\alpha_k,\beta_{\rev(k)+n})\\
&=1+(\beta_{\rev(k)},\beta_{\rev(k)+n}).
\end{align*}

So we have
\begin{align*}
  A&= (\varpi_k-\beta_{\rev(k)}-\beta_{\rev(k)+n},\varpi_k-(\varpi_k-\beta_{\rev(k)}))\\
&=(\varpi_k-\beta_{\rev(k)}-\beta_{\rev(k)+n},\beta_{\rev(k)})\\
&=-1-(\beta_{\rev(k)},\beta_{\rev(k)+n}),
\end{align*}
\begin{align*}
  B&=(\varpi_k-\beta_{\rev(k)},\beta_{\rev(k)}+\beta_{\rev(k)+n})\\
&=-1-(\beta_{\rev(k)},\beta_{\rev(k)+n})+(\varpi_k,\beta_{\rev(k)+n})\\
&=0.
\end{align*}

Consider the quantum cluster algebra $\rscQClAlg_\intA$. For any $i\in I$,
let $x_i^*$ denote the quantum cluster variable $\canCl(e_i(-1))$. We
have the following $T$-system:
\begin{align*}
x_k^*x_k=&q^{\Hf L(e_{n+k},e_k)}x_{n+k}\\
&+q^{\Hf L(\sum_{i<k}b_{ik}e_i+\sum_{j>k}b_{kj}e_{j+n},e_k)}\prod_{i<k}x_i^{b_{ik}}\cdot (x_{k+1}^*)^{b_{k,k+1}}\cdots (x_n^*)^{b_{kn}}.
\end{align*}
We have
$ L(e_{n+k},e_k)+2= L(\sum_{i<k}b_{ik}e_i+\sum_{j>k}b_{kj}e_{j+n},e_k)$.
The above $T$-system becomes
\begin{align*}
q^{-\Hf L(e_{n+k},e_k)-1} x_k^*x_k=q^{-1} x_{n+k}+\prod_{i<k}x_i^{b_{ik}}\cdot (x_{k+1}^*)^{b_{k,k+1}}\cdots (x_n^*)^{b_{kn}}.
\end{align*}
By definition, we have 
\begin{align*}
  x_k=q^{\Hf}\rscCanCl(e_k(0)),\ x_k^*=q^{\Hf}\rscCanCl(e_k(-1)),\\
  x_{n+k}=q^{\Hf\langle \beta_{\rev(k)}+\beta_{\rev(k)+n},
    \beta_{\rev(k)}+\beta_{\rev(k)+n}\rangle}\rscCanCl(e_k(-1)+e_k(0)).
\end{align*}
Also, we have $L(e_{n+k},e_k)=-(\beta(e_k(-1)),\beta(e_k(0)))$ and $\sum_{i<k}b_{ik}\beta(e_i(0))+\sum_{j>k}b_{kj}\beta(e_j(-1))=\beta(e_k(-1))+\beta(e_k(0))$.

Therefore, the $T$-system can be written as
\begin{align*}
&q^{-1-(\beta(e_k(-1)),\beta(e_k(0)))}
\rscCanCl(e_k(-1))*\rscCanCl(e_k(0))\\&\qquad =q^{-1}\rscCanCl(e_k(-1)+e_k(0))+\rscCanCl(\sum_{i<k}b_{ik}e_i(0)+\sum_{j>k}b_{kj}e_j(-1)).
\end{align*}

Therefore, $\tilde{\kappa}$ identifies the PBW generators. It follows
that it gives an isomorphism from $\rscQClAlg_\intA$ to $A_{\intA}(n(c^2))$.
\end{proof}

% \begin{NB}
%   Identified up to $q$-powers ($q$ is invertible, no torsion
%   problem). 

% Want to compute to check that really identified! did
%   \cite{HernandezLeclerc11} check this?
% \end{NB}

Let $A_{\Z[v^\pm]}(n(c^2))$ denote the free $\Z[v^\pm]$-module generated by the dual PBW basis
elements of $A_v(n(c^2))$.
\begin{Thm}\label{thm:2nd}
The map $\tilde{\kappa}$ is an algebra isomorphism from the quantum cluster
algebra $\rscQClAlg$ to
$A_{\Z[v^\pm]}(n(c^2))$. It sends $\rscCanBasis$ to the dual canonical basis
of $A_{\Z[v^\pm]}(n(c^2))$. In particular, every quantum cluster monomials up
to a $v$-power is sent into the dual canonical basis.
\end{Thm}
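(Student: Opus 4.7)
The plan is to upgrade the isomorphism $\tilde{\kappa}$ of Proposition \ref{prop:isoAlg}, which is already known to match dual PBW generators, to one that matches the dual canonical bases. The key observation is that on both sides the dual canonical basis is characterized by two conditions:
(a) invariance under the (dual) bar involution,
(b) unitriangularity of the transition matrix with respect to the dual PBW basis, with strictly positive $v$-powers off the diagonal.
So once we know $\tilde{\kappa}$ identifies the dual PBW bases and intertwines the bar involutions, the matching of dual canonical bases follows by the standard uniqueness argument.

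First I would check that $\tilde{\kappa}$ descends to an isomorphism of $\Z[v^\pm]$-forms. The dual PBW basis $\rscPbwBasis$ is by construction a $\Z[v^\pm]$-basis of $\rscQClAlg$, and on the quantum unipotent side $\{E^{\up}(\mathbf{c},\ow)\}$ is a $\Z[v^\pm]$-basis of $A_{\Z[v^\pm]}(n(c^2))$, since the transition to the dual canonical basis is upper unitriangular with entries in $\Z[v^\pm]$. Because $\tilde{\kappa}$ sends $\rscPbwCl(w)$ to $E^{\up}(c^{-1}\beta(w),\ow)$, it restricts to a $\Z[v^\pm]$-algebra isomorphism.

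Next I would compare bar involutions. On $\rscQClAlg$ the dual canonical basis $\rscCanBasis$ is bar-invariant by construction (the simple perverse sheaves $IC_w(v)$ are self-dual under Verdier duality) and, after the normalization by $q^{-\Hf\langle \beta(w),\beta(w)\rangle}$, satisfies
\[
\rscCanCl(w) \in \rscPbwCl(w) + \sum_{w' \neq w} q^{-1}\Z[q^{-1}]\,\rscPbwCl(w').
\]
On the other side, by the results recalled in Section \ref{sec:unipotentSubgroup} (in particular the proposition following the Levendorskii--Soibelman formula),
\[
B_+^{\up}(\mathbf{c},\ow) \in E^{\up}(\mathbf{c},\ow) + \sum_{\mathbf{c}' <_{\ow} \mathbf{c}} v^{-1}\Z[v^{-1}]\,E^{\up}(\mathbf{c}',\ow),
\]
and $B_+^{\up}$ is $\sigma_+$-invariant. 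The main technical step is to verify that under $\tilde{\kappa}$ the $\Z[v^\pm]$-linear bar involution on $\rscQClAlg$ (sending $v^\Hf x^g$ to $v^{-\Hf} x^g$ in the quantum torus $\torus(L)$) is intertwined with $\sigma_+$ on $A_{\Z[v^\pm]}(n(c^2))$; this is standard once one checks it on the dual PBW generators $\rscCanCl(e_i(0))$ and $\rscCanCl(e_i(-1))$, both of which correspond to unipotent quantum minors fixed by $\sigma_+$. Once this compatibility is in place, both $\tilde{\kappa}(\rscCanCl(w))$ and $B_+^{\up}(c^{-1}\beta(w),\ow)$ lie in $E^{\up}(c^{-1}\beta(w),\ow) + \sum v^{-1}\Z[v^{-1}]E^{\up}$ and are $\sigma_+$-invariant, so they coincide by the uniqueness of such elements.

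The hard part will be reconciling the two orderings: the dominance/weight order used to express $\rscCanCl$ in terms of $\rscPbwCl$ versus the lexicographic order $<_{\ow}$ on $\N^{2n}$ used on the quantum unipotent side. I would handle this by showing that $\tilde{\kappa}$ sends the degree filtration of $\rscQClAlg$ indexed by $K_0(D^b(\C Q\text{-}\mod))$ to a filtration compatible with $<_{\ow}$ (using the explicit formula $w \mapsto c^{-1}\beta(w)$ and the description of the $\beta_k$'s as roots ordered along $\ow$), so that unitriangularity on one side implies unitriangularity on the other. With the basis identification established, the last statement of the theorem is immediate: by Proposition \ref{prop:contain} every quantum cluster monomial of $\rscQClAlg$ equals some $\rscCanCl(w)$ up to a scalar, which is therefore sent by $\tilde{\kappa}$ to an element of the dual canonical basis up to a $v$-power.
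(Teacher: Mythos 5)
Your proposal is essentially the same as the paper's proof: both rely on the fact that $\tilde{\kappa}$ identifies dual PBW bases (Proposition~\ref{prop:isoAlg}), intertwines the bar involutions, and that the dual canonical basis on each side is the unique bar-invariant basis whose transition matrix from the dual PBW basis is unitriangular with off-diagonal entries in $v^{-1}\Z[v^{-1}]$. Your handling of the $\Z[v^\pm]$-form and the bar-involution compatibility checks matches what the paper asserts more tersely.

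One remark that may save you work: the ``hard part'' you flag --- reconciling the dominance/weight order on the geometric side with the lexicographic order $<_{\ow}$ on the quantum unipotent side --- is not actually needed. The standard Kazhdan--Lusztig uniqueness argument is applied purely \emph{internally} on the side of $A_{\Z[v^\pm]}(n(c^2))$: once you know $\tilde{\kappa}(\rscCanCl(w))$ is $\sigma_+$-invariant (because $\tilde{\kappa}$ intertwines the involutions) and lies in $E^{\up}(c^{-1}\beta(w),\ow) + \sum_{\mathbf{c}'\neq c^{-1}\beta(w)} v^{-1}\Z[v^{-1}]\,E^{\up}(\mathbf{c}',\ow)$ (because $\tilde{\kappa}$ is $\Z[v^\pm]$-linear and the transition on the cluster side is in $v^{-1}\Z[v^{-1}]$), the uniqueness of the $\sigma_+$-invariant element in that affine subspace --- which holds because $\sigma_+$ is unitriangular with respect to $<_{\ow}$ --- forces $\tilde{\kappa}(\rscCanCl(w)) = B_+^{\up}(c^{-1}\beta(w),\ow)$. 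No comparison of the two orders under $\tilde{\kappa}$ is required. Your proposed degree-filtration comparison is plausible but unnecessary, and dropping it simplifies the argument to exactly what the paper does.
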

\begin{proof}
The first statement follows from Proposition
\ref{prop:isoAlg}.

  The bar-invariant basis $\rscCanBasis$ is uniquely determined by the
  dual PBW basis $\{\rscPbwCl(w)\}$ and the
  upper unitriangular property. Similarly, the $\sigma_+$-invariant dual
  canonical basis of $A_\intA(n(c^2))$ is uniquely determined by the
  dual PBW basis of $A_\intA(n(c^2))$ and the
  upper unitriangular property. Because the isomorphism
  $\tilde{\kappa}$ commutes with these two involutions and identifies
  the two dual PBW bases, it identifies the dual canonical bases as
  well. 

The last statement follows from Proposition \ref{prop:contain}.
\end{proof}

%%%%%%%%%%%%%%%%%%%%%%%%%%
%%			Bibliography
%%%%%%%%%%%%%%%%%%%%%%%%%%

%% If you have bibdatabase file and want bibtex to generate the
%% bibitems, please use
%%
%%  \bibliographystyle{elsarticle-harv} 
%%  \bibliography{  \langle  your bibdatabase \rangle }

% \bibliographystyle{amsalpha}%Can also use halpha.bst.
% \bibliography{referenceURL}

\bibliographystyle{amsalpha}
\bibliography{referenceEprint}

\end{document}